\documentclass[11pt]{amsart}

\usepackage{bm}
\usepackage{hyperref}
\usepackage{setspace}
\usepackage{fullpage}
\usepackage{braket}
\usepackage{verbatim}
\usepackage{lscape}
\usepackage{tabularx}
\usepackage{tikz-cd}
\usepackage{amsmath}
\usepackage{pgfplots}
\usepackage{enumitem}
\usepackage{amsthm,thmtools}
\usepackage{extarrows}
\usepackage{xypic}
\usepackage{ulem}
\usepackage{graphicx}
\usepackage{float}
\usepackage{graphics,amssymb}
\usepackage{amsfonts}
\usepackage{latexsym}
\usepackage{epsf}
\usepackage{mathrsfs}
\usepackage{bbm}
\usepackage{hyperref}
\usepackage{color}
\usepackage[framemethod=tikz]{mdframed}
\usepackage{cancel}
\usepackage{tikz}
\usetikzlibrary{positioning,calc,intersections,through,backgrounds,matrix,arrows}
\usepackage{mathtools}
\usepackage{url}
\usepackage{fancybox}
\usepackage{dsfont}
\usepackage{bbding}
\usepackage{booktabs}

\usepackage[toc,page]{appendix}
\usepackage{setspace}
\usetikzlibrary{fit}
\usepackage[low-sup]{subdepth}

\newtheorem{theorem}{Theorem}[section]

\newtheorem{lemma}[theorem]{Lemma}
\newtheorem{proposition}[theorem]{Proposition}

\newtheorem{claim}[theorem]{Claim}

\theoremstyle{definition}
\newtheorem{remark}{Remark}[section]

\newtheorem{definition}[theorem]{Definition}

\newtheorem{setup}[theorem]{Set-up}

\newcommand{\Gal}[2]{\mathrm{Gal}(#1/#2)}
\newcommand{\hotimes}{\hat{\otimes}}

\newcommand{\Sel}{\mathrm{Sel}}
\newcommand{\Frac}{\mathrm{Frac}}

\newcommand{\m}{\mathfrak{m}}

\newcommand{\C}{\mathbb{C}}
\newcommand{\p}{\mathfrak{p}}
\newcommand{\q}{\mathfrak{q}}
\newcommand{\F}{\mathbb{F}}
\newcommand{\Z}{\mathbb{Z}}
\newcommand{\Q}{\mathbb{Q}}

\newcommand{\val}{\mathrm{val}}
\newcommand{\Hom}{\mathrm{Hom}}

\newcommand{\Res}{\mathrm{Res}}
\newcommand{\cyc}{\epsilon}

\newcommand{\AAA}{\mathbb{A}}
\newcommand{\Frob}{\mathrm{Frob}}

\newcommand{\Fitt}{\mathrm{Fitt}}

\newcommand{\OO}{\mathrm{O}}

\newcommand{\Ext}{\mathrm{Ext}}

\newcommand{\LLL}{\mathfrak{L}}
\newcommand{\Gl}{\mathrm{GL}}
\newcommand{\Int}{\mathrm{int}}

\newcommand{\ord}{\mathrm{ord}}
\newcommand{\dec}{\mathrm{dec}}
\newcommand{\DD}{\mathfrak{D}}
\newcommand{\TT}{\mathcal{T}}

\newcommand{\TTdec}{\mathcal{T}^{\mathrm{dec}}}
\newcommand{\TTdR}{\mathcal{T}^{\mathrm{dR}(\text{--},2)}}

\newcommand{\FF}{\mathcal{F}}

\newcommand{\Iw}{\mathrm{Iw}}

\newcommand{\hh}{\mathfrak{h}}

\newcommand{\Ind}{\mathrm{Ind}}
\newcommand{\cla}{\mathrm{dR}(\text{--},2)}
\newcommand{\Yos}{\mathrm{Yos}}

\newcommand{\GGG}{\mathfrak{G}}
\newcommand{\TTT}{{\mathfrak{T}}}

\newcommand{\CC}{\mathcal{C}}
\newcommand{\BB}{\mathcal{B}}
\newcommand{\pcT}{\mathrm{T}}

\newcommand{\JJ}{\mathcal{J}}

    \DeclareFontFamily{U}{wncy}{}
    \DeclareFontShape{U}{wncy}{m}{n}{<->wncyr10}{}
    \DeclareSymbolFont{mcy}{U}{wncy}{m}{n}
    \DeclareMathSymbol{\Sha}{\mathord}{mcy}{"58}

\usepackage{stmaryrd} 
\usepackage{todonotes}

\newcommand{\GSp}{\mathrm{GSp}}

\usepackage{amsrefs}

\newtheorem*{theorem*}{Theorem}
\date{}

\title{On local characterizations of Hida families of Siegel modular forms}
\author{Shaunak V. Deo}
\address{Department of Mathematics, Indian Institute of Science, CV Raman Road, Bengaluru 560012, India}
\email{shaunakdeo@iisc.ac.in}
\author{Bharathwaj Palvannan}
\address{Department of Mathematics, Indian Institute of Science, CV Raman Road, Bengaluru 560012, India}
\email{bharathwaj@iisc.ac.in}


\usepackage[T1]{fontenc} 
\usepackage{libertine}
\usepackage[utf8]{inputenc}
\usepackage{arydshln}
\usepackage{microtype}

\begin{document}

\begin{abstract}

We provide new local characterizations of Hida families of Siegel modular forms with genus two arising from stable Yoshida lifts, that is, automorphic inductions of nearly ordinary Hilbert modular eigenforms over real quadratic fields. Our characterizations involve (i) density of de Rham at $p$ specializations at the singular weights $(k,2)$ and (ii) local decomposability at $p$ of the associated $\Lambda$-adic Galois representation.  These are analogous to the characterizations of Hida families of CM modular forms provided by Ghate--Vatsal.  Our approach is similar to that of Castella--Wang-Erickson who provided an alternate strategy to reproving Ghate--Vatsal's main results by applying Ribet's method when an anti-cyclotomic class group is assumed to be pseudo-null and cyclic as a $\Lambda$-module. Along these lines, one key input to our methods involves an assumption of pseudo-nullity of Selmer groups that are defined by imposing stricter conditions at $p$ than those imposed for the usual Greenberg Selmer groups appearing in the Asai main conjectures over real quadratic fields. 
\end{abstract}

\maketitle

\section{Introduction}
\label{sec:intro}

\subsection{Motivation}
Let $f_\ord$ denote a $p$-ordinary cuspidal eigenform with weight $k \geq 2$ and level $\Gamma_1(N)$. Greenberg has asked whether the following characterization holds:
\begin{align*}
f_\ord \text{ is a CM-form} \stackrel{?}{\iff} \rho_{f_\ord} \mid_{G_{\Q_p}} \text{ is decomposable}.
\end{align*}
Here, $\rho_{f_\ord} \mid_{G_{\Q_p}}$ is  the restriction to the decomposition group at $p$ of the $p$-adic Galois representation $\rho_{f_\ord}$ associated to the eigenform $f_\ord$. We note here that Coleman had also independently asked for a similar characterization of CM forms by instead studying the image of a $\theta$-operator. Ghate--Vatsal studied an analog of this question by considering a $\Lambda$-adic Hida family $F$, instead of a single ordinary cuspidal eigenform $f_\ord$. They proved the following theorem. 

\begin{theorem*}[Ghate--Vatsal \cite{MR2139691}]\label{thm:ghate-vatsal}
Suppose that the following hypotheses hold for the residual Galois representation $\overline{\rho}_F$ associated to the $\Lambda$-adic Hida family $F$:
\begin{enumerate}[label=(\roman*)]
\item The restriction of $\overline{\rho}_F$ to $\Gal{\overline{\Q}}{\Q(\sqrt{(-1)^{{(p-1)}/{2}}p})}$ is absolutely irreducible. 
\item The restriction of $\overline{\rho}_F$ to the decomposition group at $p$ is $p$-distinguished. 
\end{enumerate}
Then, the following statements are equivalent. 
\begin{enumerate}[style=sameline, style=sameline, align=left,label=(gv\arabic*), ref=(gv\arabic*),partopsep=0pt,parsep=0pt]
\setcounter{enumi}{-1}
\item\label{item:CMHida} $F$ is a CM Hida family. 
\item\label{item:infwt1} There are infinitely many classical weight one specializations. 
\item\label{item:localsplit} The restriction, $\rho_F \mid_{G_{\Q_p}}$, of the $\Lambda$-adic Galois representation $\rho_F$ associated to the Hida family $F$ to the decomposition group at $p$ is decomposable. 
\end{enumerate}
\end{theorem*}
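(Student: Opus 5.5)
We establish the cycle of implications (gv0)$\Rightarrow$(gv1), (gv0)$\Rightarrow$(gv2), (gv2)$\Rightarrow$(gv1), and finally (gv1)$\Rightarrow$(gv0).

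\textbf{CM families.} If $F$ is a CM Hida family, then $\rho_F \cong \Ind_{K}^{\Q}\Psi$ for an imaginary quadratic field $K$ in which $p$ splits and a $\Lambda$-adic Hecke character $\Psi$. Since $p$ splits, $G_{\Q_p}\subset G_K$, so Mackey's formula gives $\rho_F|_{G_{\Q_p}}\cong \Psi|_{G_{K_{\mathfrak p}}}\oplus \Psi^c|_{G_{K_{\mathfrak p}}}$. This proves (gv2). Next, specializing $\Psi$ at the arithmetic points where its infinity type becomes trivial gives finite-order characters of $K$. Their inductions are theta series of weight one, and there are infinitely many such points (Hida/Hecke theory of CM families). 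This proves (gv1).

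\textbf{(gv2)$\Rightarrow$(gv1).} Write $\rho_F|_{G_{\Q_p}}\cong \delta\oplus\epsilon$ with $\epsilon$ unramified. Then $\delta$ is $\Lambda$-adic and equals $\det\rho_F\cdot\epsilon^{-1}$, which is $\langle\kappa\rangle\cyc^{k-1}$ up to finite order. At each arithmetic specialization of "weight one" type, meaning a point at which $\det$ has finite order on inertia, the local representation is a sum of two characters that are finite on inertia. Such a representation is Hodge--Tate of weights $(0,0)$, hence de Rham. There are infinitely many such height-one points in $\Spec$ of the Hecke algebra, since they lie over the infinitely many weight-one points of $\Lambda$. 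At each of them the specialization $\rho_x$ is a global representation that is geometric of weight zero. Two further inputs then give classicality:
\begin{itemize}
\item[(a)] Under hypotheses (i) and (ii), $\rho_x$ is residually modular, irreducible, and $p$-distinguished.
\item[(b)] A modularity lifting theorem in weight one (Buzzard--Taylor / Buzzard type) shows $\rho_x$ has finite image and comes from a classical weight one form.
\end{itemize}
Ghate--Vatsal instead exhibit the specialization directly as an overconvergent form of weight one and apply Coleman/Buzzard classicality. Together these give infinitely many classical weight one specializations.

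\textbf{(gv1)$\Rightarrow$(gv0).} This is the main obstacle. Classical weight one forms have finite image, so their Galois representations are dihedral, tetrahedral, octahedral, or icosahedral. Only finitely many exceptional (non-dihedral) projective images occur with bounded conductor, because the tame level is fixed and the $p$-part is controlled by nebentypus growth. Hence infinitely many specializations are dihedral, each induced from some quadratic field $K_x$. Since the conductors are bounded away from $p$, only finitely many quadratic fields arise, so one field $K$ occurs infinitely often.

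Ordinarity and $p$-distinguishedness at $p$ should force $K$ to be imaginary with $p$ split. Real quadratic $K$ is excluded because the ordinary filtration would make the representation decompose into characters of different Frobenius eigenvalue, which is incompatible with the unit structure. Ramification of $p$ in $K$ is excluded by hypothesis (i), which rules out $K=\Q(\sqrt{(-1)^{(p-1)/2}p})$.

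The infinitely many specializations with $\rho_x\cong\rho_x\otimes\chi_K$ then force $\rho_F\cong\rho_F\otimes\chi_K$. This holds because traces agree on a Zariski-dense set: an element of the finite Hecke algebra over $\Lambda$ vanishing at infinitely many height-one primes of a component vanishes on that component. By hypothesis (i), $\rho_F$ is residually absolutely irreducible, so $\rho_F\cong\Ind_K^{\Q}\Psi$ for a $\Lambda$-adic character $\Psi$, and $F$ is CM.

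The delicate points are the finiteness of exceptional images and the exclusion of real quadratic and ramified $K$. For these I would follow Ghate--Vatsal's argument using $p$-distinguishedness and the Frobenius eigenvalue at $p$.
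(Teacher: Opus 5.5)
Your outline is the Ghate--Vatsal argument, and it agrees with what the paper records about it. The paper does not reprove this theorem. It only notes that Ghate--Vatsal's proof rests on two inputs: Galois representations of classical weight one forms are Artin representations, and Buzzard's modularity lifting theorem. These are exactly the inputs you use for (gv1)$\Rightarrow$(gv0) and (gv2)$\Rightarrow$(gv1). The structure is sound, but three of the justifications you give at the steps you flag as delicate are wrong or empty and need to be replaced.

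\emph{Exceptional images.} These are not eliminated by finiteness for bounded conductor, because along the weight one points the $p$-part of the conductor is unbounded. The correct argument uses ordinarity: it makes $\rho_x|_{I_p}$ upper triangular with diagonal characters $\det\rho_x|_{I_p}$ and $1$. Take an element of $I_p$ on which the $p$-power part of the nebentypus has order $p^r$. Its image in the projective image has order divisible by $p^r$. Since $A_4$, $S_4$, $A_5$ have no elements of order greater than $5$, only the finitely many points with small $p$-power nebentypus can be exceptional.

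\emph{Role of hypothesis (i).} Hypothesis (i) excludes only the single field $\Q(\sqrt{(-1)^{(p-1)/2}p})$, not every quadratic field ramified at $p$. Its real job is as the Taylor--Wiles hypothesis in Buzzard's theorem. You do not need to exclude ramified or inert $p$ at all. Once $\rho_F\cong\rho_F\otimes\chi_K$, all that matters is that $K$ is imaginary, and splitting of $p$ then follows from ordinarity.

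\emph{Real quadratic $K$.} Excluding real quadratic $K$ needs an actual argument. It cannot come from ordinarity and $p$-distinguishedness at a single weight one point, since ordinary weight one forms with real multiplication do occur there. You have two options.
\begin{enumerate}
\item Use a weight $k\ge 2$ specialization of $\rho_F\cong\rho_F\otimes\chi_K$. A Hodge--Tate character of a real quadratic field has parallel infinity type, because of the unit of infinite order. The induction would then have equal Hodge--Tate weights rather than $0$ and $k-1$.
\item Use the growth of ramification at the weight one points. A finite order character of a real quadratic field that is unramified at one prime above $p$ has bounded conductor at the other. This is because a fundamental unit generates an open subgroup of $\Z_p^\times$ there.
\end{enumerate}
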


The method of Ghate--Vatsal crucially uses two arithmetic inputs. The first arithmetic input is the fact that  Galois representations associated to classical weight one eigenforms are  Artin representations. The second arithmetic input is a modularity lifting theorem of Buzzard \cite{MR1937198}. \\

Hida families corresponding to CM components in the $\Gl_2/_{\Q}$-setting have  other interesting characterizations. Hida \cite{MR3217764} has provided one such characterization using transcendentality properties of the ``big'' ordinary Hecke algebra. Stubley \cite{MR4596528} has, using this work of Hida, reproved the implication \ref{item:CMHida} $\iff$ \ref{item:infwt1}  without using the fact that Galois representations attached to classical weight one forms have finite image and without using the Ramanujan conjecture. See also related works in \cite[Appendix]{MR4397038} and \cite{MR4120067}. The results of Ghate--Vatsal also have interesting generalizations to Hida families of Hilbert modular forms. See the works of Balasubramanyam--Ghate--Vatsal \cite{MR3117174} and Hida \cite{MR3037789}.  Suppose that the two dimensional global residual representation $\overline{\rho}_F$ is induced from a finite continuous character of the absolute Galois group of an imaginary quadratic field $K$. In this slightly simplified setup, Castella--Wang-Erickson \cite{MR4397038} study the implication \ref{item:CMHida} $\iff$ \ref{item:localsplit} of the theorem of Ghate--Vatsal. Their method however involves two entirely different arithmetic inputs. Their first arithmetic input is Ribet's method that arises in the study of congruences between modular forms. Their second arithmetic input involves an assumption of pseudonullity and cyclicity of an anti-cyclotomic class group associated to the imaginary quadratic field $K$. See \cite[Theorem 1.4.1]{MR4397038} and \cite[Section 9]{castella2024critical}. \\ 

To the best of our knowledge however, local characterizations of Hida families arising from automorphic inductions haven't been studied beyond $\mathrm{GL}_2$. \\

In this article, we study an analogous question for $p$-adic families of Siegel modular forms. The role played by CM Hida families is now replaced by Hida families of automorphic inductions (stable Yoshida lifts). However, the approach  of Ghate--Vatsal is not readily available in this setting as we now explain.  The role of singular specializations played by weight one specializations for $p$-adic families of modular forms is now taken up by weight $(k,2)$ specializations in the context of $p$-adic families of Siegel modular forms, since in both these cases the infinity components of their corresponding automorphic representations are limits of discrete series representations (see \cite[Introduction]{MR4105535} and \cite[Section 6]{MR3966765}).   Therefore, the analog of the first arithmetic input used by Ghate--Vatsal is not available to us since the Galois representations associated to classical weight $(k,2)$ specializations are not Artin representations. Similarly, despite the availability of strong modularity results of Calegari--Geraghty \cite{MR4079417} (which one can view as the analog of modularity results of Buzzard--Taylor \cite{MR1709306}) related to weight $(k,2)$ specializations, the analog of the modularity results of Buzzard \cite{MR1937198} is not available to us. See also \cite[Section 6.2, Page 313]{MR3966765}. Therefore, the analog of the second arithmetic input of Ghate--Vatsal is also not available to us in the required level of generality to carry out the program in the setting of $\GSp_4$ Hida families. It is for these reasons that we choose our arithmetic inputs to be in line with those of  Castella--Wang-Erickson \cite{MR4397038}. \\

In Theorem \ref{thm:nomore}, we prove a result analogous to Ghate--Vatsal \cite{MR2139691}, but under hypotheses similar to those of Castella--Wang-Erickson providing characterizations of Hida families of stable Yoshida lifts, in terms of 
\begin{enumerate}[leftmargin=1.5em,topsep=2.2pt]
    \item density of weight $(k,2)$ specializations whose local $p$-adic Galois representations are de Rham, and
    \item decomposability of the local Galois representation.  
\end{enumerate}  We prove a result similar to the equivalence \ref{item:CMHida} $\iff$ \ref{item:infwt1}, which is not available in \cite{MR4397038}.  By means of a ring-theoretic example in Section \ref{sec:limitation}, we highlight that without a relevant cyclicity hypothesis on Selmer groups, it may not be possible to establish a clean characterization as obtained by Ghate--Vatsal \cite{MR2139691} only using Ribet's methods and pseudo-nullity hypotheses on Selmer groups. Under weaker hypotheses (in particular, without the relevant cyclicity hypothesis), we prove in Theorem \ref{thm:notisos} that not all $\GSp_4$ Hida families satisfy the local characterizations given above, providing partial evidence towards them. \\

In order to employ Ribet's method \cite{MR419403} as enhanced by Bella\"iche--Chenevier \cite{MR2656025}, we need to overcome various obstacles that were not present in the $\Gl_2$ setting. We  explain the key ingredients needed to establish Theorems \ref{thm:notisos} and \ref{thm:nomore}, highlighting the differences from \cite{MR4397038} along the way. We use the minimal $R=\mathbb{T}$ theorem ---  in the spirit of Genestier--Tilouine \cite{MR2234862} and Pilloni \cite{MR2920881} --- from a companion work \cite{DPmin} which in turn establishes the freeness of the $\GSp_4$ ordinary Hecke algebra over the Iwasawa algebra with two variables (corresponding to the weight variables).  Along with Geraghty's ordinary modularity lifting theorem \cite{MR3953131}, the freeness assertion is crucially needed to establish Theorem \ref{thm:notisos}. Unlike in the $\Gl_2$-setting, the freeness assertion is not known in greater generality in our setting. See \cite[Remark 1.1]{DPmin}. \\

In Theorem \ref{thm:nomore}, we need to compare the minimal $\GSp_4$ deformation ring appearing in \cite{DPmin} with a certain quotient of a nearly ordinary $\Gl_2$ deformation ring over a real quadratic field. A similar comparison is crucially used in \cite{MR4397038} where they compare a certain  $\Gl_2$ deformation ring with a certain deformation ring of a character over an imaginary quadratic field. Under their hypotheses, the latter ring  turns out to be isomorphic to an Iwasawa algebra in one variable and, in fact, isomorphic to the CM component of the ordinary Hecke algebra. We however do not impose an assumption of regularity on the nearly ordinary $\Gl_2$ deformation ring. We instead use a structure theorem of B\"ockle \cite{MR2392352} for nearly ordinary $\Gl_2$ deformation rings, along with the assertion of $R=\mathbb{T}$ from \cite[Theorem 1]{DPmin}, to  deduce Theorem \ref{thm:nomore}. As a result, our proof of Theorem \ref{thm:nomore} is significantly different from the proof of \cite[Theorem 1.4.1]{MR4397038}.  We now describe our results in more detail. 

\subsection{Setting: $p$-adic families of nearly ordinary Hilbert modular forms over real quadratic fields}\label{subsec:settinghilbert}

Let $p\geq 5$ denote a prime. Let $K$ be a real quadratic field where the prime $p$ splits as $\p_1\p_2$. Let $O_K$ denote its ring of integers. We fix embeddings  $\iota : \overline{\Q} \hookrightarrow \C$ and $\iota_p : \overline{\Q} \hookrightarrow \overline{\Q_p}$. There are two embeddings $\sigma_1, \sigma_2: K \hookrightarrow \overline{\Q}$. The prime $\p_1$ is determined by $\iota_p \circ \sigma_1$. We choose the ordering of the weights of Hilbert modular forms over $K$ with respect to $\iota \circ \sigma_1$ and $\iota \circ \sigma_2$ respectively. Let $\mathfrak{M}$ be an ideal of the ring of integers of $K$ co-prime to $p$. Let $S$ denote the finite set of primes of $\Q$ consisting of the primes dividing the norm of $\mathfrak{M}$, the primes ramified in the extension $K/\Q$ along with $p$ and $\infty$. Let $\Q_S$ denote the maximal extension of $\Q$ unramified outside of $S$. Let $G_{\Q,S}$ denote $\Gal{\Q_S}{\Q}$. By abuse of notation, we continue to denote by $S$ the set of primes of $K$ lying above $S$. Let $G_{K,S}$ denote the subgroup $\Gal{\Q_S}{K}.$ \\

We now  provide an overview of the main arithmetic objects over $K$, with more details provided in Section \ref{subsec:hmf}. Let $g_0$ be a nearly-ordinary cuspidal Hilbert modular newform, defined over $K$, with regular weight $(\kappa_1,\kappa_2)$, trivial nebentypus and level $\mathfrak{M}$. We further assume that $g_0$ is ordinary at $\p_1$ and nearly ordinary at $\p_2$. We suppose that $\kappa_1 \equiv \kappa_2 \ (\mathrm{mod} \ 2)$ with $\kappa_1 \geq\kappa_2 \geq 2$. Let $\GGG$ denote a ``\textit{$2$-variable} Hida family'' passing through $g_0$ with tame level $\mathfrak{M}$ that is \textit{ordinary} at $\p_1$ and nearly ordinary at $\p_2$.  Let $\TTT$ denote the local component of the ordinary $\Gl_2$-Hecke algebra defined over $K$, with trivial Nebentypus, (obtained as an appropriate quotient of the \textit{$3$-variable nearly ordinary} Hecke algebra constructed in \cite{MR1097614}) passing through $\GGG$. The Hecke algebra $\TTT$ is a local ring and is generated by the Hecke operators $T_{\mathfrak{q}}$ and $S_\q$ for all primes $\mathfrak{q}$ co-prime to $\mathfrak{M}p$, along with the Hecke operators $U_{\p_1}$ and $U_{\p_2}$. The ring  $\TTT$ is a finite integral extension of $\Z_p\llbracket x_1,x_2\rrbracket$, where 
 $x_1$ and $x_2$  correspond to the \textit{weight} variables. 
 
 We let $\tau:G_{K,S} \rightarrow \Gl_2(\overline{\Q}_p)$ denote the $p$-adic Galois representation associated to $g_0$. Denote by $\tau^c$, the conjugate Galois representation of $\tau$ coming from a non-trivial outer automorphism of $G_{K,S}$ induced by $\Gal{K}{\Q}$.

\begin{setup}\label{setting:all} Let $\mathbb{F}$ be a finite field with characteristic $p$ over which the residual representation $\bar\tau:G_{K,S} \rightarrow \Gl_2(\mathbb{F})$ associated to $g_0$ is defined. We impose the following hypotheses (along with those stated earlier) throughout the paper. These are standard hypotheses that are usually imposed to make the minimal deformation problem tractable. The rationale behind some of these hypotheses are explained in our companion work \cite[Section 1.4]{DPmin} as they allow us to establish a minimal $R=\mathbb{T}$ theorem. The additional hypothesis --- \ref{hyp:resindec} --- in this paper is imposed so that the local Galois representations at $p$ corresponding to the  specializations in $S^{\dec}$ are decomposable in a unique fashion. 

 \begin{enumerate}[style=sameline, style=sameline, align=left,label=(\scshape{Ind}), ref=\scshape{Ind},partopsep=0pt,parsep=0pt]
  \item\label{lab:ind} The residual representation $\bar\tau$  is absolutely irreducible. Furthermore, the induced representation $\Ind^\Q_K(\bar\tau)$, which we denote by $\bar\rho$, is an absolutely irreducible residual $G_{\Q,S}$-representation.
\end{enumerate}

\begin{enumerate}[style=sameline, style=sameline, align=left,label=(\scshape{$p$-dist}), ref=\scshape{$p$-dist},partopsep=0pt,parsep=0pt]
  \item\label{lab:pdist}  The four characters appearing in the semi-simplification of $\bar{\rho} \mid_{G_{\Q_p}}$ are distinct. 
\end{enumerate}

\begin{enumerate}[style=sameline, style=sameline, align=left,label=(\scshape{squarefree}), ref=(\scshape{squarefree}),partopsep=0pt,parsep=0pt]
    \item\label{hyp:squarefree} The ideal $\mathfrak{M}$ is generated by a squarefree integer $M$ such that $(M,p\Delta_K)=1$, where $\Delta_K$ is the discriminant of $K$.
\end{enumerate}

\begin{enumerate}[style=sameline, style=sameline, align=left,label=(\scshape{optimallevel}), ref=(\scshape{optimallevel}),partopsep=0pt,parsep=0pt]
    \item\label{hyp:optimallevel} The (tame) Artin conductor of $\bar\tau$ equals $\mathfrak{M}$.
\end{enumerate}

\begin{enumerate}[style=sameline, style=sameline, align=left,label=(\scshape{bigimage}), ref=(\scshape{bigimage}),partopsep=0pt,parsep=0pt]
   \item\label{hyp:bigimage} $\mathrm{SL}_2(\mathbb{F}_p) \times \mathrm{SL}_2(\mathbb{F}_p) \subset \text{Im}(\bar\tau \oplus \bar\tau^c)$.
\end{enumerate}

\begin{enumerate}[style=sameline, style=sameline, align=left,label=(\scshape{cong}), ref=(\scshape{cong}),partopsep=0pt,parsep=0pt]
    \item\label{hyp:congruence1} If $\ell$ divides $\mathfrak{M}$, then $p$ does not divide $(\ell^4-1)$. 
\end{enumerate}
\begin{enumerate}[style=sameline, style=sameline, align=left,label=(\scshape{cong-ram}), ref=(\scshape{cong-ram}),partopsep=0pt,parsep=0pt]
    \item\label{hyp:congruence2}  If $\ell$ ramifies in $K$  and $\lambda_\ell$ denotes the unique prime of $K$ lying above $\ell$, then the natural image of $(\ell^4-1)(a_{\lambda_{\ell}}(g_0)^2-(\ell+1)^2\ell^{\kappa_1-2})$ is not $0$ in $\overline{\mathbb{F}}_p$. Here, $a_{\lambda_\ell}(g_0)$ is the $T_\lambda$ eigenvalue of $g_0$.
\end{enumerate}
\begin{enumerate}[style=sameline, style=sameline, align=left,label=(\scshape{resindec}), ref=(\scshape{resindec}),partopsep=0pt,parsep=0pt]
    \item\label{hyp:resindec} The restrictions of residual representations $\bar\tau|_{G_{K_{\p_1}}}$ and $\bar\tau|_{G_{K_{\p_2}}}$ are indecomposable.
\end{enumerate}

\end{setup}

\subsection{Setting: Yoshida lifts and Hida theory for $\GSp_4$}

Let $f_0$ denote the stable Yoshida lift of $g_0$, defined over $\Q$. Note that under our assumptions, $f_0$ turns out to be a $p$-ordinary cuspidal Siegel eigenform of weight $(k_1,k_2)$ with level $\Gamma_0^{(2)}(M\Delta_K)$ (see \cite{MR3858470}) where
\begin{align} \label{formula:weights}
    k_1 = \dfrac{\kappa_1+\kappa_2}{2} \geq  k_2= \dfrac{\kappa_1-\kappa_2}{2}+2.
\end{align}

 As in the $\Gl_2$-case, let $\TT$ be the local component of the ordinary $\GSp_4$-Hecke algebra passing through $f_0$ as considered by Hida \cite{MR1954939} and Pilloni \cite{MR3059119} with tame level $\Gamma_0^{(2)}(M\Delta_K)$. This Hecke algebra is a local ring and is generated by the Hecke operators $T_{\ell,0}$, $T_{\ell,1}$ and $T_{\ell,2}$ for all primes $\ell \nmid pM\Delta_K$, along with the Hecke operators $U_{p,1}$ and $U_{p,2}$. The ring $\TT$ is a finite integral extension of $\Lambda=\Z_p\llbracket y_1,y_2\rrbracket$,  where $y_1$ and $y_2$ denote the \textit{weight} variables.

\subsection{Main Results}
\label{subsec:mainresults}

Our starting point is to establish the \textit{existence} (and uniqueness) of $p$-adic families of stable Yoshida lifts. 

\begin{restatable}{Theorem}{theoremtwo}  \label{thm:yoshidafamily}
Under Set-up \ref{setting:all}, there exists a $p$-adic family $\FF$ of stable Yoshida lifts passing through $f_0$. Furthermore, if $k_1>k_2\gg 0$, then $\FF$ is the unique Hida family passing through $f_0$. 
\end{restatable}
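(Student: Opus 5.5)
Existence will come from interpolating the Yoshida lift along the Hilbert family and then using $R=\mathbb{T}$ to produce Hecke eigensystems. Uniqueness will come from showing that, in the relevant weight range, the minimal Hecke algebra $\TT$ has a single irreducible component.

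\textbf{Existence.} Let $\GGG$ be the $2$-variable Hida family through $g_0$. It is ordinary at $\p_1$ and nearly ordinary at $\p_2$, with trivial nebentypus, and we take its coefficient ring $\mathbb{I}$ to be an irreducible component of $\TTT$. It carries a Galois representation $\tau_{\GGG}:G_{K,S}\to \Gl_2(\Frac(\mathbb{I}))$, which is residually $\bar\tau$ and hence has a lattice over $\mathbb{I}$ by (\scshape{Ind}).
\begin{enumerate}
\item Form $\rho_{\FF}:=\Ind_K^\Q(\tau_{\GGG})$, a $4$-dimensional representation of $G_{\Q,S}$ with values in $\mathbb{I}$ that deforms $\bar\rho$.
\item Check that $\rho_{\FF}$ is symplectic with multiplier $\det\tau_{\GGG}$ restricted suitably. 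This uses trivial nebentypus and $\kappa_1\equiv\kappa_2 \pmod 2$, which make $\det\tau$ extend to $G_{\Q}$.
\item Check that $\rho_{\FF}|_{G_{\Q_p}}\cong \tau_{\GGG}|_{G_{K_{\p_1}}}\oplus \tau_{\GGG}|_{G_{K_{\p_2}}}$ is a sum of two ordinary pieces. Since $p$ splits, $\Q_p=K_{\p_i}$, and this gives a Borel-ordinary (Siegel–Klingen) filtration. By (\scshape{$p$-dist}) and the weight relation (\ref{formula:weights}), the Hodge–Tate weights of this filtration are those of weight $(k_1,k_2)$.
\item Check that the local conditions away from $p$ are minimal, using (\scshape{optimallevel}) and (\scshape{squarefree}).
\end{enumerate}
Together these show that $\rho_{\FF}$ defines an $\mathbb{I}$-point of the minimal ordinary $\GSp_4$ deformation ring $R$ of $\bar\rho$. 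The map $\Lambda\to\mathbb{I}$ corresponds to $(\kappa_1,\kappa_2)\mapsto(k_1,k_2)$, which is finite.

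The minimal $R=\mathbb{T}$ theorem of \cite{DPmin} identifies $R$ with $\TT$. So we get a $\Lambda$-algebra map $\TT\to\mathbb{I}$ whose image is finite over $\Lambda$, hence of dimension $3$. Its kernel is therefore a minimal prime of $\TT$ (by freeness over $\Lambda$), i.e. an irreducible component, and this component is $\FF$. Its arithmetic specializations at classical weights are the Yoshida lifts of the classical specializations of $\GGG$. They are stable, since the Galois representations are induced and irreducible by (\scshape{bigimage}) restricted to the specialization. The specialization at $g_0$ gives $f_0$.

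\textbf{Uniqueness.} Let $\FF'$ be any Hida family (component of $\TT$) through $f_0$, i.e. through the height-$3$ prime $P_{f_0}$. Since $\TT$ is finite free over $\Lambda$, it suffices to show that $\TT_{P_{f_0}}$ is a domain, or that $\TT$ is étale over $\Lambda$ at $P_{f_0}$. I would do this by showing that the localized deformation ring is formally smooth at $\rho_{f_0}$. This amounts to the vanishing of the tangent space of the ordinary deformation functor of $\rho_{f_0}$ over $\Lambda_{P}$, namely of an adjoint Selmer group
\[
H^1_{\ord}\bigl(\Q,\mathrm{ad}^0\rho_{f_0}\bigr).
\]

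Write $\mathrm{ad}\rho_{f_0}=\mathrm{Ind}_K^\Q(\mathrm{ad}\tau)\oplus\mathrm{Ind}_K^\Q(\tau\otimes\tau^{c,\vee})$ and use Shapiro's lemma. The first summand is controlled by the known vanishing of the adjoint Selmer group of the Hilbert form $g_0$ (Hida, Fujiwara, Dimitrov). This accounts for the two weight directions, which are exactly absorbed by $\Lambda$. The second summand is the Asai/Rankin–Selberg piece, and here I would use $k_1>k_2\gg 0$. For large regular weight, the local ordinary condition at $p$ forces the $\tau\otimes\tau^{c,\vee}$ part to have no crystalline or ordinary classes, because the relevant graded pieces have nonzero Hodge–Tate weights. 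A Bloch–Kato-type argument then kills this part, in the spirit of Urban or Genestier–Tilouine. The main obstacle is this vanishing of the "non-endoscopic" part of the Selmer group: a priori it could be nonzero even for large weight, and I would first try to deduce it from the known $p$-adic regularity of Asai classes in regular weight.

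Once étaleness at $P_{f_0}$ holds, $\TT_{P_{f_0}}$ is a regular local ring, hence a domain. Therefore exactly one minimal prime lies below $P_{f_0}$, and $\FF'=\FF$.
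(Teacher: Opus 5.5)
Existence is fine, by a different route from the paper's. The uniqueness half, however, has a genuine gap.

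\textbf{Uniqueness.} You reduce everything to the vanishing of the Asai-type piece $H^1_f(K,V)$, $V=\tau_{f_0}\otimes\tau_{f_0}^{c,\vee}$, of the fixed-weight ordinary adjoint Selmer group. The reason you give, that the graded pieces have nonzero Hodge--Tate weights, is purely local.
\begin{itemize}
\item It only determines the local condition at $p$: on this piece the ordinary condition is the Panchishkin condition attached to the two positive weights $k_1-1$ and $k_2-2$.
\item Even locally your claim fails: $H^1_f(K_{\p_i},V)$ is $2$-dimensional, not zero.
\item Globally nothing forces vanishing, because the Greenberg--Wiles Euler characteristic is balanced. At each of $\p_1,\p_2$ the weights are $\pm(k_1-1),\pm(k_2-2)$, contributing $2+2$. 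At each of the two real places complex conjugation has a $2$-dimensional $(+1)$-eigenspace on $V$ (since $\tau,\tau^c$ are totally odd), contributing $-2-2$. Finally $H^0(K,V)=H^0(K,V^*(1))=0$. Hence
\[
\dim H^1_f(K,V)-\dim H^1_f(K,V^*(1))=0 .
\]
\end{itemize}
So the vanishing is a genuinely global statement of Bloch--Kato type. It would need an Euler-system bound (Asai--Flach classes) or a characteristic-zero $R=\mathbb{T}$ / adjoint-Selmer vanishing theorem, plus a check of its hypotheses under Set-up \ref{setting:all}. Your proposal supplies neither, as you acknowledge.

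Two further signs that the argument misses the real input:
\begin{itemize}
\item The hypothesis $k_2\gg0$ plays no role in your reasoning; the weights are already nonzero once $k_2\ge 3$.
\item You would be proving more than needed, namely \'etaleness rather than just that $\TT_{P_{f_0}}$ is a domain.
\end{itemize}
The paper itself treats the fixed-weight Bloch--Kato groups underlying $\Sel_{\mathrm{ord}}(K,\DD)$ (essentially your group, up to conjugation) as a separate problem, pointing to \cite{grossi2025asaiflachclassespadiclfunctions} for progress on it. It does not argue this way at all: uniqueness is deduced directly from \cite[Theorem 2]{DPmin}. (Minor point: $P_{f_0}$ has height $2$, not $3$, since $\TT/P_{f_0}$ is finite over $\Z_p$.)

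\textbf{Existence.} Your argument is sound but takes a different route from the paper. You induce $\tau_{\GGG}$, verify the minimal ordinary $\GSp_4$ deformation conditions, and invoke $R=\mathbb{T}$ \cite[Theorem 1]{DPmin} to get $\TT\to\mathbb{I}$. The paper does the same kind of verification for $\tau^\circ_{\ord}$ in the proof of Claim \ref{claim:BradicalJ}.

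The paper's proof of this theorem never uses $R=\mathbb{T}$:
\begin{itemize}
\item At each classical point $\mathfrak{P}$ of $\TTT$ it forms the classical stable Yoshida lift (Proposition \ref{thm:yoshidarepspace}) and obtains $\TT\to\TTT/\mathfrak{P}$ from Theorem \ref{thm:verticalcontrol}.
\item It glues these into $\phi:\TT\to\TTT$ using the explicit Hecke formulas of Lemma \ref{lemma:gspgl2cuspformassignment} and the density of classical points.
\item A dimension count and going-up then produce a minimal prime $\eta$ with $\ker\phi\subset\eta\subset\ker\phi_{f_0}$.
\end{itemize}
The trade-off: the paper's route needs only the classical lift and Hida--Pilloni control. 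Yours is more conceptual and maps directly to $\mathbb{I}$, but it relies on the full minimal $R=\mathbb{T}$ theorem. That includes its $\Lambda$-linearity and the identification of $U_{p,1},U_{p,2}$ with Frobenius values of the ordinary characters, both of which you need to match specializations with ($p$-stabilized) Yoshida lifts.
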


Note that $\TT$ is a reduced equidimensional (with Krull dimension $3$) local ring; see \cite[Proposition 2.5]{hsieh_yoshida}. We have the following decomposition of $\Lambda$-algebras:
\begin{align} \label{eq:Tdecompreducedring}
\TT \otimes_{\Lambda} \Frac(\Lambda) \cong \underbrace{\prod_{i=1}^t (\TT)_{\eta_i}}_{\substack{\text{corresponding to }\\ \text{$p$-adic families} \\ \text{ of automorphic inductions}}} \times \underbrace{\prod_{j=t+1}^s (\TT)_{\eta_j}}_{{\substack{\text{not corresponding to}\\ \text{$p$-adic families} \\ \text{of automorphic inductions}}}}
\end{align}

Here, $\eta_i$'s are the minimal prime ideals of $\TT$. We let $\TT^{\Yos}$ denote $\displaystyle \mathrm{Image}\left(\pi_\Yos:\TT \rightarrow \prod_{i=1}^t (\TT)_{\eta_i}\right)$. Note that $\TT^{\Yos}$ is a torsion-free $\Lambda$-module. The construction of stable Yoshida lifts allows us to immediately make the following observations:
\begin{enumerate}[label=(\scshape{O}\arabic*), ref=(\scshape{O}\arabic*),partopsep=0pt,parsep=0pt,leftmargin=0pt]
    \item\label{obs:cla} The $p$-adic Galois representation associated to the Hecke eigensystem of every weight $(k,2)$ specialization of the $\GSp_4$ Hida family $\FF$ is de Rham at $p$, whenever $k> 2$. See Remark \ref{rem:yoshidafamilyderhamk2}.
    \item\label{obs:dec} The restriction to $G_{\Q_p}$ of the Galois representation associated to $\FF$ is decomposable. See Remark \ref{rem:yoshidafamilyisdecomposableatp}.
\end{enumerate} 

Motivated by the analogy with CM Hida families for modular forms, we may ask whether these properties characterize the Hida family of stable Yoshida lifts. This is our main motivation towards Theorems \ref{thm:notisos} and \ref{thm:nomore}. These results form our analog of the results of Ghate--Vatsal and Castella--Wang-Erickson.  \\ 

We let $S^{\cla}$ denote the set of  prime ideals $\mathfrak{P}$ of $\TT$ with weight $(k,2)$ such that the $p$-adic Galois representation for the specialization $\mathfrak{P}$ is de Rham at $p$. For the sake of brevity, we call such weight $(k,2)$ specializations \textit{de Rham at $p$}. It is expected that specializations of $\TT$ with weight $(k,2)$ are classical if and only if they are de Rham at $p$. See \cite[Introduction]{MR4105535} for a discussion surrounding this circle of ideas.  \\

Similarly, we let $S^{\dec}$ denote the set of  prime ideals $\p$ of $\TT$ corresponding to cohomological specializations $h$, such that the restriction to $G_{\Q_p}$ of the Galois representation associated to $h$ is decomposable. 
To precisely formulate our results, we need to consider 
\begin{align*}
    \TT^{\cla} &\coloneqq \mathrm{Image}\left(\TT \rightarrow \prod \limits_{\p \in S^{\cla}} \dfrac{\TT}{\p}\right), \\  \qquad \TT^{\dec} &\coloneqq \mathrm{Image}\left(\TT \rightarrow \prod \limits_{\p \in S^{\dec}} \dfrac{\TT}{\p}\right).
\end{align*}

Observations \ref{obs:cla} and \ref{obs:dec} can be reformulated by saying that we have the following ring surjections: 
\begin{align}\label{eq:natsurj}\TT \twoheadrightarrow \TT^{\cla} \twoheadrightarrow \TT^{\Yos}, \qquad \TT \twoheadrightarrow \TT^{\dec} \twoheadrightarrow \TT^{\Yos}.
\end{align}

Consequently, the question of characterizing the irreducible components of $\TT$, in terms of the properties stated in observations \ref{obs:cla} and \ref{obs:dec}, can  also be reformulated using the surjective maps in equation (\ref{eq:natsurj}). See \ref{lab:GV1} and \ref{lab:GV2}. To answer these questions however, we need to consider the following pseudonullity hypotheses, as in the work of Castella--Wang-Erickson \cite[Theorem 1.4.1]{MR4397038}:

  \begin{enumerate}[style=sameline, style=sameline, align=left,label=(\scshape{Int-PN}), ref=\scshape{Int-PN},partopsep=0pt,parsep=0pt]
  \item\label{lab:intPN}  $\Sel_{\Int}(K,\DD)^\vee$ is a pseudo-null $\Lambda$-module. 
\end{enumerate}

 \begin{enumerate}[style=sameline, style=sameline, align=left,label=(\scshape{FinPN}), ref=\scshape{Fin-PN},partopsep=0pt,parsep=0pt]
  \item\label{lab:finPN}  $\Sel_{\mathrm{ur},p}(K,\DD)^\vee$ is a pseudo-null $\Lambda$-module. 
\end{enumerate}

 $\Sel_{\Int}(K,\DD)$ and $\Sel_{\mathrm{ur},p}(K,\DD)$ are both discrete Selmer groups associated to the discrete Galois module $\DD$, following Greenberg's recipe \cite{MR2290593,MR2740696,MR3629652}. The precise definitions of these objects will be recalled later in Section \ref{subsec:selmer}; here we note the following sequence of inclusions of discrete $\TT^\Yos$-modules:
\begin{align} \label{eq:containmentofselmer}
\Sel_{\mathrm{ur},p}(K,\DD) \subset \Sel_{\Int}(K,\DD) \subset \Sel_{\mathrm{ord}}(K,\DD).
\end{align}
Here, $\Sel_{\mathrm{ord}}(K,\DD)$ is the usual Greenberg primitive Selmer group that appears in the two-variable Asai main conjectures. To deduce that $\Sel_{\mathrm{ord}}(K,\DD)^\vee$ is $\Lambda$-torsion, one may use standard arguments using control theorems to reduce the torsion-ness assertion to the vanishing of the corresponding Bloch--Kato Selmer groups at fixed weights. For progress on the latter, we refer the reader to \cite{grossi2025asaiflachclassespadiclfunctions}.

\begin{remark}
The Pontryagin dual of $\Sel_{\mathrm{ord}}(K,\DD)$ is expected to be supported in codimension one, as the \textit{Asai} main conjecture over real quadratic fields would predict that its divisor is generated by the two-variable $p$-adic $L$-function. The usual Greenberg primitive Selmer group consists of global cohomology classes that are unramified at all primes $l \neq p$ and satisfy a ``Panchishkin condition'' at $p$. On the other hand, the primitive Selmer groups $\Sel_{\mathrm{ur},p}(K,\DD)$ and $\Sel_{\Int}(K,\DD)$ consist of global cohomology classes that are unramified at all primes $l \neq p$ but with stricter conditions at $p$. It thus appears likely that the Pontryagin duals $\Sel_{\mathrm{ur},p}(K,\DD)^\vee$ and $\Sel_{\Int}(K,\DD)^\vee$ are supported in codimension greater than one. While these expectations are motivated by Greenberg's pseudonullity conjectures \cite[Conjecture 3.5]{MR1846466} (see also \cite[Section 1.3]{Lei_2020}), our setting is different since we do not include the cyclotomic $\Z_p$-extension. 

We also note here that we use the hypotheses \ref{hyp:congruence1} and \ref{hyp:congruence2} along with Ribet's method to produce elements in \textit{primitive} Selmer groups (that appear in the formulation of the pseudonullity conjectures) as opposed to producing elements only in \textit{non-primitive} Selmer groups (which is usually sufficient for proving main conjectures). 
\end{remark}

Under these pseudo-nullity hypotheses, we show that if there is at least one irreducible component of $\TT$ that does not correspond to a $p$-adic family of stable Yoshida lifts, then at least one of the irreducible components of $\TT$ does not satisfy the properties stated in observations \ref{obs:cla} and \ref{obs:dec}. More precisely, we have the following theorem:

\begin{restatable}{Theorem}{theoremthree} \label{thm:notisos}
Under Set-up \ref{setting:all}, suppose that the natural surjection
\[\pi_\Yos:\mathcal{T} \twoheadrightarrow \mathcal{T}^{\Yos}\]
is not an isomorphism. That is, suppose there exists a $\GSp_4$ Hida family lifting $\bar\rho$ that does not correspond to a $p$-adic family of stable Yoshida lifts. 

\begin{enumerate}
  \item If the hypothesis \ref{lab:intPN} holds, then the natural surjection \[\pi_{\cla}: \mathcal{T} \twoheadrightarrow \mathcal{T}^{\cla}\] is not an isomorphism.  In other words, there exists a $\GSp_4$ Hida family lifting $\bar\rho$ for which the set of specializations with weight $(k,2)$, that are de Rham at $p$, is not dense.  
    \item If the hypothesis \ref{lab:finPN} holds, then the natural surjection \[\pi_{\dec}:\mathcal{T} \twoheadrightarrow \mathcal{T}^{\dec}\] is not an isomorphism. In other words, there exists a $\GSp_4$ Hida family lifting $\bar\rho$ whose associated Galois representation is not decomposable at $p$. 
\end{enumerate}
\end{restatable}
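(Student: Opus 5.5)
We argue by contradiction in both parts, using Ribet's method in the form of Bellaïche--Chenevier. Suppose in part (1) that $\pi_{\cla}$ is an isomorphism, so that $\TT \twoheadrightarrow \TT^{\Yos}$ factors as $\TT \cong \TT^{\cla} \twoheadrightarrow \TT^{\Yos}$. Since $\pi_\Yos$ is not an isomorphism, its kernel $I \coloneqq \ker(\pi_\Yos)$ is nonzero. Because $\TT$ is reduced and equidimensional, we then have a nontrivial "congruence ideal": $I + \bigcap_{j>t}\eta_j$ cuts out a proper quotient $\TT^{\Yos}/J$, where $J$ is the image of the annihilator of $I$. Moreover $\TT^{\Yos}/J$ has support of codimension exactly one in $\mathrm{Spec}\,\TT^{\Yos}$, since two equidimensional unions of components meet in codimension one. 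Here we use the freeness of $\TT$ over $\Lambda$ from \cite{DPmin} (a Cohen--Macaulay argument / Hartshorne connectedness) to guarantee that the intersection is nonempty and of pure codimension one. The key output of this step is a height-one prime $P$ of $\TT^{\Yos}$ (equivalently of $\Lambda$, after contraction) in the support of $\TT^{\Yos}/J$.

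Next, we use $R=\mathbb{T}$ from \cite[Theorem 1]{DPmin} to obtain a pseudo-representation (determinant) $\TT \to$ $\GSp_4$-valued Galois representation over the total fraction ring. Over the Yoshida components this becomes $\Ind_K^\Q(\tau_{\GGG})$, which after restriction to $G_{K,S}$ is the sum $\tau_{\GGG}\oplus\tau_{\GGG}^c$. Residually, by \ref{lab:ind} and \ref{hyp:bigimage}, $\bar\tau$ and $\bar\tau^c$ are non-isomorphic and absolutely irreducible. We then form a generalized matrix algebra over the localization $\TT_P$ in the sense of Bellaïche--Chenevier. The reducibility ideal for the decomposition $\tau\oplus\tau^c$ over $G_{K,S}$ contains $J$ (up to the usual radical statements), so the off-diagonal entries give a nonzero $\TT^{\Yos}_P/J$-module of extensions. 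This produces a non-split extension class in $H^1(G_{K,S}, \Hom(\tau^c,\tau)\otimes \cdot)$, i.e. in cohomology of $\DD$, the Asai-type module $\tau\otimes(\tau^c)^\vee$. By Pontryagin duality we get a surjection from $\Sel(K,\DD)^\vee$ onto a module whose support contains the height-one prime $P$.

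Checking local conditions is the heart of the matter and the main obstacle. Away from $p$, \ref{hyp:squarefree}, \ref{hyp:optimallevel}, \ref{hyp:congruence1} and \ref{hyp:congruence2} force the minimal (unramified) conditions, giving the primitive condition, as the Remark indicates. At $p$, in case (1) we use density of de Rham weight-$(k,2)$ points on every component (the hypothesis $\TT=\TT^{\cla}$). At such points, the Hodge--Tate weights of the ordinary filtration force the Galois representation, restricted to $G_{\Q_p}$, to have the extension between the relevant graded pieces be crystalline/split in the specific way defining $\Sel_{\Int}$. Passing to the limit over a dense set shows that the constructed classes satisfy the $\Int$ condition at $\p_1,\p_2$. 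In case (2), decomposability at $p$ on every component, together with the uniqueness of the decomposition guaranteed by \ref{hyp:resindec} and \ref{lab:pdist}, forces the off-diagonal block at $\p_i$ to vanish in the lattice, so the classes are locally trivial up to unramified at $p$, landing in $\Sel_{\mathrm{ur},p}$.

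We therefore obtain that $\Sel_{\Int}(K,\DD)^\vee$ (respectively $\Sel_{\mathrm{ur},p}(K,\DD)^\vee$) has a height-one prime in its support, contradicting \ref{lab:intPN} (respectively \ref{lab:finPN}). I expect the hardest step to be the interpolation of the local conditions at $p$ from the dense set $S^{\cla}$ to the whole family; here I would work component by component with the ordinary filtration provided by Geraghty's ordinary deformation theory \cite{MR3953131}.
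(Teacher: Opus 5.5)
The step that fails is your assertion that the off-diagonal entries of the GMA give a \emph{nonzero} module at your height-one congruence prime. What Ribet's method (via Bella\"iche--Chenevier, as in Proposition \ref{prop:selmerclass}) produces is a surjection from the Selmer dual onto $\BB/\JJ\BB$. Here $\BB$ is the off-diagonal ideal of $\TT[\widetilde{\rho}_{\TT}(G_{K,S})]$ and $\JJ=\ker(\TT\to\TT^{\Yos})$. For a prime $Q\supseteq\JJ$, Nakayama gives $(\BB/\JJ\BB)_Q\neq 0$ iff $\BB_Q\neq 0$. Since $\TT$ is reduced and every Yoshida minimal prime contains $\BB$, this holds iff some \emph{non-Yoshida} component $\eta\subseteq Q$ has $\BB\not\subseteq\eta$, i.e.\ that component is not $G_{K,S}$-reducible (not induced from $G_{K,S}$). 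Your proposal never rules out a non-Yoshida component whose Galois representation is $\Ind^{G_{\Q,S}}_{G_{K,S}}$ of a two-dimensional family. On such components $\BB$ vanishes, and you get no classes at $Q$. Your remark that the reducibility ideal ``contains $J$ up to radical'' does not fill this. One always has $\BB^2\subseteq\BB\subseteq\JJ$, and the reverse statement $\sqrt{\BB}=\JJ$ is nontrivial: it is the paper's Claim \ref{claim:BradicalJ}, proved only for Theorem \ref{thm:nomore} using Böckle's presentation and $R=\mathbb{T}$, and it itself relies on the missing input.

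That missing input is Claim \ref{claim:faithfulBperp}: $\BB^{\perp}$ is a faithful $\TT^{\perp}$-module. The paper argues by contradiction.
\begin{itemize}
\item If a minimal prime $\eta$ of $\TT^{\perp}$ contained $\BB^{\perp}$, then $\widetilde{\rho}_{\TT^{\perp}/\eta}\cong\Ind^{G_{\Q,S}}_{G_{K,S}}\tilde\tau$.
\item At a cohomological specialization with $k'_1>k'_2\gg 0$, $\tilde\tau$ is modular by Geraghty's ordinary modularity lifting theorem \cite[Theorem 5.13]{MR3953131}. This requires checking big image via \ref{hyp:bigimage}, that $\zeta_p$ is not in the fixed field of $\mathrm{ad}\,\bar\tau$, and crystallinity and ordinarity at $\p_1,\p_2$.
\item Trivial Nebentypus follows from the symplectic pairing, and level exactly $M$ from Carayol and \cite{DPmin}.
\item So the specialization is a stable Yoshida lift of a form coming from $\TTT$, contradicting the uniqueness part of Theorem \ref{thm:yoshidafamily}.
\end{itemize}
This is where Geraghty's theorem is essential. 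It is not needed for interpolating the local conditions at $p$; those are handled by the density arguments of Theorems \ref{thm:dRshape}, \ref{thm:decshape} and Lemma \ref{lemma:specializationdecomposable}, roughly as you sketch.

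Once faithfulness is known, $\Fitt^0_{\TT^{\perp}/\JJ^{\perp}}(\BB^{\perp}/\JJ^{\perp}\BB^{\perp})=0$ gives full support on $\mathrm{Spec}(\TT^{\perp}/\JJ^{\perp})$, and a height-one prime there finishes the proof. Your way of getting such a prime is a valid substitute for the paper's route (reflexivity of $\JJ$, forcing $\Lambda\cap\JJ^{\perp}=(\beta)$ to be principal). You use Cohen--Macaulayness of $\TT$ (from freeness over $\Lambda$) and Hartshorne's connectedness theorem. Claim only \emph{some} codimension-one component of the intersection, not pure codimension one.
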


\begin{remark}\label{rem:refl}
To leverage both Ribet's method and pseudonullity hypotheses towards characterizing Hida families of stable Yoshida lifts, we need that the congruence ideal involving stable Yoshida lifts is supported in codimension one. For instance, the fiber product \mbox{$\Z_p\llbracket y_1,y_2\rrbracket\times_{\mathbb{F}_p}\Z_p\llbracket y_1,y_2\rrbracket$} is an abstract ring theoretic example that we would want to avoid as the congruence ideal (being isomorphic to the maximal ideal of $\Z_p\llbracket y_1,y_2\rrbracket$) would then be supported in codimension 2. See the example in Section \ref{sec:limitation} for a further discussion along these lines. We use the freeness result of \cite[Theorem 1]{DPmin} to show that $\TT$-ideal
\[\JJ \coloneqq \ker\left(\TT \twoheadrightarrow \TT^\Yos\right)\]
is a reflexive $\Lambda$-module. Although in general reflexive modules over $\Lambda$ (which is a ring with Krull dimension $3$) need not be free, it turns out that those with rank one are free. See \cite[Lemma A1]{MR4084165}. This fact will be sufficient for our purposes. 
\end{remark}

The desired characterizations, as required by observations \ref{obs:cla}
 and \ref{obs:dec} can be stated as follows:
 \begin{enumerate}[style=sameline, style=sameline, align=left,label=(\scshape{GV}\arabic*), ref=(\scshape{GV}\arabic*),partopsep=0pt,parsep=0pt]
 \item\label{lab:GV1} Let $\eta$ be a minimal prime of $\TT^{\cla}$. Then, the inverse image $\pi_{\cla}^{-1}(\eta)$ is a minimal prime of $\TT$ if and only if $\pi_{\cla}^{-1}(\eta)$ contains $\ker(\TT \twoheadrightarrow \TT^{\Yos})$. That is, we have the following natural bijection of sets:
 \begin{align*}
\mathrm{Spec}_{\mathrm{ht}=0}(\TT) \cap \pi_{\cla}^{-1}\left(\mathrm{Spec}_{\mathrm{ht}=0}(\TT^{\cla})\right) = \pi_{\Yos}^{-1}\left(\mathrm{Spec}_{\mathrm{ht}=0}(\TT^\Yos)\right).
 \end{align*}

 In other words, if a $\GSp_4$-Hida family has a dense set of specializations with weight $(k,2)$ that are de Rham at $p$, then $\eta$ corresponds, as in Section \ref{sec:proofyoshidafamily}, to a $p$-adic family of stable Yoshida lifts. 
 \item\label{lab:GV2} Let $\eta$ be a minimal prime of $\TT^{\dec}$. Then, the inverse image $\pi_{\dec}^{-1}(\eta)$ is a minimal prime of $\TT$ if and only if $\pi_{\dec}^{-1}(\eta)$ contains $\ker(\TT \twoheadrightarrow \TT^{\Yos})$. That is, we have the following natural bijection of sets:
 \begin{align*}
\mathrm{Spec}_{\mathrm{ht}=0}(\TT) \cap \pi_{\dec}^{-1}\left(\mathrm{Spec}_{\mathrm{ht}=0}(\TT^\dec)\right) = \pi_{\Yos}^{-1}\left(\mathrm{Spec}_{\mathrm{ht}=0}(\TT^\Yos)\right).
 \end{align*}
 
 In other words, if the Galois representation associated to a $\GSp_4$-Hida family is decomposable at $p$, then $\eta$ corresponds, as in Section \ref{sec:proofyoshidafamily}, to a $p$-adic family of stable Yoshida lifts. 
 \end{enumerate} 
We have the following theorem, proved under strong cyclicity assumptions, which provides the desired characterizations in the $\GSp_4$ setting and serves as our analog of results of \cite{MR2139691, MR4397038}.  
 
\begin{restatable}{Theorem}{theoremfour}  \label{thm:nomore}
Assume all the hypotheses of Set-up \ref{setting:all}.
\begin{enumerate}

\item In addition to the hypothesis \ref{lab:intPN}, suppose that the following cyclicity hypothesis holds:
 \begin{enumerate}[style=sameline, style=sameline, align=left,label=(\scshape{Int-Cyc}), ref=\scshape{Int-Cyc},partopsep=0pt,parsep=0pt]
  \item\label{lab:intcyc}  The $\TT^\Yos$-module $\Sel_{\Int}(K,\DD)^\vee$ is cyclic. 
\end{enumerate}
Then, \ref{lab:GV1} holds.

\item In addition to the hypothesis \ref{lab:finPN}, suppose that the following cyclicity hypothesis holds:
 \begin{enumerate}[style=sameline, style=sameline, align=left,label=(\scshape{Fin-Cyc}), ref=\scshape{Fin-Cyc},partopsep=0pt,parsep=0pt]
  \item\label{lab:fincyc}  The $\TT^\Yos$-module $\Sel_{\mathrm{ur},p}(K,\DD)^\vee$ is cyclic. 
\end{enumerate}
Then, \ref{lab:GV2} holds.

\end{enumerate}
\end{restatable}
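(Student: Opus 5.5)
Throughout, $\ast$ denotes either $\cla$ or $\dec$, and $\Sel_\ast$ the matching Selmer group ($\Sel_{\Int}$ or $\Sel_{\mathrm{ur},p}$). The plan is to run Ribet's method on the ideal $\JJ=\ker(\TT\twoheadrightarrow\TT^\Yos)$, as in Castella--Wang-Erickson, but to localize at height one primes of $\TT^\Yos$ and use cyclicity to control the whole congruence module, not only its support.

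\textbf{Reduction.} The inclusion ``$\supseteq$'' in \ref{lab:GV1}/\ref{lab:GV2} is immediate from the surjections \eqref{eq:natsurj}: a minimal prime of $\TT^\Yos$ pulls back to a minimal prime of $\TT$ (by the decomposition \eqref{eq:Tdecompreducedring}), and it factors through $\TT^\ast$. For ``$\subseteq$'', let $\eta$ be minimal in $\TT^\ast$ with $\pi_\ast^{-1}(\eta)=\eta'$ minimal in $\TT$, and suppose $\eta'\not\supseteq\JJ$, so $\eta'$ is a non-Yoshida component. We will derive a contradiction by showing that the image of $\JJ$ in $\TT^{\ast}$ contributes a factor which is simultaneously
\begin{itemize}
\item nonzero in codimension one,
\item controlled by $\Sel_\ast(K,\DD)^\vee$.
\end{itemize}

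\textbf{Step 1 (congruence ideal in codimension one).} By Remark \ref{rem:refl} and the freeness from \cite[Theorem 1]{DPmin}, $\JJ$ is a reflexive $\Lambda$-module. Consequently the congruence module $\TT^\Yos/\pi_\Yos(\mathrm{Ann}_\TT\JJ)$, or equivalently $\TT/(\JJ+\mathrm{Ann}\,\JJ)$, is supported in codimension one. Hence there is a height one prime $P$ of $\TT^\Yos$ at which the Yoshida components meet some non-Yoshida component.

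\textbf{Step 2 (Ribet's method at $P$).} Localize at $P$; then $\TT_P$ is a reduced one-dimensional ring. Using the Bella\"iche--Chenevier theory of generalized matrix algebras for the pseudo-character of $\TT$ (which residually splits as $\Ind\tau\cong\tau$-part $\oplus$ $\tau^c$-part after restriction to $G_K$), produce extension classes. By Shapiro's lemma these classes lie in $H^1(K,\DD)$ with $\DD$ built from $\tau\otimes\tau^{c,\vee}$-type Asai data. Their number and size are measured by the reducibility ideal, which equals $\JJ$ up to the Yoshida quotient.

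The conditions are then checked prime by prime:
\begin{itemize}
\item Unramifiedness away from $p$ uses \ref{hyp:congruence1}, \ref{hyp:congruence2} and minimality.
\item At $p$, the Hida-ordinary filtration gives the Greenberg condition.
\end{itemize}
Restricting to the components in $S^{\ast}$ strengthens the local condition at $p$. De Rham weight $(k,2)$ specializations force the intermediate condition defining $\Sel_{\Int}$, while decomposability (unique by \ref{hyp:resindec}) forces classes unramified/trivial at $p$, that is, $\Sel_{\mathrm{ur},p}$. The outcome is a surjection
$$\Sel_\ast(K,\DD)^\vee\twoheadrightarrow \mathrm{Hom}(B_\ast,\ldots),$$
where the right side has support equal to the support of the congruence module of $\TT^\ast$ relative to $\TT^\Yos$.

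\textbf{Step 3 (cyclicity and comparison).} Pseudo-nullity (\ref{lab:intPN} or \ref{lab:finPN}) already forces the congruence module of $\TT^\ast$ over $\TT^\Yos$ to vanish at the height one primes. This alone only gives Theorem \ref{thm:notisos}, namely that $\TT\ne\TT^\ast$, because a non-Yoshida component of $\TT^\ast$ could still meet $\TT^\Yos$ in codimension two; this is the fiber product issue of Section \ref{sec:limitation}.

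To exclude that case, we use cyclicity (\ref{lab:intcyc}/\ref{lab:fincyc}). The GMA constructed from $\TT^\ast$ has off-diagonal module $B_\ast$ which surjects from a cyclic $\TT^\Yos$-module, hence is itself cyclic. Then $\TT^\ast$ is generated over $\TT^\Yos$-data by a single extension. Combining this with B\"ockle's structure theorem \cite{MR2392352} for the nearly ordinary $\Gl_2$ deformation ring over $K$, and with $R=\mathbb{T}$ from \cite{DPmin}, identifies the reducibility locus of $\TT^\ast$ with a quotient of $\TT^\Yos$ cut out by a principal-type ideal. Pseudo-nullity then forces that ideal to be the unit ideal on the relevant component, so the non-Yoshida component of $\TT^\ast$ is empty, giving a contradiction.

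\textbf{Main obstacle.} Step 3 is expected to be the hardest: converting cyclicity plus pseudo-nullity into ``no component at all'' rather than just ``no codimension one intersection''. I would first try to show directly that a cyclic, pseudo-null $B_\ast$ forces the reducibility ideal of $\TT^\ast$ to equal $\JJ\TT^\ast$ and that this ideal is zero, using the comparison of deformation rings to upgrade the statement from $\TT^\Yos$ to $\TT^\ast$.
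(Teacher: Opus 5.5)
\textbf{Genuine gap.} The gap sits where you placed your ``main obstacle''. Cyclicity plus Nakayama gives $\BB^\ast=(\alpha)$. Proposition \ref{prop:selmerclass} then only tells you that $\BB^\ast/\JJ^\ast\BB^\ast\cong\TT^\ast/(\JJ^\ast+\mathrm{Ann}(\alpha))$ is pseudo-null. Any dimension argument on a non-Yoshida minimal prime $\eta$ of $\TT^\ast$ produces the locus $V(\eta,\alpha)$ instead. To compare the two you need $\sqrt{\BB^\ast}=\JJ^\ast$: the locus where $\widetilde{\rho}\mid_{G_{K,S}}$ becomes reducible must be exactly the Yoshida locus. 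A priori $\TT/\BB$ could have lower-dimensional components outside $\mathrm{Spec}\,\TT^\Yos$, and the density/modularity argument of Claim \ref{claim:faithfulBperp} cannot see them.

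Your Step 2 assumes this silently (``the reducibility ideal equals $\JJ$ up to the Yoshida quotient''). In fact the reducibility ideal is $\BB^2$, and only the radicals can agree. Your Step 3 use of B\"ockle and $R=\mathbb{T}$ also aims at the wrong target. What these inputs give is an isomorphism $\TT/\BB\cong\mathrm{R}^{\mathrm{ord}(\p_1)}_{\bar\tau}\otimes_{\mathrm{O}[A_K]\llbracket T\rrbracket}\mathrm{O}\llbracket T\rrbracket$. B\"ockle's presentation of the right-hand side, $\mathrm{O}\llbracket Y_1,\dots,Y_{m_1},T\rrbracket/(t_1,\dots,t_{m_2},v)$ with $m_1-m_2\ge 2$, forces $\dim\TT/\BB\ge 3$. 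Since $\dim\TT=3$, $\TT/\BB$ is a complete intersection, hence Cohen--Macaulay and equidimensional of dimension $3$. So its minimal primes are minimal primes of $\TT$ containing $\BB$, and these are Yoshida by Claim \ref{claim:faithfulBperp}. That is the statement $\sqrt{\BB^\ast}=\JJ^\ast$. It is not ``a quotient of $\TT^\Yos$ cut out by a principal-type ideal''.

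\textbf{The endgame is also misdescribed.} Pseudo-nullity does not make any ideal the unit ideal. You also should not try to show that the reducibility ideal of $\TT^\ast$ is zero: that would give $\TT^\ast=\TT^\Yos$, which is stronger than \ref{lab:GV2} and not expected, since $\TT^\ast$ may carry lower-dimensional non-Yoshida components. The correct argument runs as follows.
\begin{enumerate}
\item A minimal non-Yoshida $\eta$ cannot contain $\JJ^\ast$, so $\alpha\notin\eta$ by the radical statement. Hence $\eta\supseteq\mathrm{Ann}(\alpha)$.
\item The radical statement also upgrades pseudo-nullity of $\TT^\ast/(\JJ^\ast,\mathrm{Ann}(\alpha))$ to that of $\TT^\ast/(\alpha,\mathrm{Ann}(\alpha))$: raise a local relation $dx+ey=1$ with $x\in\JJ^\ast$ to a power so that $x^n\in\BB^\ast$.
\item Therefore the quotient $\TT^\ast/(\alpha,\eta)$ is pseudo-null. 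Krull's principal ideal theorem then gives $\dim\TT^\ast/\eta-1\le\dim\TT^\ast/(\alpha,\eta)\le\dim\Lambda-2$.
\item So $\pi_\ast^{-1}(\eta)$ has positive height in the equidimensional ring $\TT$, i.e.\ it is not minimal.
\end{enumerate}
This is exactly how cyclicity enters: a single generator $\alpha$ can only cut a component in codimension one.

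Your Step 1 (reflexivity of $\JJ$ and codimension-one support of the congruence module) is the input for Theorem \ref{thm:notisos}. It plays no role here and does not address the codimension-two gluing issue you correctly identified.
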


The cyclicity hypothesis is used by Hida to reprove the main result of Ghate--Vatsal in the appendix to \cite{MR4397038}. As mentioned in \cite[Section 9]{castella2024critical}, it is also required by Castella--Wang-Erickson in \cite[Theorem 1.4.1]{MR4397038}. Although the cyclicity hypothesis seems to be strong, it seems to be necessary in order to obtain the desired characterizations if one solely employs Ribet's method and pseudo-nullity hypotheses. See section \ref{sec:limitation}. Note that there is however some history towards studying cyclicity of Iwasawa modules. See \cite{hida2018anticyclotomic}.

\begin{remark}
If, instead of the hypothesis \ref{lab:finPN}, we had the stronger assertion that $\Sel_{\mathrm{ur},p}(K,\DD)^\vee=0$, then the arguments of the proof of Theorems \ref{thm:notisos} and \ref{thm:nomore} will allow us to deduce an analog of the Greenberg--Coleman question for Siegel modular forms under Setup \ref{setting:all}, that is, the restriction to $G_{\Q_p}$ of a classical specialization of $\TT$ is decomposable if and only if the corresponding specialization is a stable Yoshida lift.  
\end{remark}

\begin{remark}
There are two instances of $2$-variable $p$-adic families that are entirely obtained from lower rank groups which we haven't considered: (i) (unstable) Yoshida lifts coming from two Hida families $F$ and $G$, and (ii) automorphic inductions of CM Hilbert modular Hida families. In both of these instances, the big image hypotheses \ref{hyp:bigimage} would not hold. Keeping in mind Siegel cuspforms that are stable, one could also consider Siegel cuspidal eigenforms coming from lower rank groups via functoriality in the following two cases: (a) as the symmetric cube of a $p$-ordinary elliptic modular form as in \cite{MR2318628}, and (b) as an automorphic induction of a Bianchi modular form as in \cite{MR1213108}. However, neither of these cases interpolate appropriately to give $2$-variable $p$-adic families that are entirely obtained from lower rank groups via functoriality. 
\end{remark}

\subsection*{Acknowledgements}

We thank Patrick Allen, John Bergdall, Francesc Castella, Frank Calegari, Mladen Dimitrov, David Loeffler, Haruzo Hida, Ming-Lun Hsieh, Lue Pan, Vincent Pilloni, Brooks Roberts, Ralf Schmidt, Jacques Tilouine and Carl Wang-Erickson for helpful correspondences and discussions that clarified many of our questions. SD's research is partially supported by the Infosys Young Investigator Award from the Infosys Foundation Bangalore along with the SERB-MATRICS grant MTR/2023/000850 and DST FIST program 2021 [TPN - 700661]. BP's research is partially supported by the Infosys Young Investigator Award from the Infosys Foundation Bangalore along with the SERB-MATRICS grant MTR/2022/000244 and DST FIST program 2021 [TPN - 700661].

\tableofcontents

\section{Hilbert modular forms over the real quadratic field $K$} \label{subsec:hmf}

In this section, we gather the required background on Hida families associated to Hilbert modular forms over real quadratic fields and the construction of the associated \textit{two-variable ordinary} Hecke algebra. 

\subsection{Hecke eigensystems associated to nearly ordinary Hilbert cuspidal eigenforms}

We associate the following weight vectors to a non-negative integer tuple $(v'_1,v'_2)$ along with an integer~$\mu' \geq 0$,  
\begin{align}\label{formula:weightkappamuomega}
\vec{\kappa'} = (\underbrace{\mu' - 2v'_1 + 2}_{\kappa'_1},\underbrace{\mu'-2v'_2 + 2}_{\kappa'_2}), \qquad \vec{w'} = \vec{\kappa'} + (v'_1-1,v'_2-1).
\end{align}
We mainly follow Hida's works \cite{MR960949,MR1097614} and refer the reader to them  and his books \cite{MR2055355,MR2243770} for more details.  We study the Hilbert modular cuspidal space $S_{\vec{\kappa'},\vec{w'}}(\Gamma_1(\mathcal{N}),\psi)$. See \cite[Section 2]{MR960949} for the definition of the space of Hilbert modular cuspforms; the level $\Gamma_1(\mathcal{N})$ corresponds to the level $V_1(\mathcal{N})$ of \cite[Section 2]{MR960949}. Recall that, we are ordering the weights so that $\kappa'_i$ is associated to the unique embedding $\sigma_i:K \hookrightarrow \overline{\Q}$ of section \ref{subsec:settinghilbert}.  The prime $\p_1$ of $K$ is determined by $\iota_p \circ \sigma_1$. For simplicity, we now assume that $\kappa'_1 \geq \kappa'_2$.     \\

For each prime $\q$ and an open-compact subgroup $\mathcal{K}\subset \Gl_2(K_\q)$, we let $C_c^\infty(\Gl_2(K_\q)//\mathcal{K},\Z_p)$ be the ring (multiplication is via convolution), consisting of $\mathcal{K}$ bi-invariant locally constant functions $\Gl_2(K_\q) \rightarrow \Z_p$ with compact support. If $\mathcal{K}$ equals $\Gl_2(\OO_{K_\q})$, it is well-known that the algebra $C_c^\infty(\Gl_2(K_\q)//\mathcal{K},\Z_p)$ is commutative and is equal to the polynomial ring $\Z[T_\q,  S_\q, S_\q^{-1}]$,
where the Hecke operators $T_\q$,  $S_\q$ are respectively the characteristic functions of the following double cosets:
\begin{align*}
\Gl_2(\OO_{K_\q})\left[\begin{array}{cc}1 & 0 \\ 0  & \pi_\q\end{array}\right]\Gl_2(\OO_{K_\q}), \qquad \Gl_2(\OO_{K_\q})\left[\begin{array}{cc}\pi_\q & 0 \\ 0  & \pi_\q\end{array}\right]\Gl_2(\OO_{K_\q}).
\end{align*}

Here, $\pi_\q$ is a uniformizer in $\OO_{K_\q}$. The space $S_{\vec{\kappa'},\vec{w'}}(\Gamma_1(\mathcal{N}),\psi)$ carries an action of the Hecke operators $T_\q$,  $S_\q$, for all primes $(\q,\mathcal{N})=1$. In particular, if $g$ is a Hilbert modular cuspidal eigenform as given in Definition \ref{def:ordgl2}, then it determines a Hecke eigensystem $\phi_{g}:\Z\left[\left\{T_{\q}, S_{\q},S_{\q}^{-1}\right\}_{(\q,\mathcal{N})=1}\right] \rightarrow \overline{\Q}_p$.

\begin{definition}\label{def:ordgl2}
We say that a cuspidal Hilbert modular eigenform $g$ over $K$ in $S_{\vec{\kappa'},\vec{w'}}(\Gamma_1(\mathcal{N}),\psi)$ is nearly ordinary at $\p_1$ and $\p_2$ if 
\begin{itemize}
    \item the $p$-adic valuations of the roots of the Hecke polynomial at $\p_1$ \[X^2-\iota_p(a(\p_1,g))X+\psi(\p_1)p^{\kappa_1'-1}\] are $v'_1$ and $\kappa'_1-1-v'_1$, and
    \item the $p$-adic valuations of the roots of the Hecke polynomial at $\p_2$ \[X^2-\iota_p(a(\p_2,g))X+\psi(\p_2)p^{\kappa'_1-1}\] 
    are $v'_2$ and $\kappa'_1-1-v'_2$.
\end{itemize} 

Furthermore, we say that a cuspidal Hilbert modular eigenform $g$  is (\textit{$\p_1$-ordinary, $\p_2$-nearly ordinary}) if it is nearly ordinary and $v'_1$ equals $0$. That is, 
\begin{itemize}
    \item the $p$-adic valuations of the roots of the Hecke polynomial at $\p_1$ are $0$ and $\kappa'_1-1$, and
    \item the $p$-adic valuations of the roots of the Hecke polynomial at $\p_2$ are  $\frac{\kappa'_1-\kappa'_2}{2}$ and $\frac{\kappa'_1+\kappa'_2}{2}-1$.
\end{itemize} 
\end{definition}

Here, we assume that $p$ is a good prime, so that $(p,\mathcal{N})=1$.  One could also rephrase this definition as requiring that the Hodge polygons and the Newton polygons coincide. The definition could also be restated as requiring the eigenvalues of the normalized (as in \cite{MR960949}) Hecke operators $T_{\p_1}$ and $T_{\p_2}$ are $p$-adic units. 

 We also consider the Iwahori subgroup $\Iw_\q^{(2)}(\OO_{K_\q})$ of $\Gl_2(\OO_{K_\q})$ which consists of all the matrices in  $\Gl_2(\OO_{K_\q})$ whose reduction to $\Gl_2(\OO_K/(\pi_\q))$ is upper-triangular.  We will work with the commutative subalgebra $\Z_p[U_{\q}]$ of $C_c^\infty(\Gl_2(K_\q)//\Iw_\q^{(2)}(\OO_{K_\q}),\Z_p)$, where the Hecke operator $U_{\q}$ is the characteristic function of the  double coset \[\Iw_\q^{(2)}(\OO_{K_\q})\left[\begin{array}{cc} 1 & 0 \\ 0 & \pi_\q \end{array} \right] \Iw_\q^{(2)}(\OO_{K_\q}).\]
 
Let $g$ be a nearly ordinary form as given in Definition \ref{def:ordgl2}. The ordinary $p$-stabilization of $g$, which is a Hilbert modular cuspidal eigenform in $S_{\vec{\kappa'},\vec{w'}}(\Gamma_1(\mathcal{N}p),\psi)$, determines a Hecke eigensystem:
\begin{align}
\phi^{p,\mathrm{stab}}_{g}:\Z\left[\left\{U_{\p_1},U_{\p_2}\right\} \cup \left\{T_{\q}, S_{\q},S_{\q}^{-1}\right\}_{(\q,\mathcal{N})=1}\right] \rightarrow \overline{\Q}_p.
\end{align}

\subsection{Hida theory for $\Gl_2$ over $K$} \label{subsec:hidatheorygl2k}

We fix the embedding 

\begin{align*}
\Z_p^\times \times \underbrace{(\mathrm{O}_{K_{\p_1}}^\times)}_{\Z_p^\times} \times \underbrace{(\mathrm{O}_{K_{\p_2}}^\times)}_{\Z_p^\times} &\hookrightarrow \Gl_2(\mathrm{O}_{K_{\p_1}}) \times \Gl_2(\mathrm{O}_{K_{\p_2}}), \\ 
(g,g_1,g_2) &\mapsto \left[\begin{array}{cc} g g_1^{-1}& 0 \\ 0 & g_1 \end{array}\right] \times \left[\begin{array}{cc} g g_2^{-1}& 0 \\ 0 & g_2 \end{array}\right].
\end{align*}

Given $\mu', v'_1, v'_2$, one can determine $(\vec{\kappa'},\vec{w'})$ using the formula for $\vec{\kappa'}$ and $\vec{w'}$ in equation (\ref{formula:weightkappamuomega}). We shall say that a prime ideal $\mathfrak{P}$ of the Iwasawa algebra $\Z_p\llbracket\Z_p^\times \times (\mathrm{O}_{K_{\p_1}}^\times) \times (\mathrm{O}_{K_{\p_2}})^\times \rrbracket$ is a classical prime with weight $(\vec{\kappa'},\vec{w'})$ if it is the kernel of the ring homomorphism:
\begin{align*}
\phi_{\mu',v'_1,v'_2}: \Z_p\llbracket\Z_p^\times \times (\mathrm{O}_{K_{\p_1}})^\times \times (\mathrm{O}_{K_{\p_2}})^\times \rrbracket &\rightarrow \overline{\Q}_p^\times , \\ 
(g,g_1,g_2) &\mapsto g^{\mu'} g_1^{v'_1} g_2^{v'_2}.
\end{align*}
with $\mu' \geq 2v'_2 \geq 2v'_1 \geq 0$. 

Let $\Z_p\llbracket\Z_p^\times \times (\mathrm{O}_{K_{\p_2}})^\times \rrbracket $ denote the \textit{quotient} of $\Z_p\llbracket\Z_p^\times \times (\mathrm{O}_{K_{\p_1}})^\times \times (\mathrm{O}_{K_{\p_2}})^\times \rrbracket $ obtained by mapping every element of $(\mathrm{O}_{K_{\p_1}})^\times$ to $1$. Consider the ring homomorphism\[\phi_{\mu',v'_2}: \Z_p\llbracket\Z_p^\times  \times (\mathrm{O}_{K_{\p_2}})^\times \rrbracket \rightarrow \overline{\Q}_p^\times\]
induced by $\phi_{\mu',0,v'_2}$. We call $\ker(\phi_{\mu',v'_2})$ a classical prime of $\Z_p\llbracket\Z_p^\times \times (\mathrm{O}_{K_{\p_2}})^\times \rrbracket$ if 
\begin{align}\label{eq:classicalcondnord}
\mu' \geq 2v'_2 \geq 0.
\end{align}

 The semi-local ring $\Z_p\llbracket\Z_p^\times \times (\mathrm{O}_{K_{\p_2}})^\times \rrbracket$ can be decomposed into a product of local rings. Furthermore, these local rings are in one-to-one correspondence with group homomorphisms $(\upsilon^{a'},\upsilon^{b'}):\mu_{p-1} \times \mu_{p-1} \rightarrow \mathbb{F}_p^\times$. Here, the map $\upsilon:\mu_{p-1} \rightarrow \mathbb{F}_p^\times$ is obtained by composing the Hensel lift $\mu_{p-1} \hookrightarrow \Z_p^\times$ with the canonical surjection $\Z_p^\times \twoheadrightarrow \mathbb{F}_p^\times$. We call the corresponding local ring $\Lambda_{(a',b')}$. Observe that $\Lambda_{(a',b')}$ is isomorphic as a ring to the power series $\Z_p \llbracket x_1,x_2 \rrbracket$ in two variables. The variables $x_1$ and $x_2$ are \textit{chosen} so that the induced morphism $\phi_{\mu',v'_2}: \Lambda_{(a',b')} \rightarrow \overline{\Q}_p$ is defined by sending $x_i$ to $(1+p)^{\kappa'_i}-1$ for $i \in \{1,2\}$.  These variables $x_1$ and $x_2$ are often called the \textit{weight variables}.  \\ 

Recall that the weight vector $\vec{\kappa}_0 \coloneqq (\kappa_1,\kappa_2)$ of the (\textit{$\p_1$-ordinary, $\p_2$-nearly ordinary}) cuspidal Hilbert modular newform $g_0$ from the introduction along with the ordinarity condition $v_1=0$ determines $\mu$, $v_2$ and the vector $\vec{w}_0$ using the formula (\ref{formula:weightkappamuomega}). Thus, $g_0$ belongs to  $S_{(\vec{\kappa},\vec{w})}(\Gamma_1(M))$ with trivial nebentypus, squarefree level $M$ and is ordinary at $\p_1$. We fix the congruence class $(a,b)$ in $\Z/(p-1)\Z \oplus \Z/(p-1)\Z$ such that 
\[(a,b) \equiv (\kappa_1,\kappa_2) \mod (p-1)\Z^2. \]

{\sloppy Suppose that $\mathcal{R}_2$ is a  reduced algebra that is finite and torsion-free over  $\Lambda_{(a,b)}$. We will say that a prime $\mathfrak{P}$ of $\mathcal{R}_2$ is classical and has weight $(\vec{\kappa'},\vec{w'})$ if $\phi_{\mu',v'_2}$ factors through $\Lambda_{(a,b)}$ and the pullback of $\mathfrak{P}$ to $\Z_p\llbracket\Z_p^\times  \times (\mathrm{O}_{K_{\p_2}})^\times \rrbracket$ via 
\[\Z_p\llbracket\Z_p^\times  \times (\mathrm{O}_{K_{\p_2}})^\times \rrbracket\twoheadrightarrow \Lambda_{(a,b)} \hookrightarrow \mathcal{R}_2\] equals $\ker(\phi_{\mu',v'_2})$. Once again, recall that $(\vec{\kappa'},\vec{w'})$ is obtained from $(\mu',0,v'_2)$ using the formula (\ref{formula:weightkappamuomega}). We let $\mathcal{X}_{\mathrm{cla}}(\mathcal{R}_2)$ denote the set of classical primes of $\mathcal{R}_2$. We also consider 
\begin{align*}
\mathcal{X}_{\mathrm{cla}}^{\geq 3}(\mathcal{R}_2) &\coloneqq \left\{\mathfrak{P} \in \mathcal{X}_{\mathrm{cla}}(\mathcal{R}_2), \text{ where the corresponding weight $\vec{\kappa'}$ satisfies $\kappa'_1 \geq \kappa'_2 \geq 3$} \right\}, \\
\mathcal{X}_{\mathrm{cla},(a,b)}^{\geq 3}(\mathcal{R}_2) &\coloneqq \left\{\mathfrak{P} \in \mathcal{X}_{\mathrm{cla}}^{\geq 3}(\mathcal{R}_2), \text{ where } (\kappa'_1,\kappa'_2) \equiv (a,b) \pmod{(p-1)\Z^2} \right\}. 
\end{align*}
Note that the usual topology on the Iwasawa algebra $\Lambda_{(a,b)}$ gives a natural topology on $\mathcal{R}_2$. 

\begin{theorem}\label{thm:control} 

There exists a finite, local, reduced, torsion-free and equidimensional (with dimension $3$) $\Lambda_{(a,b)}$-algebra $\hh_{2,\bar\tau}^{\mathrm{red}}$ and a ring homomorphism 
\[j_2:\Z\left[\left\{U_{\p_1},U_{\p_2}\right\} \cup \left\{T_{\q}, S_{\q},S_{\q}^{-1}\right\}_{(\q,M)=1}\right] \rightarrow \hh_{2,\bar\tau}^{\mathrm{red}} \] 
satisfying the following properties:
\begin{enumerate}

\item\label{item:genbyheckeoperators} The image of $j_2$ is dense in $\hh_{2,\bar\tau}^{\mathrm{red}}$. 
\item\label{heckealgdensityprimes} $\mathcal{X}_{\mathrm{cla},(a,b)}^{\geq 3}(\hh_{2,\bar\tau}^{\mathrm{red}})$ is dense in $\mathrm{Spec}(\hh_{2,\bar\tau}^{\mathrm{red}})$ with respect to the Zariski topology.

\item\label{item:existence} 
Let $\vec{\kappa}_0=(\kappa_1,\kappa_2)$, $(v_1,v_2)=(0,(\kappa_1-\kappa_2)/2)$ and $\vec{w}_0$ be the vector given by the formula in equation (\ref{formula:weightkappamuomega}). Then, there exists a ring homomorphism \[  \Phi_{(\vec{\kappa}_0,\vec{w}_0)}:\hh_{2,\bar\tau}^{\mathrm{red}} \rightarrow \overline{\Q}_p,\]
    such that \begin{itemize}
        \item the induced map $\Phi_{(\vec{\kappa}_0,\vec{w}_0)} \circ j_2 : \Z\left[\left\{U_{\p_1},U_{\p_2}\right\} \cup \left\{T_{\q}, S_{\q},S_{\q}^{-1}\right\}_{(\q,M)=1}\right]  \rightarrow  \overline{\Q}_p$ corresponds to the Hecke eigensystem of the ordinary $p$-stabilization of $g_0$. 
        \item $\ker(\Phi_{(\vec{\kappa}_0,\vec{w}_0)})$ belongs to $\mathcal{X}_{\mathrm{cla}}(\hh_{2,\bar\tau}^{\mathrm{red}})$ with weight $(\vec{\kappa}_0,\vec{w}_0)$. 
    \end{itemize}

\item\label{item:actualcontrol} Suppose we are given a ring homomorphism $\Phi_{(\vec{\kappa'},\vec{w'})}:\hh_{2,\bar\tau}^{\mathrm{red}} \rightarrow \overline{\Q}_p$ such that
$\ker(\Phi_{(\vec{\kappa'},\vec{w'})})$ belongs to $\mathcal{X}_{\mathrm{cla},(a,b)}^{\geq 3}(\hh_{2,\bar\tau}^{\mathrm{red}})$ with weight $(\vec{\kappa'},\vec{w'})$. Then, the induced ring homomorphism 
\[ \Phi_{(\vec{\kappa'},\vec{w'})} \circ j_2 : \Z\left[\left\{U_{\p_1},U_{\p_2}\right\} \cup \left\{T_{\q}, S_{\q},S_{\q}^{-1}\right\}_{(\q,M)=1}\right] \rightarrow \overline{\Q}_p\]
corresponds to the Hecke eigensystem of the ordinary $p$-stabilization of a (\textit{$\p_1$-ordinary, $\p_2$-nearly ordinary}) eigenform $g$ in $S_{\vec{\kappa'},\vec{w'}}(\Gamma_1(M))$. 
\item\label{item:deRham} Suppose we are given a ring homomorphism $\Phi_{(\vec{\kappa'},\vec{w'})}:\hh_{2,\bar\tau}^{\mathrm{red}} \rightarrow \overline{\Q}_p$ such that $\ker(\Phi_{(\vec{\kappa'},\vec{w'})})$ contains $(1+x_1)^n-(1+p)^{n\kappa'_1}$ and $(1+x_2)^n-(1+p)^{n\kappa'_2}$, for some natural number $n \geq 1$ with $\kappa'_1 \geq \kappa'_2 \geq 3$ with $\kappa'_1 \equiv \kappa'_2 \pmod{2\Z}$. Then, the induced ring homomorphism 
\[ \Phi_{(\vec{\kappa'},\vec{w'})} \circ j_2 : \Z\left[\left\{U_{\p_1},U_{\p_2}\right\} \cup \left\{T_{\q}, S_{\q},S_{\q}^{-1}\right\}_{(\q,M)=1}\right] \rightarrow \overline{\Q}_p\]
corresponds to the Hecke eigensystem of a nearly ordinary Hilbert modular eigenform $g$  with tame level $M$ and weight $(\vec{\kappa'},\vec{w'})$. In particular, the corresponding $p$-adic Galois representation at $K_{\p_i}$ is de-Rham for $i\in \{1,2\}$.

	\end{enumerate}

\end{theorem}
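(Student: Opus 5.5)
The plan is to obtain $\hh_{2,\bar\tau}^{\mathrm{red}}$ from Hida's three-variable nearly ordinary Hecke algebra \cite{MR1097614} by specializing the $\p_1$-weight, localizing, and taking the reduced quotient. Let $\hh^{\mathrm{n.o.}}$ be Hida's nearly ordinary Hecke algebra of tame level $M$ and trivial tame character. It is finite and torsion-free over $\Z_p\llbracket\Z_p^\times \times \OO_{K_{\p_1}}^\times \times \OO_{K_{\p_2}}^\times\rrbracket$, and Hida's control theorem holds for it at arithmetic weights $\mu'\geq 2v'_2\geq 2v'_1\geq 0$ with $\kappa'_i\geq 2$.

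\textbf{Construction.} Base change $\hh^{\mathrm{n.o.}}$ along the quotient sending $\OO_{K_{\p_1}}^\times$ to $1$, which imposes $v'_1=0$. Then take the local component at the maximal ideal determined by $\bar\tau$ and the unit eigenvalues of $U_{\p_i}$ attached to $g_0$. Project to the $(a,b)$-component $\Lambda_{(a,b)}$. Finally pass to the maximal reduced quotient that is $\Lambda_{(a,b)}$-torsion-free; concretely, take the image in the product of the localizations at the minimal primes lying over $(0)\subset\Lambda_{(a,b)}$.

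\textbf{Properties.} Finiteness and torsion-freeness are inherited from the three-variable algebra. Equidimensionality of dimension $3$ follows because every minimal prime lies over $(0)$ of the two-variable $\Lambda_{(a,b)}$, and going-up holds for finite torsion-free extensions. Define $j_2$ by sending the Hecke operators to their images. Density (1) holds because $\hh^{\mathrm{n.o.}}$ is topologically generated by Hecke operators as a $\Lambda$-algebra, and $\Lambda$ itself is generated by the diamond operators. These diamond operators are expressible through $S_\q$ and Chebotarev applied to determinants of the Galois representation; alternatively one can note that $\Lambda_{(a,b)}$ acts through the Hecke algebra by construction.

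\textbf{Control.} Property (4) is the control theorem after specialization at $v'_1=0$. The key point is that the sub-quotient construction commutes with passing to weight-$(\vec{\kappa'},\vec{w'})$ specializations for $\kappa'_2\geq 3$. The main obstacle is that the $\p_1$-weight was specialized before taking the reduced torsion-free quotient, so one must check that no embedded or torsion component is lost. I would handle this via Hida's result that the nearly ordinary module is free over the weight algebra, or at least that the cuspidal Hida module is finite projective after localization at $\bar\tau$, so that its specialization at $v'_1=0$ remains torsion-free. Property (3) is the special case for $g_0$ itself; its ordinary $p$-stabilization is $\p_1$-ordinary and nearly ordinary at $\p_2$ with $v_1=0$. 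If $\kappa_2=2$, existence follows from the surjection of Hecke algebras in parallel weight $2$, which is still covered by Hida's control at arithmetic points with $\kappa'\geq2$.

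\textbf{Density and de Rham property.} For (2), each of the finitely many components maps finitely and surjectively onto $\mathrm{Spec}\,\Lambda_{(a,b)}$. The arithmetic points with $\kappa'_1\geq\kappa'_2\geq3$ in the fixed congruence class are Zariski dense in $\mathrm{Spec}\,\Lambda_{(a,b)}$, being an infinite product set in two variables. Their preimages in each component are then classical by (4), so they are dense. For (5), the kernel contains $(1+x_i)^n-(1+p)^{n\kappa'_i}$, so the weight character is $\kappa'$ twisted by finite-order characters of $p$-power order. Hida's control theorem with finite-order nebentypus at $p$ shows that the specialization is classical of weight $(\vec{\kappa'},\vec{w'})$ and some level $Mp^r$. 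The de Rham property of the attached Galois representation at $\p_i$ then follows from the construction of Galois representations for Hilbert modular forms (Blasius–Rogawski, Taylor) together with local–global compatibility at $p$.
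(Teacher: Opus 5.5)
Your construction, and your arguments for (1), (2) and (5), are essentially the paper's. For (2) the paper cites \cite[Lemma A1]{hsieh_yoshida}, which is the going-up argument you sketch. Membership in $\mathcal{X}_{\mathrm{cla},(a,b)}^{\geq 3}$ is a condition on the pullback to $\Lambda_{(a,b)}$ alone, so you do not need (4) there.

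\textbf{The gap is in (4).} Hida's control theorem, applied to $\Phi_{(\vec{\kappa'},\vec{w'})}$, only gives a nearly ordinary eigenform $g$ of level $\Gamma_1(M)\cap\Gamma_0(p)$ that is ordinary at $\p_1$. Statement (4) asserts more: $g$ must be the ordinary $p$-stabilization of a form in $S_{\vec{\kappa'},\vec{w'}}(\Gamma_1(M))$, i.e.\ $g$ is old at both $\p_1$ and $\p_2$. Your proposal never proves this.

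The obstacle you single out instead is not a real one. $\hh_{2,\bar\tau}^{\mathrm{red}}$ is a quotient of Hida's three-variable algebra, so every homomorphism out of it is a homomorphism out of that algebra. The control theorem therefore applies directly, with no need for freeness or projectivity of the specialized Hida module (a far stronger input than the statement requires). Likewise, saying the construction ``commutes with specialization for $\kappa'_2\geq 3$'' does not identify where $\kappa'_2\geq 3$ is actually used.

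The missing argument is the following:
\begin{itemize}
\item The $\Gamma_0(p)$-level makes the $p$-part of the nebentypus trivial. So each $\pi_{g,\p_i}$ is either an unramified principal series or an unramified twist $\mathrm{St}\otimes\chi$ of Steinberg.
\item In the Steinberg case $a(g,\p_i)^2=\chi(p)^2p^{\kappa'_1-2}$, so $\mathrm{val}_p(a(g,\p_i))=(\kappa'_1-2)/2$.
\item Ordinarity at $\p_1$ makes this valuation $0$, forcing $\kappa'_1=2$.
\item Near-ordinarity at $\p_2$ makes it $(\kappa'_1-\kappa'_2)/2$, forcing $\kappa'_2=2$.
\item Both are excluded by $\kappa'_1\geq\kappa'_2\geq 3$, so $g$ is old at $\p_1$ and $\p_2$ and comes from level $\Gamma_1(M)$.
\end{itemize}

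\textbf{A smaller issue concerns (3).} It is an existence statement, not a special case of (4). It follows from Hida's main theorem, which gives a point of the three-variable algebra for the $p$-stabilization of $g_0$. That point factors through the $v'_1=0$ quotient because $g_0$ is $\p_1$-ordinary.

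However, since you discard all components not lying over $(0)\subset\Lambda_{(a,b)}$, you must also check that this point lies on a surviving component. Torsion-freeness is not simply ``inherited'' under the specialization $v'_1=0$. The paper handles this by proving that $\hh_{2,\bar\tau}$ is already equidimensional of dimension $3$. It does so via Krull's Hauptidealsatz and catenarity, applied to the quotient of the equidimensional three-variable algebra by the single $\p_1$-weight element. Your remark about parallel weight $2$ when $\kappa_2=2$ does not substitute for this.
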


\begin{proof}
The proof of the theorem essentially follows from the construction of the $3$-variable nearly ordinary Hecke algebra $\hh_3^{\mathrm{n.o.}}$ by Hida \cite{MR1097614}. The ring $\hh_3^{\mathrm{n.o.}}$ is a finite, semi-local, reduced, torsion-free and equidimensional (with dimension $4$) $\Z_p\llbracket\Z_p^\times \times (\mathrm{O}_{K_{\p_1}})^\times \times (\mathrm{O}_{K_{\p_2}})^\times \rrbracket$-algebra. We first define 
\begin{align*}
\hh_2  \coloneqq \hh_3^{\mathrm{n.o.}} \otimes_{\Z_p\llbracket\Z_p^\times \times (\mathrm{O}_{K_{\p_1}})^\times \times (\mathrm{O}_{K_{\p_2}})^\times \rrbracket} \Z_p\llbracket\Z_p^\times \times (\mathrm{O}_{K_{\p_2}})^\times \rrbracket.
\end{align*}

We obtain that $\hh_2$ is a finite, semi-local, equidimensional (with dimension $3$) $\Z_p\llbracket\Z_p^\times \times (\mathrm{O}_{K_{\p_2}})^\times \rrbracket$-algebra. Observe that the topological group $\mathrm{O}_{K_{\p_1}}^\times$ is pro-cyclic. Now, note that the equidimensionality of $\hh_2$ essentially follows from the equidimensionality of $\hh_3$ by an application of Krull's Hauptidealsatz \cite[Theorem 10.2]{MR1322960} and the fact that any regular local ring is universally catenary \cite[Corollary 18.10]{MR1322960}. 

 Hida's main Theorem \cite{MR1097614} gives us a ring homomorphism $\hh_3^{\mathrm{n.o.}} \rightarrow \overline{\mathbb{Z}}_p$ determined by the Hecke eigensystem of $g_0$. Since $g_0$ itself is ordinary at $\p_1$, we note that this ring homomorphism factors through $\hh_2$. The corresponding mod-$p$ Hecke eigensystem gives us a ring homomorphism $\hh_2 \rightarrow \overline{\mathbb{F}}_p$.  The kernel of this map gives us a maximal ideal in $\hh_2$ and we let $\hh_{2,\bar\tau}$ denote the corresponding local component. The intersection of the maximal ideal of $\hh_2$ with $\Z_p\llbracket\Z_p^\times \times (\mathrm{O}_{K_{\p_2}})^\times \rrbracket$, by definition, corresponds to the local component $\Lambda_{(a,b)}$. We note here that $\hh_{2,\bar\tau}$ is equidimensional (with dimension $3$) and finite over $\Lambda_{(a,b)}$. We let $\hh_{2,\bar\tau}^{\mathrm{red}}$ denote the maximal reduced quotient of $\hh_{2,\bar\tau}$. Note that $\hh_{2,\bar\tau}^{\mathrm{red}}$
 is a finite, local, reduced, equidimensional with dimension $3$ (and hence torsion-free) $\Lambda_{(a,b)}$-algebra. 

 The existence of the map $j_2$ satisfying property (\ref{item:genbyheckeoperators}) follows since $\hh_{2,\bar\tau}^{\mathrm{red}}$ is a quotient of the nearly ordinary \textit{Hecke algebra} $\hh_3^{\mathrm{n.o.}}$ constructed by Hida \cite{MR1463699}. Property (\ref{heckealgdensityprimes}) follows from \cite[Lemma A1]{hsieh_yoshida}. Property (\ref{item:existence}) follows from construction of $\hh_{2,\bar\tau}$.  

 The map $\Phi_{(\vec{\kappa'},\vec{w'})}$ of (\ref{item:actualcontrol}) gives us a ring homomorphism $\hh_3^{\mathrm{n.o.}} \rightarrow \overline{\Q}_p$ which according to  Hida's main Theorem \cite{MR1097614} corresponds to the Hecke eigensystem of a nearly ordinary form $g$ in  $S_{\vec{\kappa'},\vec{w'}}(\Gamma_1(M)\cap \Gamma_0(p))$. Since this map factors through $\hh_2$, the modular form $g$ is ordinary at $\p_1$.  We now need to show that $g$ corresponds to the ordinary $p$-stabilization of a nearly ordinary modular form in $S_{\vec{\kappa'},\vec{w'}}(\Gamma_1(M))$. The $\Gamma_0(p)$-level ensures that  the component of the Nebentypus of $g$ at $p$ is trivial. Therefore for each $i \in \{1,2\}$, the classification of the local representation $\pi_{g,\p_i}$ for $\Gl_2(\Q_p)$ tells us that $\pi_{g,\p_i}$ is either unramified principal series or an unramified twist of the Steinberg representation (say $\mathrm{St} \otimes \chi$). See, for instance, \cite{MR2234120}. Suppose for the sake of contradiction that $\pi_{g,\p_i}$ is an unramified twist of the Steinberg representation $\mathrm{St} \otimes \chi$, i.e.,
      \begin{align}       \label{eq:steinbergtwist}     \chi(p)^2=a(g,\p_i)^2/p^{\kappa'_1-2} 
      \end{align}
 for some finite unramified character $\chi$ of $\Q_p^\times$.  Since $p$-adic valuations of roots of unity equals zero, we may now finish our proof as follows. If $i=1$, then since $g$ is ordinary at $\p_1$, we observe that the $p$-adic valuation of $\iota_p(a(g,\p_1))$ equals $0$ if $\kappa'_1 \geq 2$. Thus, equation (\ref{eq:steinbergtwist}) can only hold if $\kappa'_1=2$. Similarly, using the fact that $g$ is nearly ordinary at $\p_2$, we observe that the $p$-adic valuation of $\iota_p(a(g,\p_2))$ equals $(\kappa'_1-\kappa'_2)/2$ if $\kappa'_1 \geq \kappa'_2 \geq 2$. Thus, equation (\ref{eq:steinbergtwist}) can only hold if $\kappa'_2=2$. This leads to a contradiction since by assumption $\kappa'_1 \geq  \kappa'_2 \geq 3$. As a result, for $i\in \{1,2\}$, the local $\Gl_2(K_{\p_i})$ representation $\pi_{g,\p_i}$ is an unramified principal series. Hence, we see that $g$ is not new at either $\p_1$ or $\p_2$. Therefore, the underlying Hecke eigensystem must correspond to the ordinary $p$-stabilization of a nearly ordinary modular form in $S_{\vec{\kappa'},\vec{w'}}(\Gamma_1(M))$.

The fact that the Hecke eigensystem, as prescribed in point (\ref{item:deRham}), corresponds to a Hilbert modular eigenform $g$ with tame level $M$ (with the possibility of having a power of $p$ appearing in the level) follows from Hida's control theorem \cite[Theorem 2.4, Corollary 2.5]{MR1463699}. The fact that the corresponding $p$-adic Galois representation is de Rham at both $\p_1$ and $\p_2$ follows, for instance, from the work of Blasius--Rogawski \cite[Corollary 2.5.4]{MR1235020} since the weight $(\kappa'_1,\kappa'_2)$ is ``motivic''.
 This concludes our proof. 
\end{proof}

Let $C_M^+$ denote the narrow ray class group of $K$ with conductor $M$. For each minimal prime $\eta$ of $\hh_{2,\bar\tau}^{\mathrm{red}}$, we get a group homomorphism, which one may view as an extension to the corresponding irreducible component, of the nebentypus at classical specializations giving us the following commutative diagram:

\begin{align*}
\xymatrix{
C_{M}^+ \ar[rd]\ar[r]^{\psi_{\eta}}& \hh_{2,\bar\tau}^{\mathrm{red}}/\eta \ar[d] \\
& \prod \limits \dfrac{\hh_{2,\bar\tau}^{\mathrm{red}}/\eta }{\mathfrak{P}}
}
\end{align*}
Here, the product is taken over all primes $\mathfrak{P}$ in $\mathcal{X}(\hh_{2,\bar\tau}^{\mathrm{red}}/\eta)$ satisfying the congruence condition $(\kappa'_1,\kappa'_2) \equiv (a,b) \ \mathrm{mod} \ (p-1)\Z^2$ for the weights. Here, $\psi_\eta$ is defined as follows: 
\begin{align} \label{eq:nebentypeformula}
\psi_\eta(\mathfrak{q}) =  \omega(q)^{-a} q^{2}  (1+x_1)^{\frac{-\log_p(q)}{\log_p{(1+p)}}} S_\q, \forall \text{ prime ideals $\q$, co-prime to $M$ with $q=\mathrm{Norm}^K_\Q(\q)$}.
\end{align}
Here, $\omega$ is the Hensel lift of the mod-$p$ cyclotomic character. The fact that $\psi_\eta$ is a group homomorphism follows by noting that the specializations at the primes $\mathfrak{P}$ of $\hh_{2,\bar\tau}^{\mathrm{red}}/\eta$ of the formula given in equation (\ref{eq:nebentypeformula}) correspond to the Nebentypus of those corresponding classical specializations along with the observations that these specializations are dense in $\mathrm{Spec}(\hh_{2,\bar\tau}^{\mathrm{red}}/\eta)$. Furthermore combining the facts that $C_M^+$ is finite and that $\hh_{2,\bar\tau}^{\mathrm{red}}/\eta$ is a domain, one may conclude that $\mathrm{Image}(\psi_\eta)$ is a finite group, taking values inside the group of roots of unity. We consider the following decomposition: 
\begin{align}
\hh_{2,\bar\tau}^{\mathrm{red}} \otimes_{\Lambda_{(a,b)}} \Frac(\Lambda_{(a,b)}) = \prod \limits_{\mathrm{Image}(\psi_\eta)=\{1\}} \Frac\left(\dfrac{\hh_{2,\bar\tau}^{\mathrm{red}}}{\eta}\right) \times \prod \limits_{\mathrm{Image}(\psi_\eta)\neq\{1\}}\Frac\left(\dfrac{\hh_{2,\bar\tau}^{\mathrm{red}}}{\eta}\right) 
\end{align}

We let $\TTT$ denote $\mathrm{Image}\left(\hh_{2,\bar\tau}^{\mathrm{red}} \rightarrow \prod \limits_{\mathrm{Image}(\psi_\eta)=\{1\}} \Frac\left(\dfrac{\hh_{2,\bar\tau}^{\mathrm{red}}}{\eta}\right)\right)$. Note that $\TTT$ is not the zero ring by the assumption $g_0$ has trivial Nebentypus. It follows directly from Theorem \ref{thm:control} that the ring $\TTT$ carries over most of the required ring theoretic properties of $\hh_{2,\bar\tau}^{\mathrm{red}}$. That is, we have the following proposition:

\begin{proposition} \label{prop:trivnebentypuscomponent}
$\TTT$ is a finite, torsion-free, reduced, local, equidimensional (with dimension $3$) $\Lambda_{(a,b)}$-algebra. Furthermore, properties listed in Theorem \ref{thm:control} hold after replacing $\hh_{2,\bar\tau}^{\mathrm{red}}$ with $\TTT$ in the statement of Theorem \ref{thm:control}.
\end{proposition}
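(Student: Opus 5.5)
The strategy is to identify $\TTT$ with the quotient $\hh_{2,\bar\tau}^{\mathrm{red}}/\mathfrak{a}$, where $\mathfrak{a}=\bigcap_{\psi_\eta=1}\eta$ is the intersection of the minimal primes $\eta$ with trivial $\psi_\eta$. Everything is then read off from the structure of $\hh_{2,\bar\tau}^{\mathrm{red}}$ established in Theorem \ref{thm:control}.

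First, since $\hh_{2,\bar\tau}^{\mathrm{red}}$ is reduced and torsion-free over $\Lambda_{(a,b)}$, the map $\hh_{2,\bar\tau}^{\mathrm{red}} \to \prod_{\eta}\Frac(\hh_{2,\bar\tau}^{\mathrm{red}}/\eta)$ is injective. Projecting onto the factors with $\mathrm{Image}(\psi_\eta)=\{1\}$ therefore has kernel exactly $\mathfrak{a}$, so $\TTT\cong \hh_{2,\bar\tau}^{\mathrm{red}}/\mathfrak{a}$. From this description the algebraic properties follow directly.
\begin{itemize}
\item $\TTT$ is a quotient of a finite local $\Lambda_{(a,b)}$-algebra, so it is finite and local.
\item It is nonzero because $g_0$ has trivial Nebentypus, so some $\eta$ through $\ker\Phi_{(\vec{\kappa}_0,\vec{w}_0)}$ has $\psi_\eta=1$.
\item It is reduced, since $\mathfrak{a}$ is an intersection of primes.
\item Its minimal primes are the images of the $\eta$ with $\psi_\eta=1$. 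Each $\hh_{2,\bar\tau}^{\mathrm{red}}/\eta$ has dimension $3$ because $\hh_{2,\bar\tau}^{\mathrm{red}}$ is equidimensional. Hence $\TTT$ is equidimensional of dimension $3$.
\item It embeds into a product of fraction fields of domains finite over $\Lambda_{(a,b)}$, each of which contains $\Lambda_{(a,b)}$ (finite of dimension $3$ over a $3$-dimensional domain forces $\eta\cap\Lambda_{(a,b)}=0$). Hence $\TTT$ is torsion-free.
\end{itemize}

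Next we transport the properties of Theorem \ref{thm:control}, taking $j_2$ composed with the surjection $\hh_{2,\bar\tau}^{\mathrm{red}}\twoheadrightarrow\TTT$ as the new structure map.
\begin{itemize}
\item Density of the image in (\ref{item:genbyheckeoperators}) is preserved under continuous surjections.
\item Properties (\ref{item:actualcontrol}) and (\ref{item:deRham}) concern homomorphisms out of $\TTT$. Such maps are in particular homomorphisms out of $\hh_{2,\bar\tau}^{\mathrm{red}}$, so these properties are inherited verbatim.
\item For (\ref{item:existence}), we must check that $\Phi_{(\vec{\kappa}_0,\vec{w}_0)}$ factors through $\TTT$, i.e.\ kills $\mathfrak{a}$. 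Since $\ker\Phi$ is a prime containing $\ker(\hh_{2,\bar\tau}^{\mathrm{red}}\to\TTT)$ exactly when it contains some $\eta$ with $\psi_\eta=1$, it suffices to find such an $\eta\subset\ker\Phi$.

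Pick any minimal prime $\eta\subset\ker\Phi$. Evaluating (\ref{eq:nebentypeformula}) at $\ker\Phi$ gives the Nebentypus of $g_0$, which is trivial. The values $\psi_\eta(\q)$ are roots of unity of order prime to $p$ up to $p$-power torsion. More cleanly: a finite character into the domain $\hh_{2,\bar\tau}^{\mathrm{red}}/\eta$ whose reduction modulo the prime $\ker\Phi/\eta$ is trivial is itself trivial, since distinct roots of unity in a characteristic-$0$ domain stay distinct after specialization to $\overline{\Q}_p$ (the residue field of $\ker\Phi$ has characteristic $0$). Hence $\psi_\eta=1$.
\end{itemize}

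The main obstacle is the density property (\ref{heckealgdensityprimes}). Since $\mathrm{Spec}(\TTT)$ is the closed subset $V(\mathfrak{a})$, it is the union of the irreducible components $V(\eta)$ with $\psi_\eta=1$. For each such $\eta$, I would show that the classical primes of weight $\geq 3$ in $V(\eta)$ are Zariski dense in $V(\eta)$. I would get this by applying \cite[Lemma A1]{hsieh_yoshida} directly to the domain $\hh_{2,\bar\tau}^{\mathrm{red}}/\eta$, which is finite torsion-free over $\Lambda_{(a,b)}$; that lemma only needs a finite torsion-free algebra over $\Lambda_{(a,b)}$ and the fact that arithmetic points of $\Lambda_{(a,b)}$ with $\kappa'_1\ge\kappa'_2\ge3$ in the fixed congruence class are dense. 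Alternatively, one could argue that a dense subset of an equidimensional space meets each irreducible component densely, because the components are finite in number and irredundant. Either way, the union over $\eta$ with $\psi_\eta=1$ of these dense sets is dense in $\mathrm{Spec}(\TTT)$, and these primes are exactly $\mathcal{X}^{\geq3}_{\mathrm{cla},(a,b)}(\TTT)$.
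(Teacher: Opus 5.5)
Your proposal is correct and follows the same route as the paper. The paper simply asserts that everything is inherited from Theorem \ref{thm:control}, and that $\TTT\neq 0$ because $g_0$ has trivial Nebentypus; your identification $\TTT\cong\hh_{2,\bar\tau}^{\mathrm{red}}/\bigcap_{\psi_\eta=1}\eta$, your roots-of-unity argument showing that $\Phi_{(\vec{\kappa}_0,\vec{w}_0)}$ kills this intersection, and your componentwise density argument supply exactly the details the paper leaves implicit. Of your two density arguments, the second is the self-contained one: a dense subset of a Noetherian space meets each irreducible component densely, which needs only irredundancy of the components, not equidimensionality. So you do not need to rely on the precise hypotheses of \cite[Lemma A1]{hsieh_yoshida}.
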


\section{Siegel modular forms over $\Q$ with genus two} \label{sec:siegelgenus2}

In this section, we gather the required background information on Hida families associated to Siegel modular forms over $\Q$ in the genus two setting along with Galois theoretic properties of stable Yoshida lifts (automorphic inductions from Hilbert modular forms over real quadratic fields). 

Let $R$ be a commutative ring.  Recall that the general symplectic group $\GSp_4(R)$ is defined below:
\begin{align*}
\GSp_4(R) := \left\{g \in \Gl_4(R), \text{ such that there exists a $\lambda(g) \in R^\times$ satisfying } g^T \underbrace{\left[\begin{array}{cccc} 0 & 0 & 0 & 1 \\ 0 & 0 & 1 & 0 \\ 0 & - 1 & 0 & 0 \\ -1 & 0 & 0 & 0 \\ \end{array}\right]}_Jg = \lambda(g)J \right\}.
\end{align*}
The map $\lambda: \GSp_4(R) \rightarrow R^\times$ turns out to be a character, called the similitude character. \\

Let $\varPi$ be an irreducible cuspidal automorphic representation of $\GSp_4(\AAA_\Q)$. Throughout this section, we shall assume that the local component $\varPi_\infty$ of $\varPi$ at $\infty$   is a holomorphic (limit of) discrete series representation (in the sense of \cite[Section 2.5]{MR3748235}) with \textit{regular} weight $(k'_1,k'_2)$ with $k'_1 \geq k'_2 \geq 2$. Furthermore, we will also assume that $\varPi$ has vectors that are right-invariant under the action of the adelization of a principal Siegel congruence subgroup $\Gamma(N')$, for some $N' \geq 3$ that is co-prime to $p$. We let $S_{\mathrm{good}}$ denote the set of finite primes where $\varPi_\ell$ is unramified. If $\ell$ belongs to $S_{\mathrm{good}}$, then $\varPi_\ell$ has an action of the $\Z$-algebra consisting of all $\GSp_4(\Z_\ell)$ bi-invariant continuous functions $\GSp_4(\Q_\ell) \rightarrow \overline{\Q}$ with compact support (with multiplication given by convolution). This algebra is often called the spherical Hecke algebra and denoted $\mathcal{H}(\GSp_4(\Q_\ell),\Z)$. The spherical Hecke algebra turns out (see \cite[Section 5.1.3]{MR4105535}) to be isomorphic to $\Z[T_{\ell,0},\ T_{\ell,0}^{-1}, \ T_{\ell,1}, \ T_{\ell,2} ]$, where $T_{\ell,2}$, $T_{\ell,1}$ and $T_{\ell,0}$ are (respectively) the characteristic functions of the following double cosets:
\begin{align*}
\GSp_4(\Z_\ell)\left[\begin{array}{cccc} 1 \\ & 1 \\ & & \ell \\ & & & \ell \end{array} \right] \GSp_4(\Z_\ell),  \quad  \GSp_4(\Z_\ell)\left[\begin{array}{cccc} 1 \\ & \ell \\ & & \ell \\ & & & \ell^2 \end{array} \right] \GSp_4(\Z_\ell),  \quad  \ell \GSp_4(\Z_\ell).
\end{align*} We let $Q_{\varPi_\ell}(X)$ denote the monic degree four Hecke polynomial at $\ell$:
\begin{align*} 
Q_{\varPi_\ell}(X):=X^4 - T_{\ell,2}(\varPi) X^3  + \ell(T_{\ell,1}(\varPi)+(\ell^2+1)T_{\ell,0}(\varPi))X^2 - \ell^3T_{\ell,2}(\varPi) T_{\ell,0}(\varPi) X + \ell^6T_{\ell,0}^2(\varPi).
\end{align*} 

\begin{definition}\label{def:gsppordinary}
We will say that an automorphic representation $\varPi$, with weight $(k'_1,k'_2)$, is \underline{$p$-ordinary} if $p \in S_{\mathrm{good}}$ and if the $p$-adic valuations (normalized so that the $p$-adic valuation of $p$ equals $1$) of the four roots of the Hecke polynomial $Q_{\varPi_p}(X)$ inside $\overline{\Q}_p$ (via $\iota_p$) equal $0$, $k'_2-2$, $k'_1-1$ and $k'_1 + k'_2 - 3$.  
\end{definition}
We summarize the properties of the four dimensional Galois representation associated to $\varPi$:
\begin{theorem}[Laumon  \cite{MR2234859}, Taylor \cite{MR1240640},  Weissauer \cite{MR2234860}, Urban \cite{MR2234861}]\label{thm:GSP4weissauertaylorlaumon} \mbox{}
Let $\varPi$ be an automorphic representation of $\GSp_4(\mathbb{A}_\Q)$ with regular \textit{cohomological} weight $(k'_1,k'_2)$, that is,  $k'_1\geq k'_2 \geq 3$.
\begin{enumerate}[label=(\roman*)]
\item\label{thm:galoispoint1} Let $E_\varPi$ denote the field obtained by adjoining to $\Q$, all roots of the Hecke polynomial $Q_{\varPi_\ell}(X)$ for every $\ell \in S_{\mathrm{good}}$. Then, $E_\varPi$ is a number field.
\item\label{thm:galoispoint2} There exists a unique continuous semi-simple Galois representation $\varrho_{\varPi,p} :G_{\Q,S} \rightarrow \Gl_4(E_{\varPi,p})$ such that for every prime $\ell \in S_{\mathrm{good}} \setminus \{p\}$, we have the following equality of characteristic polynomials:
\begin{align} \label{eq:equalityheckecharacteristicpoly}
\det\left(X \cdot I_4-\varrho_{\varPi,p}(\Frob_\ell)\right) = Q_{\varPi_\ell}(X).
\end{align}
Here, $E_{\varPi,p}$ denotes the compositum of the fields $\iota_p(E_\varPi)$ and $\Q_p$ inside $\overline{\Q}_p$. In particular, the Galois representation $\varrho_{\varPi,p}$ is unramified at all primes $\ell \in S_{\mathrm{good}} \setminus \{p\}$. 
\item\label{thm:galoispoint3} The Galois representation $\varrho_{\varPi,p}$ is Hodge--Tate  with weights (we normalize the $p$-adic cyclotomic character to have Hodge--Tate weight $1$) $0, \quad k'_2 -2, \quad k'_1-1, \quad k'_1+k'_2-3$.
\item\label{thm:galoispoint5} If $\varPi$ is $p$-ordinary and is neither CAP nor endoscopic, then there exist unramified characters $\psi_0,\psi_1, \psi_2,\psi_3$ of $G_{\Q_p}$ such that
\[\varrho_{\varPi,p}|_{G_{\Q_p}} \simeq \begin{pmatrix} \psi_3\cyc^{k'_1+k'_2-3} & \star & \star & \star \\ 0 & \psi_2\cyc^{k'_1-1} & \star & \star\\ 0 & 0 & \psi_1\cyc^{k'_2-2} & \star \\
0 & 0 & 0 & \psi_0 \end{pmatrix},\]
with $\psi_0(\Frob_p)=\alpha_0$, $\psi_1(\Frob_p) = p^{2-k'_2}\alpha_1$, $\psi_2(\Frob_p) = p^{1-k'_1}\alpha_2$ and $\psi_3(\Frob_p) = p^{3-k'_1-k'_2}\alpha_3$.
Here, $\alpha_0, \alpha_1, \alpha_2, \alpha_3$ are roots of the Hecke polynomial $Q_{\varPi_p}(X)$ with valuations $0$, $k'_2-2$, $k'_1-1$ and $k'_1 + k'_2 - 3$, respectively. 
\end{enumerate}
\end{theorem}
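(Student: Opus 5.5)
The plan is to assemble this from the cohomology of Siegel threefolds rather than prove anything new; each item reduces to a standard input, and the work is in matching normalizations.

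\textbf{Items \ref{thm:galoispoint1} and \ref{thm:galoispoint2}.} Since $k'_1\geq k'_2\geq 3$, $\varPi_\infty$ is a holomorphic discrete series. Hence $\varPi_f$ contributes to the interior cohomology $H^3_!(S_{\Gamma(N')},\mathcal{V}_{k'})$ of the Siegel threefold of level $\Gamma(N')$. Here $\mathcal{V}_{k'}$ is the local system attached to the algebraic representation of $\GSp_4$ of highest weight determined by $(k'_1,k'_2)$.

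\begin{itemize}
\item For \ref{thm:galoispoint1}: this cohomology has a $\Q$-structure preserved by the spherical Hecke algebras $\mathcal{H}(\GSp_4(\Q_\ell),\Z)$ for $\ell\in S_{\mathrm{good}}$. The eigenvalues $T_{\ell,i}(\varPi)$ are therefore algebraic and lie in a single number field, namely the field of rationality of the finite-dimensional $\varPi_f$-isotypic piece. The roots of the finitely many-parameter family of polynomials $Q_{\varPi_\ell}(X)$ then generate a number field; here I would invoke Ramanujan-type bounds (or the Satake parameters lying in a fixed finite extension) to get a uniform bound.
\item For \ref{thm:galoispoint2}: take the $\varPi_f$-isotypic part of $H^3_{\text{ét},!}(S_{\Gamma(N')}\otimes\overline{\Q},\mathcal{V}_{k',p})$. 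Use the Langlands--Kottwitz comparison of the Lefschetz and Arthur--Selberg trace formulas (Laumon, Weissauer), together with Taylor's pseudo-character interpolation from the non-endoscopic and endoscopic cases, to obtain a $4$-dimensional semisimple representation satisfying \eqref{eq:equalityheckecharacteristicpoly}. Uniqueness follows from Chebotarev density and Brauer--Nesbitt.
\end{itemize}

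\textbf{Item \ref{thm:galoispoint3}.} Since $(N',p)=1$, the Siegel threefold has good reduction at $p$. Faltings' comparison theorem then shows that $\varrho_{\varPi,p}$ is crystalline at $p$ (hence Hodge--Tate). The Hodge--Tate weights are read off from the BGG decomposition of the de Rham cohomology (Faltings--Chai). With the cyclotomic character normalized to have weight $1$, they are $0,\,k'_2-2,\,k'_1-1,\,k'_1+k'_2-3$.

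\textbf{Item \ref{thm:galoispoint5}.} This is the main obstacle, and I would argue it via $p$-adic Hodge theory (Urban's approach).

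\begin{enumerate}
\item \emph{Local--global compatibility at $p$.} Show that the crystalline Frobenius on $D_{\mathrm{cris}}(\varrho_{\varPi,p}|_{G_{\Q_p}})$ has characteristic polynomial $Q_{\varPi_p}(X)$. I would get this from the Eichler--Shimura type congruence relation, or from the trace formula argument run at $p$ combined with Katz--Messing.
\item \emph{Building the filtration.} By $p$-ordinarity, the Newton slopes are $0,\,k'_2-2,\,k'_1-1,\,k'_1+k'_2-3$, so the Newton polygon coincides with the Hodge polygon. Weak admissibility (Mazur's lemma in Fontaine's framework) then shows that the slope decomposition of $D_{\mathrm{cris}}$ is compatible with the Hodge filtration. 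This produces a $\varphi$-stable flag of admissible submodules whose successive quotients are rank one with Hodge weight equal to the slope.
\item \emph{Identifying the graded pieces.} Applying Fontaine's equivalence gives the upper triangular shape. The diagonal characters are unramified twists $\psi_i\cyc^{w_i}$, where $\psi_i(\Frob_p)=p^{-w_i}\alpha_i$ after matching arithmetic versus geometric Frobenius conventions.
\end{enumerate}

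The hypothesis that $\varPi$ is neither CAP nor endoscopic is used in two places: to guarantee that $\varrho_{\varPi,p}$ occurs with multiplicity one in degree $3$, and that it is genuinely $4$-dimensional. Without it, the $\alpha_i$ and the slopes could fail to match one-to-one. I expect the delicate points to be the crystalline Frobenius compatibility in step 1 and these normalization checks.
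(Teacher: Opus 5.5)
\textbf{The gap is in item (i), in the step from coefficients to roots.} Your rational-structure argument is correct as far as it goes. It shows that the Hecke eigenvalues $T_{\ell,i}(\varPi)$, i.e.\ the \emph{coefficients} of the $Q_{\varPi_\ell}(X)$, lie in a single number field $E$. This is the field of rationality of $\varPi_f$, which is finite because $\mathrm{Aut}(\C)$ permutes the finitely many $\varPi'_f$ contributing to $H^3_!$ at level $\Gamma(N')$.

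The next step cannot be carried out. Each $Q_{\varPi_\ell}$ splits over an extension of $E$ of degree at most $24$, but that extension moves with $\ell$. Ramanujan bounds control the archimedean absolute values of the roots, not the degree of the field they generate. Northcott does not help either, since the heights grow with $\ell$. Your parenthetical ``Satake parameters lying in a fixed finite extension'' is exactly what would need proof, and it is false in general.

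For example, take the stable Yoshida lifts of this paper. At a prime $\ell$ splitting as $\q_1\q_2$ in $K$, the roots of $Q_{\varPi_\ell}$ are those of $X^2-a_{\q_i}X+\ell^{\kappa'_1-1}$. Suppose all of them lay in one number field $L$, enlarged to contain the Hecke field of $g$. Reduce modulo a prime of $L$ above a rational prime $r$ that splits completely in $L$ and at which the residual image of $\tau_g$ contains $\mathrm{SL}_2(\mathbb{F}_r)$; such $r$ exist when $g$ is not CM. By Chebotarev, every element of that residual image would then have characteristic polynomial split over $\mathbb{F}_r$, which fails for non-split torus elements.

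So item (i), read literally, cannot be proved. The correct statement, as in Taylor's and Weissauer's theorems, takes $E_\varPi$ to be the field generated by the coefficients of the $Q_{\varPi_\ell}$. Then $\varrho_{\varPi,p}$ is valued in $\Gl_4$ of (possibly a finite extension of) its completion via $\iota_p$. Your first sentence proves exactly this, so drop the Ramanujan step rather than try to repair it. Nothing later needs more: the $\alpha_i$ in item (iv) are roots of the single polynomial $Q_{\varPi_p}$.

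\textbf{Items (ii)--(iv) are fine in outline.} The paper gives no proof of its own and only cites the literature, and your plan follows those sources: the Langlands--Kottwitz and pseudo-character construction of Laumon, Taylor and Weissauer; Faltings' comparison together with the BGG description of Faltings--Chai for the weights; and Urban's argument for (iv). In that argument, $\varphi$ on $D_{\mathrm{cris}}$ has characteristic polynomial $Q_{\varPi_p}$ (suitably normalized), with four distinct slopes equal to the Hodge--Tate weights. Weak admissibility then forces the span of the $j$ lowest-slope lines (in Fontaine's normalization) to satisfy $t_H=t_N$, so it is a weakly admissible subobject, and this gives the flag.

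One correction of emphasis. Excluding CAP and endoscopic $\varPi$ is not what makes the slopes match the $\alpha_i$; the Newton-equals-Hodge argument works for any crystalline $\varrho$ with that Frobenius polynomial and those weights. What the exclusion buys is that all of $\varrho_{\varPi,p}$ is cut out of $H^3_!$, so Faltings' comparison and Katz--Messing apply to it directly. Items (ii) and (iii) carry no such hypothesis. For endoscopic or CAP $\varPi$ the $\varPi_f$-isotypic part of $H^3_!$ carries only part of $\varrho_{\varPi,p}$, so there the remaining constituents and their weights must come from the lower-rank forms, not from the isotypic piece.
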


Assume now that $\varPi$ is $p$-ordinary. Let $\OO_\mathcal{E}$ denote the ring of integers in a finite extension $\mathcal{E}$ of $\Q_p$ with uniformizer $\pi_\mathcal{E}$. We then consider the Iwahori subgroup $\Iw_p(\OO_\mathcal{E})$ of $\GSp_4(\OO_\mathcal{E})$ which consists of all the matrices in  $\GSp_4(\OO_\mathcal{E})$ whose reduction to $\GSp_4(\OO_\mathcal{E}/(\pi_\mathcal{E}))$ is upper-triangular. Consider the commutative subalgebra $\Z[U_{p,1},U_{p,2}]$ of the $\Z$-algebra consisting of all $\Iw_p$ bi-invariant continuous functions $\GSp_4(\Q_p) \rightarrow \overline{\Q}$ with compact support $C_c^\infty(\GSp_4(\Q_p)//\Iw_p(\Z_p),\Z)$, where $U_{p,2},U_{p,1}$ are the characteristic functions of the following double cosets:

\begin{align*}
\Iw_p(\Z_p)\left[\begin{array}{cccc} p \\ & p \\ & & 1 \\ & & & 1 \end{array} \right] \Iw_p(\Z_p),  \qquad  \Iw_p(\Z_p)\left[\begin{array}{cccc} p^2 \\ & p \\ & & p \\ & & & 1 \end{array} \right] \Iw_p(\Z_p).
\end{align*}

The dimension $\varPi_p^{\Iw_p(\Z_p)}$ turns out to be $8$. Suppose $\alpha_0$ and $\alpha_1$ are two roots of the Hecke polynomial $Q_{\varPi_p}(X)$ such that $\val_{p}(\alpha_0) = 0$ and $\val_p(\alpha_1) = k'_2-2$. Following Miyauchi--Yamauchi \cite[Section 7.1]{MR3320491}, one can consider the $p$-ordinary $p$-stabilized vector. That is, there exists a $p$-ordinary $p$-stabilized vector\footnote{the $p$-ordinary $p$-stabilized vector is unique, up to scalars, when the weight is cohomological, that is, $k'_1\geq k'_2 \geq 3$.} in $\varPi_p^{\Iw_p(\Z_p)}$ that determines (and is characterized) by the following ring homomorphism: 
\begin{align}\label{eq:Upeigenvalues}
\phi_{p-\mathrm{stab,ord}}: 	\Z[U_{p,1},U_{p,2}] & \rightarrow \overline{\Q}, \\
\notag U_{p,2} &\mapsto \alpha_0, \\
\notag  U_{p,1} &\mapsto (p^{2-k'_2}\alpha_1)\alpha_0.
\end{align}
The $p$-ordinary $p$-stabilized vector $\vec{v}$ determines a ring homomorphism: 
\begin{align}\label{eq:ordstab}\phi_{\vec{v}}: \Z\left[\left\{U_{p,1},U_{p,2}\right\}  \cup \left\{T_{\ell,0},T_{\ell,0}^{-1}, T_{\ell,1},T_{\ell,2}\right\}_{\ell \in S_{\mathrm{good}}\setminus \{p\}} \right] \rightarrow  \overline{\Q}_p.
\end{align}

We will call $\phi_{\vec{v}}$ the \textit{Hecke eigensystem with weight $(k'_1,k'_2)$}.

\subsection{Inductions of two dimensional representations} \label{subsec:consyoshida}
We first recall a calculation from \cite[Section 3]{DPmin} which implies that the image of $\bar\rho$ preserves a symplectic pairing over $\mathbb{F}^4$ i.e. under a suitable basis, the image of $\bar\rho$ is a subgroup of $\GSp_4(\mathbb{F})$.

Let $\{\vec{v}_1, \vec{v}_2\}$ denote  basis for the $\mathbb{F}[G_{K,S}]$ representation space corresponding to $\bar\tau$. 
Choose a prime $\ell_0$ that is ramified in $K/\Q$; the hypothesis \ref{hyp:squarefree} tells us that the natural image of inertia subgroup $I_{\ell_0}$ of $G_{\Q,S}$ inside  $\Gal{{\overline{\Q}^{\mathrm{ker}(\bar\tau)} \cdot \overline{\Q}^{\mathrm{ker}(\bar\tau^c)}}}{\Q}$ has order two. Let $\sigma$ denote an element in $I_{\ell_0}$ that maps to the non-trivial order two element. Such an element exists due to hypothesis \ref {hyp:squarefree}.  Note that $\sigma$ does not belong to $G_{K,S}$, but $\sigma^2$ belongs to $G_{K,S}$.

We consider the action of $G_{\Q,S}$ on the ordered basis $\{\sigma\cdot \vec{v}_1,  \vec{v}_1,  \vec{v}_2,  \sigma\cdot \vec{v}_2\}$ of the induced representation $\Ind^{G_{\Q,S}}_{G_{K,S}}(\bar\tau)$ giving us a homomorphism $\bar\rho:G_{\Q,S} \rightarrow \Gl_4(\mathbb{F})$. For an element $h$ in $G_{K,S}$,  let $\bar\tau(h)= \begin{pmatrix} \bar{a}_h & \bar{b}_h \\ \bar{c}_h & \bar{d}_h \end{pmatrix}$ and $\underbrace{\bar\tau(\sigma^{-1}h\sigma)}_{\bar\tau^c(h)}= \begin{pmatrix} \bar{x}_h & \bar{y}_h \\ \bar{u}_h & \bar{v}_h \end{pmatrix}$. We then have
\begin{align} \label{eq:residualembeddingGL2GSP4}
    \bar\rho(\sigma)= \begin{pmatrix}
0 & 1 & 0 & 0 \\1 & 0 & 0 & 0\\ 0 & 0 & 0 & 1 \\ 0 & 0 & 1 & 0
\end{pmatrix}, \qquad \bar\rho(h) = \begin{pmatrix} 
\bar{x}_h & 0 & 0 & \bar{y}_h \\ 0 & \bar{a}_h & \bar{b}_h& 0 \\ 0 & \bar{c}_h & \bar{d}_h & 0 \\ \bar{u}_h & 0 & 0 & \bar{v}_h  \end{pmatrix}.
\end{align}
 
Since the character $\det(\bar\tau)$ is obtained as the restriction of a $G_{\Q,S}$-character, we see that $\mathrm{Image}(\bar\rho)$ lands inside $\GSp_4(\mathbb{F})$. 

Let $\bar\lambda:G_{\Q,S} \rightarrow \mathbb{F}^\times$ denote the similitude character obtained from $\bar\rho$. Let $R$ be a complete Noetherian local ring with finite residue field $\mathbb{F}$. Let $\tau_R:G_{K,S} \rightarrow \Gl_2(R)$ be a lift of the two dimensional Galois representation $\bar\tau:G_{K,S} \rightarrow \Gl_2(\mathbb{F})$ of the introduction. We will present an abstract algebraic lemma in this setting to view its induction as being valued in $\GSp_4(R)$.

\begin{lemma} \label{lem:Gsp4basis}
Suppose that there exists a character $\lambda_{\tau_R}: G_{\Q,S} \rightarrow R^\times$ that lifts $\bar\lambda$ and such that its restriction to $G_{K,S}$ equals 
the character $\det(\tau_R):G_{K,S} \rightarrow R^\times$. Then, there exists a four dimensional Galois representation $\varrho_R:G_{\Q,S} \rightarrow \GSp_4(R)$ satisfying the following properties:
\begin{enumerate}
    \item $\varrho_R:G_{\Q,S} \rightarrow \GSp_4(R)$ is isomorphic to the induced representation $\Ind^{G_{\Q,S}}_{G_{K,S}}(\tau_R)$.
    \item The similitude character of $\varrho_R$ equals $\lambda_{\tau_R}$.
\end{enumerate}
    \end{lemma}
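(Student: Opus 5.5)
We mimic the residual computation in \eqref{eq:residualembeddingGL2GSP4}, now over $R$. Let $\{\vec{e}_1,\vec{e}_2\}$ be the standard basis of $R^2$ on which $G_{K,S}$ acts through $\tau_R$. Using the element $\sigma\in I_{\ell_0}\setminus G_{K,S}$ chosen above, realize $\Ind^{G_{\Q,S}}_{G_{K,S}}(\tau_R)$ as $R[G_{\Q,S}]\otimes_{R[G_{K,S}]}R^2$, a free $R$-module with basis $\{\sigma\vec{e}_1,\vec{e}_1,\vec{e}_2,\sigma\vec{e}_2\}$. For $h\in G_{K,S}$ write $\tau_R(h)=\begin{pmatrix} a_h & b_h\\ c_h & d_h\end{pmatrix}$ and $\tau_R(\sigma^{-1}h\sigma)=\begin{pmatrix} x_h & y_h\\ u_h & v_h\end{pmatrix}$. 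Then $h$ acts by the same block pattern as in \eqref{eq:residualembeddingGL2GSP4}. Also $\sigma$ sends $\vec{e}_i\mapsto\sigma\vec{e}_i$ and $\sigma\vec{e}_i\mapsto \sigma^2\vec{e}_i=\tau_R(\sigma^2)\vec{e}_i$. Since $G_{\Q,S}=G_{K,S}\sqcup\sigma G_{K,S}$, these matrices determine $\varrho_R$, which gives (1) for the raw basis.

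The main issue is that $\tau_R(\sigma^2)$ need not be the identity, so $\sigma$ need not act by the permutation matrix. The first step is therefore to conjugate so that $\varrho_R(\sigma)$ has a convenient form. Since $\sigma$ lies in inertia at the ramified prime $\ell_0$, its image has order two in $\Gal{\overline{\Q}^{\ker(\bar\tau)}\cdot\overline{\Q}^{\ker(\bar\tau^c)}}{\Q}$, so $\bar\tau(\sigma^2)=1$. I would try to choose $\sigma$, or replace it by a suitable element of $\sigma G_{K,S}$ of order two in the image of $\varrho_R$, so that $\tau_R(\sigma^2)=1$. If this fails, I would instead change basis to $\{\sigma\vec{e}_1,\vec{e}_1,\vec{e}_2,\sigma\vec{e}_2\}$ twisted by a square root of $\tau_R(\sigma^2)$. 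Such a square root exists by Hensel's lemma because $p\ge5$ and $\tau_R(\sigma^2)\equiv1$ modulo $\m_R$, and it commutes with $\tau_R(\sigma^2)$.

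The cleaner route is to define the form directly rather than by explicit matrices. On $R^2$ take the determinant pairing $\langle\,,\rangle$, which satisfies $\langle\tau_R(h)v,\tau_R(h)w\rangle=\det\tau_R(h)\langle v,w\rangle$. On the induced module $V=R^2\oplus\sigma R^2$ define
\[
B(v_1+\sigma w_1,\ v_2+\sigma w_2)=\langle v_1,v_2\rangle+c\,\langle w_1,w_2\rangle,
\]
where $c\in R^\times$ is chosen so that $B(\sigma x,\sigma y)=\lambda_{\tau_R}(\sigma)B(x,y)$.

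Checking this in the two cases is the heart of the proof.

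\textbf{Case $h\in G_{K,S}$.} On $\sigma R^2$ the element $h$ acts through $\tau_R(\sigma^{-1}h\sigma)$. Its determinant is $\det\tau_R(\sigma^{-1}h\sigma)=\lambda_{\tau_R}(\sigma^{-1}h\sigma)=\lambda_{\tau_R}(h)$, because $\lambda_{\tau_R}$ is a character of $G_{\Q,S}$ that restricts to $\det\tau_R$. This is exactly where the hypothesis is used.

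\textbf{Case $\sigma$.} We need the compatibility of $\sigma$ with $B$. The condition on $R^2$ forces $c=\lambda_{\tau_R}(\sigma)$. On $\sigma R^2$ we need $\langle\tau_R(\sigma^2)w_1,\tau_R(\sigma^2)w_2\rangle=c^{-1}\lambda_{\tau_R}(\sigma)\,c\,\langle w_1,w_2\rangle$, which reduces to $\det\tau_R(\sigma^2)=\lambda_{\tau_R}(\sigma)^2=\lambda_{\tau_R}(\sigma^2)$. This again holds because $\lambda_{\tau_R}|_{G_{K,S}}=\det\tau_R$.

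So $B$ is a perfect alternating form with multiplier $\lambda_{\tau_R}$. Finally, reorder and scale the basis to $\{\sigma\vec{e}_1,\vec{e}_1,\vec{e}_2,c'\sigma\vec{e}_2\}$ with an appropriate unit $c'$ and sign, so that the Gram matrix of $B$ becomes $J$. This gives $\varrho_R$ valued in $\GSp_4(R)$, isomorphic to the induction and with similitude $\lambda_{\tau_R}$, proving (1) and (2).

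I expect the only delicate point to be this sign and unit bookkeeping needed to match $J$. Lifting $\bar\lambda$ is not an issue, since it is part of the hypothesis.
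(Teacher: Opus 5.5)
Your argument is correct, and the route you call ``cleaner'' is genuinely different from the paper's.

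\textbf{The paper's route.} It works in coordinates. It uses Hensel's lemma to take a square root $A_\sigma$ of $\tau_R(\sigma^2)$ with $\det A_\sigma \equiv 1$, and passes to the basis $\{\sigma v_1, w_1, w_2, \sigma v_2\}$ with $(w_1\ w_2)=(v_1\ v_2)A_\sigma$. It then writes $\varrho_R(\sigma)$ and $\varrho_R(h)$ as explicit matrices and checks by hand that they lie in $\GSp_4(R)$. The similitude at $\sigma$ comes out as $\det A_\sigma$, which is only known to be a square root of $\lambda_{\tau_R}(\sigma)^2$. So the paper identifies the similitude with $\lambda_{\tau_R}$ indirectly: both characters agree on $G_{K,S}$ and both lift $\bar\lambda$. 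Their ratio is therefore a residually trivial character of $\Gal{K}{\Q}$, hence trivial because $p$ is odd.

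\textbf{Your route.} You take the orthogonal sum of the determinant pairing on $R^2$ with $\lambda_{\tau_R}(\sigma)$ times it on $\sigma R^2$.
\begin{itemize}
\item This avoids matrix square roots entirely.
\item Because $c$ is a free unit, you impose the multiplier $\lambda_{\tau_R}(\sigma)$ directly, so (2) holds on the nose. The verification reduces to the two identities $\det\tau_R(\sigma^{-1}h\sigma)=\lambda_{\tau_R}(h)$ and $\det\tau_R(\sigma^2)=\lambda_{\tau_R}(\sigma)^2$.
\item A by-product is that the hypothesis that $\lambda_{\tau_R}$ lifts $\bar\lambda$ is not needed for (1) and (2). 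Taking $c=-\lambda_{\tau_R}(\sigma)$ would realize the other extension of $\det\tau_R$, namely $\lambda_{\tau_R}$ times the quadratic character of $K$.
\item In your setup the hypothesis matters because $\bar\lambda(\sigma)=1$ forces $c\equiv 1$. Then your basis $\{\sigma e_1, e_1, e_2, c^{-1}\sigma e_2\}$ (take $c'=c^{-1}$; no sign change is needed) reduces to the residual basis of \eqref{eq:residualembeddingGL2GSP4}, so $\varrho_R$ is literally a lift of $\bar\rho$. That is how the lemma is used later in the paper.
\end{itemize}
What the paper's version buys in exchange is explicit matrix formulas that mirror the residual ones.

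\textbf{Two cosmetic points.}
\begin{itemize}
\item Your second paragraph (trying to arrange $\tau_R(\sigma^2)=1$, or twisting by a square root) is superfluous and can be deleted.
\item In the $\sigma$-case the right-hand side should be $\lambda_{\tau_R}(\sigma)\,c\,\langle w_1,w_2\rangle$, not $c^{-1}\lambda_{\tau_R}(\sigma)\,c\,\langle w_1,w_2\rangle$. The condition you then state, $\det\tau_R(\sigma^2)=\lambda_{\tau_R}(\sigma)^2$, is the correct one.
\end{itemize}
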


\begin{proof}
Since  $\mathrm{det}(\bar\tau)(\sigma^2)=1$ and since $p$ is odd, it follows using Hensel's lemma that $\mathrm{det}(\tau_R)(\sigma^2)$ has a square-root (say $s$) in $R$ such that its reduction in $\mathbb{F}$ equals $1$. Similarly, since $\mathrm{Trace}(\bar\tau)(\sigma^2)=2$, Hensel's lemma tells us that $\mathrm{Trace}(\tau_R)(\sigma^2)+2s$ has a square root in $R$ (say $t$), such that its reduction modulo $\mathbb{F}$ equals $2$. These observations let us conclude that the $2 \times 2$ matrix $\tau_R(\sigma^2)$ has a square-root $A_\sigma = \begin{pmatrix} a_\sigma & b_\sigma \\ c_\sigma & d_\sigma \end{pmatrix}$, such that the reduction of $\det(A_\sigma)$ in $\mathbb{F}$ equals $1$. See \cite[Equation (4)]{MR600232} for the construction of an explicit square-root. \\

Let $\{\vec{v}_{\tau_R,1}, \vec{v}_{\tau_R,2}\}$ denote the basis for the $R[G_{K,S}]$ representation space corresponding to $\tau_R$. We also consider the basis $\{\vec{w}_{\tau_R,1},\vec{w}_{\tau_R,2}\}$ as a basis for the $R[G_{K,S}]$ representation $\tau_R$ given as follows:
\[\vec{w}_{\tau_R,1} \coloneqq a_\sigma \vec{v}_{\tau_R,1} + c_\sigma \vec{v}_{\tau_R,2}, \qquad \vec{w}_{\tau_R,2} \coloneqq b_\sigma \vec{v}_{\tau_R,1} + d_\sigma \vec{v}_{\tau_R,2}.\]
We consider the action of $G_{\Q,S}$ on the ordered basis $\{\sigma\cdot \vec{v}_{\tau_R,1},  \vec{w}_{\tau_R,1},  \vec{w}_{\tau_R,2},  \sigma\cdot \vec{v}_{\tau_R,2}\}$ of the induced representation $\Ind^{G_{\Q,S}}_{G_{K,S}}(\tau_R)$ giving us a homomorphism $\varrho_R:G_{\Q,S} \rightarrow \Gl_4(R)$. For an element $h$ in $G_{K,S}$,  let $A_\sigma^{-1}\tau_R(h)A_{\sigma}= \begin{pmatrix} {a}_h & {b}_h \\ {c}_h & {d}_h \end{pmatrix}$ and $\tau_R(\sigma^{-1}h\sigma)= \begin{pmatrix} {x}_h & {y}_h \\ {u}_h & {v}_h \end{pmatrix}$. We then have
\[\varrho_R(\sigma)= \begin{pmatrix}
0 & a_\sigma & b_\sigma & 0 \\ a_\sigma & 0 & 0 & b_\sigma \\ c_\sigma & 0 & 0 & d_\sigma \\ 0 & c_\sigma & d_\sigma & 0
\end{pmatrix}, \qquad \varrho_R(h) = \begin{pmatrix} 
x_h & 0 & 0 & y_h \\ 0 & a_h & b_h & 0 \\ 0 & c_h & d_h & 0 \\ u_h & 0 & 0 & v_h  \end{pmatrix}.\] 
Since the character $\det(\tau_R)$ is obtained as the restriction of the $G_{\Q,S}$-character $\lambda_{\tau_R}$ to $G_{K,S}$, one can verify that the image of $\varrho_R$ is inside $\GSp_4(R)$. 

The fact that the similitude character of $\varrho_R$ lifts $\bar\lambda$ relies on our choice of the matrix $A_\sigma$. Note also that the restriction of the similitude character of $\varrho_R$ to $G_{K,S}$ agrees with $\det(\tau_R)$. Combining these observations allow us to deduce that the similitude character of $\varrho_R$ equals $\lambda_{\tau_R}$.
\end{proof}

\subsection{Stable Yoshida lifts} \label{subsec:stableyoshidalift}

Let $g$ be a cuspidal Hilbert modular eigenform over $K$ with regular weight $(\kappa'_1,\kappa'_2)$ with $\kappa'_1 \geq \kappa'_2 \geq 2$ and square-free level $M$ and trivial Nebentypus.  Let $\tau_{g}:G_{K,S} \rightarrow \Gl_2(\overline{\Z}_p)$ denote the two-dimensional Galois representation associated to $g$.  Note that $\det(\tau_{g})$ equals $\Res^{\Q}_{K}\left(\cyc^{\kappa'_1-1}\right)$. Here, $\cyc:G_{\Q,S} \rightarrow \Z_p^\times$ denotes the $p$-adic cyclotomic character. \\

We have the following proposition pertaining to the construction of an automorphic representation for ${\GSp_4}_{/\Q}$ corresponding to the automorphic induction of $g$.  The origins of this construction go back to Yoshida \cite{MR586427,MR758701} and B\"{o}cherer--Schulze-Pillot \cite{MR1096467,MR1292796,MR1475167}.

\begin{proposition}[Proposition 2.4 in \cite{DPmin}]\label{thm:yoshidarepspace} \mbox{}
There exists a cuspidal automorphic representation $\varPi_0$ with weight $(k'_1, k'_2)\coloneqq \left(\dfrac{\kappa'_1 + \kappa'_2}{2},\dfrac{\kappa'_1 - \kappa'_2}{2}+2\right)$ 
satisfying the following properties:
\begin{enumerate}

\item\label{con:stableYos1} For all primes $\ell \notin S \cup \{p\}$, the Hecke polynomial of $\varPi_0$ at $\ell$ equals \[\det\left(X \cdot I_4-\Ind^{G_{\Q,S}}_{G_{K,S}}(\tau_{g})(\Frob_\ell)\right).\] 

\item\label{con:stableYos2} There exists a Galois representation $\varrho_{\varPi_0,p}:G_{\Q,S} \rightarrow \GSp_4(\overline{\Z}_p)$ that is isomorphic to $\Ind^{G_{\Q,S}}_{G_{K,S}}(\tau_{g})$. 
Moreover, when $\varPi_0$ has cohomological weights, then the Galois representation $\varrho_{\varPi_0,p}$ is isomorphic to the $p$-adic Galois representation of Theorem \ref{thm:GSP4weissauertaylorlaumon}.

\item\label{con:stableYos3} There exists a fixed vector (say $f$) under the Siegel congruence subgroup $\Gamma_0^{(2)}(M\Delta_K)$ for $\varPi_0$. Here, $\Delta_K$ is the discriminant of the quadratic field $K$.
\item\label{con:stableYos4} If $g$ is  (ordinary at $\p_1$, nearly ordinary at $\p_2$), then  $\varPi_0$ is $p$-ordinary (in the sense of Definition \ref{def:gsppordinary}).

\end{enumerate}

\end{proposition}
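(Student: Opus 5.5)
The statement is Proposition~\ref{thm:yoshidarepspace}, which is quoted from \cite[Proposition 2.4]{DPmin}. My plan is to assemble it from the classical theta-lift construction of Yoshida lifts together with local Langlands compatibility.

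\textbf{Construction.} I will realize $\varPi_0$ as the theta lift from $\mathrm{GO}(2,2)$ to $\GSp_4$. The cuspidal representation $\pi_g$ of $\Gl_2(\AAA_K)$ with trivial central character gives a representation of $\mathrm{GSO}(V)$, where $V$ is the four-dimensional quadratic space $(\mathrm{M}_2(K)$-type, split$)$ attached to $K$. The relevant identification is $\mathrm{GSO}(V)\simeq (\Gl_2(\AAA_K)\times \AAA_\Q^\times)/\AAA_K^\times$, and I use the Asai/Roberts description. Since $\pi_g$ is not Galois-conjugate-invariant, the assumption in \ref{lab:ind} that $\bar\tau\not\simeq\bar\tau^c$ implies $\pi_g\not\simeq\pi_g^c$. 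By Roberts' results, the global theta lift $\theta(\pi_g)$ to $\GSp_4$ is then nonzero and cuspidal; I will cite Roberts and Böcherer--Schulze-Pillot here. I will then choose the archimedean component so that it is the holomorphic (limit of) discrete series with Harish-Chandra parameter matching $(\kappa'_1,\kappa'_2)$. This produces the weight formula in \eqref{formula:weights}, namely $k'_1=(\kappa'_1+\kappa'_2)/2$ and $k'_2=(\kappa'_1-\kappa'_2)/2+2$.

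\textbf{Hecke polynomials and Galois representation.} At unramified $\ell$, the Satake parameters of the local theta lift are the multiset union of the parameters of $\pi_{g,\lambda}$ for $\lambda\mid\ell$, or the square roots of those parameters at inert $\ell$. This is a direct unramified theta correspondence computation, and it gives part (\ref{con:stableYos1}). For part (\ref{con:stableYos2}), I take $\varrho_{\varPi_0,p}=\Ind(\tau_g)$. It is valued in $\GSp_4$ by Lemma~\ref{lem:Gsp4basis}, using $\lambda=\cyc^{\kappa'_1-1}$. By Chebotarev, its characteristic polynomials agree with those from (\ref{con:stableYos1}). In the cohomological case, uniqueness in Theorem~\ref{thm:GSP4weissauertaylorlaumon}\ref{thm:galoispoint2} identifies the two semisimple representations. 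Irreducibility also follows, because $\tau_g\not\simeq\tau_g^c$.

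\textbf{Level and ordinarity.} For (\ref{con:stableYos3}), I will compute the local theta lifts at $\ell\mid M\Delta_K$ and exhibit a $\Gamma_0^{(2)}$-fixed vector, either through the paramodular/Siegel newform theory of Roberts--Schmidt or through Yoshida's explicit theta series with Eichler orders. Hypothesis \ref{hyp:squarefree} keeps the local representations Steinberg or ramified-quadratic, where this is known. I expect this step to be the main obstacle, since local fixed vectors at ramified primes of $K$ need care. For (\ref{con:stableYos4}), the Hecke roots at $p$ are the union of the roots at $\p_1$ and $\p_2$, because $p$ splits. Their valuations are $\{0,\kappa'_1-1\}$ and $\{(\kappa'_1-\kappa'_2)/2,(\kappa'_1+\kappa'_2)/2-1\}$. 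These match $0$, $k'_2-2$, $k'_1+k'_2-3$ and $k'_1-1$ respectively, which is exactly Definition~\ref{def:gsppordinary}.
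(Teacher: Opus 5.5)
The gap is in the construction, at the infinite place. The step that fails is ``I will then choose the archimedean component so that it is the holomorphic (limit of) discrete series.'' The archimedean component of a global theta lift is not yours to choose: it is the local lift $\theta(\pi_{g,\infty})$ for the dual pair attached to $V\otimes_{\Q}\mathbb{R}$. Your $V$ is the split space (even Clifford algebra $M_2(K)$, coming from $D=M_2(\Q)$), so $V\otimes_{\Q}\mathbb{R}\cong(M_2(\mathbb{R}),\det)$ has signature $(2,2)$. The lift from $\mathrm{GO}(2,2)$ to $\GSp_4(\mathbb{R})$ of the discrete series of weights $(\kappa'_1,\kappa'_2)$ is the large (generic) member of the archimedean $L$-packet, not the holomorphic one. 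Holomorphic (limits of) discrete series come from the compact form $\mathrm{GO}(4,0)$. This is the Harris--Kudla/Roberts picture: the split lift is the globally generic member of the packet, and Yoshida used definite quaternion algebras precisely to obtain holomorphic forms. So your $\theta(\pi_g)$ is a non-holomorphic cusp form. It has no weight $(k'_1,k'_2)$ in the sense of Section~\ref{sec:siegelgenus2}, and it contains no holomorphic Siegel form of level $\Gamma_0^{(2)}(M\Delta_K)$ on which $\TT$ could act. The weight assertion and part (3) therefore fail as you have set things up.

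The missing idea, used by the construction the paper relies on (via \cite{DPmin}, going back to Yoshida and B\"ocherer--Schulze-Pillot), is to lift from a positive definite space.
\begin{itemize}
\item Take $D$ a \emph{definite} quaternion algebra over $\Q$, ramified only at $\infty$ and at an odd number of primes that are non-split in $K$ (for instance one prime dividing $\Delta_K$).
\item Then $D\otimes_{\Q}K$ is totally definite and split at every finite place, and the associated space $V'$ of discriminant $\Delta_K$ is positive definite at $\infty$.
\item One has $\mathrm{GSO}(V')(\AAA_\Q)\cong\bigl((D\otimes_{\Q}\AAA_K)^\times\times\AAA_\Q^\times\bigr)/\AAA_K^\times$. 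Since $\pi_g$ is discrete series at both real places, it transfers to $(D\otimes_{\Q}K)^\times$ by Jacquet--Langlands.
\item At $\infty$ the transfer is $\mathrm{Sym}^{\kappa'_1-2}\boxtimes\mathrm{Sym}^{\kappa'_2-2}$ up to twist. By Kashiwara--Vergne this lifts to the holomorphic (limit of) discrete series of weight $(k'_1,k'_2)$, which is where the weight formula genuinely comes from.
\end{itemize}
You must then still do three things. First, prove global non-vanishing, either by explicit theta series or by the Rallis inner product formula plus local non-vanishing, using $\pi_g\not\cong\pi_g^c$. Second, fix the character $\chi\in\{1,\chi_K\}$ on the $\mathrm{GL}_1$-factor so that the lift has trivial central character. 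This matters because $\Ind\tau_g\otimes\chi_K\cong\Ind\tau_g$, so both similitudes $\cyc^{\kappa'_1-1}$ and $\cyc^{\kappa'_1-1}\chi_K$ are compatible with $\Ind\tau_g$. Third, redo part (3) at the primes where $D$ ramifies, since the local lift there now comes from the non-split space.

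Alternatively, you could keep your generic lift and invoke Arthur's multiplicity formula for $\GSp_4$ (Gee--Ta\"ibi). The parameter $\Ind_K^{\Q}\pi_g$ is stable, so the archimedean component can be switched to the holomorphic member while your finite components are kept. That is a major input, however, and your sketch neither states nor checks it, including in the limit case $k'_2=2$.

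Your arguments for (1), (2) and (4) are fine once the correct $\varPi_0$ is in hand. These are the unramified correspondence, Chebotarev with Brauer--Nesbitt, and the matching of $\{0,\kappa'_1-1\}\cup\{(\kappa'_1-\kappa'_2)/2,(\kappa'_1+\kappa'_2)/2-1\}$ with $\{0,k'_2-2,k'_1-1,k'_1+k'_2-3\}$.
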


Let $g_{\mathrm{stab}}$ be the $p$-stabilization of $g$ with level $Mp$ and trivial Nebentypus such that the $U_{\p_1}$ and $U_{\p_2}$ eigenvalues equal the roots of the Hecke polynomial at $\p_1$ and $\p_2$ with valuations $0$ and $\frac{\kappa'_1-\kappa'_2}{2}$ respectively. Let $f_{\mathrm{stab}}$ be the $p$-stabilization of $f$ as given in equation (\ref{eq:Upeigenvalues}) such that its $U_{p,2}$-eigenvalue equals the $U_{\p_1}$-eigenvalue of $g$. Studying the $p$-adic valuations of roots of the Hecke polynomial at $p$ then tells us that the $U_{p,1}$-eigenvalue of $f_{\mathrm{stab}}$ equals $U_{\p_2}(g_{\mathrm{stab}}) U_{\p_1}(g_{\mathrm{stab}})$.

\begin{lemma} \label{lemma:gspgl2cuspformassignment}
 In the setting mentioned above, we have the following assignments:
\begin{enumerate}  
\item If the prime $\ell$ of $\Q$ splits as $\q_1\q_2$ in $K$, then 
\begin{align*}
T_{\ell,2}(f_{\mathrm{stab}}) &= T_{\q_1}(g_{\mathrm{stab}})+ T_{\q_2}(g_{\mathrm{stab}}), \\
T_{\ell,1}(f_{\mathrm{stab}}) & = \ell^{-1}T_{\q_1}(g_{\mathrm{stab}})T_{\q_2}(g_{\mathrm{stab}}) + \ell^{-2}  \left( \ell^2-1  \right) S_{\q_1}(g_{\mathrm{stab}}). \\ 
T_{\ell,0}(f_{\mathrm{stab}}) &= \ell^{-2} S_{\q_1}(g_{\mathrm{stab}}).
\end{align*}
\item If the prime $\ell$ of $\Q$ remains inert (say $\q$) in $K$, then 
\begin{align*}
T_{\ell,2}(f_{\mathrm{stab}}) &= 0.  \\
T_{\ell,1}(f_{\mathrm{stab}}) &= -\ell^{-1}T_\q(g_{\mathrm{stab}}) - \ell^{-2}( \ell^2+1)  S_\q^{1/2}(g_{\mathrm{stab}}),  \\
T_{\ell,0}(f_{\mathrm{stab}}) &= \ell^{-2}S_\q^{1/2}(g_{\mathrm{stab}}).
\end{align*}
Here, $S_\q^{1/2}(g_{\mathrm{stab}})$ is the square-root (chosen via Hensel's lemma) so that its reduction in $\mathbb{F}$ coincides with $\bar{\lambda}(\Frob_\ell)$, where $\bar{\lambda}$ is the reduction of the similitude character of $\varPi_0$ to $\mathbb{F}$.
\item For the Hecke operators at the prime $p$, we have 
\begin{align*}
U_{p,2}(f_{\mathrm{stab}})=U_{\p_1}(g_{\mathrm{stab}}), \qquad U_{p,1}(f_{\mathrm{stab}}) = U_{\p_2}(g_{\mathrm{stab}}) U_{\p_1}(g_{\mathrm{stab}}).
\end{align*}
\end{enumerate}

\end{lemma}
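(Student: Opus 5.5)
The plan is to read off every identity by comparing two degree-four characteristic polynomials. The unramified ones come from part (\ref{con:stableYos1}) of Proposition \ref{thm:yoshidarepspace}, and the ones at $p$ from the $p$-stabilization recipe (\ref{eq:Upeigenvalues}). For $\ell \notin S\cup\{p\}$, part (\ref{con:stableYos1}) gives
\[Q_{\varPi_0,\ell}(X)=\det\bigl(X\cdot I_4-\Ind^{G_{\Q,S}}_{G_{K,S}}(\tau_g)(\Frob_\ell)\bigr).\]
The spherical eigenvalues of $g$ and $g_{\mathrm{stab}}$ agree away from $p$. So it suffices to compute the right-hand side in terms of $T_\q(g)$ and $S_\q(g)$. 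For this we use Eichler--Shimura for $\tau_g$ in Hida's normalization: $\mathrm{Tr}\,\tau_g(\Frob_\q)=T_\q(g)$ and $\det\tau_g(\Frob_\q)=N(\q)\,S_\q(g)$, up to the fixed power of $N(\q)$ dictated by the normalization in \cite{MR960949}. We then match coefficients against the explicit formula for $Q_{\varPi_\ell}(X)$.

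\emph{Split case.} If $\ell=\q_1\q_2$, then $\Frob_\ell\in G_{K,S}$. The restriction of the induced representation is $\tau_g\oplus\tau_g^c$, so the right-hand side factors as
\[\bigl(X^2-T_{\q_1}X+\ell S_{\q_1}\bigr)\bigl(X^2-T_{\q_2}X+\ell S_{\q_2}\bigr),\]
with the normalization above. Since $g$ has trivial Nebentypus and $\det\tau_g$ comes from $\Q$, we have $S_{\q_1}=S_{\q_2}$. The coefficients then match as follows.
\begin{itemize}
\item The $X^3$ coefficient gives $T_{\ell,2}=T_{\q_1}+T_{\q_2}$.
\item The constant term gives $\ell^6T_{\ell,0}^2=(\ell S_{\q_1})^2$ in the normalization used. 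The correct square root $T_{\ell,0}=\ell^{-2}S_{\q_1}$ is pinned down because $T_{\ell,0}$ is $\ell^{-3}$ times the similitude $\lambda(\Frob_\ell)=\det\tau_g(\Frob_{\q_1})$, by construction of the $\GSp_4$-structure in Lemma \ref{lem:Gsp4basis}.
\item The $X^2$ coefficient gives
\[\ell\bigl(T_{\ell,1}+(\ell^2+1)T_{\ell,0}\bigr)=T_{\q_1}T_{\q_2}+2\ell S_{\q_1}.\]
Solving this for $T_{\ell,1}$ yields the stated formula, with the term $\ell^{-2}(\ell^2-1)S_{\q_1}$.
\end{itemize}

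\emph{Inert case.} If $\ell$ is inert with prime $\q$, then $\Frob_\ell\notin G_{K,S}$ and $\Frob_\ell^2=\Frob_\q$. The induced matrix is block anti-diagonal, so its characteristic polynomial is a polynomial in $X^2$:
\[X^4-\mathrm{Tr}\,\tau_g(\Frob_\q)\,X^2+\det\tau_g(\Frob_\q)\quad\text{with } X\mapsto X^2.\]
This forces $T_{\ell,2}=0$, and the cubic and linear coefficients vanish consistently with that. The constant term gives $\ell^6T_{\ell,0}^2=\det\tau_g(\Frob_\q)$, which is a multiple of $S_\q$. So $T_{\ell,0}$ is $\ell^{-2}$ times a square root of $S_\q$. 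The branch is chosen by requiring its reduction to be $\bar\lambda(\Frob_\ell)$, since $T_{\ell,0}$ is again a normalized similitude value. The $X^2$ coefficient then gives $T_{\ell,1}=-\ell^{-1}T_\q-\ell^{-2}(\ell^2+1)S_\q^{1/2}$.

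\emph{At $p$.} The Hecke polynomial of $\varPi_0$ at $p$ is the product of the Hecke polynomials of $g$ at $\p_1$ and at $\p_2$, by the same split-prime computation. Its four roots therefore have valuations $0$ and $\kappa'_1-1$ (from $\p_1$), and $\tfrac{\kappa'_1-\kappa'_2}{2}$ and $\tfrac{\kappa'_1+\kappa'_2}{2}-1$ (from $\p_2$). By (\ref{formula:weights}) these are exactly $0$, $k'_2-2$, $k'_1+k'_2-3$ and $k'_1-1$. Hence $\alpha_0$ is the unit root at $\p_1$, namely $U_{\p_1}(g_{\mathrm{stab}})$, and $\alpha_1$ is the root at $\p_2$ of valuation $k'_2-2$. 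By (\ref{eq:Upeigenvalues}), $U_{p,2}=\alpha_0$ and $U_{p,1}=p^{2-k'_2}\alpha_1\alpha_0$. Finally, $p^{2-k'_2}\alpha_1=p^{-v_2}\alpha_1$ is precisely the normalized eigenvalue $U_{\p_2}(g_{\mathrm{stab}})$, which gives the third assertion.

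The main obstacle is bookkeeping rather than ideas. One must fix consistently the normalization relating $S_\q$ and $T_\q$ to $\det\tau_g$ and $\mathrm{Tr}\,\tau_g$, and the normalization of $T_{\ell,0}$ relative to the similitude. One must also justify that the square-root sign is the one compatible with $\bar\lambda$. For the sign, I would argue through the explicit $\GSp_4$-model of Lemma \ref{lem:Gsp4basis}: there $\lambda(\Frob_\ell)$ is an actual square root of $\det\tau_g(\Frob_\ell^2)$ lifting $\bar\lambda(\Frob_\ell)$, and Hensel uniqueness (with $p$ odd) identifies it.
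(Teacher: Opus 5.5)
Your proposal is correct and follows essentially the same route as the paper: compare $Q_{\varPi_\ell}(X)$ with the characteristic polynomial of $\mathrm{Frob}_\ell$ on $\mathrm{Ind}^{G_{\mathbb{Q},S}}_{G_{K,S}}(\tau_g)$ in the split and inert cases, fix the sign of $T_{\ell,0}$ using the fact that the similitude equals $\ell^3T_{\ell,0}$, and read the $U_p$-eigenvalues off the $p$-stabilization (\ref{eq:Upeigenvalues}). One minor point concerns the prime $p$. The factorization of the Hecke polynomial of $\varPi_0$ at $p$ is not covered by part (\ref{con:stableYos1}) of Proposition \ref{thm:yoshidarepspace}, which excludes $p$; it instead rests on unramified local compatibility of the lift at $p$, the same input the paper uses implicitly for part (\ref{con:stableYos4}). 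In addition, when $k'_2=2$ the two unit roots cannot be told apart by valuation, so $U_{p,2}(f_{\mathrm{stab}})=U_{\mathfrak{p}_1}(g_{\mathrm{stab}})$ is the defining choice of $f_{\mathrm{stab}}$ rather than a consequence of your valuation count.
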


\begin{proof}
The proof follows by comparing the Hecke polynomial $Q_{\varPi_\ell}(X)$ of $\varPi_0$ at $\ell$ with the characteristic polynomial $C_{\ell}(X)$ of $\Frob_\ell$ for $\Ind^{G_{\Q,S}}_{G_{K,S}}(\tau_{g})$. 
\begin{enumerate}
    \item If the prime $\ell$ of $\Q$ splits as $\q_1\q_2$ in $K$, then $C_{\ell}(X)$ equals 
\[(X^2-T_{\q_1}(g_{\mathrm{stab}})X+\ell S_{\q_1}(g_{\mathrm{stab}}))(X^2-T_{\q_2}(g_{\mathrm{stab}})X+\ell S_{\q_2}(g_{\mathrm{stab}})).\]
Note that since $g$ has trivial Nebentypus,  both $S_{\q_1}(g_{\mathrm{stab}})$ and $S_{\q_2}(g_{\mathrm{stab}})$ are equal to $\ell^{\kappa'_1-2}$.
\item  If the prime $\ell$ of $\Q$ remains inert (say $\q$) in $K$, then $C_{\ell}(X)$ equals
\[(X^4-T_{\q}(g_{\mathrm{stab}})X^2+\ell^2 S_{\q}(g_{\mathrm{stab}})).\]
\end{enumerate}
The assignments in the first and second part of the lemma now follow from these observations along with the fact that the similitude character for $\varPi_0$ equals $\ell^3T_{\ell,0}(f_{\mathrm{stab}})$. 
The third part of the lemma is by construction of $f_{\mathrm{stab}}$ and $g_{\mathrm{stab}}$.
\end{proof}

\subsection{Hida's $p$-ordinary Hecke algebra for $\GSp_4$} \label{subsec:hidagsp}

For each $i \in \{1,2\}$, we let $G_i$ denote $\Z_p^\times$. Let $\widetilde{\Lambda}$ denote the completed group ring $\Z_p\llbracket G_1 \times G_2 \rrbracket$. Consider  $G_1 \times G_2$ as a diagonal torus inside $\mathrm{Sp_4}(\Z_p)$: 
\begin{align*}
 G_1 \times G_2 &\hookrightarrow \mathrm{Sp_4}(\Z_p), \\
(g_1,g_2) & \mapsto \left[\begin{array}{cccc}g_1 & 0 & 0 & 0 \\ 0 & g_2 & 0 & 0 \\ 0 & 0 & g_2^{-1}& 0 \\ 0 & 0 & 0 & g_1^{-1} \end{array}\right]. 
\end{align*}

For each $(k'_1,k'_2) \in \Z^2$, the character
\begin{align*}
	G_1 \times G_2 & \rightarrow \Z_p^\times \\
	(g_1,g_2) &\mapsto  g_1^{k'_1} g_2^{k'_2}.
\end{align*}
determines a ring homomorphism of completed $\Z_p$-algebras:
\begin{align}\label{eq:ringhomk1k2}
	\varphi_{(k'_1,k'_2)}: \widetilde{\Lambda} \rightarrow \Z_p.
\end{align}
 We shall say that a prime ideal $\mathfrak{p}$ of $\widetilde{\Lambda}$ is a \textit{cohomological prime} with weight $(k'_1,k'_2)$ if it is the kernel of $\varphi_{(k'_1,k'_2)}$ with $k'_1\geq k'_2 \geq 3$.  As observed in Section \ref{subsec:hidatheorygl2k}, the semi-local ring $\widetilde{\Lambda}$ can be decomposed into a product of local rings. Furthermore, these local rings are in one-to-one correspondence with group homomorphisms $(\upsilon^{a'},\upsilon^{b'}):\mu_{p-1} \times \mu_{p-1} \rightarrow \mathbb{F}_p^\times$. Here, $\upsilon:\mu_{p-1} \rightarrow \mathbb{F}_p^\times$ is given as in the previous section. We call the corresponding local ring $\Lambda_{(a',b')}$. Observe that $\Lambda_{(a',b')}$ is isomorphic as a ring to the power series $\Z_p \llbracket y_1,y_2 \rrbracket$ in two variables. If $(k'_1,k'_2) \equiv (a',b') \pmod{(p-1)\Z^2}$, then the map $\varphi_{(k'_1,k'_2)}$ factors via $\Lambda_{(a',b')}$. The variables $y_1$ and $y_2$ are \textit{chosen} so that the morphism $\Lambda_{(a',b')} \rightarrow \overline{\Q}_p$, induced by $\varphi_{(k'_1,k'_2)}$, is defined by sending $y_i$ to $(1+p)^{k'_i}-1$ for $i \in \{1,2\}$. Suppose that $\mathcal{R}_2$ is a  reduced algebra that is finite and torsion-free over  $\Lambda_{(a',b')}$. We will say that a prime $\mathcal{P}$ of $\mathcal{R}_2$ is cohomological and has weight $(k'_1,k'_2)$ if the pullback of $\mathcal{P}$ to $\widetilde{\Lambda}$ equals $\ker(\varphi_{(k'_1,k'_2)})$. We let $\mathcal{X}_{\mathrm{coh}}(\mathcal{R}_2)$ denote the set of cohomological primes of $\mathcal{R}_2$. We also consider the following subset of cohomological primes:
\begin{multline}
\mathcal{X}^{\geq 4}_{\mathrm{temp}}(\mathcal{R}_2) \coloneqq \biggl\{\mathcal{P} \in \mathrm{Spec}(\mathcal{R}_2) : \mathcal{P} \text{ is a cohomological prime with } k'_1 > k'_2 \geq 4 \\[-1em]
\text{and } (k'_1,k'_2) \equiv (a',b') \pmod{(p-1)\mathbb{Z}^2}\biggr\}
\end{multline}

Recall from the introduction  that weight $(k_1,k_2)$ of the Yoshida lift $f_0$ fixes a congruence class $(a_0,b_0) \pmod{(p-1)\Z^2}$. We denote $\Lambda_{(a_0,b_0)}$ by $\Lambda$. By Proposition \ref{thm:yoshidarepspace}(\ref{con:stableYos4}), the automorphic representation corresponding to $f_0$ is $p$-ordinary. By Proposition \ref{thm:yoshidarepspace}(\ref{con:stableYos3}), this  automorphic representation is unramified at all the primes not dividing $M\Delta_K$. We recall the main vertical control theorem  independently due to Hida \cite[Theorem 1.1]{MR1954939} and Pilloni \cite[Theorem 7.1]{MR3059119}, \cite[Corollary 1.3]{MR2783930} and some of its consequences. See \cite[Section 2]{hsieh_yoshida}.

\begin{theorem}[Vertical control theorem (Hida, Pilloni)]\label{thm:verticalcontrol}\mbox{} 

There exists a finite, local, reduced, torsion-free and equidimensional (with dimension $3$) $\Lambda_{(a_0,b_0)}$-algebra $\TT$ and a ring homomorphism $j_4:\Z\left[\left\{U_{p,2},U_{p,1}\right\} \cup \left\{T_{\ell,2},  T_{\ell,1}, T_{\ell,0},T_{\ell,0}^{-1}\right\}_{(\ell,Mp\Delta_K)=1}\right] \rightarrow \TT$
satisfying the following properties:
\begin{enumerate}

\item\label{item:gsp4genbyheckeoperators} The image of $j_4$ is dense in $\TT$. 
\item\label{item:gsp4heckealgdensityprimes} $\mathcal{X}^{\geq 4}_{\mathrm{temp}}(\TT)$ is dense in $\mathrm{Spec}(\TT)$ with respect to the Zariski topology. 

\item \label{item:gsp4existence} There exists a ring homomorphism \[  \varPhi_{(k_1,k_2)}:\TT \rightarrow \overline{\Q}_p,\]
such that the induced map $\varPhi_{(k_1,k_2)} \circ j_4$ corresponds to the Hecke eigensystem of the ordinary $p$-stabilization of $f_0$. If $k_2 \geq 3$, then $\ker(\varPhi_{(k_1,k_2)})$ is cohomological with weight $(k_1,k_2)$.

\item\label{item:actualcontrol4} Suppose we are given a ring homomorphism $\varPhi_{(k'_1,k'_2)}:\TT \rightarrow \overline{\Q}_p$ such that $k'_1 \geq k'_2 \geq 4$. Then, the induced ring homomorphism $\varPhi_{(k'_1,k'_2)} \circ j_4$ corresponds to the Hecke eigensystem, as in equation (\ref{eq:ordstab}), of a non-zero $p$-ordinary stabilized vector of a $p$-ordinary automorphic representation $\varPi$  with level $\Gamma_0^{(2)}(M\Delta_K)$ and cohomological weight $(k'_1,k'_2)$.  If $k'_1 > k'_2 \geq 4$, then $\ker(\varPhi_{(k'_1,k'_2)}) \in \mathcal{X}^{\geq 4}_{\mathrm{temp}}(\TT)$. 
\item Suppose $\phi: \Z\left[\left\{U_{p,2},U_{p,1}\right\}  \cup \left\{T_{\ell,0},T_{\ell,0}^{-1}, T_{\ell,1},T_{\ell,2}\right\}_{(\ell,Mp\Delta_K)=1} \right] \rightarrow  \overline{\Q}_p$
corresponds to the Hecke eigensystem, as in equation (\ref{eq:ordstab}), of a non-zero $p$-ordinary stabilized vector of a $p$-ordinary automorphic representation $\varPi$  with level $\Gamma_0^{(2)}(M\Delta_K)$ and weight $(k'_1,k'_2)$  such that \begin{itemize}
\item $(a_0,b_0) \equiv (k'_1,k'_2) \mod (p-1)\Z^2$.
\item $k'_1 \geq k'_2 \geq 4$.
\end{itemize} Then, there exists a ring homomorphism \[  \varPhi_{(k'_1,k'_2)}:\TT \rightarrow \overline{\Q}_p,\]such that $\phi = \varPhi_{(k'_1,k'_2)} \circ j_4$. Furthermore, if $k'_1>k'_2 \geq 4$, then $\ker(\varPhi_{(k'_1,k'_2)}) \in \mathcal{X}^{\geq 4}_{\mathrm{temp}}(\TT)$.
\end{enumerate}

\end{theorem}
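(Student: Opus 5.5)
The vertical control theorem is quoted from Hida and Pilloni, so the plan is to assemble $\TT$ from their constructions and check each listed property against the cited results.

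\textbf{Construction of $\TT$.} Start with Hida's (equivalently Pilloni's) ordinary Hecke algebra $\mathbf{h}^{\ord}$ for $\GSp_4$ of tame level $\Gamma_0^{(2)}(M\Delta_K)$. This is built from the ordinary part of the $p$-adic cohomology (or of the space of $p$-adic Siegel cusp forms) and is finite and torsion-free over $\widetilde{\Lambda}$ by \cite[Theorem 1.1]{MR1954939} and \cite[Theorem 7.1]{MR3059119}. Take the local component $\Lambda_{(a_0,b_0)}$ of $\widetilde{\Lambda}$. Then localize at the maximal ideal cut out by the mod $p$ Hecke eigensystem of the ordinary $p$-stabilization of $f_0$; this eigensystem exists by Proposition \ref{thm:yoshidarepspace}(\ref{con:stableYos3}),(\ref{con:stableYos4}). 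Finally pass to the maximal reduced quotient, exactly as in the proof of Theorem \ref{thm:control}. The map $j_4$ sends each abstract Hecke operator to its image. Property (\ref{item:gsp4genbyheckeoperators}) then holds because the ordinary Hecke algebra is, by definition, the closure of the algebra generated by these operators. Removing finitely many operators at the bad primes does not matter, since away from a finite set the Hecke operators already determine the Galois pseudo-representation, and the remaining operators generate.

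\textbf{Control and existence.} Property (\ref{item:actualcontrol4}), and its converse in the last item, is the vertical control theorem itself. For $k'_1\ge k'_2\ge 4$, the specialization of the ordinary module at $\ker\varphi_{(k'_1,k'_2)}$ is isomorphic to the ordinary part of classical cohomology of weight $(k'_1,k'_2)$ and level $\Gamma_0^{(2)}(M\Delta_K)\cap \Iw_p$. An eigensystem there comes from an ordinary $p$-stabilized vector as in equation (\ref{eq:ordstab}), by the uniqueness in the footnote; the $p$-part of the nebentypus is trivial because we work with $\Gamma_0$-level at $p$. Conversely, such a vector gives a character of the classical ordinary Hecke algebra, and control lifts it to $\TT$, landing in our local component because of the congruence class $(a_0,b_0)$ and the mod $p$ eigensystem. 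Property (\ref{item:gsp4existence}) follows in the same way for $f_0$. When $k_2\le 3$ I would invoke Pilloni's higher Hida theory / classicality \cite[Corollary 1.3]{MR2783930}, which covers the small weights.

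\textbf{Structure, density, and the main obstacle.} Finiteness and torsion-freeness come from freeness or finiteness of the ordinary cohomology over $\widetilde{\Lambda}$ in Pilloni's setup. Equidimensionality of dimension $3$ and reducedness need an extra argument. I would argue that the specializations at the dense set of arithmetic primes are semisimple Hecke modules: they are classical, and the Hecke operators away from the level act semisimply. A finite torsion-free $\Lambda$-algebra whose fibres are reduced at a Zariski-dense set of primes is reduced, and being torsion-free over a domain it has all minimal primes lying over $(0)$, which gives dimension $3$. Density (\ref{item:gsp4heckealgdensityprimes}) follows because primes with $k'_1>k'_2\ge4$ in the fixed congruence class are Zariski dense in $\mathrm{Spec}\Lambda$, and $\TT$ is finite and torsion-free over $\Lambda$. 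So every irreducible component surjects onto $\mathrm{Spec}\Lambda$, and going-down/going-up for finite extensions shows the preimages are dense in each component; this is \cite[Section 2]{hsieh_yoshida}.

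I expect the main obstacle to be the precise reducedness and equidimensionality claim together with the small-weight case $k_2=3$ or $2$ in item (\ref{item:gsp4existence}). For both I would rely directly on the cited statements of Hsieh and Pilloni rather than reprove them.
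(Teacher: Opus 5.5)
Your overall plan (take Hida's and Pilloni's ordinary Hecke algebra, localize, and read each item off their control theorems and \cite[Section 2]{hsieh_yoshida}) is what the paper does; it offers nothing beyond these citations. Your arguments for items (2) and (4) are fine.

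The step that fails as written is item (3) when $k_2\in\{2,3\}$. Note that $k_2=2$ genuinely occurs here, whenever $\kappa_1=\kappa_2$. Classicality theorems such as \cite[Corollary 1.3]{MR2783930} go in the wrong direction: they turn ordinary $p$-adic eigensystems (points of $\TT$) into classical forms. In weight $(k,2)$ no such result is available; the introduction only \emph{expects} de Rham weight $(k,2)$ points to be classical. Item (3) needs the easy converse. The ordinary $p$-stabilization of $f_0$, restricted to the ordinary locus, is an ordinary $p$-adic cusp form of weight $(k_1,k_2)$. Hida's ordinary module interpolates \emph{all} ordinary $p$-adic cusp forms of a given weight, with no regularity condition on the weight; regularity is only used to compare these with classical forms. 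Since $\TT$ acts faithfully on this module, the eigenvector coming from $f_0$ gives $\varPhi_{(k_1,k_2)}$. For $k_2\ge 3$, the claim that its kernel is cohomological is then just the definition.

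In item (5), your phrase ``landing in our local component because of the congruence class $(a_0,b_0)$ and the mod $p$ eigensystem'' hides a hypothesis. The congruence class only selects the factor $\Lambda_{(a_0,b_0)}$ of $\widetilde{\Lambda}$. The map $\phi$ factors through the local factor $\TT$ of the big Hecke algebra only if its reduction equals the mod $p$ eigensystem of $f_0$, i.e.\ only if $\varPi$ is residually $\bar\rho$. That is not among the stated hypotheses of item (5), so you must add it explicitly. It does hold in all of the paper's applications, e.g.\ Yoshida lifts of points of $\TTT$.

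Two side remarks are also unsound, though harmless.
\begin{itemize}
\item Item (1) is tautological once $\TT$ is defined, as in the paper, as the closed subalgebra generated by the listed operators. The fact that good-prime operators determine the pseudo-representation does not put operators at primes dividing $M\Delta_K$ into that closure, and you do not need it.
\item Since you already pass to the maximal reduced quotient, your extra reducedness argument is unnecessary. As written it conflates the fibres $\mathbf{h}/P\mathbf{h}$ with the Hecke algebras of the specialized classical modules. Also, the semisimplicity of $U_{p,1},U_{p,2}$ comes from the uniqueness of the ordinary $p$-stabilized vector, not from acting away from the level.
\end{itemize}
The paper instead cites \cite[Proposition 2.5]{hsieh_yoshida}, so that $\TT$ is the Hecke algebra itself. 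That is the object to which \cite[Theorem 1]{DPmin} is later applied.
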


\section{Shapes and properties of Galois representations}\label{sec:galoisrep}

In this section, we recall the main properties of the Galois representation valued in the $\GSp_4$ ordinary Hecke algebra. These properties will be used to produce cohomology classes in Selmer groups (which in turn will be used to prove Theorems \ref{thm:notisos} and \ref{thm:nomore}). An interesting feature of our approach is that we obtain a generalized matrix algebra (GMA) structure on the sub-algebra of $M_4(\TT)$ generated by the image of an index two subgroup of $G_{\Q,S}$. One can view this result as a higher-dimensional analog of a result of Bella\"iche \cite[Lemma 2.4.5]{MR3989529}; this GMA structure is subsequently used in our application of Ribet's method.

\begin{theorem}\label{thm:pseudorepBC}  There exists a $\TT$-integral ideal $\BB$ along with a Galois representation 
\begin{align*}
\widetilde{\rho}_{\TT} : G_{\Q,S} \to \Gl_4(\TT)
	\end{align*}
satisfying the following conditions:
\begin{enumerate}
\item\label{item:trace} For all $\ell \notin S$,  we have $\mathrm{Trace}(\widetilde{\rho}_{\TT})(\Frob_\ell)=T_{\ell,2}$.

\item\label{item:restriction} The restriction of $\widetilde{\rho}_{\TT}$ to $G_{K,S}$ provides us the following equality inside the endomorphism ring $M_4(\TT)$ of the underlying rank four $\TT$-module:
\begin{align*}
	\TT[\widetilde{\rho}_{\TT}(G_{K,S})] = \left(\begin{array}{cc} M_2(\TT) & M_2(\BB) \\ M_2(\BB) & M_2(\TT) \end{array}\right).	
	\end{align*}

\item\label{hyp:BinsideYos0} If $\TT'$ is a quotient ring of $\TT$ such that the associated Galois representation $\tilde{\rho}_{\TT'} \mid_{G_{K,S}}$ is a direct sum of two $2$-dimensional Galois representations, then the image of $\BB$ under the natural map $\TT \twoheadrightarrow \TT'$ equals zero.

\item\label{item:localpart} For every element $g$ in the decomposition group $\Gal{\overline{\Q}_p}{\Q_p}$,
{\allowdisplaybreaks\begin{align*}
  \widetilde{\rho}_{\TT}(g) &= \left(\begin{array}{cccc}  \dfrac{\widetilde{\lambda}}{ \widetilde{\psi}_1\cyc^{-2}\omega^{b_0}\widetilde{\kappa}_2}(g) & \star & 0 & \star \\ 0 &\widetilde{\psi}_1\cyc^{-2}\omega^{b_0}\widetilde{\kappa}_2(g)   & 0 & \star \\ \star & \star &  \dfrac{\widetilde{\lambda}}{\widetilde{\psi}_0}(g) & \star \\ 0 & 0 & 0 & \widetilde{\psi}_0(g) \end{array} \right).
\end{align*}
}
Here, $\widetilde{\lambda}:G_{\Q,S} \rightarrow \TT^\times$  is a global character uniquely characterized by sending $\Frob_\ell$ to $\ell^3 T_{\ell,0}$ for all $\ell$ not dividing $M\Delta_K$. Here, \[\widetilde{\kappa}_2:G_{\Q_p} \rightarrow \Gal{\Q_p^{\mathrm{cyc}}}{\Q_p} \cong 1+p\Z_p \hookrightarrow \Z_p\llbracket1+p\Z_p\rrbracket^\times \cong \Z_p\llbracket y_2\rrbracket^\times\] is the tautological character coming from the second weight variable. Furthermore, for each $i \in \{0,1\}$, the characters $\tilde{\psi}_i:G_{\Q_p} \rightarrow \TT^\times$ are unramified with \[\tilde{\psi}_0(\Frob_p)=U_{p,2}, \qquad \tilde{\psi}_1(\Frob_p)=U_{p,1}/U_{p,2}.\]
\item\label{hyp:localshaperamlevel}  If $\ell$ is a prime dividing $M$, then there exists a matrix $P_{\ell} \in \Gl_4(\TT)$ such that $ \widetilde{\rho}_{\TT}(I_{\ell})$ is the pro-cyclic group generated by \[P_{\ell}\begin{pmatrix} 1 & 1 & 0 & 0 \\ 0 & 1 & 0 & 0\\0 & 0 & 1 & -1 \\ 0 & 0 & 0 & 1\end{pmatrix}P_{\ell}^{-1}.\]

\item\label{hyp:localshapediscK} If $\ell$ ramifies in $K$, then there exists a matrix $P_\ell \in \Gl_4(\TT)$ such that $ \widetilde{\rho}_{\TT}(I_{\ell})$ is the cyclic group generated by \[P_{\ell}\begin{pmatrix} 1 & 0 & 0 & 0 \\ 0 & -1 & 0 & 0\\0 & 0 & -1 & 0 \\ 0 & 0 & 0 & 1\end{pmatrix}P_{\ell}^{-1}.\]
\end{enumerate}
\end{theorem}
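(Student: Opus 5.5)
Our plan is to build $\widetilde{\rho}_{\TT}$ from the pseudo-representation carried by $\TT$, and then choose the basis carefully so that each listed shape becomes visible. Since $\TT$ is reduced and the cohomological primes $\mathcal{X}^{\geq 4}_{\mathrm{temp}}(\TT)$ are Zariski dense (Theorem \ref{thm:verticalcontrol}), we proceed as follows. The Galois representations $\varrho_{\varPi,p}$ of Theorem \ref{thm:GSP4weissauertaylorlaumon} at these primes interpolate, by Chenevier's theory of determinants (or Taylor's pseudo-characters, since $p \geq 5 > 4$), to a $4$-dimensional $\TT$-valued determinant $D$ of $G_{\Q,S}$. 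It is characterized by its characteristic polynomial at $\Frob_\ell$ being $Q_{\varPi_\ell}(X)$ with $T_{\ell,i}$ in place of $T_{\ell,i}(\varPi)$. Its residual reduction is $\bar\rho$, which is absolutely irreducible by (\ref{lab:ind}). Hence, by Nyssen--Rouquier/Chenevier, $D$ comes from a genuine representation $\widetilde{\rho}_{\TT}:G_{\Q,S}\to \Gl_4(\TT)$. Part (\ref{item:trace}) then holds by construction. The global character $\widetilde{\lambda}$ is obtained in the same way, by interpolating the similitude characters $\ell^3T_{\ell,0}$.

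For (\ref{item:restriction}) and (\ref{hyp:BinsideYos0}), we restrict to $G_{K,S}$. Residually, $\bar\rho|_{G_{K,S}}\cong \bar\tau\oplus\bar\tau^c$. These two summands are absolutely irreducible, and they are non-isomorphic by (\ref{hyp:bigimage}). We would then apply Bella\"iche--Chenevier's structure theorem for residually multiplicity-free pseudo-representations to $D|_{G_{K,S}}$. This gives a generalized matrix algebra $\TT[\widetilde{\rho}_{\TT}(G_{K,S})]=\begin{pmatrix} M_2(\TT) & M_2(\BB)\\ M_2(\mathcal{C}) & M_2(\TT)\end{pmatrix}$, adapted to lifts of the idempotents cutting out $\bar\tau$ and $\bar\tau^c$. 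Here $\BB\mathcal{C}$ is the reducibility ideal. Conjugation by $\widetilde{\rho}_{\TT}(\sigma)$ swaps the two idempotents, which shows that $\mathcal{C}$ and $\BB$ are identified. After rescaling the basis we can arrange $\mathcal{C}=\BB$; this is the higher-dimensional analog of \cite[Lemma 2.4.5]{MR3989529}. Part (\ref{hyp:BinsideYos0}) is then the universal property of the reducibility ideal: $\BB\cdot\BB$ vanishes in $\TT'$. To upgrade this to the vanishing of $\BB$ itself, we would use the fact that a split representation has vanishing off-diagonal blocks in an adapted basis.

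For the local shapes, we proceed as follows.
\begin{enumerate}
\item[(\ref{item:localpart})] At $p$, Theorem \ref{thm:GSP4weissauertaylorlaumon}\ref{thm:galoispoint5} gives an ordinary filtration at a dense set of primes. Using (\ref{lab:pdist}) together with the standard Hida-family argument (Wiles, or \cite[Section 7]{MR2656025}), the filtration by $G_{\Q_p}$-stable flags extends over $\TT$. The characters are pinned down by the $U_{p,i}$-eigenvalues through (\ref{eq:Upeigenvalues}), and by the weight variable through $\widetilde{\kappa}_2$.
\item[(\ref{hyp:localshaperamlevel})] At $\ell\mid M$, we invoke (\ref{hyp:optimallevel}) and (\ref{hyp:congruence1}). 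Together they force the inertia at each specialization to be unipotent of the residual shape.
\item[(\ref{hyp:localshapediscK})] At ramified $\ell$, (\ref{hyp:congruence2}) guarantees that the residual eigenvalues $\pm1$ of inertia lift without collapse. Because $p\nmid 2$, the inertial eigenspace decomposition lifts.
\end{enumerate}

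The main obstacle is making the \emph{same} basis work for all of these at once. The GMA adapted basis for $G_{K,S}$ has to be compatible with the flag at $p$, so that the zero pattern in (\ref{item:localpart}) appears, with the block positions matching the ordered basis $\{\sigma v_1,w_1,w_2,\sigma v_2\}$ of Lemma \ref{lem:Gsp4basis}. We would first fix the GMA idempotents. We would then use (\ref{hyp:resindec}) and (\ref{lab:pdist}) to show that the $p$-adic flag lines are each contained in one idempotent block, which we expect to hold because $G_{K_{\p_i}}=G_{\Q_p}$ preserves each summand residually. Finally, we would choose basis vectors within each block along the flag.
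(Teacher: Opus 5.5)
Your construction of $\widetilde{\rho}_{\TT}$, the Bella\"iche--Chenevier GMA, and the argument for (\ref{hyp:BinsideYos0}) match the paper. The gap is in the step you flag as the main obstacle, and that step fails as planned.

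\textbf{The main gap: the local shape at $p$.} Once the GMA idempotents are fixed, the $G_{\Q_p}$-stable flag lines are in general \emph{not} contained in the blocks, so this cannot be deduced from the residual splitting. Here is a concrete example.
\begin{itemize}
\item Let $R=\TT[\widetilde{\rho}_{\TT}(G_{K,S})]=\begin{pmatrix} M_2(\TT) & M_2(\BB)\\ M_2(\BB) & M_2(\TT)\end{pmatrix}$, and suppose $\widetilde{\rho}_{\TT}$ already satisfies (\ref{item:localpart}).
\item Take $u=\begin{pmatrix} I_2 & \beta\\ 0 & I_2\end{pmatrix}$ with $\beta\in M_2(\BB)$ having nonzero first column. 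Then $u\in R^{\times}$, so $u\widetilde{\rho}_{\TT}u^{-1}$ generates the same algebra $R$ with the same standard idempotents.
\item However, the line on which $G_{\Q_p}$ acts by $\widetilde{\lambda}/\widetilde{\psi}_0$ is now spanned by $u\vec{e}_3=(\beta_{11},\beta_{21},1,0)^{T}$, which lies in neither block.
\end{itemize}
So the compatibility must be \emph{arranged} by a change of basis in $R^{\times}$; it cannot be proved for a given choice. This is what the paper does, following \cite[Theorem 4.6(7)]{hsieh_yoshida}:
\begin{itemize}
\item Since $p$ splits, $G_{\Q_p}=G_{K_{\p_1}}\subset G_{K,S}$.
\item Hypothesis \textsc{$p$-dist} gives $\sigma_0\in G_{\Q_p}$ whose image has four residually distinct eigenvalues, and by Hensel these lie in $\TT$.
\item The spectral projectors of $\widetilde{\rho}_{\TT}(\sigma_0)$ are therefore $\TT$-polynomials in $\widetilde{\rho}_{\TT}(\sigma_0)$, hence lie in $R$.
\item The eigenvector basis is thus given by a matrix in $R^{\times}$, which preserves the GMA shape and the ideal $\BB$.
\item A density argument over ordinary specializations shows these eigenlines span the $G_{\Q_p}$-stable flag, giving the zero pattern of (\ref{item:localpart}).
\end{itemize}
Hypothesis \textsc{resindec} plays no role here. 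Also, the ordering of Lemma \ref{lem:Gsp4basis} interleaves the two $G_{K,S}$-blocks, so it is not the ordering in which (\ref{item:restriction}) holds; no symplectic basis is needed for this theorem.

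\textbf{Two smaller points.} First, for $\BB=\CC$, let $R=\TT[\widetilde{\rho}_{\TT}(G_{K,S})]$ with $M_2(\CC)$ in the lower-left block.
\begin{itemize}
\item Changes of basis by $\Gl_2(\TT)\times\Gl_2(\TT)$ preserve both $M_2(\BB)$ and $M_2(\CC)$, so no ``rescaling'' can make two distinct ideals equal.
\item Also, $f=\widetilde{\rho}_{\TT}(\sigma)e_1\widetilde{\rho}_{\TT}(\sigma)^{-1}$ need not equal $e_2$.
\end{itemize}
Your idea is still repairable, and it then gives a route different from the paper's:
\begin{itemize}
\item Residually $\widetilde{\rho}_{\TT}(\sigma)$ swaps the $\bar\tau$- and $\bar\tau^c$-blocks, so $f\equiv e_2$ modulo $\mathrm{rad}(R)$.
\item Hence $f=ue_2u^{-1}$ with $u=fe_2+(1-f)(1-e_2)\in R^{\times}$.
\item Then $v=u^{-1}\widetilde{\rho}_{\TT}(\sigma)$ normalizes $R$ and satisfies $ve_1v^{-1}=e_2$, so it is block anti-diagonal $\begin{pmatrix} 0 & X\\ Y & 0\end{pmatrix}$ with $X,Y\in\Gl_2(\TT)$.
\item Conjugating $e_1Re_2$ by $v$ gives $M_2(\CC)=Y\,M_2(\BB)\,X^{-1}=M_2(\BB)$ exactly.
\end{itemize}
The paper instead reduces modulo $\BB$, splits the module as $W\oplus\sigma W$ to get $\CC\subset\BB$, and then reverses the roles.

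Second, for (\ref{hyp:localshapediscK}), the eigenvalues $\pm1$ lift automatically because $p$ is odd. The real content is that $\widetilde{\rho}_{\TT}(I_{\lambda_\ell})$ is trivial, i.e.\ there is no extra ramification over $K$ at $\lambda_\ell$. This is where \textsc{cong-ram} enters: its two factors together exclude
\begin{itemize}
\item a residual Frobenius eigenvalue ratio $\ell^{\pm1}$ at $\lambda_\ell$, and hence a nonzero monodromy operator;
\item $p$-power order tame characters.
\end{itemize}
Your sketch does not address this; the paper imports it from \cite[Theorem 3.1]{DPmin}.
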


\begin{proof}
Since the traces of $p$-adic Galois representations attached to Siegel cuspidal eigenforms (as in Theorem \ref{thm:GSP4weissauertaylorlaumon}) are pseudocharacters of dimension four (see for instance \cite[Section 1.2]{MR2656025} for the definition of pseudocharacters), one can use a density argument (see for instance \cite[Theorem A2]{hsieh_yoshida} for such an application of a density argument) using the cohomological specializations in the set $\mathcal{X}^{\geq 4}_{\mathrm{temp}}(\TT)$ to show that there exists a unique continuous pseudocharacter $\pcT : G_{\Q,S} \rightarrow \TT$ of dimension four, characterized by the following assignment
\begin{align*}
\Frob_\ell \mapsto T_{\ell,2}, \quad \forall \ell \notin S.
\end{align*}
Therefore, the residual pseudocharacter $G_{\Q,S} \xrightarrow{\pcT} \TT \rightarrow \mathbb{F}$ is the trace of an irreducible representation $\bar\rho:G_{\Q,S} \rightarrow \Gl_4(\mathbb{F})$. Hence, it follows from works of Nyssen and Rouquier \cite{MR1411348,MR1378546} that there exists a Galois representation $\varrho:G_{\Q,S} \rightarrow \Gl_4(\TT)$ lifting  (an appropriate conjugate of) $\bar\rho$  such that the associated trace function equals $\pcT$.   We choose the residual representation $\overline{\varrho}$ so that it has the following shape when restricted to $G_{K,S}$:
\begin{align}\label{eq:blockdiagonalshape}
\overline{\varrho} \mid_{G_{K,S}} = \begin{pmatrix}  \bar\tau^c &\bar{0}_2 \\ \bar{0}_2 & \bar\tau \end{pmatrix}.
\end{align}
We also obtain the  global character $\widetilde{\lambda}:G_{\Q,S} \rightarrow \TT^\times$  uniquely characterized by sending $\Frob_\ell$ to $\ell^3 T_{\ell,0}$ for all $\ell$ not dividing $M\Delta_K$. See \cite[Lemma 4.2]{hsieh_yoshida}.

Consider the $\TT$-subalgebra of $M_4(\TT)$ given by the group algebra $\TT[\varrho(G_{K,S})]$. By restricting the trace function to this subalgebra, one naturally obtains a pseudocharacter \[\pcT_{K}:~\TT[\varrho(G_{K,S})]~\rightarrow~\TT\] of dimension four. The residual pseudocharacter associated to $\pcT_{K}$ equals $\mathrm{Trace}(\bar\tau \oplus \bar\tau^c)$. The hypothesis \ref{lab:ind} then tells us that the $\pcT_{K}$ is residually multiplicity free. We also note that the pseudocharacter $\pcT_K:\TT[\varrho(G_{K,S})] \rightarrow \TT$ is Cayley--Hamilton in the sense of Bella\"iche--Chenevier \cite[Section 1.2.3]{MR2656025}. We now claim that there exists an invertible matrix $P \in \Gl_4(\TT)$ satisfying the following properties: 
\begin{enumerate}[label=(\roman*),ref=\roman*]
\item\label{propblockdiag} The natural image of $P$ in $\Gl_4(\mathbb{F})$ is the identity matrix $\overline{I}_4$. 
   \item\label{propconstruction} The following elements 
    \[e_1 = \begin{pmatrix} I_2 & 0_2 \\ 0_2 & 0_2 \end{pmatrix},\quad e_2 = \begin{pmatrix} 0_2 & 0_2 \\ 0_2 & I_2 \end{pmatrix},\]
    are in the $\TT$-subalgebra $\TT[P\varrho(G_{K,S})P^{-1}]$ of $M_4(\TT)$.
   \item\label{propmaxideal} For every $x, y$ in  $\TT[P\varrho(G_{K,S})P^{-1}]$, the traces of the $4 \times 4$ matrices $e_1 x e_2 y e_1$ and $e_2 x e_1 y e_2$ lie in the maximal ideal of $\TT$.

    \item\label{propreduction} For each $i \in \{1,2\}$, the compositions of natural maps
    \begin{align*}
   G_{K,S} \rightarrow e_1 \TT[P\varrho(G_{K,S})P^{-1}] e_1 \xrightarrow {\sigma_1} M_2(\TT) \twoheadrightarrow M_2(\mathbb{F}), \\ 
    G_{K,S} \rightarrow e_2 \TT[P\varrho(G_{K,S})P^{-1}] e_2 \xrightarrow {\sigma_2} M_2(\TT) \twoheadrightarrow M_2(\mathbb{F}), 
    \end{align*}
   are induced by $\bar\tau^c$ and $\bar\tau$ respectively. 
   
      \item\label{prop2surjection} The canonical projections
    \begin{align*}
    \sigma_1: e_1 \TT[P\varrho(G_{K,S})P^{-1}] e_1 \rightarrow M_2(\TT), \qquad \sigma_2: e_2 \TT[P\varrho(G_{K,S})P^{-1}] e_2 \rightarrow M_2(\TT)
    \end{align*}
    are isomorphisms.
\end{enumerate}

To justify our claim, we apply the idempotent lifting lemma \cite[Lemma 1.4.3]{MR2656025}, which guarantees the existence of two orthogonal idempotents $e'_1$ and $e'_2$ in the $\TT$-subalgebra  $\TT[\varrho(G_{K,S})]$ of $M_4(\TT)$ satisfying properties similar to the one listed above (we refer the reader to \cite[Lemma 1.4.3]{MR2656025} for the exact statements regarding these properties). In particular,
the compositions of natural maps
    \begin{align}\label{eq:blockdiagonaltraces}
   G_{K,S} \rightarrow e'_1 \TT[\varrho(G_{K,S})] e'_1 \hookrightarrow  M_4(\TT) \twoheadrightarrow M_4(\mathbb{F}) \xrightarrow{\mathrm{Trace}} \mathbb{F}, \\ 
    G_{K,S} \rightarrow e'_2 \TT[\varrho(G_{K,S})] e'_2 \hookrightarrow  M_4(\TT) \twoheadrightarrow M_4(\mathbb{F})\xrightarrow{\mathrm{Trace}} \mathbb{F},  \notag
    \end{align}
   are the maps $\mathrm{Trace}(\bar\tau^c)$ and $\mathrm{Trace}(\bar\tau)$ respectively. We note that $e'_1+e'_2=I_4$ and that $\mathrm{Trace}(e'_1)=\mathrm{Trace}(e'_2)=2$. Therefore, we have
\[\TT^4 = \mathrm{Range}(e'_1) \oplus \mathrm{Range}(e'_2), \text{ with } \mathrm{Rank}_\TT \left(\mathrm{Range}(e'_1)\right) = \mathrm{Rank}_\TT\left(\mathrm{Range}(e'_2)\right) =2. \]
Furthermore, the shape of $\varrho\mid_{G_{K,S}}$ as given in equation (\ref{eq:blockdiagonalshape}) tells us that the natural images $\overline{e'_1}$ and $\overline{e'_2}$  of $e'_1$ and $e'_2$  respectively in $M_4(\mathbb{F})$ are also block diagonal with shape $\begin{pmatrix} \star_2 & \overline{0}_2 \\ \overline{0}_2 &\star_2\end{pmatrix}$. \\

\begin{claim}\label{subclaim:eiprimeshape}
$\overline{e'_1} = \begin{pmatrix}\overline{I}_2 &\overline{0}_2 \\ \overline{0}_2 &\overline{0}_2  \end{pmatrix}$ and $\overline{e'_2} = \begin{pmatrix}\overline{0}_2 &\overline{0}_2 \\ \overline{0}_2 &\overline{I}_2  \end{pmatrix}$.
\end{claim}
\begin{proof}[Proof of Claim \ref{subclaim:eiprimeshape}]
We will prove the claim for $\overline{e'_1}$; one can deduce the claim for $\overline{e'_2}$ similarly. Since $e'_1$ is an idempotent matrix with trace $2$, the same property holds for $\overline{e'_1}$. Furthermore, since $\overline{e'_1}$ has block diagonal shape $\begin{pmatrix} \star_2 & \overline{0}_2 \\ \overline{0}_2 &\star_2\end{pmatrix}$, each of the blocks on the diagonal themselves are $2 \times 2$ idempotent matrices. Therefore, there exists a block diagonal matrix $\bar{A}$ with shape  $\begin{pmatrix} \star_2 & \overline{0}_2 \\ \overline{0}_2 &\star_2\end{pmatrix}$ such that $\bar{A}\overline{e'_1}\bar{A}^{-1}$ equals (i) $\begin{pmatrix}\overline{I}_2 &\overline{0}_2 \\ \overline{0}_2 &\overline{0}_2  \end{pmatrix}$, (ii) $\begin{pmatrix}\overline{0}_2 &\overline{0}_2 \\ \overline{0}_2 &\overline{I}_2  \end{pmatrix}$ or (iii) $\begin{pmatrix}B&\overline{0}_2 \\ \overline{0}_2 &B  \end{pmatrix}$, where $B$ is the $2 \times 2$ matrix $\begin{pmatrix} 1 & 0 \\ 0 & 0 \end{pmatrix}$. To prove the claim, we have to rule out cases (ii) and (iii). 

Let $A$ denote a lift of $\bar{A}$ in $M_4(\TT)$. Since conjugation preserves trace, by using (\ref{eq:blockdiagonaltraces}), we see that the composition of the maps 
\begin{align} \label{eq:compositionmaps}
    G_{K,S} \rightarrow (Ae'_1A^{-1})\TT[A\varrho(G_{K,S})A^{-1}] (Ae'_1A^{-1}) \hookrightarrow  M_4(\TT) \twoheadrightarrow M_4(\mathbb{F}) \xrightarrow{\mathrm{Trace}} \mathbb{F}
\end{align}
equals $\mathrm{Trace}(\bar\tau^c)$.  

One can now rule out case (ii) as follows: since  both $\bar{A}$ and $\overline{\varrho}$ are block-diagonal with shape $\begin{pmatrix} \star_2 & \overline{0}_2 \\ \overline{0}_2 &\star_2\end{pmatrix}$, we see that case (ii) would force the composition of maps in equation (\ref{eq:compositionmaps}) to equal $\mathrm{Trace}(\bar\tau)$. One arrives at a contradiction since the irreducible two-dimensional representations $\bar\tau^c$ and $\bar\tau$ are non-isomorphic.

To rule out case (iii) we proceed as follows: for each $g$ in $G_{K,S}$, we let  $\bar{A}\overline{\varrho}(g)\bar{A}^{-1}$ equal $\left(\begin{array}{cc|cc}
a_g & b_g & 0 & 0 \\
c_g & d_g & 0 & 0 \\ \midrule
0 & 0 & a'_g & b'_g \\
0 & 0 & c'_g & d'_g
\end{array} \right)$. Then, $\mathrm{Trace}(\bar\tau^c)(g)=a_g +d_g$, while $\mathrm{Trace}(\bar\tau)(g)=a'_g +d'_g$. Combining the following computation:
\begin{align*}
\left(\begin{array}{cc|cc}
1 & 0 & 0 & 0 \\
0 & 0 & 0 & 0 \\ \midrule
0 & 0 & 1 & 0 \\
0 & 0 & 0 & 0
\end{array} \right)\left(\begin{array}{cc|cc}
a_g & b_g & 0 & 0 \\
c_g & d_g & 0 & 0 \\ \midrule
0 & 0 & a'_g & b'_g \\
0 & 0 & c'_g & d'_g
\end{array} \right)
\left(\begin{array}{cc|cc}
1 & 0 & 0 & 0 \\
0 & 0 & 0 & 0 \\ \midrule
0 & 0 & 1 & 0 \\
0 & 0 & 0 & 0
\end{array}\right)= \left(\begin{array}{cc|cc}
a_g & 0 & 0 & 0 \\
0 & 0 & 0 & 0 \\ \midrule
0 & 0 & a'_g & 0 \\
0 & 0 & 0 & 0
\end{array}\right)
\end{align*}
with the observation from equation (\ref{eq:compositionmaps}), we see that $a'_g=d_g$. Using the fact that $\det(\bar\tau)=\det(\bar\tau^c)$ and repeating the above deduction for $g^{-1}$, we get that $a_g=d'_g$. This forces $\mathrm{Trace}(\bar\tau)$ to equal $\mathrm{Trace}(\bar\tau^c)$,  providing a contradiction once again since the irreducible two-dimensional representations $\bar\tau^c$ and $\bar\tau$ are non-isomorphic. This completes the proof of the claim. 
\end{proof}

One may thus choose the change of basis matrix $P^{-1}$ to consist of the column vectors $\{\vec{v}_1, \vec{v}_2, \vec{v}_3, \vec{v}_4\}$ that lift the standard basis of $\mathbb{F}^4$, and at the same time $\{\vec{v}_1, \vec{v}_2\}$ spans $\mathrm{Range}(e'_1)$, while $\{\vec{v}_3, \vec{v}_4\}$ spans $\mathrm{Range}(e'_2)$. Such a choice is possible due to claim \ref{subclaim:eiprimeshape}. With this choice, property (\ref{propblockdiag}) follows. Let $e_1 =Pe'_1P^{-1}$ and $e_2 =Pe'_2P^{-1}$. Property (\ref{propconstruction}) follows directly from the choice of the change of basis matrix $P^{-1}$. Property (\ref{propmaxideal})  follows from  \cite[Lemma 1.4.3(4)]{MR2656025}. Property (\ref{propreduction}) follows from the fact that $\overline{\varrho}$ is block-diagonal with shape as given in equation (\ref{eq:blockdiagonalshape}), the matrices $e_1,e_2$ are of block diagonal shape while the matrix $P$ is residually the identity matrix. Property (\ref{propreduction}) and the fact that $\bar\tau^c$ and $\bar\tau$ are absolutely irreducible representations let us conclude that the composition of $\sigma_i$ with the natural reduction map $M_2(\TT)\rightarrow M_2(\mathbb{F})$ is surjective. An application of Nakayama's lemma now provides us property (\ref{prop2surjection}).  \\

These observations allow us to conclude that the algebras
\[\begin{pmatrix} M_2(\TT) & 0_2 \\ 0_2 & 0_2\end{pmatrix}, \
\qquad \begin{pmatrix} 0_2 & 0_2 \\ 0_2 & M_2(\TT)\end{pmatrix}\]
are contained in $\TT[P\varrho(G_{K,S})P^{-1}]$. 

For every $2 \times 2$ matrix $U = \begin{pmatrix} u_1 & u_2 \\ u_3 & u_4\end{pmatrix}$, one can form $16$ different matrices $U_{(i,j),k}$ with the property that all entries of $U_{(i,j),k}$ are zero, except its $(i,j)$-th entry which equals $u_k$. Here, $1 \leq k \leq 4$ and $1 \leq i,j \leq 2$. These $16$ matrices are equal to the product $e_{m_1,l_1}Ue_{m_2,l_2}$. Here, $e_{m_i,l_i}$are the $2 \times 2$ matrices all of whose entries are zero, except the $(m_i,l_i)$-th entry which equals $1$. One has the following elementary $4 \times 4$ matrix calculations:
\begin{align*}
\begin{pmatrix} e_{m_i,l_i} & 0_2 \\ 0_2 & 0_2 \end{pmatrix} \begin{pmatrix} 0_2 & B \\ C & 0_2 \end{pmatrix} \begin{pmatrix} 0_2 & 0_2 \\ 0_2 & e_{m_j,l_j}  \end{pmatrix} &= \begin{pmatrix} 0_2 & e_{m_i,l_i} B e_{m_j,l_j}\\ 0_2 & 0_2\end{pmatrix}, \\ 
\begin{pmatrix} 0_2 & 0_2 \\ 0_2 &  e_{m_i,l_i} \end{pmatrix} \begin{pmatrix} 0_2 & B \\ C & 0_2 \end{pmatrix} \begin{pmatrix}  e_{m_j,l_j}   & 0_2 \\ 0_2 &0_2 \end{pmatrix} &= \begin{pmatrix} 0_2 & 0_2\\ e_{m_i,l_i} C e_{m_j,l_j} & 0_2\end{pmatrix}.
\end{align*}
Combining our observations in the previous two paragraphs allows us to conclude that there exist $\TT$-ideals $\BB$ and $\CC$ such that we  have the following equality inside $M_4(\TT)$:
\begin{align}\label{eq:tempGMA}
\TT[P\varrho(G_{K,S})P^{-1}] = \begin{pmatrix} M_2(\TT) & M_2(\BB) \\ M_2(\CC) & M_2(\TT) \end{pmatrix}. 
\end{align}
The ideal $\BB$ turns out to be generated by all the entries of the matrix $e_1 U e_2$, where $U$ varies over $\TT[P\varrho(G_{K,S})P^{-1}]$. Similarly, the ideal $\CC$ turns out to be generated by all the entries of the matrix $e_2 U e_1$, where $U$ varies over $\TT[P\varrho(G_{K,S})P^{-1}]$. We let $\varrho':G_{\Q,S} \rightarrow \Gl_4(\TT)$ be the conjugate Galois representation $P\varrho P^{-1}$.\\

Property (\ref{propmaxideal}) above tells us that $\BB \CC$ lies in the maximal ideal of $\TT$. Without loss of generality, suppose that $\BB$ lies in the maximal ideal. We now show that $\BB=\CC$. Consider the Galois representation $\varrho' \pmod \BB:  G_{\Q,S} \rightarrow \Gl_4(\TT/\BB)$. Equation (\ref{eq:tempGMA}) tells us that $\TT[\varrho' \pmod \BB]$ equals the GMA given by:
\begin{align}\label{eq:tempGMAmodB}
    \begin{pmatrix} M_2(\TT/\BB) & 0_2 \\ M_2(\CC') & M_2(\TT/\BB) \end{pmatrix}
\end{align}
Here, $\CC'$ is the ideal generated by $\CC$ in $\TT/\BB$. Therefore, there exists a rank two $\TT$-module $W$, that is $G_{K,S}$-equivariant inside the  rank four $\TT$-module providing $\varrho' \pmod \BB$. The action of $G_{K,S}$ on $W$ is given by the lower $2 \times 2$ block. Furthermore, the residual two-dimensional $G_{K,S}$ representation associated to $W \otimes \mathbb{F}$ is $\bar\tau$. Let $\sigma$ denote an element of $G_{\Q,S}$ which does not belong to $G_{K,S}$. Since $G_{K,S}$ is normal in $G_{\Q,S}$, the conjugate Galois module $\sigma \cdot W$ is also a rank two $\TT$-submodule, that is $G_{K,S}$-equivariant, inside the rank four $\TT$-module providing $\varrho' \pmod \BB$. Furthermore, the residual two-dimensional $G_{K,S}$ representation $\sigma \cdot W \otimes \mathbb{F}$ is $\bar\tau^c$. Since $\bar\tau^c$ is not isomorphic to $\bar\tau$, using Nakayama's lemma we may conclude that the  rank four $\TT$-module providing $\varrho' \pmod \BB$ is isomorphic to the direct sum $W \oplus \sigma \cdot W$, as $(\TT /\BB)[G_{K,S}]$-modules. As a result, the action of $G_{K,S}$ on $\sigma \cdot W$ is provided by the upper $2 \times 2$ block in equation (\ref{eq:tempGMAmodB}). This -- along with equation (\ref{eq:tempGMAmodB}) -- forces $\CC'$ to equal the zero ideal. Therefore, $\CC \subset \BB$. This tells us that $\CC$ is in the maximal ideal of $\TT$. Reversing the roles of $\BB$ and $\CC$ and repeating the above argument gives us the reverse inclusion $\BB \subset \CC$. We thus obtain $\BB=\CC$. \\

Consider a quotient ring $\TT'$ of $\TT$. Let $I$ denote $\ker (\TT \twoheadrightarrow \TT')$. Suppose now that $\varrho' \pmod I$
is a direct sum of two $2$-dimensional Galois representations, say $\varrho'_1 \oplus \varrho'_2$. Using the Brauer-Nesbitt theorem, we conclude that $\varrho'_1$ and $\varrho'_2$ lift $\bar\tau^c$ and $\bar\tau$ (say) respectively. The GMA structure on $\TT'[\varrho' \pmod I]$ is given by
\begin{align}\label{eq:tempGMAdecomposable}
 \begin{pmatrix} M_2(\TT') & M_2(\BB') \\ M_2(\BB') & M_2(\TT') \end{pmatrix}.
\end{align}
Here, $\BB'$ is the image of $\BB$ in $\TT'$. Using \cite[Proposition 1.5.1]{MR2656025}, we obtain that $\BB^2$ is the reducibility ideal associated to $\mathrm{Trace}(\varrho' \mid_{G_{K,S}})$ and, therefore, $I$ contains $\BB^2$. This observation produces two $2$-dimensional Galois representations $\tau'_1: G_{K,S} \rightarrow \Gl_2(\TT')$ and $\tau'_2: G_{K,S} \rightarrow \Gl_2(\TT')$ via the upper and lower $2 \times 2$ blocks given in equation (\ref{eq:tempGMAdecomposable}) respectively. The uniqueness assertion of \cite[Proposition 1.5.1]{MR2656025} and the fact that $\bar\tau$, $\bar\tau^c$ are absolutely irreducible representations tell us that we have isomorphisms  $\varrho'_1  \cong \tau'_1$ and  $\varrho'_2  \cong \tau'_2$ of $G_{K,S}$-representations over $\TT'$.  Thus, under a suitable basis over $\TT'$ for the $4$-dimensional Galois representation $\varrho' \pmod I$, we obtain that the GMA of equation (\ref{eq:tempGMAdecomposable}) is conjugate by a matrix in $\Gl_4(\TT')$ to the GMA 
\begin{align*}
 \begin{pmatrix} M_2(\TT') & 0_2 \\ 0_2 & M_2(\TT') \end{pmatrix}.
\end{align*}
This observation -- for instance using Nakayama's lemma -- allows us to conclude that $\BB'$ equals the zero ideal. 

The $p$-distinguished hypothesis now allows us to choose a change-of-basis matrix $P'$ for the Galois representation $\varrho'$ such that 
\begin{align}
\TT[P'\varrho'(G_{K,S})P'^{-1}] = \begin{pmatrix} M_2(\TT) & M_2(\BB) \\ M_2(\BB) & M_2(\TT) \end{pmatrix}
\end{align}
and so that for all $g \in G_{\Q_p}$, its natural image in the GMA $\TT[P' \varrho'(G_{K,S}) P'^{-1}]$ has the desired shape prescribed by property (\ref{item:localpart}) while maintaining the properties established earlier. For example, see the arguments provided in \cite[Theorem 4.6(7)]{hsieh_yoshida}, where it is shown that the change-of-basis matrix $P'$ can be chosen to be a unit in the GMA $\begin{pmatrix} M_2(\TT) & M_2(\BB) \\ M_2(\BB) & M_2(\TT) \end{pmatrix}$. We let $\widetilde{\rho}_{\TT}$ denote $P'\varrho'P'^{-1}$. We see that the Galois representation $\widetilde{\rho}_{\TT}: G_{\Q,S} \rightarrow \Gl_4(\TT)$ satisfies points (\ref{item:trace}), (\ref{item:restriction}), (\ref{hyp:BinsideYos0}) and (\ref{item:localpart}) of the Theorem.  Parts (\ref{hyp:localshaperamlevel})  and (\ref{hyp:localshapediscK}) follow from \cite[Theorem 3.1, parts (2) and (3)]{DPmin}. This finishes the proof of Theorem \ref{thm:pseudorepBC}. 
\end{proof}

Suppose we have a quotient ring $\TT'$ of $\TT$ given by a surjective ring homomorphism $\varpi': \TT \rightarrow \TT'$. We have a Galois representation $\widetilde{\rho}_{\TT'}:G_{\Q,S} \rightarrow \Gl_4(\TT')$ given by the composition $\varpi' \circ \widetilde{\rho}_{\TT}$ satisfying the property that for all $\ell \notin S$,  we have $\mathrm{Trace}(\widetilde{\rho}_{\TT'})(\Frob_\ell)=\varpi'(T_{\ell,2})$. Furthermore, if we let  $\BB'$ denote the ideal $\varpi'(\BB)$  in $\TT'$, then one observes that the tuple $(\widetilde{\rho}_{\TT'},\BB')$ satisfies all the conditions of Theorem \ref{thm:pseudorepBC} above. To emphasize the specific applications of the rings $\TT^\perp, \TTdR, \TTdec$ respectively, which were introduced in Section \ref{sec:intro}, we let $\BB'$ denote $\BB^\perp, \BB^{\cla}, \BB^{\dec}$ in these corresponding cases respectively.  \\

Recall the identification of $\Lambda$  with  $\Z_p \llbracket y_1,y_2 \rrbracket$ given in section \ref{sec:siegelgenus2}. A ring homomorphism $\TT \rightarrow \overline{\Q}_p$ is said to be a specialization with weight  $(k,2)$ if its kernel contains $(1+y_1)^{nk}-1$ and $(1+y_2)^{2n}-1$ for some natural number $n$. Similarly, a prime ideal $\p$ of $\TT$ is said to have weight $(k,2)$ if it corresponds to the kernel of a specialization with weight $(k,2)$. Recall from the introduction that $\TT^{\cla}$ is defined as follows:
 \[\TT^{\cla} \coloneqq \mathrm{Image}\left(\TT \rightarrow \prod \limits_{\p \in S^{\cla}} \dfrac{\TT}{\p}\right).\]

  Here, $S^{\cla}$ denotes the set of  prime ideals $\p$ of $\TT$ corresponding to specializations with weight $(k,2)$ such that the restriction of the $p$-adic Galois representation for the specialization at $\p$ to the decomposition subgroup at $p$  is de Rham.

\begin{theorem}\label{thm:dRshape}
There exists a $4 \times 4$ matrix $N_0 \coloneqq \begin{pmatrix} 1 & 0 & 0 & 0 \\ 0 & 1 & 0 & b_2 \\ b_1 & 0 & 1 & 0 \\ 0 & 0 & 0 & 1\end{pmatrix}$, with $b_1,b_2 \in \BB^{\cla}$ such that for every element $g$ in the decomposition group $\Gal{\overline{\Q}_p}{\Q_p}$, 
\resizebox{1.05\textwidth}{!}{\begin{minipage}{\textwidth}{\begin{align*}
 N_0^{-1}\widetilde{\rho}_{\TTdR}(g)N_0= \left(\begin{array}{cccc} \pi_{\cla} \circ \left(\dfrac{\widetilde{\lambda}}{ \widetilde{\psi}_1\cyc^{-2}\omega^{b_0}\widetilde{\kappa}_2}\right)(g) & \star & 0 & \star \\ 0 & \pi_{\cla}  \circ \left(\widetilde{\psi}_1\cyc^{-2}\omega^{b_0}\widetilde{\kappa}_2\right)(g)   & 0 & 0 \\ 0 & \star &  \pi_{\cla}  \circ \left(\dfrac{\widetilde{\lambda}}{\widetilde{\psi}_0}\right)(g) & \star \\ 0 & 0 & 0 &  \pi_{\cla}  \circ \widetilde{\psi}_0(g) \end{array} \right).
  \end{align*}
}
\end{minipage}
}
 \end{theorem}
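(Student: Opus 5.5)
The plan is to remove the two starred entries in positions $(3,1)$ and $(2,4)$ of the local shape in Theorem \ref{thm:pseudorepBC}(\ref{item:localpart}). We do this by a single conjugation by a unipotent matrix $N_0$ whose off-diagonal entries lie in $\BB^{\cla}$. The key input is that, at every $\p\in S^{\cla}$, the corresponding $2$-dimensional local extensions are de Rham, and hence split.

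\textbf{Step 1 (two subquotients).} In the shape of Theorem \ref{thm:pseudorepBC}(\ref{item:localpart}), the $\TT$-span $\langle e_1,e_3\rangle$ of the first and third basis vectors is $G_{\Q_p}$-stable. This holds because column $1$ has nonzero entries only in rows $1,3$, and entry $(1,3)$ is $0$. On $\langle e_1,e_3\rangle$ the action is lower triangular with diagonal $\chi_1=\widetilde\lambda/(\widetilde\psi_1\cyc^{-2}\omega^{b_0}\widetilde\kappa_2)$ and $\chi_3=\widetilde\lambda/\widetilde\psi_0$, with off-diagonal entry $c_{31}$. On the quotient by $\langle e_1,e_3\rangle$, spanned by the images of $e_2,e_4$, the action is upper triangular with diagonal $\chi_2=\widetilde\psi_1\cyc^{-2}\omega^{b_0}\widetilde\kappa_2$ and $\chi_4=\widetilde\psi_0$, with off-diagonal entry $c_{24}$. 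Since these entries sit in off-diagonal blocks of the GMA of Theorem \ref{thm:pseudorepBC}(\ref{item:restriction}), $c_{31}$ and $c_{24}$ take values in $\BB^{\cla}$ after applying $\pi_{\cla}$. Moreover, $g\mapsto c_{24}(g)\chi_4(g)^{-1}$ is a $1$-cocycle with values in $\BB^{\cla}(\chi_2\chi_4^{-1})$, and similarly $c_{31}\chi_1^{-1}$ is a cocycle in $\BB^{\cla}(\chi_3\chi_1^{-1})$.

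\textbf{Step 2 (pointwise splitting).} Fix $\p\in S^{\cla}$ of weight $(k,2)$. At such a point $\widetilde\kappa_2$ specializes to $\cyc^{2}$ times a finite-order character. Hence the specializations of $\chi_2$ and $\chi_4$ are finite-order times unramified, with Hodge--Tate weight $0$. The specializations of $\chi_1$ and $\chi_3$ both have Hodge--Tate weight $k-1$. Subquotients of de Rham representations are de Rham, so the specialized classes of $c_{24}$ and $c_{31}$ lie in $H^1_g(\Q_p,\chi)$, where $\chi$ is a de Rham character of Hodge--Tate weight $0$. By \ref{lab:pdist}, $\chi$ is residually nontrivial, so in particular $\chi\neq 1,\cyc$. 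Then the Bloch--Kato dimension formulas give $H^1_g=H^1_f=0$, so both specialized cocycles are coboundaries at $\p$.

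\textbf{Step 3 (uniform coboundary and conclusion).} Using \ref{lab:pdist}, choose $g_0\in G_{\Q_p}$ with $\chi_2\chi_4^{-1}(g_0)-1\in(\TT^{\cla})^\times$, and define
\[
b_2 \coloneqq -\frac{c_{24}(g_0)\chi_4(g_0)^{-1}}{\chi_2\chi_4^{-1}(g_0)-1}\in\BB^{\cla}.
\]
Define $b_1$ analogously from $c_{31}$, using some $g_1\in G_{\Q_p}$ with $\chi_3\chi_1^{-1}(g_1)-1$ a unit. Conjugating by $N_0$ replaces each cocycle by itself minus the coboundary of $b_i$, and this difference vanishes at $g_0$ (respectively $g_1$). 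At each $\p\in S^{\cla}$ the difference is a coboundary $(\chi-1)\beta$ by Step 2. Since it vanishes at the point where $\chi-1$ is a unit, $\beta=0$, so the difference is identically zero modulo $\p$. As $\TT^{\cla}$ embeds into $\prod_{\p\in S^{\cla}}\TT/\p$, the new $(2,4)$ and $(3,1)$ entries vanish.

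\textbf{Main obstacle.} The part I expect to be most delicate is bookkeeping. We must check that conjugating by $N_0$ preserves the existing zeros in positions $(1,3)$, $(2,1)$, $(2,3)$ and row $4$, and that it does not reintroduce $(3,1)$ or $(2,4)$ terms through cross terms. These cross terms involve products such as $b_1c_{\ast}$ and $b_1b_2$. They should vanish because $N_0^{-1}$ has the same unipotent shape, and because the relevant entries of $\widetilde\rho_{\TTdR}(g)$ in column $1$ and row $4$ are already zero.

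Concretely, I would carry out the computation in the order $N_0=N_{b_2}N_{b_1}$, first killing $(3,1)$ and then $(2,4)$, and verify at each stage that the cocycle for the second entry is still computed from the unchanged quotient on $\langle e_2,e_4\rangle$.
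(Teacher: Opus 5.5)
Your proposal is correct and is essentially the paper's own argument. The paper likewise takes the $G_{\Q_p}$-stable span of $e_1,e_3$ and the quotient spanned by the images of $e_2,e_4$, and splits each specialized extension at points of $S^{\cla}$ using de Rhamness and equal Hodge--Tate weights. Where you compute $H^1_g=H^1_f=0$, the paper cites Nekov\'a\v{r}. It then uses an element $\sigma_0$ that residually separates the two diagonal characters to produce $b_1,b_2\in\BB^{\cla}$ via an eigenvector condition, which is equivalent to your vanishing-coboundary condition, and concludes by density over $S^{\cla}$.

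Two small points on your write-up. First, the bookkeeping you flag as the main obstacle is harmless. With $E_{ij}$ the elementary matrices, $N_0=I+b_1E_{31}+b_2E_{24}$ and $E_{31}E_{24}=E_{24}E_{31}=0$. A direct computation shows the new $(3,1)$ and $(2,4)$ entries are exactly $c_{31}+b_1(\chi_3-\chi_1)$ and $c_{24}+b_2(\chi_2-\chi_4)$, and all the other prescribed zeros are preserved. Second, $\chi\neq\cyc$ follows from the Hodge--Tate weights, not from residual nontriviality. In any case only $\chi\neq 1$ is needed for the vanishing you use.
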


\begin{proof}
The proof of this theorem uses an argument similar to the proof of Theorem \ref{thm:pseudorepBC}\ref{item:localpart}. Let $\mathcal{M}^{\cla}$ denote the rank four $\TT^{\cla}$-module underlying $\widetilde{\rho}_{\TT^{\cla}}$. We are working with the GMA 
\begin{align*}
	\TT^{\cla}[\widetilde{\rho}_{\TT^{\cla}}(G_{K,S})] = \left(\begin{array}{cc} M_2(\TT^{\cla}) & M_2(\BB^{\cla}) \\ M_2(\BB^{\cla}) & M_2(\TT^{\cla}) \end{array}\right).	
	\end{align*}
Moreover, for every element $g$ in the decomposition group $\Gal{\overline{\Q}_p}{\Q_p}$,
{\allowdisplaybreaks\begin{align} \label{eq:Tclaprelim}
  \widetilde{\rho}_{\TT^{\cla}}(g) &= \left(\begin{array}{cccc}  \pi_{\cla} \circ \dfrac{\widetilde{\lambda}}{ \widetilde{\psi}_1\cyc^{-2}\omega^{b_0}\widetilde{\kappa}_2}(g) & \star & 0 & \star \\ 0 &\pi_{\cla} \circ \widetilde{\psi}_1\cyc^{-2}\omega^{b_0}\widetilde{\kappa}_2(g)   & 0 & \star \\ \star & \star &  \pi_{\cla} \circ \dfrac{\widetilde{\lambda}}{\widetilde{\psi}_0}(g) & \star \\ 0 & 0 & 0 & \pi_{\cla} \circ \widetilde{\psi}_0(g) \end{array} \right),
\end{align}
}

 Let $\chi_1$ and $\chi_3$ denote the characters $\pi_{\cla} \circ \dfrac{\widetilde{\lambda}}{ \widetilde{\psi}_1\cyc^{-2}\omega^{b_0}\widetilde{\kappa}_2}$ and $\pi_{\cla} \circ \dfrac{\widetilde{\lambda}}{\widetilde{\psi}_0}$ respectively. The $p$-distinguished hypothesis allows us to find an element $\sigma_0$ in $G_{\Q_p}$ for which the characters $\chi_1(\sigma_0)$ and $\chi_3(\sigma_0)$ are residually distinct. Using explicit computations, one can find an element $b_1 \in \BB^{\cla}$ such that the column vector $\begin{pmatrix} 1 \\ 0 \\ b_1 \\ 0 \end{pmatrix}$ (call it $\vec{u}_1$) is an eigenvector for the action of $\sigma_0$ on $\mathcal{M}^{\cla}$ with eigenvalue  $\chi_1(\sigma_0)$. 
 
 Let $\mathcal{M'}$ denote $\TT^{\cla}$-submodule of $\mathcal{M}^{\cla}$ generated by the subset $\{\vec{e}_1, \vec{e}_3\}$ of the standard basis. Note that $\mathcal{M'}$ is $G_{\Q_p}$-equivariant. Consider  a  specialization $\psi:\TT^{\cla} \rightarrow \overline{\Q}_p$ with weight $(k,2)$, we know by definition that the $\overline{\Q}_p[G_{\Q_p}]$ representation $\psi(\mathcal{M}^{\cla})\otimes \overline{\Q}_p$ is de-Rham. Since $\overline{\Q}_p[G_{\Q_p}]$-submodules of de-Rham representations are de-Rham (see \cite[Section 1.4]{MR1263527}), we obtain that $\psi(\mathcal{M}') \otimes \overline{\Q}_p$ is also de-Rham. We obtain an exact sequence of $G_{\Q_p}$-representations as follows:
 \begin{align}\label{eq:deRhamextension}
 0 \rightarrow \overline{\Q}_p(\psi \circ \chi_3) \rightarrow \psi(\mathcal{M}') \otimes \overline{\Q}_p \rightarrow \overline{\Q}_p(\psi \circ \chi_1) \rightarrow 0.
 \end{align}
 Note that the specializations $\psi \circ \chi_1$ and $\psi \circ \chi_3$ are both de-Rham as they are subquotients of a de-Rham representation (see \cite[Section 1.4]{MR1263527}), and that  both have Hodge--Tate weights equal to $k-1$ as $\psi$ is a specialization with weight $(k,2)$. An explicit computation using the observations above and \cite[Proposition 1.24]{MR1263527}) tells us that the exact sequence of $\overline{\Q}_p[G_{\Q_p}]$-modules given in equation (\ref{eq:deRhamextension}) is split. The choice of $\sigma_0$ in $G_{\Q_p}$ allows us to deduce that the one-dimensional $\overline{\Q}_p$-subspace, spanned by the image of the column vector $\vec{u}_1$ under $\psi$, is the $G_{\Q_p}$-subrepresentation of $\psi(\mathcal{M}') \otimes \overline{\Q}_p$ corresponding to the character $\psi \circ \chi_1$. A density argument now tells us that $\vec{u}_1$ is an eigenvector for all $g$ in $G_{\Q_p}$ with eigenvalue $\chi_1(g)$.

 Similarly, we work with the quotient $\mathcal{M}^{\cla}/\mathcal{M}'$, which is a free $\TT$-module on which $G_{\Q_p}$ acts by deleting the first, third rows and columns from the matrix given in equation (\ref{eq:Tclaprelim}). Observe that each $(k,2)$ specialization of this quotient gives rise to a de-Rham representation (since quotients of de-Rham representations are de-Rham). Furthermore, these $(k,2)$ specializations are two-dimensional given by an extension of the form:
 \begin{align}\label{eq:deRhamextension2}
 0 \rightarrow \overline{\Q}_p(\psi \circ \chi_2) \rightarrow \psi(\mathcal{M}^{\cla}/\mathcal{M}') \otimes \overline{\Q}_p \rightarrow \overline{\Q}_p(\psi \circ \chi_4) \rightarrow 0.
 \end{align}
Note that the specializations $\psi \circ \chi_2$ and $\psi \circ \chi_4$ are both de-Rham and have Hodge--Tate weights zero as $\psi$ is a specialization with weight $(k,2)$. Here, $\chi_2$ and $\chi_4$ are the $G_{\Q_p}$-characters $\pi_{\cla} \circ \widetilde{\psi}_1\cyc^{-2}\omega^{b_0}\widetilde{\kappa}_2$ and $\pi_{\cla} \circ \widetilde{\psi}_0$. Repeating the previous arguments allows us to find an element $b_2 \in \BB^{\cla}$ such that the image of the column vector $\begin{pmatrix} 0  \\ 1 \\ 0 \\ b_2 \end{pmatrix}$ (call it $\vec{u}_4$) in this quotient $\mathcal{M}^{\cla}/\mathcal{M}'$ has the property that $G_{\Q_p}$ acts on $\TT^{\cla}$-module generated by it via the character $\chi_4$.

 Let $\vec{e}_2=\begin{pmatrix} 0 \\ 1 \\ 0 \\ 0 \end{pmatrix}$ and $\vec{e}_3=\begin{pmatrix} 0 \\ 0 \\ 1 \\ 0 \end{pmatrix}$ denote the elements in the standard basis.  Combining our observations and using the new basis $\{\vec{u}_1, \vec{e}_2, \vec{e}_3, \vec{u}_4\}$ now completes the proof of the theorem.

\end{proof}

 Recall from the introduction that $\TT^{\dec}$ is defined as follows:
 \[\TT^{\dec} \coloneqq \mathrm{Image}\left(\TT \rightarrow \prod \limits_{\p \in S^{\dec}} \dfrac{\TT}{\p}\right).\]
 Here, $S^{\dec}$ denotes the set of cohomological primes $\p$ of $\TT$ such that the restriction to $G_{\Q_p}$ of the corresponding $\overline{\Q}_p$-valued Galois representation associated to the specialization at $\p$ is decomposable. We next prove that, under the $p$-distinguished \ref{lab:pdist} and residual indecomposability \ref{hyp:resindec} hypotheses, there is only one type of decomposability and this decomposition happens integrally. Let $\mathrm{O} \subset \overline{\Q}_p$ denote a finite extension of $\Z_p$ with residue field $\mathbb{F}$.
\begin{lemma} \label{lemma:specializationdecomposable}
Consider a Galois representation $\varrho_\mathcal{M}: G_{\Q,S} \rightarrow \Gl_4(\mathrm{O})$ lifting $\bar\rho$, obtained from  a cohomological specialization of $\TT$ in $\mathcal{X}^{\geq 4}_{\mathrm{temp}}(\TT)$. Let $\mathcal{M}$ denote the underlying rank four $\mathrm{O}$-module. Let $\mathcal{V}$ denote the Galois module $\mathcal{M} \otimes_{\mathrm{O}} \overline{\Q}_p$. Let $\chi_1,\chi_2,\chi_3,\chi_4$ denote the $G_{\Q_p}$ characters given by the projection of the characters appearing in the diagonal as given in Theorem \ref{thm:pseudorepBC}(\ref{item:localpart}). Suppose there exists two $\overline{\Q}_p[G_{\Q_p}]$-subrepresentations $\mathcal{V}_1,\mathcal{V}_2$ of $\mathcal{V}$ such that we have the following equality of $\overline{\Q}_p[G_{\Q_p}]$-representations inside $\mathcal{V}$:
\begin{align*}
 \mathcal{V} =  \mathcal{V}_1 \oplus \mathcal{V}_2. 
\end{align*}
Let $\mathcal{M}_i$ denote the $\mathrm{O}[G_{\Q_p}]$-module $\mathcal{M} \cap \mathcal{V}_i$. Then, we have the following results. 
\begin{enumerate}
    \item\label{item:equalitylattices} We have the following equality of $\mathrm{O}[G_{\Q_p}]$-modules:
\begin{align*}
 \mathcal{M} = \mathcal{M}_1 \oplus \mathcal{M}_2.
\end{align*}
\item\label{item:equalitylatticesranks}
$\mathrm{Rank}_{\mathrm{O}}\mathcal{M}_1 = \mathrm{Rank}_{\mathrm{O}}\mathcal{M}_2 = 2$. 
\item\label{item:equalitylatticeslifts} The $\mathrm{O}[G_{\Q_p}]$-representations given by $\mathcal{M}_1$ and $\mathcal{M}_2$ lift $\bar\tau \mid_{G_{\Q_p}}$ and $\bar\tau^c \mid_{G_{\Q_p}}$ respectively. 
\item\label{item:nonsplitlocally} There exist non-split short exact sequences of $\mathrm{O}[G_{\Q_p}]$-modules:
\begin{align*}
& 0 \rightarrow \OO(\chi_1) \rightarrow \mathcal{M}_2 \rightarrow \OO(\chi_2)\rightarrow 0, \\  
& 0 \rightarrow \OO(\chi_3) \rightarrow \mathcal{M}_1 \rightarrow \OO(\chi_4)\rightarrow 0. 
\end{align*}
\end{enumerate}
\end{lemma}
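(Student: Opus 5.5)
The argument will be driven by the residual picture. By (\ref{eq:residualembeddingGL2GSP4}) and Theorem \ref{thm:pseudorepBC}(\ref{item:localpart}), the residual local representation $\bar\rho|_{G_{\Q_p}}$ is the direct sum $\bar\tau|_{G_{\Q_p}} \oplus \bar\tau^c|_{G_{\Q_p}}$. By \ref{hyp:resindec}, each summand is indecomposable. By \ref{lab:pdist}, the four diagonal characters $\bar\chi_1,\dots,\bar\chi_4$ are pairwise distinct. From the shape in Theorem \ref{thm:pseudorepBC}(\ref{item:localpart}), $\bar\tau^c|_{G_{\Q_p}}$ (or $\bar\tau$, depending on the indexing) is a non-split extension of $\bar\chi_4$ by $\bar\chi_3$, and the other summand is a non-split extension of $\bar\chi_2$ by $\bar\chi_1$. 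Moreover $\mathcal{V}$ has a full flag with the characters $\chi_i$ as graded pieces, and these are pairwise distinct since they are distinct mod $p$.

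\textbf{Step 1: ranks.} Neither $\mathcal{V}_i$ is zero, as otherwise the decomposition is trivial; we assume here that ``decomposable'' means both summands are nonzero. I will show that neither summand can have dimension $1$ or $3$. Suppose $\mathcal{V}_1$ has dimension $1$, say with character $\chi$. Then $\mathcal{M}_1=\mathcal{M}\cap\mathcal{V}_1$ is saturated in $\mathcal{M}$, so $\mathcal{M}/\mathcal{M}_1$ is torsion-free. Consequently $\bar{\mathcal{M}}_1=\mathcal{M}_1\otimes\mathbb{F}$ is a line in $\bar\rho|_{G_{\Q_p}}$. It is also a direct summand: $\mathcal{V}_1$ is a summand of $\mathcal{V}$, so the projection $\mathcal{V}\to\mathcal{V}_1$ restricts to $\mathcal{M}\to\frac{1}{\pi^n}\mathcal{M}_1$, which gives an extension class of bounded torsion. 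This shows the line is a summand only after rescaling, which is not yet enough, so I will argue differently. Since $\chi$ is distinct residually from the other three characters, the Ext groups between $\chi$ and the complement's characters control splitting. Here is the cleaner route: the quotient $\mathcal{V}/\mathcal{V}_2\cong\mathcal{V}_1$ forces $\chi$ to be both a sub-character and a quotient-character of $\mathcal{V}$. Residually, $\bar\chi$ would then be both a sub and a quotient of $\bar\rho|_{G_{\Q_p}}$, via the saturated line $\bar{\mathcal{M}}_1$ and the image of $\mathcal{M}$ in $\mathcal{V}_1$. But in $\bar\tau\oplus\bar\tau^c$ with non-split indecomposable pieces and distinct characters, the sub-characters are exactly $\{\bar\chi_1,\bar\chi_3\}$ and the quotient-characters are exactly $\{\bar\chi_2,\bar\chi_4\}$. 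These sets are disjoint, which is a contradiction. Dimension $3$ is handled symmetrically by applying the same argument to $\mathcal{V}_2$. This gives (\ref{item:equalitylatticesranks}).

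\textbf{Step 2: integral splitting.} Each $\mathcal{V}_i$ is $2$-dimensional and has a $1$-dimensional sub. Its sub-character is some $\chi_j$ with $\bar\chi_j\in\{\bar\chi_1,\bar\chi_3\}$, and its quotient-character has $\bar\chi$ in $\{\bar\chi_2,\bar\chi_4\}$. The reductions $\bar{\mathcal{M}}_i$, formed using saturated lattices, are $2$-dimensional subrepresentations of $\bar\tau\oplus\bar\tau^c$. I will argue that each $\bar{\mathcal{M}}_i$ must equal one of the summands $\bar\tau|_{G_{\Q_p}}$ or $\bar\tau^c|_{G_{\Q_p}}$. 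A $2$-dimensional subrepresentation containing $\bar\chi_1$ and $\bar\chi_3$ would have quotient characters violating the constraint on quotient-characters (via $\mathcal{V}_i$ being a quotient of $\mathcal{V}$). A subrepresentation with a mixed graph is excluded because distinct characters admit no nonzero homomorphisms. Since the $\mathcal{V}_i$ are distinct summands, $\bar{\mathcal{M}}_1$ and $\bar{\mathcal{M}}_2$ are the two different summands. Hence $\bar{\mathcal{M}}_1+\bar{\mathcal{M}}_2=\bar{\mathcal{M}}$, and Nakayama's lemma gives $\mathcal{M}=\mathcal{M}_1\oplus\mathcal{M}_2$. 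This proves (\ref{item:equalitylattices}), and (\ref{item:equalitylatticeslifts}) follows after relabeling.

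\textbf{Step 3: non-split sequences.} Each $\mathcal{M}_i$ is a rank-$2$ saturated lattice whose $G_{\Q_p}$-stable flag consists of the characters $\chi_1,\chi_2$ (respectively $\chi_3,\chi_4$); this follows from the residual matching together with distinctness. The extension classes are non-split integrally because their reductions are the indecomposable $\bar\tau|_{G_{\Q_p}}$ and $\bar\tau^c|_{G_{\Q_p}}$, which gives (\ref{item:nonsplitlocally}).

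\textbf{Main obstacle.} The main obstacle is making rigorous the passage from a rational decomposition to residual sub/quotient constraints, that is, controlling reductions of saturated lattices. I will handle it via the standard fact that reduction of a saturated $G$-stable sublattice is a subrepresentation, and that reduction of its image in the quotient is a quotient, with semisimplifications matched by Brauer--Nesbitt.
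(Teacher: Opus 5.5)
Your argument is correct in substance, but it is organized differently from the paper's.

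The paper proves (1) first, and for all dimensions at once. Since $\mathcal{V}^{ss}=\mathcal{V}_1^{ss}\oplus\mathcal{V}_2^{ss}$ and the $\chi_j$ are residually distinct (\ref{lab:pdist}), the saturated reductions $\mathcal{M}_1/\pi\mathcal{M}_1$ and $\mathcal{M}_2/\pi\mathcal{M}_2$ inject into $\mathcal{M}/\pi\mathcal{M}$ with disjoint Jordan--H\"older constituents. Hence they intersect trivially, and since their dimensions add to $4$ they span. Nakayama then gives $\mathcal{M}=\mathcal{M}_1\oplus\mathcal{M}_2$; this step uses only \ref{lab:pdist}. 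Parts (2) and (3) are then read off from the uniqueness (Krull--Schmidt) of the decomposition $\bar\rho|_{G_{\Q_p}}\cong\bar\tau|_{G_{\Q_p}}\oplus\bar\tau^c|_{G_{\Q_p}}$ into indecomposables, which is where \ref{hyp:resindec} enters.

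You go ranks-first instead. You use the socle/cosocle observation: the residual sub-characters are $\{\bar\chi_1,\bar\chi_3\}$ and the quotient characters are $\{\bar\chi_2,\bar\chi_4\}$. You then classify the $2$-dimensional subrepresentations of $\bar\tau|_{G_{\Q_p}}\oplus\bar\tau^c|_{G_{\Q_p}}$, and only then apply Nakayama. This is valid and shows concretely why a $1$- or $3$-dimensional summand is impossible. However, it is longer, and it uses \ref{hyp:resindec} already for (1), where the paper separates the roles of the two hypotheses. Your Step 3 is essentially the paper's argument for (4).

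One step needs repair. The sentence ``since the $\mathcal{V}_i$ are distinct summands, $\bar{\mathcal{M}}_1$ and $\bar{\mathcal{M}}_2$ are the two different summands'' does not follow as stated: distinct saturated sublattices can have the same reduction. The correct reason is that $\mathcal{V}_1$ and $\mathcal{V}_2$ carry complementary subsets of $\{\chi_1,\dots,\chi_4\}$ as constituents. By Brauer--Nesbitt and residual distinctness, $\bar{\mathcal{M}}_1$ and $\bar{\mathcal{M}}_2$ therefore have disjoint constituents. Once you invoke this, $\bar{\mathcal{M}}_1\cap\bar{\mathcal{M}}_2=0$ follows immediately. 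Your Step 1 and the classification of $2$-dimensional subrepresentations then become unnecessary for (1), which is exactly the paper's shortcut.

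You should also delete the abandoned ``direct summand after rescaling'' attempt at the start of Step 1.
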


\begin{proof}
We start with the observation that $\dim_{\overline{\Q}_p} \mathcal{V}_i = \mathrm{rank}_{\mathrm{O}} \mathcal{M}_i$ (say equal to $m_i$). This follows from the following equalities inside $\mathcal{V}$:
\[\mathcal{V}_1 = \mathcal{M}_1 \otimes_{\mathrm{O}} \overline{\Q}_p, \qquad \mathcal{V}_2 = \mathcal{M}_2 \otimes_{\mathrm{O}} \overline{\Q}_p. \]
Let $\varrho_{\mathcal{M}_i}:G_{\Q_p} \rightarrow \Gl_{m_i}(\mathrm{O})$ denote the Galois representation corresponding to $\mathcal{M}_i$ (and hence to $\mathcal{V}_i$). We can thus conclude the trace functions associated to the $G_{\Q_p}$-representations $\mathcal{V}_i$ are $\mathrm{O}$-valued and given by the trace function $\mathrm{Trace}(\varrho_{\mathcal{M}_i})$.

Furthermore, we observe that the quotient $\mathcal{M}/ \mathcal{M}_i$ is $\mathrm{O}$-torsion free (and hence flat as an $\mathrm{O}$-module). Let $\pi$ denote a uniformizer of $\mathrm{O}$. Therefore, we have the natural injection of $G_{\Q_p}$-modules:
\begin{align} \label{eq:inclusioncotorsion}
\mathcal{M}_i/\pi\mathcal{M}_i \hookrightarrow \mathcal{M} / \pi \mathcal{M}. 
\end{align}
Using Theorem \ref{thm:pseudorepBC}(\ref{item:localpart}), we have the following decomposition of $G_{\Q_p}$-modules:
\begin{align} \label{eq:ordinaritylemmadec}
\mathcal{V}^{ss} &\cong \overline{\Q}_p (\chi_1) \oplus  \overline{\Q}_p (\chi_2) \oplus \overline{\Q}_p (\chi_3) \oplus \overline{\Q}_p (\chi_4), \\ \notag
&\cong \mathcal{V}_1^{ss} \oplus \mathcal{V}_2^{ss}.
\end{align}
Here, $ss$ denotes semi-simplification. We also observe that the characters $\chi_i$  are $\mathrm{O}$-valued, lifting residually distinct characters. Using the Brauer--Nesbitt theorem over $\overline{\Q}_p$, the $\mathrm{O}$-valued function $\mathrm{Trace}(\varrho_{\mathcal{M}_i})$ is a sum of $m_i$ characters with $m_1+m_2=4$. These characters are the ones appearing in equation (\ref{eq:ordinaritylemmadec}) above. We have the following equality of trace functions valued in $\mathrm{O}$:
\[\mathrm{Trace}(\varrho_{\mathcal{M}}) = \mathrm{Trace}(\varrho_{\mathcal{M}_1})+\mathrm{Trace}(\varrho_{\mathcal{M}_2}). \]
Reducing modulo $\pi$, we get the following equality of trace functions valued in $\mathbb{F}$:
\begin{align}
    \mathrm{Trace}(\bar\rho) = \mathrm{Trace}(\overline{\varrho}_{\mathcal{M}_1})+\mathrm{Trace}(\overline{\varrho}_{\mathcal{M}_2}).
\end{align} 
Here, $\overline{\varrho}_{\mathcal{M}_i}:G_{\Q_p} \rightarrow \Gl_{m_i}(\mathbb{F})$ is the Galois representation associated to the $\mathbb{F}$-module $\mathcal{M}_i/\pi\mathcal{M}_i$. The $p$-distinguished hypothesis \ref{lab:pdist} now allows us to conclude that the $\mathbb{F}$-valued characters appearing in the semi-simplifications $(\overline{\varrho}_{\mathcal{M}_1})^{ss}$ and $(\overline{\varrho}_{\mathcal{M}_2})^{ss}$ are distinct. Since $m_1+m_2=4$, using the inclusion given in equation (\ref{eq:inclusioncotorsion}) allows us to deduce the following equality of $\mathbb{F}[G_{\Q_p}]$-modules:
\begin{align*}
\mathcal{M}/\pi\mathcal{M} = \mathcal{M}_1/\pi\mathcal{M}_1 \oplus \mathcal{M}_2/\pi\mathcal{M}_2.
\end{align*}
 Nakayama's lemma now allows us to conclude part (\ref{item:equalitylattices}). By the construction of $\bar\rho$ in section \ref{subsec:consyoshida}, we know that 
\[\bar\rho\mid_{G_{\Q_p}} \cong \bar\tau \mid_{G_{\Q_p}}  \oplus \bar\tau^c \mid_{G_{\Q_p}}.  \]
The residual indecomposability hypothesis \ref{hyp:resindec}  tells us that the above decomposition is the only possible decomposition of $\bar\rho \mid_{G_{\Q_p}}$. Combining our observations now allows to conclude parts (\ref{item:equalitylatticesranks}) and (\ref{item:equalitylatticeslifts}). 

To prove part (\ref{item:nonsplitlocally}), the trace functions $\mathrm{Trace}(\varrho_{\mathcal{M}_i})$ tell us that the $G_{\Q_p}$-representations $\mathcal{V}_i$ are reducible. Repeating our arguments above for $\mathcal{M}$ for $\mathcal{M}_1$ and $\mathcal{M}_2$, tells us that we have short exact sequences 
\begin{align*}
0 \rightarrow \OO(\chi'_1) \rightarrow \mathcal{M}_2 \rightarrow \OO(\chi'_2)\rightarrow 0, \qquad   
& 0 \rightarrow \OO(\chi'_3) \rightarrow \mathcal{M}_1 \rightarrow \OO(\chi'_4)\rightarrow 0. 
\end{align*}
where we have an equality of sets of characters $\{\chi_1, \chi_2,\chi_3,\chi_4\}=\{\chi'_1, \chi'_2,\chi'_3,\chi'_4\}$. Reducing the equality in part (\ref{item:equalitylattices}) modulo $\pi$, and using residual indecomposability hypothesis \ref{hyp:resindec} along with the $p$-distinguished hypothesis \ref{lab:pdist} now lets us conclude part (\ref{item:nonsplitlocally}).
\end{proof}
 
 \begin{theorem} \label{thm:decshape}
There exists a $4 \times 4$ matrix $N_1 \coloneqq \begin{pmatrix} 1 & 0 & 0 &b_3 \\ 0 & 1 & 0 & b_4\\ b_1 & b_2 & 1 & 0 \\ 0 & 0 & 0 & 1\end{pmatrix}$, with $b_1,b_2,b_3,b_4 \in \BB^{\dec}$ such that for every element $g$ in the decomposition group $\Gal{\overline{\Q}_p}{\Q_p}$,    
  \begin{align*}
N_1^{-1}\widetilde{\rho}_{\TTdec}(g)N_1 &=
  \left(\begin{array}{cccc}   \pi_{\dec} \circ \dfrac{\widetilde{\lambda}}{  \widetilde{\psi}_1\cyc^{-2}\omega^{b_0}\widetilde{\kappa}_2}(g) & \star & 0 & 0 \\ 0 &\pi_{\dec} \circ \widetilde{\psi}_1\cyc^{-2}\omega^{b_0}\widetilde{\kappa}_2(g)   & 0 & 0 \\ 0 & 0 &   \pi_{\dec} \circ \dfrac{\widetilde{\lambda}}{\widetilde{\psi}_0}(g) & \star \\ 0 & 0 & 0 &  \pi_{\dec} \circ \widetilde{\psi}_0(g) \end{array} \right).
  \end{align*}

\end{theorem}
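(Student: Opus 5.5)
The plan follows the proof of Theorem \ref{thm:dRshape}, with Lemma \ref{lemma:specializationdecomposable} replacing the de Rham splitting argument. Let $\mathcal{M}^{\dec}$ be the free rank four $\TT^{\dec}$-module underlying $\widetilde{\rho}_{\TTdec}$. By Theorem \ref{thm:pseudorepBC}(\ref{item:localpart}), in the standard basis $\vec{e}_1,\dots,\vec{e}_4$, the line $\TT^{\dec}\vec{e}_1$ is $G_{\Q_p}$-stable with character $\chi_1$. The span $\mathcal{M}'=\langle\vec{e}_1,\vec{e}_3\rangle$ is also stable, as in the proof of Theorem \ref{thm:dRshape}, with $\chi_3$ on the sub and $\chi_1$ on the quotient. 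The span $\langle \vec{e}_1,\vec{e}_2,\vec{e}_3\rangle$ is stable as well. Here $\chi_1,\dots,\chi_4$ are the four diagonal characters composed with $\pi_{\dec}$.

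I aim to produce two $G_{\Q_p}$-stable free rank two direct summands. The first is $\mathcal{N}_1=\langle \vec{e}_1, \vec{e}_2+b_2\vec{e}_3\rangle$, which is $\TT^{\dec}\vec{e}_1$ extended by $\chi_2$. The second is $\mathcal{N}_2=\langle \vec{e}_3+b_1\vec{e}_1\ (\text{twisted}),\ \vec{e}_4+b_3\vec{e}_1+b_4\vec{e}_2\rangle$, which is $\chi_3$ extended by $\chi_4$. The columns of $N_1$ encode exactly these vectors. Concretely, the columns $(1,0,b_1,0)^T$, $(0,1,b_2,0)^T$, $(0,0,1,0)^T$, $(b_3,b_4,0,1)^T$ must span stable submodules: the first two span $\mathcal{N}_1$ with characters $\chi_1,\chi_2$, and the last two span $\mathcal{N}_2$ with characters $\chi_3,\chi_4$.

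The first column is handled exactly as in Theorem \ref{thm:dRshape}. Using \ref{lab:pdist}, I pick $\sigma_0\in G_{\Q_p}$ with $\chi_1(\sigma_0)\not\equiv\chi_3(\sigma_0)$ and solve for the $\sigma_0$-eigenvector $\vec{u}_1=(1,0,b_1,0)^T$ in $\mathcal{M}'$. The entry $b_1$ lies in $\BB^{\dec}$ because the $(3,1)$ entry of the GMA lies in $M_2(\BB)$; this requires writing $\sigma_0$'s image via elements of $G_{K,S}$, which is fine since $G_{\Q_p}\subset G_{K,S}$ because $p$ splits.

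At each $\p\in S^{\dec}$ with $\TT/\p$ a lattice $\mathcal{M}$, Lemma \ref{lemma:specializationdecomposable} gives $\mathcal{M}=\mathcal{M}_1\oplus\mathcal{M}_2$. Here $\mathcal{M}_2$ is an extension of $\chi_2$ by $\chi_1$, and $\mathcal{M}_1$ is an extension of $\chi_4$ by $\chi_3$, both non-split. So at each such specialization, the $\chi_1$-line lies in $\mathcal{M}_2$ and the $\chi_3$-line lies in $\mathcal{M}_1$. The specialization of $\vec{u}_1$ spans the $\chi_1$-line, since the choice of $\sigma_0$ pins it down inside $\mathcal{M}'$. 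As in Theorem \ref{thm:dRshape}, since $\TT^{\dec}\hookrightarrow\prod\TT/\p$, the relation $g\vec{u}_1=\chi_1(g)\vec{u}_1$ holds for all $g$. Similarly, $\vec{e}_3$ specializes into the $\chi_3$-line, because $\chi_3$ occurs as a sub of $\mathcal{M}'$, and that line is $\mathcal{M}_1\cap\mathcal{M}'$.

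Next I find $\sigma_1$ separating $\chi_2$ from $\chi_3,\chi_4$, again by \ref{lab:pdist}. In each specialization, $\mathcal{M}_2$ is the unique stable complement containing the $\chi_1$-line with quotient $\chi_2$. Its lift of $\vec{e}_2$ modulo $\vec{e}_1$ is the $\sigma_1$-eigenvector in $\langle\vec{e}_1,\vec{e}_2,\vec{e}_3\rangle/\TT^{\dec}\vec{u}_1$. Solving the $\sigma_1$-eigenvector equation globally over $\TT^{\dec}$, which is possible since $\chi_2(\sigma_1)-\chi_3(\sigma_1)$ is a unit, gives $\vec{e}_2+b_2\vec{e}_3$ up to $\vec{e}_1$-multiples. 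These multiples can be absorbed, since $N_1$ is only required to make the block upper triangular, which is why the $(1,2)$ star survives. Likewise, separating $\chi_4$ from $\chi_1,\chi_2$ yields $\vec{e}_4+b_3\vec{e}_1+b_4\vec{e}_2$ up to $\vec{e}_3$-multiples. In both cases, density upgrades specialization-wise stability to stability over $\TT^{\dec}$, as before. That $b_2,b_3,b_4\in\BB^{\dec}$ follows from the GMA shape: each is an off-diagonal-block coefficient obtained from entries of $\widetilde{\rho}(\sigma_i)$ in $M_2(\BB)$, divided by units.

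The main obstacle is this last step. I must ensure that the eigenvector computations are legitimate over the non-domain $\TT^{\dec}$ and produce a complement integrally. I also must justify that the specialization-wise decompositions are the ones singled out by the $\sigma_i$-eigenvectors. This is where the uniqueness in Lemma \ref{lemma:specializationdecomposable}, which relies on \ref{hyp:resindec}, is essential. Finally, $N_1\equiv I_4$ modulo the maximal ideal, so it is invertible, which gives the stated block shape.
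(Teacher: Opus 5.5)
Your proposal is correct and follows essentially the paper's route. You build $\vec u_1$ as a $\chi_1$-eigenvector, then $\vec u_2$ modulo $\TT^{\dec}\vec u_1$ and $\vec u_4$ modulo $\TT^{\dec}\vec e_3$ using $p$-distinguished elements. You identify these at each $\p\in S^{\dec}$ via Lemma~\ref{lemma:specializationdecomposable} and conclude by density.

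The only variation is in how you find $\vec u_1$ and $\vec u_2$. You locate them inside the stable submodules $\langle\vec e_1,\vec e_3\rangle$ and $\langle\vec e_1,\vec e_2,\vec e_3\rangle$ with a single separating element, as in Theorem~\ref{thm:dRshape}. The paper instead takes the nullspace of a matrix assembled from three elements $\sigma_2,\sigma_3,\sigma_4$ acting on the whole module. You tacitly use that the $\chi_1$-line lies in the specialization of $\langle\vec e_1,\vec e_3\rangle$, and that $\mathcal M_2$ lies in the specialization of $\langle\vec e_1,\vec e_2,\vec e_3\rangle$. Both containments hold because each $\chi_i$ occurs only once in the semisimplification.

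One correction: your opening claim that $\TT^{\dec}\vec e_1$ is $G_{\Q_p}$-stable is false. The $(3,1)$ entry is a $\star$ in $\BB^{\dec}$, which is exactly why $b_1$ is needed. So $\mathcal N_1$ and $\mathcal N_2$ should be $\langle\vec u_1,\vec e_2+b_2\vec e_3\rangle$ and $\langle\vec e_3,\vec u_4\rangle$, as your list of the columns of $N_1$ correctly indicates.
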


\begin{proof}
The proof of this theorem uses the previous lemma and an argument similar to the proof of Theorem \ref{thm:pseudorepBC}\ref{item:localpart}. Let $\mathcal{M}^\dec$ denote the rank four $\TT^{\dec}$-module underlying $\widetilde{\rho}_{\TT^\dec}$. We are working with the GMA 
\begin{align*}
	\TT^\dec[\widetilde{\rho}_{\TT^\dec}(G_{K,S})] = \left(\begin{array}{cc} M_2(\TT^\dec) & M_2(\BB^\dec) \\ M_2(\BB^\dec) & M_2(\TT^\dec) \end{array}\right).	
	\end{align*}
Moreover, for every element $g$ in the decomposition group $\Gal{\overline{\Q}_p}{\Q_p}$,
{\allowdisplaybreaks\begin{align} \label{eq:Tdecprelim}
  \widetilde{\rho}_{\TT^\dec}(g) &= \left(\begin{array}{cccc}  \pi_{\dec} \circ \dfrac{\widetilde{\lambda}}{ \widetilde{\psi}_1\cyc^{-2}\omega^{b_0}\widetilde{\kappa}_2}(g) & \star & 0 & \star \\ 0 &\pi_{\dec} \circ \widetilde{\psi}_1\cyc^{-2}\omega^{b_0}\widetilde{\kappa}_2(g)   & 0 & \star \\ \star & \star &  \pi_{\dec} \circ \dfrac{\widetilde{\lambda}}{\widetilde{\psi}_0}(g) & \star \\ 0 & 0 & 0 & \pi_{\dec} \circ \widetilde{\psi}_0(g) \end{array} \right),
\end{align}
}
   The $p$-distinguished hypothesis allows us to find three elements $\sigma_2$, $\sigma_3$ and $\sigma_4$ in $G_{\Q_p}$ such that the following matrix obtained respectively from the second row, third row and fourth row of \[\widetilde{\rho}_{\TT^\dec}(\sigma_2)- \pi_{\dec} \circ \dfrac{\widetilde{\lambda}}{ \widetilde{\psi}_1\cyc^{-2}\omega^{b_0}\widetilde{\kappa}_2}(\sigma_2)I_4, \quad \widetilde{\rho}_{\TT^\dec}(\sigma_3)- \pi_{\dec} \circ \dfrac{\widetilde{\lambda}}{ \widetilde{\psi}_1\cyc^{-2}\omega^{b_0}\widetilde{\kappa}_2}(\sigma_3)I_4, \quad \widetilde{\rho}_{\TT^\dec}(\sigma_4)- \pi_{\dec} \circ \dfrac{\widetilde{\lambda}}{ \widetilde{\psi}_1\cyc^{-2}\omega^{b_0}\widetilde{\kappa}_2}(\sigma_4)I_4,\] 
   has units on the diagonal (except the first diagonal entry). 
   {\tiny \allowdisplaybreaks\begin{align*}
  \left(\begin{array}{cccc}  0  & 0 & 0 & 0 \\ 0 &\pi_{\dec} \circ \widetilde{\psi}_1\cyc^{-2}\omega^{b_0}\widetilde{\kappa}_2(\sigma_2) - \pi_{\dec} \circ \dfrac{\widetilde{\lambda}}{ \widetilde{\psi}_1\cyc^{-2}\omega^{b_0}\widetilde{\kappa}_2}(\sigma_2)   & 0 & b_{\sigma_2} \\ b_{\sigma_3} & b'_{\sigma_3} &  \pi_{\dec} \circ \dfrac{\widetilde{\lambda}}{\widetilde{\psi}_0}(\sigma_3)-\pi_{\dec} \circ \dfrac{\widetilde{\lambda}}{ \widetilde{\psi}_1\cyc^{-2}\omega^{b_0}\widetilde{\kappa}_2}(\sigma_3) & b''_{\sigma_3} \\ 0 & 0 & 0 & \pi_{\dec} \circ \widetilde{\psi}_0(\sigma_4) - \pi_{\dec} \circ \dfrac{\widetilde{\lambda}}{ \widetilde{\psi}_1\cyc^{-2}\omega^{b_0}\widetilde{\kappa}_2}(\sigma_4) \end{array} \right).
\end{align*}
}
   As a result, this matrix has rank $3$ for every cohomological specialization of $\TT^\dec$. 
  Using an explicit computation to compute the nullspace of the above matrix, we find an element $b_1 \in \BB^\dec$ for which the column vector $\begin{pmatrix} 1 \\ 0 \\ b_1 \\ 0 \end{pmatrix}$ belongs to the nullspace of the above matrix.

  Applying Lemma \ref{lemma:specializationdecomposable} to every cohomological specialization of $\psi:\TT^\dec \rightarrow \mathrm{O}$, we know that $\psi(\mathcal{M}^\dec)$ has a free $\mathrm{O}$ sub-module with rank $1$ on which $G_{\Q_p}$ acts by $\psi \circ \pi_{\dec} \circ \dfrac{\widetilde{\lambda}}{ \widetilde{\psi}_1\cyc^{-2}\omega^{b_0}\widetilde{\kappa}_2}$. Every element of this submodule is a solution of the specialization under $\psi$ of the above matrix over $\TT^\dec$ (whose nullspace has rank one). Combining these observations along with a density argument now tells us that the column vector $\begin{pmatrix} 1 \\ 0 \\ b_1 \\ 0 \end{pmatrix}$ is an eigenvector of $g$ with eigenvalue $\pi_{\dec} \circ \dfrac{\widetilde{\lambda}}{ \widetilde{\psi}_1\cyc^{-2}\omega^{b_0}\widetilde{\kappa}_2}(g)$, for all $g$ in $G_{\Q_p}$. \\

 Let $\vec{u}_1 = \begin{pmatrix} 1 \\ 0 \\ b_1 \\ 0 \end{pmatrix}$. We now note that the quotient $\dfrac{\mathcal{M}^\dec}{\TT^\dec\langle\vec{u}_1\rangle}$ is a free $\TT$-module with rank $3$ that is $G_{\Q_p}$-equivariant. In fact, the action of $G_{\Q_p}$ is given by the lower $3 \times 3$ block appearing in equation (\ref{eq:Tdecprelim}). Applying Lemma \ref{lemma:specializationdecomposable} to every cohomological specialization of $\psi:\TT^\dec \rightarrow \mathrm{O}$, we know that there exists a unique $\mathrm{O}$ sub-module with rank $1$ on which $G_{\Q_p}$ acts by $\psi \circ \pi_{\dec} \circ \widetilde{\psi}_1\cyc^{-2}\omega^{b_0}\widetilde{\kappa}_2$. Repeating a similar procedure on this quotient module allows us to find an element $b_2 \in \BB^\dec$ so that the image of the column vector $\begin{pmatrix} 0 \\ 1 \\ b_2 \\ 0 \end{pmatrix}$ in this quotient module (call it $\vec{u}_2$) is an eigenvector of $g$ with eigenvalue $\pi_{\dec} \circ \widetilde{\psi}_1\cyc^{-2}\omega^{b_0}\widetilde{\kappa}_2(g)$, for all $g \in G_{\Q_p}$. \\

  Let $\vec{u}_3 = \begin{pmatrix} 0 \\ 0 \\ 1 \\ 0 \end{pmatrix}$.  We now note that the quotient $\dfrac{\mathcal{M}^\dec}{\TT^\dec\langle\vec{u}_3\rangle}$ is a free $\TT$-module with rank $3$ that is $G_{\Q_p}$-equivariant. In fact, the action of $G_{\Q_p}$ is given by deleting the third row and column from  the $4 \times 4$ matrix appearing in equation (\ref{eq:Tdecprelim}). Applying Lemma \ref{lemma:specializationdecomposable} to every cohomological specialization of $\psi:\TT^\dec \rightarrow \mathrm{O}$, we know that there exists a unique $\mathrm{O}$ sub-module with rank $1$ on which $G_{\Q_p}$ acts by $\psi \circ \pi_{\dec} \circ \widetilde{\psi}_0$. Repeating a similar procedure on this quotient module allows us to find elements $b_3,b_4 \in \BB^\dec$ so that the image of the column vector $\begin{pmatrix} b_3 \\ b_4 \\ 0 \\ 1 \end{pmatrix}$ in this quotient module (call it $\vec{u}_4$)  is an eigenvector of $g$ with eigenvalue $\pi_{\dec} \circ \widetilde{\psi}_0(g)$, for all $g \in G_{\Q_p}$.

  Combining our observations and using the new basis $\{\vec{u}_1, \vec{u}_2, \vec{u}_3, \vec{u}_4\}$ now completes the proof. 
\end{proof}

Recall that $\iota_p:\overline{\Q} \hookrightarrow \overline{\Q}_p$ provides an identification $G_{K_{\p_1}} \cong \Gal{\overline{\Q}_p}{\Q_p}$. The element $\sigma$, as chosen in Section \ref{subsec:consyoshida}, allows an identification of the decomposition groups:
\begin{align*}
G_{K_{\p_2}} &\xrightarrow{\cong} G_{K_{\p_1}} \cong \Gal{\overline{\Q}_p}{\Q_p}, \\
g &\mapsto \sigma g \sigma^{-1}.
\end{align*}
We may thus identify representations of $G_{K_{\p_2}}$ with those of $G_{K_{\p_1}}$. For a character $\chi$ of $G_{K_{\p_1}}$, we let   $\chi'$ denote the character of $G_{K_{\p_2}}$ under this identification. Note that $\chi$ is unramified if and only if $\chi'$ is unramified. If $\chi$ is the restriction of a character of $G_{\Q,S}$, then by abuse of notation, we simply write $\chi$ for $\chi'$ as well. Furthermore, $\left(\tau \mid_{G_{K_{\p_1}}}\right)'$ equals $\tau^c \mid_{G_{K_{\p_2}}}$.

\begin{lemma}\label{lemma:p2shape}
\begin{enumerate}
     \item\label{item:p2ord} There exists an invertible $4 \times 4$ matrix $N_2$ in the GMA $\begin{pmatrix} M_2(\TT) & M_2(\BB) \\ M_2(\BB) & M_2(\TT) \end{pmatrix}$ such that for every element $g$ in the group $G_{K_{\p_2}}$,
    {\allowdisplaybreaks\begin{align*}
  N_2^{-1}\widetilde{\rho}_{\TT}(g)N_2 &= \begin{pmatrix} \dfrac{\widetilde{\lambda}}{\widetilde{\psi}_0'}(g) & \star & \star & \star \\ 0 & \widetilde{\psi}_0'(g) & 0 & 0 \\ 0 & \star & 
  \dfrac{\widetilde{\lambda}}{ \widetilde{\psi}'_1\cyc^{-2}\omega^{b_0}\widetilde{\kappa}_2}(g) & \star  \\ 0 & \star & 0 & \widetilde{\psi}'_1\cyc^{-2}\omega^{b_0}\widetilde{\kappa}_2(g)    \end{pmatrix}.
\end{align*}
}
\item\label{item:p2dR}    There exists an invertible $4 \times 4$ matrix $N_3$ in the GMA $\begin{pmatrix} M_2(\TT^{\cla}) & M_2(\BB^{\cla}) \\ M_2(\BB^{\cla}) & M_2(\TT^{\cla}) \end{pmatrix}$ such that for every element $g$ in the group $G_{K_{\p_2}}$,
    {\allowdisplaybreaks\begin{align*}
N_3^{-1}  \widetilde{\rho}_{\TT^{\cla}}(g)N_3 &= \begin{pmatrix} \pi_{\cla} \circ \dfrac{\widetilde{\lambda}}{\widetilde{\psi}_0'}(g) & \star & 0 & \star \\ 0 & \pi_{\cla} \circ \widetilde{\psi}_0'(g) & 0 & 0 \\ 0 & \star & 
 \pi_{\cla} \circ  \dfrac{\widetilde{\lambda}}{ \widetilde{\psi}'_1\cyc^{-2}\omega^{b_0}\widetilde{\kappa}_2}(g) & \star  \\ 0 & 0 & 0 & \pi_{\cla} \circ \widetilde{\psi}'_1\cyc^{-2}\omega^{b_0}\widetilde{\kappa}_2(g)    \end{pmatrix}.
\end{align*}
}
\item\label{item:p2dec} There exists an invertible $4 \times 4$ matrix $N_4$ in the GMA $\begin{pmatrix} M_2(\TT^\dec) & M_2(\BB^\dec) \\ M_2(\BB^\dec) & M_2(\TT^\dec) \end{pmatrix}$ such that for every element $g$ in the group $G_{K_{\p_2}}$,
    {\allowdisplaybreaks\begin{align*}
N_4^{-1} \widetilde{\rho}_{\TT^\dec}(g)N_4 &= \begin{pmatrix} \pi_{\dec} \circ  \dfrac{\widetilde{\lambda}}{\widetilde{\psi}_0'}(g) & \star & 0 & 0 \\ 0 & \pi_{\dec} \circ \widetilde{\psi}_0'(g) & 0 & 0 \\ 0 & 0 & 
 \pi_{\dec} \circ   \dfrac{\widetilde{\lambda}}{ \widetilde{\psi}'_1\cyc^{-2}\omega^{b_0}\widetilde{\kappa}_2}(g) & \star  \\ 0 & 0 & 0 & \pi_{\dec} \circ  \widetilde{\psi}'_1\cyc^{-2}\omega^{b_0}\widetilde{\kappa}_2(g)    \end{pmatrix}.
\end{align*}
}
\end{enumerate}
\end{lemma}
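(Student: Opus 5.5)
The idea is to transport the local shapes at $\p_1$ (that is, at $G_{\Q_p}\cong G_{K_{\p_1}}$) to $G_{K_{\p_2}}$ by conjugating with the element $\sigma$ of Section~\ref{subsec:consyoshida}. Concretely, for $g\in G_{K_{\p_2}}$ we have $\sigma^{-1} g\sigma\in G_{K_{\p_1}}$ (or $\sigma g\sigma^{-1}$, depending on the convention fixed above). Hence
\[\widetilde{\rho}_{\TT}(g)=\widetilde{\rho}_{\TT}(\sigma)\,\widetilde{\rho}_{\TT}(\sigma^{-1}g\sigma)\,\widetilde{\rho}_{\TT}(\sigma)^{-1},\]
and the middle factor has the shape given by Theorem~\ref{thm:pseudorepBC}(\ref{item:localpart}) for $\TT$, by Theorem~\ref{thm:dRshape} for $\TT^{\cla}$, and by Theorem~\ref{thm:decshape} for $\TT^{\dec}$. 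Under the identification $g\mapsto\sigma g\sigma^{-1}$, the diagonal characters $\chi$ become $\chi'$. The global character $\widetilde{\lambda}$ and the cyclotomic-type characters $\cyc,\omega,\widetilde{\kappa}_2$ are restrictions of $G_{\Q,S}$-characters, so they are unchanged.

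For part (\ref{item:p2ord}), we need $\widetilde{\rho}_{\TT}(\sigma)$ to lie in the ``anti-diagonal'' part of the GMA. Since $\sigma\notin G_{K,S}$, the matrix $\widetilde{\rho}_{\TT}(\sigma)$ normalizes $\TT[\widetilde{\rho}_{\TT}(G_{K,S})]$ and swaps the two residual blocks $\bar\tau^c$ and $\bar\tau$ (compare the residual shape (\ref{eq:residualembeddingGL2GSP4})). One checks that $\widetilde{\rho}_{\TT}(\sigma)=Q\cdot w$, where $w$ is a fixed permutation matrix exchanging the block index sets $\{1,2\}$ and $\{3,4\}$, and $Q$ is a unit of the GMA $\begin{pmatrix} M_2(\TT) & M_2(\BB) \\ M_2(\BB) & M_2(\TT)\end{pmatrix}$. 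To justify this, one argues that $\widetilde{\rho}_{\TT}(\sigma)e_1\widetilde{\rho}_{\TT}(\sigma)^{-1}$ is an idempotent of the GMA lifting $e_2$, and then applies the idempotent conjugacy argument from the proof of Theorem~\ref{thm:pseudorepBC}.

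Conjugating the $\p_1$-shape by $w$ permutes the basis vectors. The permutation is then composed with a further permutation inside the $\{1,2\}$ and $\{3,4\}$ blocks so as to order the characters as $\widetilde{\lambda}/\widetilde{\psi}_0'$, $\widetilde{\psi}_0'$, $\widetilde{\lambda}/(\widetilde{\psi}_1'\cyc^{-2}\omega^{b_0}\widetilde{\kappa}_2)$, $\widetilde{\psi}_1'\cyc^{-2}\omega^{b_0}\widetilde{\kappa}_2$. This ordering is the one in which the pattern of zeros in the statement appears. Since the block-swap-and-reorder permutation lies in the GMA, $N_2\coloneqq Qw'$ (for the appropriate permutation $w'$) is an invertible element of the GMA. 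Parts (\ref{item:p2dR}) and (\ref{item:p2dec}) follow in the same way after reducing modulo $\ker\pi_{\cla}$ and $\ker\pi_{\dec}$. The matrices $N_0$ and $N_1$ have off-diagonal entries in $\BB^{\cla}$ and $\BB^{\dec}$ and unit diagonal, so they are units of the corresponding GMAs, and we take $N_3$ and $N_4$ to be products of these with the images of $Qw'$.

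The main obstacle is the bookkeeping showing that the specific zero entries survive the conjugation, which amounts to tracking exactly which permutation $w'$ arises. If the literal conjugate does not give the stated pattern, the fallback is to redo the eigenvector construction directly for $G_{K_{\p_2}}$. In that approach, $p$-distinguishedness (\ref{lab:pdist}) supplies elements with residually distinct eigenvalues, and Lemma~\ref{lemma:specializationdecomposable} together with the de Rham splitting argument from Theorem~\ref{thm:dRshape} supplies the specialization-wise input. A density argument then shows that the resulting eigenvectors, whose off-block coordinates lie in $\BB$, are stable under all of $G_{K_{\p_2}}$.
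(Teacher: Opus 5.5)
Your main argument is correct, but it follows a different route from the paper's.

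The paper uses $\widetilde{\rho}_{\TT}(\sigma)$ only to obtain the residual block shape and $p$-distinguishedness of $\widetilde{\rho}_{\TT}|_{G_{K_{\p_2}}}$. It then re-runs the constructions at $\p_2$: the procedure of \cite[Theorem 4.6(7)]{hsieh_yoshida} for part (1), and the eigenvector and density arguments of Theorems~\ref{thm:dRshape} and~\ref{thm:decshape} for parts (2) and (3). That is exactly your fallback.

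You instead transport the finished $\p_1$ shapes. This needs one structural fact: $R\coloneqq\widetilde{\rho}_{\TT}(\sigma)=Qw$, with $Q$ a unit of the GMA $\mathcal{G}$ and $w$ the block swap. Your idempotent sketch does prove this, once made precise:
\begin{itemize}
\item Since $R\mathcal{G}R^{-1}=\mathcal{G}$, the element $f=Re_1R^{-1}$ is an idempotent of $\mathcal{G}$. It satisfies $\bar f=\bar e_2$ by Schur's lemma, because $\bar\tau\not\cong\bar\tau^c$.
\item Then $u=fe_2+(1-f)e_1\in\mathcal{G}$ is residually $I_4$, hence a unit of $\mathcal{G}$.
\item One checks $ue_2=fu$ and $ue_1=(1-f)u$. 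Hence $u^{-1}R$ is exactly block anti-diagonal, with invertible blocks $X,Y$, and $Q=u\,\mathrm{diag}(X,Y)$.
\end{itemize}
What your route buys: all three parts follow at once from results already proved, with no new de Rham or decomposability input at $\p_2$. It also shows that the changed placement of zeroes the paper remarks on is literally a block swap.

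Two bookkeeping fixes are needed.
\begin{itemize}
\item The block swap $w$ is not in $\mathcal{G}$, since its off-diagonal blocks are $I_2\notin M_2(\BB)$. What you actually use is that $w$ normalizes $\mathcal{G}$, which holds because $\BB=\CC$.
\item No reordering inside the blocks is needed. With the paper's convention $\widetilde\rho(g)=R^{-1}\widetilde\rho(\sigma g\sigma^{-1})R$ for $g\in G_{K_{\p_2}}$, take $N_2=R^{-1}w$, $N_3=R^{-1}N_0w$ and $N_4=R^{-1}N_1w$, using the images of $R$ in $\TT^{\cla}$ and $\TT^{\dec}$. These are units of the respective GMAs; for example, $N_3=(w^{-1}Q^{-1}w)(w^{-1}N_0w)$. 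Note that $N_3,N_4$ involve the $w$-conjugates of $N_0,N_1$, not simple products of $Q$ with $N_0,N_1$ as you wrote.
\end{itemize}
With these choices, $N^{-1}\widetilde\rho(g)N=w^{-1}Mw$, where $M$ is the relevant $\p_1$ shape evaluated at $\sigma g\sigma^{-1}$. Since $(w^{-1}Mw)_{ij}=M_{\pi(i)\pi(j)}$ for $\pi=(1\,3)(2\,4)$, a direct check reproduces the three stated zero patterns exactly. The diagonal characters become the primed ones, while $\widetilde\lambda$, $\cyc$, $\omega$ and $\widetilde\kappa_2$ are unchanged.
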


\begin{proof}
There exist invertible matrices $A,B$ in $\Gl_2(\mathbb{F})$, such that for all $g$ in $G_{K_{\p_2}}$, the residual representation associated to ${\widetilde{\rho}_{\TT}}$ equals \[\begin{pmatrix} A & 0_2 \\ 0_2 & B\end{pmatrix} \begin{pmatrix} \left(\bar\tau \mid_{G_{K_{\p_1}}}\right)'(g)  & 0_2 \\ 0_2 &  \left(\bar\tau^c\mid_{G_{K_{\p_1}}}\right)'(g) \end{pmatrix}\begin{pmatrix} A & 0_2 \\ 0_2 & B\end{pmatrix}^{-1}.\] 
As a result of the choice of basis for $\widetilde{\rho}_\TT$, we also note that for all $g$ in $G_{K_{\p_2}}$, we have
\[\left(\bar\tau \mid_{G_{K_{\p_1}}}\right)'(g) = \begin{pmatrix} \overline{\dfrac{\widetilde{\lambda}}{\widetilde{\psi}_0'}}(g) & \star  \\ 0 & \overline{\widetilde{\psi}_0'}(g) \end{pmatrix}, \qquad \left(\bar\tau^c\mid_{G_{K_{\p_1}}}\right)'(g)  =  \begin{pmatrix} \overline{\left(\dfrac{\widetilde{\lambda}}{ \widetilde{\psi}'_1\cyc^{-2}\omega^{b_0}\widetilde{\kappa}_2}\right)}(g) & \star  \\  0 & \overline{\widetilde{\psi}'_1\cyc^{-2}\omega^{b_0}\widetilde{\kappa}_2}(g) \end{pmatrix}. \]

The residual representation attached to the restriction of $\widetilde{\rho}_{\TT}$ to $G_{K_{\p_2}}$ is $p$-distinguished. We also note that the restriction of $\widetilde{\rho}_{\TT}$ to $G_{K_{\p_2}}$ is obtained from $\widetilde{\rho}_{\TT} \mid_{G_{K_{\p_1}}}$ via a change of basis matrix corresponding to $\widetilde{\rho}_{\TT}(\sigma)$. Point (\ref{item:p2ord}) of the lemma follows from the procedure given in \cite[Theorem 4.6(7)]{hsieh_yoshida}, exactly as it was used in the proof of Theorem \ref{thm:pseudorepBC}(\ref{item:localpart}). Similarly, points (\ref{item:p2dR}) and (\ref{item:p2dec}) follow identically from the arguments used in the proofs of Theorems \ref{thm:dRshape} and \ref{thm:decshape}. One main difference from Theorems \ref{thm:pseudorepBC}(\ref{item:localpart}), \ref{thm:dRshape} and \ref{thm:decshape} is the order in which the corresponding characters appear on the main diagonal, since after the appropriate change of basis, the residual representation restricted to $G_{K_{\p_2}}$ is of the following shape:
\[\begin{pmatrix} \left(\bar\tau \mid_{G_{K_{\p_1}}}\right)'  & 0_2 \\ 0_2 &  \left(\bar\tau^c\mid_{G_{K_{\p_1}}}\right)' \end{pmatrix}. \]
Thus, there is a small difference (in particular, the placement of zeroes in the anti-diagonal blocks) in the shapes of the matrices appearing in the lemma as compared to the shapes appearing in the theorems~above. 
\end{proof}

\section{Proof of Theorem \ref{thm:yoshidafamily}}\label{sec:proofyoshidafamily}

{\theoremtwo*}

Let $\eta$ be a minimal prime of $\TT$. We will say that $\eta$ is a \textit{Hida family of stable Yoshida lifts} if for every prime $\mathcal{P}$  in $\mathcal{X}^{\geq 4}_{\mathrm{temp}}(\TT/\eta)$,  the $p$-adic Hecke eigensystem corresponding to \[\TT \rightarrow \TT/\eta \rightarrow \TT/\mathcal{P} \hookrightarrow \overline{\Z}_p,\]corresponds to the $p$-adic Hecke eigensystem of a stable Yoshida lift as in section \ref{subsec:stableyoshidalift}.

Every cohomological prime $\mathfrak{P}$ in $\mathcal{X}^{\geq 3}_{\mathrm{cla},(a,b)}(\TTT)$ corresponds to an (ordinary at $\p_1$, nearly ordinary at $\p_2$) Hilbert cuspidal eigenform $g$ of level $M$,  with trivial Nebentypus by Proposition \ref{prop:trivnebentypuscomponent}. As a result, by Proposition \ref{thm:yoshidarepspace}, the Hecke eigensystem of the stable Yoshida lift $f$ of $g$ corresponds to a $p$-ordinary Siegel cuspidal eigenform with level $\Gamma_0^{(2)}(M\Delta_K)$.  The Hecke eigensystem of $f$ provides us a ring homomorphism $\TT \rightarrow \TTT/\mathfrak{P} \hookrightarrow \overline{\Z}_p$. The assignments of the Hecke operators generating $\TT$ are provided in Lemma \ref{lemma:gspgl2cuspformassignment}.   

Combining all of these ring homomorphisms along with the formulas of Lemma \ref{lemma:gspgl2cuspformassignment} tell us that the image of the ring homomorphism $\TT \rightarrow \prod \limits_{\mathfrak{P} \in  \mathcal{X}^{\geq 3}_{\mathrm{cla},(a,b)}(\TTT)} \dfrac{\TTT}{\mathfrak{P}}$ lands inside $\TTT$ providing us the following commutative diagram:

\begin{align} \label{eq:commdiagramGSP4GL2}
\xymatrix{
\TT \ar@{.>}[d]^{\phi} \ar[rd]\\
\TTT \ar@{^{(}->}[r]&  \prod \limits_{\mathfrak{P} \in  \mathcal{X}^{\geq 3}_{\mathrm{cla},(a,b)}(\TTT)} \dfrac{\TTT}{\mathfrak{P}}
}
\end{align}
Note that we deduce the injectivity $\TTT \hookrightarrow \prod \limits_{\mathfrak{P} \in  \mathcal{X}^{\geq 3}_{\mathrm{cla},(a,b)}(\TTT)} \dfrac{\TTT}{\mathfrak{P}}$ using the fact that the set $\mathcal{X}^{\geq 3}_{\mathrm{cla},(a,b)}(\TTT)$ is dense in $\mathrm{Spec}(\TTT)$. Furthermore under this map $\phi:\TT  \rightarrow \TTT$, the  ring homomorphism afforded on the Iwasawa algebras generated by the corresponding weight variables is given as follows:
\begin{align} \label{eq:mapiwasawalabgera}
\Z_p\llbracket y_1,y_2 \rrbracket &\xrightarrow{\cong} \Z_p\llbracket x_1,x_2 \rrbracket, \\ \notag
(1+y_1) &\mapsto\sqrt{(1+x_1)(1+x_2)}, \\ \notag
(1+y_2) &\mapsto (1+p)^2\sqrt{(1+x_1)/(1+x_2)}, \\ \notag
(1+y_1)(1+y_2)/(1+p)^2 &\mapsfrom (1+x_1), \\ \notag
(1+p)^2(1+y_1)/(1+y_2)&\mapsfrom (1+x_2).\notag
\end{align}
The isomorphism is obtained since given a Hilbert modular form $g$ with weight $(\kappa_1',\kappa_2')$ with $\kappa_1'\geq \kappa_2' \geq 2$, the weight of the stable Yoshida lift of $g$ equals $\left(\frac{\kappa_1'+\kappa_2'}{2},\frac{\kappa_1'-\kappa_2'}{2}+2\right)$. 

As a result, $\mathrm{Image}(\phi)$ is reduced, finite and torsion-free over $\Z_p\llbracket x_1,x_2 \rrbracket$ and hence equidimensional (with dimension $3$).  Consequently, $\TTT$ is finite over $\mathrm{Image}(\phi)$. The equidimensionality (with dimension $3$) of $\mathrm{Image}(\phi)$ lets us conclude that the minimal primes of $\mathrm{Image}(\phi)$ pullback to minimal primes of $\TT$. Let $\phi_{f_0}:\TT \rightarrow \overline{\Z}_p$ be the Hecke eigensystem afforded by the stable Yoshida lift $f_0$. Combining the observations in this paragraph along with the commutative diagram in equation (\ref{eq:commdiagramGSP4GL2}), we can choose a minimal prime $\eta$ of $\TT$ such that 
\[\ker(\phi) \subset \eta \subset \ker(\phi_{f_0}). \]
We claim that the minimal prime $\eta$ corresponds to a Hida family of stable Yoshida lifts. Let $\mathcal{P}$ belong to $\mathcal{X}^{\geq 4}_{\mathrm{temp}}(\TT/\eta)$. The sequence of inclusions $\mathcal{P} \supset \eta \supset \ker(\phi)$ along with the going up theorem allows us to choose a prime $\mathfrak{P}'$ lying over $\phi(\mathcal{P})$. The formula for the map between the Iwasawa algebras as in equation (\ref{eq:mapiwasawalabgera}) tells us that if $\mathcal{P} \in \mathcal{X}^{\geq 4}_{\mathrm{temp}}(\TT/\eta)$, then $\mathfrak{P}'\in \mathcal{X}^{\geq 3}_{\mathrm{cla},(a,b)}(\TTT)$. This allows us to extend the commutative diagram of equation (\ref{eq:commdiagramGSP4GL2}) as follows, so that the induced map $\TT \rightarrow \overline{\Z}_p$ is given by $\phi_{f_0}$:
\begin{align} \label{commdiag:phimap}
\xymatrix{
\TT\ar@/^6pc/[rrrd]^{\phi_{f_0}} \ar@{.>}[d]^{\phi} \ar[rd]\ar[rr]&& \dfrac{\TT}{\mathcal{P}} \ar@{^{(}->}[d] \\
\TTT \ar@{^{(}->}[r]&  \prod \limits_{\mathfrak{P} \in  \mathcal{X}^{\geq 3}_{\mathrm{cla},(a,b)}(\TTT)} \dfrac{\TTT}{\mathfrak{P}} \ar[r]&  \dfrac{\TTT}{\mathfrak{P}'}\ar@{^{(}->}[r]& \overline{\Z}_p
}
\end{align}
In particular, the Hecke eigensystem of $\TT$  corresponding to $\mathcal{P}$ arises from the Yoshida lift of the Hilbert cuspidal eigenform corresponding to the cohomological prime $\mathfrak{P}'$ of $\TTT$. These observations allow us to conclude that this minimal prime $\eta$ corresponds to a Hida family of stable Yoshida lifts. We have thus proved the existence assertion of Theorem \ref{thm:yoshidafamily}. 

The uniqueness assertion of this theorem follows directly from \cite[Theorem 2]{DPmin}.

\begin{remark} \label{rem:yoshidafamilyisdecomposableatp}
Recall that we defined $\TT^{\Yos}$ to be  $\displaystyle \mathrm{Image}\left(\pi_\Yos:\TT \rightarrow \prod_{i=1}^t (\TT)_{\eta_i}\right)$, under the decomposition given in equation (\ref{eq:Tdecompreducedring}). For each minimal prime $\eta$ of $\TT$ corresponding to a minimal prime of $\TT^{\Yos}$, density of cohomological primes of $\mathcal{X}^{\geq 4}_{\mathrm{temp}}(\TT/\eta)$ along with Theorem \ref{thm:pseudorepBC}(\ref{hyp:BinsideYos0}) allows us to conclude that the image of the fractional ideal $\BB$ of Theorem \ref{thm:pseudorepBC} under the natural map $\TT \twoheadrightarrow \TT/\eta$ equals zero. In particular, the Galois representation associated to the Hida family $\widetilde{\rho}_{\TT/\eta}$ is induced from a representation of $G_{K,S}$ and hence decomposable at $G_{\Q_p}$. The commutative diagram given in equation (\ref{commdiag:phimap}), definition of $ \TT^\Yos$ and the above proof show that $\TT^\Yos$ can naturally be identified with the image of the map  $\phi: \TT \rightarrow \TTT$ and hence we have a natural ring inclusion $i: \TT^\Yos \hookrightarrow \TTT$. Let $S'$ denote the set of primes of $K$ containing both the primes above $\infty$ along with the primes dividing $Mp$. We also have that the Galois representation $\widetilde{\rho}_{\TT^{\Yos}}$ itself is induced from a two-dimensional Galois representation $\Pi$ of $G_{K,S}$ and hence decomposable at $G_{\Q_p}$. Here, $\Pi:G_{K,S} \rightarrow \Gl_2(\TT^{\Yos})$ is the two-dimensional Galois representation of $G_{K,S}$ characterized by the property that it is unramified outside $S'$ and that for all primes $\mathfrak{q} \notin S'$, we have $\mathrm{Trace}(i \circ \Pi(\Frob_\mathfrak{q})) = T_{\mathfrak{q}}$. The conjugate Galois representation of $\Pi$ is given by $\Pi^c$. This last equality, along with the $p$-distinguished and the ordinarity assumptions, allows us to in fact deduce that $i$ is an isomorphism. We let $\LLL_\Pi$ and $\LLL_{\Pi^c}$ denote the rank two $\TTT$-modules affording the Galois representations $\Pi$ and $\Pi^c$ of $G_{K,S}$. 
\end{remark}

\begin{remark}\label{rem:yoshidafamilyderhamk2}
Let  $\eta$ denote a minimal prime of $\TT$ corresponding to a minimal prime of $\TT^{\Yos}$. Let $\mathcal{P}$ denote a $(k,2)$ specialization of $\TT/\eta$ with $k \geq 3$. Our arguments allow us to obtain a prime $\mathfrak{P}'$ of $\mathcal{X}^{\geq 3}_{\mathrm{cla},(a,b)}(\TTT)$ with weight $(k,k)$ and lying over $\phi(\mathcal{P})$. Theorem \ref{thm:control}(\ref{item:deRham}) and Proposition \ref{prop:trivnebentypuscomponent} allow us to conclude that the $p$-adic Galois representation $\tau_{ \TTT/\mathfrak{P}'}$ associated to the Hecke eigensystem corresponding to $\TTT \rightarrow \TTT/\mathfrak{P}' \hookrightarrow \overline{\Z}_p$ is de Rham at all places of $K$ above $p$. As observed in Remark \ref{rem:yoshidafamilyisdecomposableatp}, the Galois representation $\tilde{\rho}_{\TT/\mathcal{P}}$ associated to the Hecke eigensystem corresponding to $\TT \rightarrow \TT/\mathcal{P} \hookrightarrow \overline{\Z}_p$ must be isomorphic to $\mathrm{Ind}^{G_{\Q,S}}_{G_{K,S}}(\tau_{ \TTT/\mathfrak{P}'})$. Since $p$ splits in $K$, we may now conclude that $\tilde{\rho}_{\TT/\mathcal{P}} \mid_{G_{\Q_p}}$ is de Rham at $p$.
\end{remark}

\section{Producing Selmer classes} \label{subsec:selmer}

We will attach various Selmer groups to the $G_{K,S'}$-module \[\DD \coloneqq \underbrace{\Hom_{\TTT}\left(\LLL_{\Pi},\LLL_{\Pi^c}\right) \otimes_{\TTT} \widehat{\TTT}}_{\cong \Hom_{\TTT}\left(\LLL_{\Pi},\LLL_{\Pi^c}\otimes_{\TTT} \widehat{\TTT}\right)} .\]
Here, $\widehat{\TTT}$ denotes the Pontryagin dual of $\TTT$. The lattices $\LLL_{\Pi}$ and $\LLL_{\Pi^c}$ are introduced in Remark \ref{rem:yoshidafamilyisdecomposableatp}. Instead of describing the various Selmer groups via Galois cohomology, we prefer to describe them via $\Ext$ groups. Firstly, we have the global $\Ext$ group $\Ext^1_{\TTT[G_{K,S'}]}\left(\LLL_{\Pi},\LLL_{\Pi^c}\otimes_{\TTT} \widehat{\TTT}\right)$ which parametrizes isomorphism classes of $\TTT[G_{K,S'}]$-extensions:
\begin{align}\label{eq:globalext}
0 \rightarrow \LLL_{\Pi^c} \otimes_{\TTT} \widehat{\TTT} \rightarrow E \rightarrow \LLL_{\Pi} \rightarrow 0.  
\end{align}
For each $i \in \{1,2\}$ and primes $\mathfrak{q}$ of $K$, we have natural restriction maps \[\Ext^1_{\TTT[G_{K,S'}]}\left(\LLL_{\Pi},\LLL_{\Pi^c}\otimes_{\TTT} \widehat{\TTT}\right) \xrightarrow {\mathrm{Res}_{\p_i}}\Ext^1_{\TTT[G_{K_{\p_i}}]}\left(\LLL_{\Pi},\LLL_{\Pi^c}\otimes_{\TTT} \widehat{\TTT}\right)\]

\[\Ext^1_{\TTT[G_{K,S'}]}\left(\LLL_{\Pi},\LLL_{\Pi^c}\otimes_{\TTT} \widehat{\TTT}\right) \xrightarrow {\mathrm{Res}_{\mathfrak{q}}}\Ext^1_{\TTT[I_\mathfrak{q}]}\left(\LLL_{\Pi},\LLL_{\Pi^c}\otimes_{\TTT} \widehat{\TTT}\right)\]
obtained by naturally considering the extension in equation (\ref{eq:globalext}) as a $\TTT[G_{K_{\p_i}}]$-extension and a $\TTT[I_{\mathfrak{q}}]$-extension respectively.

Using the $p$-distinguished hypotheses \ref{lab:pdist} along with the fact that we are working with the nearly ordinary $\Gl_2$ Hecke algebra, we obtain short exact sequences of free $\TTT$-modules that is $G_{K_{\p_i}}$-equivariant using Hida's works \cite[Proposition 2.3]{MR1463699}:
\begin{align*} 
0 \rightarrow \mathrm{Fil}_i^+\LLL_\Pi \rightarrow \LLL_\Pi \rightarrow \dfrac{\LLL_\Pi}{\mathrm{Fil}_i^+\LLL_\Pi } \rightarrow 0, \qquad 0 \rightarrow \mathrm{Fil}_i^+\LLL_{\Pi^c} \rightarrow \LLL_{\Pi^c} \rightarrow \dfrac{\LLL_{\Pi^c}}{\mathrm{Fil}_i^+\LLL_{\Pi^c}} \rightarrow 0, 
\end{align*}

Using theorem \ref{thm:pseudorepBC}, we may describe the above $\TTT[G_{K_{\p_1}}]$-modules explicitly as follows. We have free rank one $\TTT$-modules on which $G_{K_{\p_1}}$ acts via characters as given below:
\begin{align*}\mathrm{Fil}_1^+\LLL_\Pi &\cong \TTT\left(\phi \circ \dfrac{\widetilde{\lambda}}{\widetilde{\psi}_0}\right),  &&\dfrac{\LLL_\Pi}{\mathrm{Fil}_1^+\LLL_\Pi}\cong \TTT\left(\phi\circ \widetilde{\psi}_0\right), \\ \mathrm{Fil}_1^+\LLL_{\Pi^c} &\cong \TTT\left(\phi\circ \dfrac{\widetilde{\lambda}}{ \widetilde{\psi}_1\cyc^{-2}\omega^{b_0}\widetilde{\kappa}_2}\right),  &&\dfrac{\LLL_{\Pi^c}}{\mathrm{Fil}_1^+\LLL_{\Pi^c}} \cong \TTT\left(\phi\circ\widetilde{\psi}_1\cyc^{-2}\omega^{b_0}\widetilde{\kappa}_2\right).
\end{align*}
We also have the following similar explicit description as $\TTT[G_{K_{\p_2}}]$-modules using Lemma \ref{lemma:p2shape} (\ref{item:p2ord}):

\begin{align*}
\mathrm{Fil}_2^+\LLL_{\Pi} &\cong \TTT\left(\phi\circ \dfrac{\widetilde{\lambda}}{ \widetilde{\psi}'_1\cyc^{-2}\omega^{b_0}\widetilde{\kappa}_2}\right),  &&\dfrac{\LLL_{\Pi}}{\mathrm{Fil}_2^+\LLL_{\Pi}} \cong \TTT\left(\phi\circ\widetilde{\psi}'_1\cyc^{-2}\omega^{b_0}\widetilde{\kappa}_2\right), \\
\mathrm{Fil}_2^+\LLL_{\Pi^c} &\cong \TTT\left(\phi \circ \dfrac{\widetilde{\lambda}}{\widetilde{\psi}'_0}\right),  &&\dfrac{\LLL_{\Pi^c}}{\mathrm{Fil}_2^+\LLL_{\Pi^c}}\cong \TTT\left(\phi\circ \widetilde{\psi}'_0\right).
\end{align*}
As a result, for each $i \in \{1,2\}$, we have natural pushforward and pullback maps:
\begin{align}\label{eq:pushpull}\Ext^1_{\TTT[G_{K_{\p_i}}]}\left(\LLL_{\Pi},\LLL_{\Pi^c}\otimes_{\TTT} \widehat{\TTT}\right) &\xrightarrow {\mathrm{pushfwd}_{\p_i}}\Ext^1_{\TTT[G_{K_{\p_i}}]}\left(\LLL_{\Pi},\ \dfrac{\LLL_{\Pi^c}}{\mathrm{Fil}_i^+\LLL_{\Pi^c}}\otimes_{\TTT} \widehat{\TTT}\right), \\ \Ext^1_{\TTT[G_{K_{\p_i}}]}\left(\LLL_{\Pi},\LLL_{\Pi^c}\otimes_{\TTT} \widehat{\TTT}\right) &\xrightarrow {\mathrm{pullback}_{\p_i}}\Ext^1_{\TTT[G_{K_{\p_i}}]}\left(\mathrm{Fil}_i^+\LLL_{\Pi},\ \LLL_{\Pi^c}\otimes_{\TTT} \widehat{\TTT}\right),
\end{align}
given by the following extensions respectively:
\begin{align}\label{ext:pushfwd}
\xymatrix{
0 \ar[r]& \LLL_{\Pi^c} \otimes_{\TTT} \widehat{\TTT} \ar[r]\ar@{->>}[d]& E \ar[r]\ar@{->>}[d]& \LLL_{\Pi} \ar[d]^{=}\ar[r]& 0 \\
0 \ar[r]& \dfrac{\LLL_{\Pi^c}}{\mathrm{Fil}_i^+\LLL_{\Pi^c}}\otimes_{\TTT} \widehat{\TTT} \ar[r]& \dfrac{E}{\mathrm{Fil}_i^+\LLL_{\Pi^c}\otimes_{\TTT} \widehat{\TTT} } \ar[r]& \LLL_{\Pi} \ar[r]& 0
}
\end{align}
and 
\begin{align}\label{ext:pullback}
\xymatrix{0 \ar[r]& \LLL_{\Pi^c} \otimes_{\TTT} \widehat{\TTT} \ar[r]& E \ar[r]^{\mathrm{proj}}& \LLL_{\Pi} \ar[r]& 0 \\
0 \ar[r]& \LLL_{\Pi^c} \ar[u]^{=} \otimes_{\TTT} \widehat{\TTT} \ar[r]& \mathrm{proj}^{-1}(\mathrm{Fil}_i^+\LLL_{\Pi}) \ar@{^{(}->}[u]\ar[r]& \mathrm{Fil}_i^+\LLL_{\Pi} \ar@{^{(}->}[u] \ar[r]& 0 }
\end{align}
We define the ordinary Selmer group $\Sel_{\ord}(K,\DD)$ as the subgroup of $\Ext^1_{\TTT[G_{K,S'}]}\left(\LLL_{\Pi},\LLL_{\Pi^c}\otimes_{\TTT} \widehat{\TTT}\right)$
given by the intersection:
\begin{align}\label{eq:localconditionord}
\bigg(\ker\left(\mathrm{pullback}_{\p_1} \circ \mathrm{Res}_{\p_1}\right)\bigg) \bigcap \bigg(\ker\left(\mathrm{pushforward}_{\p_2} \circ \mathrm{Res}_{\p_2}\right)\bigg) \bigcap \left(\bigcap_{\mathfrak{q} \in S'} \ker\left(\mathrm{Res}_\mathfrak{q}\right)\right).
\end{align}
We define $\Sel_{\mathrm{int}}(K,\DD)$ as a subgroup of $\Ext^1_{\TTT[G_{K,S'}]}\left(\LLL_{\Pi},\LLL_{\Pi^c}\otimes_{\TTT} \widehat{\TTT}\right)$ given by the intersection:
\begin{align} \label{eq:localconditiondr}
\bigg(&\ker\left(\mathrm{pullback}_{\p_1} \circ \mathrm{Res}_{\p_1}\right)\bigg) \bigcap \bigg(\ker\left(\mathrm{pushforward}_{\p_2} \circ \mathrm{Res}_{\p_2}\right)\bigg) \\ & \notag \bigcap \bigg(\ker\left(\mathrm{pushforward}_{\p_1} \circ \mathrm{Res}_{\p_1}\right) \bigg)\bigcap \bigg(\ker\left(\mathrm{pullback}_{\p_2} \circ \mathrm{Res}_{\p_2}\right)\bigg) \bigcap \left(\bigcap_{\mathfrak{q} \in S'} \ker\left(\mathrm{Res}_\mathfrak{q}\right)\right). 
\end{align}
We define $\Sel_{\mathrm{ur},p}(K,\DD)$ as a subgroup of $\Ext^1_{\TTT[G_{K,S'}]}\left(\LLL_{\Pi},\LLL_{\Pi^c}\otimes_{\TTT} \widehat{\TTT}\right)$ given by the intersection:
\begin{align} \label{eq:localconditiondec}
\bigg(\ker\left(\mathrm{Res}_{\p_1}\right)\bigg) \bigcap \bigg(\ker\left( \mathrm{Res}_{\p_2}\right)\bigg)  \bigcap \left(\bigcap_{\mathfrak{q} \in S'} \ker\left(\mathrm{Res}_\mathfrak{q}\right)\right). 
\end{align}

We thus obtain natural inclusions of Selmer groups as given in equation (\ref{eq:containmentofselmer}):
\begin{align*} 
\Sel_{\mathrm{ur},p}(K,\DD) \subset \Sel_{\Int}(K,\DD) \subset \Sel_{\mathrm{ord}}(K,\DD).
\end{align*}

Here, we also use the notation $(\cdot)^\vee$ to denote Pontryagin duals of modules. Let $\JJ^{\cla}$ and $\JJ^\dec$ denote the ideals generated by image of $\JJ$ under the maps $\pi_{\cla}$ and $\pi_{\dec}$ respectively. 

\begin{proposition}\label{prop:selmerclass}
We have the following surjections of $\Lambda$-modules:
\begin{align}
 \Sel_{\Int}(K,\mathcal{D})^\vee &\twoheadrightarrow \dfrac{\BB^{\cla}}{\JJ^{\cla}\BB^{\cla}}, \\  \Sel_{\mathrm{ur},p}(K,\mathcal{D})^\vee&\twoheadrightarrow \dfrac{\BB^\dec}{\JJ^\dec\BB^\dec}.
\end{align}

\end{proposition}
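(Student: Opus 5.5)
The approach is Ribet's method in the form of Bella\"iche--Chenevier \cite[Theorem 1.5.5]{MR2656025}, applied to the GMA of Theorem \ref{thm:pseudorepBC}(\ref{item:restriction}) after base change to $\TT^{\cla}$ (respectively $\TT^{\dec}$) and reduction modulo $\JJ^{\cla}$ (respectively $\JJ^{\dec}$). Write $\TT^\star$ for $\TT^{\cla}$ or $\TT^{\dec}$. Since $\TT^\star/\JJ^\star \cong \TT^{\Yos}\cong\TTT$ (Remark \ref{rem:yoshidafamilyisdecomposableatp}), reducing the diagonal blocks of $\widetilde{\rho}_{\TT^\star}|_{G_{K,S}}$ modulo $\JJ^\star$ gives $\Pi^c$ and $\Pi$; more precisely, they give $\TTT$-lattices isomorphic to $\LLL_{\Pi^c}$ and $\LLL_\Pi$, by uniqueness of lifts of absolutely irreducible residual representations.

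First, from the GMA $\begin{pmatrix} M_2(\TT^\star) & M_2(\BB^\star)\\ M_2(\BB^\star) & M_2(\TT^\star)\end{pmatrix}$, the upper-right block $g\mapsto b(g)$ produces an injective $\TTT$-linear map
\[\Hom_{\TTT}\bigl(\BB^\star/\JJ^\star\BB^\star,\widehat{\TTT}\bigr)\hookrightarrow \Ext^1_{\TTT[G_{K,S'}]}\bigl(\LLL_\Pi,\LLL_{\Pi^c}\otimes_{\TTT}\widehat{\TTT}\bigr).\]
To build it, take $f$ in the source and push the upper-right cocycle $b$ forward along $M_2(\BB^\star)\to M_2(\BB^\star/\JJ^\star\BB^\star)\xrightarrow{f}M_2(\widehat{\TTT})$. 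The cocycle relation follows from multiplicativity in the GMA, using $\BB^\star\cdot\BB^\star\subset\JJ^\star$: that ideal kills any quotient in which the representation is block diagonal, by Theorem \ref{thm:pseudorepBC}(\ref{hyp:BinsideYos0}) applied to $\TT^{\Yos}$. Injectivity is the standard Bella\"iche--Chenevier argument, which uses that the entries $b(g)$ generate $\BB^\star$ together with the residual absolute irreducibility \ref{lab:ind}. Pontryagin duality then turns this injection into the desired surjection from the dual of the image, so it remains to check that every class in the image satisfies the local conditions.

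Second, the local conditions. At $\q\in S'$ with $\q\nmid p$, Theorem \ref{thm:pseudorepBC}(\ref{hyp:localshaperamlevel}) and (\ref{hyp:localshapediscK}) describe the inertia image: it is unipotent, of the same monodromy shape as for the Yoshida family, or of order two. Combined with \ref{hyp:optimallevel}, \ref{hyp:congruence1} and \ref{hyp:congruence2}, this should show that the extension restricted to $I_\q$ splits. Concretely, $\rho(I_\q)$ is generated by a single element conjugate to a fixed shape, and the conjugating matrix can be taken compatibly with the GMA, so the off-diagonal part of the generator is forced into $\JJ^\star\BB^\star$. This is where primitivity is gained.

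At $p$ I use Theorem \ref{thm:dRshape} together with Lemma \ref{lemma:p2shape}(\ref{item:p2dR}) in the $\cla$ case, and Theorem \ref{thm:decshape} together with Lemma \ref{lemma:p2shape}(\ref{item:p2dec}) in the $\dec$ case. The conjugating matrices $N_0,N_1,N_3,N_4$ have off-diagonal entries in $\BB^\star$ and diagonal entries $1$, so conjugating by them changes the upper-right cocycle only by a coboundary modulo $\JJ^\star\BB^\star$. It is therefore enough to read off the zeros.

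For $\TT^{\dec}$, the upper-right block at $\p_1$ vanishes identically after conjugation, giving $\ker(\mathrm{Res}_{\p_1})$; the same holds at $\p_2$, giving $\Sel_{\mathrm{ur},p}$.

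For $\TT^{\cla}$, at $\p_1$ the upper-right block is $\begin{pmatrix}0&\star\\0&0\end{pmatrix}$. Its entry sits in the row of $\mathrm{Fil}^+_1\LLL_{\Pi^c}$ and the column of $\LLL_\Pi/\mathrm{Fil}^+_1\LLL_\Pi$, so both the pullback to $\mathrm{Fil}_1^+\LLL_\Pi$ and the pushforward to $\LLL_{\Pi^c}/\mathrm{Fil}_1^+$ vanish. At $\p_2$ the roles are swapped by Lemma \ref{lemma:p2shape}(\ref{item:p2dR}), and the same two conditions follow. This gives exactly the conditions defining $\Sel_{\Int}$ in (\ref{eq:localconditiondr}).

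The main obstacle I expect is bookkeeping at $p$: matching the ordering of the filtrations $\mathrm{Fil}_i^+\LLL_{\Pi},\mathrm{Fil}_i^+\LLL_{\Pi^c}$ with the rows and columns of the matrices in Theorems \ref{thm:dRshape} and \ref{thm:decshape}. A second difficulty is checking that the conjugations needed at $\p_1$, $\p_2$ and the ramified primes preserve the GMA and alter the cocycle only by coboundaries. I would handle this by working throughout modulo $\JJ^\star\BB^\star$, where the diagonal blocks become the fixed lattices $\LLL_{\Pi^c},\LLL_\Pi$ with their Hida filtrations.
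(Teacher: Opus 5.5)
Your proposal is correct and takes essentially the same route as the paper. It uses the Bella\"iche--Chenevier injection $\Hom_{\TTT}(\BB^\star/\JJ^\star\BB^\star,\widehat{\TTT})\hookrightarrow \Ext^1_{\TTT[G_{K,S'}]}(\LLL_\Pi,\LLL_{\Pi^c}\otimes_{\TTT}\widehat{\TTT})$ built from the GMA, reads off the local conditions at $\p_1,\p_2$ after conjugating by $N_0,N_3$ (resp.\ $N_1,N_4$), uses a GMA-compatible conjugation to unipotent Jordan form at $\q\mid M$, uses trivial inertia at primes ramified in $K$ so the class factors through $G_{K,S'}$, and finishes with Pontryagin duality. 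The paper carries out the $\q\mid M$ conjugation explicitly via $\widetilde{P}$ and $\widetilde{Q}_2$, whereas you only assert it. One minor slip: $N_3,N_4$ are merely units of the GMA whose diagonal blocks need not be the identity. This is harmless, since conjugation by any GMA unit gives an isomorphism of the local extensions, and the filtrations $\mathrm{Fil}_i^+$ are intrinsic.
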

\begin{proof}
We will prove the surjection $\Sel_{\mathrm{ur},p}(K,\mathcal{D})^\vee \twoheadrightarrow \dfrac{\BB^\dec}{\JJ^\dec\BB^\dec}$. The arguments to prove the surjection  $ \Sel_{\Int}(K,\mathcal{D})^\vee \twoheadrightarrow \dfrac{\BB^{\cla}}{\JJ^{\cla}\BB^{\cla}}$ are nearly identical. \\ 

It follows from Remark \ref{rem:yoshidafamilyisdecomposableatp} that we have a natural surjection $\TT^\dec \twoheadrightarrow \TTT$. By Remark \ref{rem:yoshidafamilyisdecomposableatp}, the restriction of the four dimensional Galois representation $\widetilde{\rho}_{\TT^\Yos}$ to $G_{K,S}$ is reducible (and in fact a direct sum of two $2$-dimensional Galois representations) and residually multiplicity free, hence we have that  $\ker\left(\TT^\dec \twoheadrightarrow \underbrace{\TTT}_{\cong \TT^\Yos}\right)$ contains the reducibility ideal of $\mathrm{Trace}(\widetilde{\rho}_{\TT^\dec}\mid_{G_{K,S}})$,in the sense of~\cite[Section 1.5]{MR2656025}. 

As a consequence, following \cite[Theorem 1.5.5]{MR2656025}, we obtain an injective homomorphism of discrete $\Lambda$-modules:
\begin{align}i_{\TT^\dec}: \Hom_{\TT^\dec}\left(\BB^\dec,\widehat{\TTT}\right) \hookrightarrow \Ext^1_{\TTT[G_{K,S'}]}\left(\LLL_{\Pi},\LLL_{\Pi^c}\otimes_{\TTT} \widehat{\TTT}\right),
\end{align} 
which we describe as follows. Let $\delta \in \Hom_{\TT^\dec}\left(\BB^\dec,\widehat{\TTT}\right)$. To describe the extension class $i_{\TT^\dec}(\delta)$, we let $E_\delta$ denote the $\TTT$-module given by $\widehat{\TTT} \oplus \widehat{\TTT} \oplus \TTT \oplus \TTT$. We have a $\TT^\dec$-algebra homomorphism
\begin{align}\label{eq:deltaalgmap}
\phi_\delta: \begin{pmatrix} M_2(\TT^\dec) & M_2(\BB^\dec) \\ M_2(\BB^\dec) & M_2(\TT^\dec) \end{pmatrix} \rightarrow \mathrm{End}_{\TTT} \left(E_\delta\right),
\end{align}
described as follows. 
Consider a matrix $4 \times 4$ matrix $W \coloneqq \begin{pmatrix}A & B \\ C & D\end{pmatrix}$ in the GMA $\begin{pmatrix} M_2(\TT^\dec) & M_2(\BB^\dec) \\ M_2(\BB^\dec) & M_2(\TT^\dec) \end{pmatrix}$ with
\begin{align*}
A = \begin{pmatrix} a_{11} & a_{12} \\ a_{21} & a_{22} \end{pmatrix}, \quad D = \begin{pmatrix} d_{11} & d_{12} \\ d_{21} & d_{22} \end{pmatrix}, \quad B = \begin{pmatrix} b_{11} & b_{12} \\ b_{21} & b_{22} \end{pmatrix}.
\end{align*}
The action of $W$ on  an element $(\hat{v}_1,\hat{v}_2, v_3,v_4) \in E_\delta$ is as follows (described in column form):
\[W \cdot \begin{pmatrix} \hat{v}_1 \\ \hat{v}_2 \\  v_3 \\ v_4 \end{pmatrix} = \begin{pmatrix}
\overline{a_{11}}\hat{v}_1 + \overline{a_{12}}\hat{v}_2 + v_3\delta(b_{11}) + v_4\delta(b_{12}) \\[6pt]
\overline{a_{21}}\hat{v}_1 + \overline{a_{22}}\hat{v}_2 + v_3\delta(b_{21}) + v_4\delta(b_{22}) \\[6pt]
\overline{d_{11}}v_3 + \overline{d_{12}}v_4 \\[6pt]
\overline{d_{21}}v_3 + \overline{d_{22}}v_4
\end{pmatrix}.\]
Here, for every element $x$ in $\TT^\dec$, we let $\overline{x}$ denote its image in $\TTT$. The fact that the map $\phi_\delta$ in equation (\ref{eq:deltaalgmap}) is a $\TT^\dec$-algebra homomorphism relies on the fact that $\ker(\TT^\dec \rightarrow \TTT)$ contains the reducibility ideal $(\BB^\dec)^2$ of $\mathrm{Trace}(\widetilde{\rho}_{\TT^\dec}\mid_{G_{K,S}})$.

 By theorem \ref{thm:pseudorepBC}, the Galois representation $\widetilde{\rho}_{\TT^\dec}$ maps $G_{K,S}$ to the GMA $\begin{pmatrix} M_2(\TT^\dec) & M_2(\BB^\dec) \\ M_2(\BB^\dec) & M_2(\TT^\dec) \end{pmatrix}$. For each $g$ in $G_{K,S}$ one obtains the matrix  $\widetilde{\rho}_{\TT^\dec}(g)= \begin{pmatrix}A_g & B_g \\ C_g & D_g\end{pmatrix}$ in the GMA $\begin{pmatrix} M_2(\TT^\dec) & M_2(\BB^\dec) \\ M_2(\BB^\dec) & M_2(\TT^\dec) \end{pmatrix}$. Since $\TT^\Yos$ is isomorphic to $\TTT$, by theorem \ref{thm:pseudorepBC} once again we obtain that the natural image of $\bar{A}_g$ and $\bar{D}_g$ in $M_2(\TTT)$ equals $\Pi^c(g)$ and $\Pi(g)$ respectively. As a consequence, we obtain a short exact sequence of $\TTT[G_{K,S}]$-modules that describes the extension class in $\Ext^1_{\TTT[G_{K,S}]}\left(\LLL_{\Pi},\LLL_{\Pi^c}\otimes_{\TTT} \widehat{\TTT}\right)$:
\begin{align} \label{extGKS}
0 \rightarrow \LLL_{\Pi^c}\otimes_{\TTT} \widehat{\TTT} \rightarrow E_\delta \rightarrow \LLL_{\Pi} \rightarrow 0.
\end{align}
From Theorem \ref{thm:pseudorepBC}(\ref{hyp:localshapediscK}), we obtain that if $g \in G_{K,S}$ belongs to the inertia group of a prime $\mathfrak{q}$ of $K$ in $S \setminus S'$, then $\widetilde{\rho}_{\TT^\dec}(g) = I_4$. As a result,  the extension class in equation (\ref{extGKS}) naturally belongs to $\Ext^1_{\TTT[G_{K,S'}]}\left(\LLL_{\Pi},\LLL_{\Pi^c}\otimes_{\TTT} \widehat{\TTT}\right)$. This describes the map $i_{\TT^\dec}$. The fact that this map is an injective homomorphism follows from the arguments in the proof of the result of Bella\"iche--Chenevier \cite[Proof of Theorem 1.5.5]{MR2656025}.  See also \cite[Section 4.2 and proof of Proposition 4.8]{hsieh_yoshida}. The arguments crucially rely on the fact that the group algebra $\TT^\dec[\widetilde{\rho}_{\TT^\dec}\mid_{G_{K,S}}]$ is a GMA.

Now we show that $i_{\TT^\dec}(\delta)$ satisfies the required local conditions in equation (\ref{eq:localconditiondec}).

To show that the local condition at $\p_1$ holds, we apply theorem \ref{thm:decshape}. Consider  the invertible $4 \times 4$ matrix $N_1$ over $\TT^\dec$ specified in theorem \ref{thm:decshape}. Since $N_1$ also belongs to the $GMA$ $\begin{pmatrix} M_2(\TT^\dec) & M_2(\BB^\dec) \\ M_2(\BB^\dec) & M_2(\TT^\dec) \end{pmatrix}$, we note that $\phi_\delta(N_1^{-1})(E_\delta)$, as an extension class, is isomorphic to $E_\delta$ in  $\Ext^1_{\TTT[G_{K,S'}]}\left(\LLL_{\Pi},\LLL_{\Pi^c}\otimes_{\TTT} \widehat{\TTT}\right)$. The natural restriction of this  extension class  $\phi_\delta(N_1^{-1}) (E_\delta)$ as an element of $\Ext^1_{\TTT[G_{K_{\p_1}}]}\left(\LLL_{\Pi},\LLL_{\Pi^c}\otimes_{\TTT} \widehat{\TTT}\right)$ is zero. To see that this extension is split, we note that for every $g$ in $G_{K_{\p_1}}$, the element $N_1^{-1}\widetilde{\rho}_{\TTdec}(g)N_1$ of the GMA $\begin{pmatrix} M_2(\TT^\dec) & M_2(\BB^\dec) \\ M_2(\BB^\dec) & M_2(\TT^\dec) \end{pmatrix}$  is a block diagonal matrix, as specified in theorem \ref{thm:decshape}, whose upper right $2 \times 2$ block entries form the $2 \times 2$ zero matrix\footnote{for the case involving $ \Sel_{\Int}(K,\mathcal{D})$ and $\TT^{\cla}$, the role of theorem \ref{thm:decshape} is replaced by theorem \ref{thm:dRshape}. The analogous crucial observations are that the $4 \times 4$ matrix $N_0$ of theorem \ref{thm:dRshape} belongs to the GMA $\begin{pmatrix} M_2(\TT^{\cla}) & M_2(\BB^{\cla}) \\ M_2(\BB^{\cla}) & M_2(\TT^{\cla}) \end{pmatrix}$ along with the fact that the $(1,3)$, $(2,3)$ and $(2,4)$ entries of $N_0^{-1}\widetilde{\rho}_{\TTdR}(g)N_0$ equal zero for all $g \in G_{K_{\p_1}}$.}. 

To show that the local condition at $\p_2$ holds\footnote{for the case involving $ \Sel_{\Int}(K,\mathcal{D})$ and $\TT^{\cla}$, the role of Lemma \ref{lemma:p2shape}(\ref{item:p2dec}) is replaced by Lemma \ref{lemma:p2shape}(\ref{item:p2dR}). The analogous crucial observations are that the $4 \times 4$ matrix $N_3$ of Lemma \ref{lemma:p2shape}(\ref{item:p2dR}) belongs to the GMA $\begin{pmatrix} M_2(\TT^{\cla}) & M_2(\BB^{\cla}) \\ M_2(\BB^{\cla}) & M_2(\TT^{\cla}) \end{pmatrix}$ along with the fact that the $(1,3)$, $(2,3)$ and $(2,4)$ entries of $N_3^{-1}\widetilde{\rho}_{\TTdR}(g)N_3$ equal zero for all $g \in G_{K_{\p_2}}$.}, we similarly apply Lemma \ref{lemma:p2shape}(\ref{item:p2dec}), and work with the isomorphic model $\phi_\delta(N_4^{-1})(E_\delta)$ instead. Here, $N_4$ is the $4 \times 4$ matrix specified in Lemma \ref{lemma:p2shape}(\ref{item:p2dec}), which also belongs to the GMA $\begin{pmatrix} M_2(\TT^\dec) & M_2(\BB^\dec) \\ M_2(\BB^\dec) & M_2(\TT^\dec) \end{pmatrix}$. 

Now fix a prime $\mathfrak{q}$ in $K$ dividing the level $M$. To show that the local condition at $\mathfrak{q}$ holds, we note by Theorem \ref{thm:pseudorepBC}(\ref{hyp:localshaperamlevel}) that $\widetilde{\rho}_{\TT^\dec}(I_{\mathfrak{q}})$ is a pro-$p$ group (which is isomorphic to $\Z_p$) inside the GMA $\begin{pmatrix} M_2(\TT^\dec) & M_2(\BB^\dec) \\ M_2(\BB^\dec) & M_2(\TT^\dec) \end{pmatrix}$. Let $g_{\mathfrak{q}}$ be an element of $I_{\mathfrak{q}}$ that maps to a topological generator of $\widetilde{\rho}_{\TT^\dec}(I_{\mathfrak{q}})$. In block diagonal form, write $\widetilde{\rho}_{\TT^\dec}(g_{\mathfrak{q}}) = \begin{pmatrix}A_{g_{\mathfrak{q}}} & B_{g_{\mathfrak{q}}} \\ C_{g_{\mathfrak{q}}} & D_{g_{\mathfrak{q}}}\end{pmatrix}$. 
 As earlier, observe  $\bar{A}_{g_{\mathfrak{q}}}=\Pi^c(g_\mathfrak{q})$ and~$\bar{D}_{g_{\mathfrak{q}}}=\Pi(g_\mathfrak{q})$. 
 
 By theorem \ref{thm:pseudorepBC}(\ref{hyp:localshaperamlevel}), we also have $(\widetilde{\rho}_{\TT^\dec}(g_{\mathfrak{q}})-I_4)^2=0$. As a result, in $M_2(\TTT)$, we have \[(\bar{A}_{g_{\mathfrak{q}}}-I_2)^2=(\bar{D}_{g_{\mathfrak{q}}}-I_2)^2=0_2.\]
 By the hypothesis \ref{hyp:optimallevel} on Artin conductors of the $G_{K,S'}$ residual representations $\bar\tau$ and $\bar\tau^c$, the natural images of $\bar{A}_{g_{\mathfrak{q}}}-I_2$ and $\bar{D}_{g_{\mathfrak{q}}}-I_2$ in $M_2(\mathbb{F})$
are not zero. A standard linear algebra argument now lets us produce two invertible matrices $P_1$ and $P_2$ in $\Gl_2(\TTT)$ such that \begin{align}\label{extension:jordan2x2block}
P_1^{-1} \bar{A}_{g_{\mathfrak{q}}} P_1 = P_2^{-1}\bar{D}_{g_{\mathfrak{q}}} P_2 =\begin{pmatrix} 1 & 1 \\ 0 & 1 \end{pmatrix}. \end{align}
Consider lifts $\widetilde{P}_1$ and $\widetilde{P}_2$ in $\Gl_2(\TT^\dec)$ of $P_1$ and $P_2$ respectively. Let $\widetilde{P}$ denote the $4 \times 4$ matrix in the GMA $\begin{pmatrix} M_2(\TT^\dec) & M_2(\BB^\dec) \\ M_2(\BB^\dec) & M_2(\TT^\dec) \end{pmatrix}$ given by $\begin{pmatrix} \widetilde{P}_1 & 0_2 \\ 0_2 & \widetilde{P}_2\end{pmatrix}$. We now work with the extension given by $\phi_\delta(\widetilde{P}^{-1})(E_\delta)$, which is isomorphic to $E_\delta$ as an extension class in $\Ext^1_{\TTT[G_{K,S'}]}\left(\LLL_{\Pi},\LLL_{\Pi^c}\otimes_{\TTT} \widehat{\TTT}\right)$. Using equation (\ref{extension:jordan2x2block}) along with the facts that  $\phi_\delta$ is an algebra homomorphism and that $(\widetilde{\rho}_{\TT^\dec}(g_{\mathfrak{q}})-I_4)^2=0$, one can explicitly describe the action of $g_\mathfrak{q}$ on $\phi_\delta(\widetilde{P}^{-1})(E_\delta)$. Let $(\hat{v}_1,\hat{v}_2, v_3,v_4)$ belong to $\phi_\delta(\widetilde{P}^{-1})(E_\delta)$. There exists $b_1, b_2 \in \BB^\dec$ such that
\[g_\mathfrak{q} \cdot \begin{pmatrix} \hat{v}_1 \\ \hat{v}_2 \\  v_3 \\ v_4 \end{pmatrix} = \begin{pmatrix}
\hat{v}_1 +\hat{v}_2 + v_3\delta({b}_1) + v_4\delta({b}_2) \\
 \hat{v}_2 - v_4\delta({b}_1) \\
v_3 + v_4 \\
v_4
\end{pmatrix}.\]
Consider the matrix $\widetilde{Q}_2 \coloneqq \begin{pmatrix} 1 & 0 & b_2 & 0 \\ 0 & 1 & -b_1 &0 \\ 0 & 0 & 1 & 0 \\ 0 & 0 & 0 & 1\end{pmatrix}$. One checks via a direct computation that the action of $g_\mathfrak{q}$ on the isomorphic model $\phi_\delta(\widetilde{Q_2}^{-1}\widetilde{P}^{-1})(E_\delta)$ is via $\phi_\delta\begin{pmatrix}1 & 1 & 0 & 0 \\ 0 & 1 & 0 & 0 \\ 0 & 0 & 1 & 1 \\ 0 & 0 & 0 & 1 \end{pmatrix}$. As a result, one sees directly that the image of the extension class corresponding to $\phi_\delta(\widetilde{Q_2}^{-1}\widetilde{P}^{-1})(E_\delta)$ under restriction in $\Ext^1_{\TTT[I_\mathfrak{q}]}\left(\LLL_{\Pi},\LLL_{\Pi^c}\otimes_{\TTT} \widehat{\TTT}\right)$ is trivial. We have thus verified that we get a natural induced inclusion 
\begin{align}i_{\TT^\dec}: \Hom_{\TT^\dec}\left(\BB^\dec,\widehat{\TTT}\right) \hookrightarrow \Sel_{\mathrm{ur},p}(K,\mathcal{D})\hookrightarrow \Ext^1_{\TTT[G_{K,S'}]}\left(\LLL_{\Pi},\LLL_{\Pi^c}\otimes_{\TTT} \widehat{\TTT}\right),
\end{align}
Since  $\ker\left(\TT^\dec \twoheadrightarrow \TTT \right)$ equals $\JJ^\dec$, the  group $\Hom_{\TT^\dec}\left(\BB^\dec,\widehat{\TTT}\right)$ can be identified with $\Hom_{\TTT} \left(\dfrac{\BB^\dec}{\JJ^\dec\BB^\dec},\widehat{\TTT}\right)$. The latter Hom group is naturally isomorphic to the Pontryagin dual of $\dfrac{\BB^\dec}{\JJ^\dec\BB^\dec}$.  See  \cite[Section 2.9.1]{MR2333680} and \cite[Lemma 4.7]{MR4385094}. 
The surjection $\Sel_{\mathrm{ur},p}(K,\mathcal{D})^\vee \twoheadrightarrow \dfrac{\BB^\dec}{\JJ^\dec\BB^\dec}$ can now be obtained by applying Pontryagin duality. The proposition follows.
\end{proof}

\section{Proof of Theorem \ref{thm:notisos}}

{\theoremthree*}

We will prove the theorem for the case involving $\TT^{\dec}$. The proof of the theorem for the case involving $\TT^{\cla}$ follows similarly by replacing $\Sel_{\mathrm{ur},p}(K,\DD)$ with $\Sel_{\mathrm{int}}(K,\DD)$.  As $\TT$ is a reduced local ring, recall that we have the following decomposition of $\Lambda$-algebras:
\begin{align}\label{eq:reducedTdecomp}
\TT \otimes_{\Lambda} \Frac(\Lambda) \cong \underbrace{\prod_{i=1}^t (\TT)_{\eta_i}}_{\substack{\text{corresponding to }\\ \text{automorphic inductions}}} \times \underbrace{\prod_{j=t+1}^s (\TT)_{\eta_j}}_{{\substack{\text{not corresponding to}\\ \text{automorphic inductions}}}}
\end{align}

We have 
\[\TT^{\Yos} = \mathrm{Image}\left(\TT \rightarrow \prod_{j=1}^t (\TT)_{\eta_j}\right), \quad \TT^{\perp} = \mathrm{Image}\left(\TT \rightarrow \prod_{j=t+1}^s (\TT)_{\eta_j}\right), \quad \TT^{\dec} = \mathrm{Image}\left(\TT \rightarrow \prod \limits_{\p \in S^{\dec}} \dfrac{\TT}{\p}\right).\]  

 We define $\JJ^{\perp}$ to be the image of $\JJ$ inside $\TT^{\perp}$. Observe that since $\JJ$ is defined as $\ker\left(\TT \rightarrow \TT^{\Yos}\right)$, we have a natural isomorphism of $\Lambda$-modules:
\begin{align}\label{eq:JeqJperp}
\JJ \cong \JJ^{\perp}.
\end{align}

Suppose for the sake of contradiction that we have an isomorphism

\begin{align} \label{eq:contiso}
    \TT \cong \TT^{\dec}.
\end{align}

Using Proposition \ref{prop:selmerclass} and the isomorphism of equation (\ref{eq:contiso}), we have a natural surjection $\Sel_{\mathrm{ur},p}(K,\DD)^\vee \twoheadrightarrow \dfrac{\BB}{\JJ \BB}$. Combining this observation with the  natural surjection $\BB \twoheadrightarrow \BB^{\perp}$ and the isomorphism in equation (\ref{eq:JeqJperp}), we have the following natural surjection of $\Lambda$-modules: 

\begin{align}
    \Sel_{\mathrm{ur},p}(K,\DD)^\vee \twoheadrightarrow \dfrac{\BB^{\perp}}{\JJ^{\perp}\BB^{\perp}}.
\end{align}

The hypothesis of our theorem tells us that $\Sel_{\mathrm{ur},p}(K,\DD)^\vee$ is a pseudo-null $\Lambda$-module. Therefore, $\dfrac{\BB^{\perp}}{\JJ^{\perp}\BB^{\perp}}$ is a pseudo-null $\Lambda$-module.

\begin{claim}\label{claim:faithfulBperp}
$\BB^{\perp}$ is a faithful $\TT^{\perp}$-module. That is, $\mathrm{Ann}_{\TT^{\perp}}(\BB^{\perp})=(0)$.
\end{claim}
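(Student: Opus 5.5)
Since $\TT^{\perp}$ is reduced, its zero ideal is the intersection of its minimal primes $\eta_{t+1},\dots,\eta_s$ (images of those minimal primes of $\TT$ not corresponding to Yoshida families). So it suffices to show that for each such minimal prime $\eta$ of $\TT^{\perp}$ we have $\BB^{\perp}\not\subset \eta$. Indeed, if $x\in\mathrm{Ann}_{\TT^{\perp}}(\BB^{\perp})$, then $x\BB^{\perp}=0\subset\eta$ for every minimal prime $\eta$, and primality together with $\BB^{\perp}\not\subset\eta$ forces $x\in\eta$. Hence $x$ lies in the nilradical, which is $(0)$.

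To show $\BB^{\perp}\not\subset\eta$, argue by contradiction. Suppose $\BB^{\perp}\subset\eta$. Then the image of $\BB$ in the domain $\TT/\eta$ (a quotient of $\TT^{\perp}$, hence of $\TT$) is zero. By Theorem \ref{thm:pseudorepBC}(\ref{item:restriction}), the algebra $(\TT/\eta)[\widetilde{\rho}_{\TT/\eta}(G_{K,S})]$ is then block diagonal $\begin{pmatrix} M_2(\TT/\eta) & 0 \\ 0 & M_2(\TT/\eta)\end{pmatrix}$. Thus $\widetilde{\rho}_{\TT/\eta}|_{G_{K,S}}\cong \tau_1\oplus\tau_2$, with $\tau_1,\tau_2$ two-dimensional lifting $\bar\tau^c,\bar\tau$. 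As in the argument for $\BB=\CC$ in the proof of Theorem \ref{thm:pseudorepBC}, an element $\sigma\notin G_{K,S}$ interchanges the two blocks, because $\bar\tau\not\cong\bar\tau^c$. Hence $\widetilde{\rho}_{\TT/\eta}\cong\Ind^{G_{\Q,S}}_{G_{K,S}}(\tau_2)$ with $\tau_1\cong\tau_2^c$.

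The remaining step is to show that the family $\eta$ is then a Hida family of stable Yoshida lifts, contradicting $\eta\in\{\eta_{t+1},\dots,\eta_s\}$. I would specialize at the dense set $\mathcal{X}^{\geq 4}_{\mathrm{temp}}(\TT/\eta)$ of Theorem \ref{thm:verticalcontrol}(\ref{item:gsp4heckealgdensityprimes}). At each such point $\mathcal{P}$, the associated automorphic representation $\varPi$ has Galois representation $\Ind_K^{\Q}(\tau_{2,\mathcal{P}})$. By the ordinary local shape (Theorem \ref{thm:pseudorepBC}(\ref{item:localpart})) and Lemma \ref{lemma:p2shape}(\ref{item:p2ord}), $\tau_{2,\mathcal{P}}$ is (ordinary at $\p_1$, nearly ordinary at $\p_2$) with the expected Hodge--Tate weights. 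Modularity of $\tau_{2,\mathcal{P}}$ over $K$ should follow from the identification of $\varPi$ as an automorphic induction: a cuspidal $\GSp_4$ representation whose Galois representation is induced from $K$ is a Yoshida-type lift, by the classification of Arthur packets/weak endoscopic transfer. This identifies the eigensystem at $\mathcal{P}$ with that of a stable Yoshida lift of a Hilbert form of level $M$.

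An alternative that avoids automorphy is to use the equivalence $\TT^{\Yos}\cong\TTT$ from Remark \ref{rem:yoshidafamilyisdecomposableatp} together with the $R=\mathbb{T}$ theorem of \cite{DPmin}. The induced representation $\tau_2$ is a nearly ordinary minimal lift of $\bar\tau$, so the universal property gives a map from the nearly ordinary $\Gl_2$ deformation ring, and Hilbert $R=\mathbb{T}$ (via \cite{DPmin}, or Geraghty \cite{MR3953131}) produces a factorization $\TT\to\TTT\to\TT/\eta$ compatible with Lemma \ref{lemma:gspgl2cuspformassignment}. Then $\ker(\phi)\subset\eta$, and $\eta$ is a minimal prime over the kernel of $\TT\to\TT^{\Yos}$, contradicting the assumption.

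I expect this final step, showing that an induced-from-$K$ family is genuinely a Yoshida family, to be the main obstacle. The first approach needs an automorphy input for $\tau_{2,\mathcal{P}}$; the second needs a clean modularity/$R=\mathbb{T}$ statement for nearly ordinary deformations of $\bar\tau$ over $K$. I would try the second first, since the required hypotheses (\ref{hyp:bigimage}, \ref{lab:pdist}, minimality via \ref{hyp:optimallevel}) are all in Set-up \ref{setting:all}.
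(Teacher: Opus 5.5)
Your reduction matches the paper: since $\TT^{\perp}$ is reduced, it suffices to show $\BB^{\perp}\not\subset\eta$ for each minimal prime $\eta$, and if $\BB^{\perp}\subset\eta$ then Theorem \ref{thm:pseudorepBC}(\ref{item:restriction}) makes $\widetilde{\rho}_{\TT^{\perp}/\eta}$ induced from $G_{K,S}$. However, the step you flag as the main obstacle, namely that such an $\eta$ must be a Yoshida component, is the entire content of the claim. Neither of your routes closes it.

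\textbf{Your second route is circular with the tools available.} \cite{DPmin} is an $R=\mathbb{T}$ theorem for $\GSp_4$, not for nearly ordinary deformations of $\bar\tau$ over $K$. What you would need is that the reduced quotient of the ($\p_1$-ordinary) $\Gl_2$ deformation ring is $\TTT$. Via the isomorphism $\varkappa$, this is the same as $(\TT/\BB)^{\mathrm{red}}\cong\TT^{\Yos}$, and the paper only obtains that in Claim \ref{claim:BradicalJ} \emph{by using} the present claim. Geraghty's \cite[Theorem 5.13]{MR3953131} is a pointwise modularity lifting theorem, so it does not by itself produce a factorization $\TT\to\TTT\to\TT/\eta$.

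\textbf{Your first route is an unspecified appeal to Arthur packets, not an argument.} To make it work you would need:
\begin{itemize}
\item transfer to $\Gl_4$;
\item strong multiplicity one to get $\Pi\cong\Pi\otimes\chi_K$;
\item Arthur--Clozel descent;
\item local--global compatibility to place the descended form in $\TTT$.
\end{itemize}
In particular, you assert without proof that the Hilbert form has level $M$ and trivial Nebentypus. Both are required for it to be a stable Yoshida lift in the sense of Section \ref{subsec:stableyoshidalift}.

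\textbf{How the paper fills the gap, using a single point.}
\begin{itemize}
\item It chooses a cohomological prime $\p_{k'_1,k'_2}$ of $\TT^{\perp}/\eta$ with $k'_1>k'_2\gg0$ and $(k'_1,k'_2)\equiv(a_0,b_0)\pmod{(p-1)\Z^2}$. This specialization is crystalline at $p$, so both $\tau_{\kappa'_1,\kappa'_2}|_{G_{K_{\p_i}}}$ are crystalline.
\item It applies \cite[Theorem 5.13]{MR3953131} to $\tau_{\kappa'_1,\kappa'_2}$. Ordinarity at $\p_1,\p_2$ comes from Theorem \ref{thm:pseudorepBC}(\ref{item:localpart}) and Lemma \ref{lemma:p2shape}. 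The conditions $\zeta_p\notin\overline{\Q}^{\ker(\mathrm{ad}\bar\tau)}$ and bigness come from \ref{hyp:bigimage} via Dickson's theorem. Residual modularity comes from $g_0$.
\item Trivial Nebentypus follows from $\det\tau_{\kappa'_1,\kappa'_2}=\lambda_{\p_{k'_1,k'_2}}$, read off from the symplectic pairing of \cite[Theorem 3.1]{DPmin}. Level exactly $M$ follows from Carayol together with \cite[Propositions 3.4, 3.7]{DPmin}.
\item The point is therefore a stable Yoshida lift of a classical point of $\TTT$. The uniqueness part of Theorem \ref{thm:yoshidafamily} (from \cite[Theorem 2]{DPmin}) at such weights then forces the only component through it to be a Yoshida component, contradicting $\eta\in\mathrm{Spec}(\TT^{\perp})$.
\end{itemize}

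You could avoid the uniqueness theorem by running the same Geraghty, Nebentypus and level argument at every point of $\mathcal{X}^{\geq 4}_{\mathrm{temp}}(\TT/\eta)$, then concluding $\ker\phi\subset\eta$ by density. Either way, the modularity input and the level/Nebentypus identification cannot be skipped.
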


\begin{proof}[Proof of Claim \ref{claim:faithfulBperp}]
Suppose for the sake of contradiction, there exists a non-zero element $x$ in $\TT^{\perp}$ such that $x \cdot \BB^{\perp}=(0)$. Since $\TT^{\perp}$ is reduced, there exists a minimal prime $\eta$ such that $x \notin \eta$. This forces $\BB^{\perp} \subset \eta$. As a result using Theorem \ref{thm:pseudorepBC}, the restriction to $G_{K,S}$ of the Galois representation $\widetilde{\rho}_{\TT^{\perp}/\eta}$ is decomposable.  Since the $G_{\Q,S}$-representation $\widetilde{\rho}_{\TT^{\perp}}$ is irreducible, we have 
\begin{align}\label{eq:galoisindeta}
\widetilde{\rho}_{\TT^{\perp}/\eta} \cong \mathrm{Ind}^{G_{\Q,S}}_{G_{K,S}} \tilde\tau,
\end{align}
for some Galois representation $\tilde\tau:G_{K,S} \rightarrow \Gl_2(\TT^{\perp}/\eta)$. Note that, as in section \ref{subsec:hidagsp}, the local ring $\TT$ fixes a congruence class $(a_0,b_0)$ modulo $(\Z/(p-1)\Z)^2$. We choose a cohomological prime $\p_{k'_1,k'_2}$ of $\TT^\perp/\eta$ such that 
\begin{align} \label{eq:choiceofweights}
k'_1 > k'_2 \gg 0, \qquad (k'_1,k'_2) \equiv (a_0,b_0) \pmod{(p-1)\Z^2}.
\end{align}

Here, $k'_2$ is chosen large enough so that the uniqueness assertion of \cite[Theorem 2]{DPmin} holds. At this specialization, equation (\ref{eq:galoisindeta}) provides us the following isomorphism:
\begin{align}\label{eq:galoisindetaspec}
\rho_{\p_{k'_1,k'_2}} \cong \mathrm{Ind}^{G_{\Q,S}}_{G_{K,S}} \tau_{\kappa'_1,\kappa'_2}.
\end{align}
Here, we have a $4$-dimensional Galois representation $\rho_{\p_{k'_1,k'_2}}:G_{\Q,S} \rightarrow \GSp_4(\overline{\Z}_p)$ and a $2$-dimensional Galois representation  $\tau_{\kappa'_1,\kappa'_2}:G_{K,S} \rightarrow \Gl_2(\overline{\Z}_p)$ that arise by specializing $\widetilde{\rho}_{\TT^{\perp}/\eta}$ and $\tilde\tau$ respectively at $\p_{k'_1,k'_2}$.  The formula for $(\kappa'_1,\kappa'_2)$ in terms of $(k'_1,k'_2)$ can be deduced from equation (\ref{formula:weights}).  We now deduce that the $2$-dimensional $p$-adic Galois representation $\tau_{\kappa'_1,\kappa'_2}$ arises from a $p$-ordinary Hilbert modular eigenform over $K$ with weight $(\kappa'_1,\kappa'_2)$ and level co-prime to $p$ using a result of Geraghty \cite[Theorem 5.13]{MR3953131}. Since $p$ splits in the real quadratic field $K$, we have a natural decomposition of $G_{\Q_p}$-representations:
\begin{align} \label{eq:directsumcrys}
\rho_{\p_{k'_1,k'_2}}\mid_{G_{\Q_p}}   \cong \tau_{\kappa'_1,\kappa'_2}\mid_{G_{K_{\p_1}}} \oplus \ \tau_{\kappa'_1,\kappa'_2}\mid_{G_{K_{\p_2}}}.
\end{align}
The choice of the congruence class $(a_0,b_0)$ ensures that the $4$-dimensional Galois representation $\rho_{\p_{k'_1,k'_2}}  \mid_{G_{\Q_p}}$ is crystalline. Equation (\ref{eq:directsumcrys}) then forces $\tau_{\kappa'_1,\kappa'_2}\mid_{G_{K_{\p_1}}}$ and $\tau_{\kappa'_1,\kappa'_2}\mid_{G_{K_{\p_2}}}$ to be crystalline as well. We now verify the hypotheses of \cite[Theorem 5.13]{MR3953131}. 
\begin{itemize}[leftmargin=1.5em]
\item Condition (1) of \cite[Theorem 5.13]{MR3953131}, which is similar to a self-duality hypothesis, follows immediately since $\tau_{\kappa'_1,\kappa'_2}$ is a totally odd $2$-dimensional Galois representation. 
\item Condition (2) of \cite[Theorem 5.13]{MR3953131} also follows immediately since $\widetilde{\rho}_{\TT}$ itself is unramified at all but finitely many primes. 
\item Condition (3) of \cite[Theorem 5.13]{MR3953131} follows since by using Theorem \ref{thm:pseudorepBC}, we can establish that $\tau_{\kappa'_1,\kappa'_2}\mid_{G_{K_{\p_1}}}$ is ordinary with weight $(\kappa'_1-1,0)$ and $\tau_{\kappa'_1,\kappa'_2}\mid_{G_{K_{\p_2}}}$ is ordinary with weight $(\dfrac{\kappa'_1+\kappa'_2}{2}-1,\dfrac{\kappa'_1-\kappa'_2}{2})$. In this point, we use the notion of being ordinary as in \cite[Section 5.2]{MR3953131}. 
\item Condition (4) of \cite[Theorem 5.13]{MR3953131} requires us to show that $\zeta_p$ does not belong to $\overline{\Q}^{\ker(\mathrm{ad}(\bar\tau))}$. To see this, note that the Galois group $\Gal{\overline{\Q}^{\ker(\mathrm{ad}(\bar\tau))}}{K}$ is isomorphic to $\mathrm{Proj}(\mathrm{Im}(\bar\tau))$ --- the projective image of $\bar\tau$. Hypothesis \ref{hyp:bigimage} tells us that $\mathrm{Proj}(\mathrm{Im}(\bar\tau)) \supset \mathrm{PSL}_2(\mathbb{F}_p)$. In particular, $p$ divides the cardinality of $\mathrm{Proj}(\mathrm{Im}(\bar\tau))$. Dickson's theorem (see \cite[Theorem 3.7]{MR2172950}) would then tell us that up to conjugation, $\mathrm{Proj}(\mathrm{Im}(\bar\tau))$ is isomorphic to $\mathrm{PSL}_2(\mathbb{F}_q)$ or $ \mathrm{PGL}_2(\mathbb{F}_q)$, for some power $q$ of $p$. However,  $\mathrm{PSL}_2(\mathbb{F}_q)$ is simple when $q \geq 3$ and the only non-trivial abelian quotient of $\mathrm{PGL}_2(\mathbb{F}_q)$ has order $2$. On the other hand, the degree of the extension $K(\zeta_p)/K$ is strictly greater than two since $p$ is unramified in $K$ and $p > 3$. These observations establish Condition (4) of \cite[Theorem 5.13]{MR3953131}.
\item Condition (5) of \cite[Theorem 5.13]{MR3953131} requires us to establish that $\mathrm{Im}(\bar\tau) \mid_{G_{K(\zeta_p),S}}$ is \textit{big} in the sense of \cite[Definition 2.5.1]{MR2470687}.  Hypothesis \ref{hyp:bigimage} tells us that $\mathrm{Im}(\bar\tau) \supset \mathrm{SL}_2(\mathbb{F}_p)$. As $\mathrm{SL}_2(\mathbb{F}_p)$ is its own commutator when $p > 3$, we have the inclusion $\mathrm{Im}\left(\bar\tau \mid_{G_{K(\zeta_p),S}}\right) \supset \mathrm{SL}_2(\mathbb{F}_p)$. Applying Dickson's theorem again, we know that the projective image $\mathrm{Proj}\ \mathrm{Im}\left(\bar\tau \mid_{G_{K(\zeta_p),S}}\right)$ is isomorphic (up to conjugation) to $\mathrm{PSL}_2(\mathbb{F}_q)$ or $ \mathrm{PGL}_2(\mathbb{F}_q)$, for some power $q$ of $p$. As a result, we have the inclusion
\begin{align*}
  \mathbb{F}^\times \Gl_2(\mathbb{F}_q) \supset  \mathrm{Im}\left(\bar\tau \mid_{G_{K(\zeta_p),S}}\right).
\end{align*}
Furthermore, using \cite[Lemma 3.10]{MR2172950}, we have the inclusion
\begin{align*}
 \mathrm{Im}\left(\bar\tau \mid_{G_{K(\zeta_p),S}}\right) \supset \mathrm{SL}_2(\mathbb{F}_q).
\end{align*}
Using \cite[Corollary 2.5.4]{MR2470687}, we can conclude that Condition (5) of \cite[Theorem 5.13]{MR3953131} holds. 
\item Condition (6) of \cite[Theorem 5.13]{MR3953131}, which is similar to a residual modularity  condition, follows since the residual representation arises from the $p$-ordinary Hilbert modular eigenform $g_0$ of the introduction.
\item We had earlier observed that the representations $\tau_{\kappa'_1,\kappa'_2}\mid_{G_{K_{\p_1}}}$ and $\tau_{\kappa'_1,\kappa'_2}\mid_{G_{K_{\p_2}}}$ are crystalline. 
\end{itemize}

These observations allow us to apply Geraghty's theorem to conclude that the $2$-dimensional $p$-adic Galois representation $\tau_{\kappa'_1,\kappa'_2}$ arises from a $p$-ordinary Hilbert modular eigen newform $g'$ over $K$ with weight $(\kappa'_1,\kappa'_2)$ and level $M'$ co-prime to $p$. 

We now need to show that the level of $g'$ equals $\Gamma_0(M)$. To do so, we first argue that the Nebentypus of $g'$ is trivial.  We use Theorem \ref{thm:pseudorepBC}(\ref{item:restriction}) and the fact that the minimal prime $\eta$ contains $\BB^\perp$ to conclude that there is a $\overline{\Z}_p$-basis $\{\vec{e}_1, \vec{e}_2, \vec{e}_3,\vec{e}_4\}$ under which the Galois representation $\rho_{\p_{k'_1,k'_2}}$ is in block-diagonal form where the lower $2 \times 2$ block corresponds to the $2$-dimensional Galois representation $\tau_{\kappa'_1,\kappa'_2}$. On the other hand, \cite[Theorem 3.1]{DPmin} provides an induced symplectic pairing  $\langle \cdot,\cdot \rangle$ on $\rho_{\p_{k'_1,k'_2}}$ lifting a certain residual symplectic pairing on $\bar\rho$ given in Section~\ref{subsec:consyoshida}. The proof of Theorem \ref{thm:pseudorepBC}, especially observing the residual shape as in equation (\ref{eq:blockdiagonalshape}) and the description of the residual symplectic pairing on $\bar\rho$ given in section \ref{subsec:consyoshida}, tells us that $\langle \vec{e}_3,\vec{e}_4 \rangle$ is residually non-zero. Now observe that  for all $x \in G_{K,S}$, 
\[\langle x \vec{e}_3, x \vec{e}_4 \rangle = \lambda_{\p_{k'_1,k'_2}}(x) \langle \vec{e}_3,  \vec{e}_4 \rangle.\]
Here, $\lambda_{\p_{k'_1,k'_2}}$ is the specialization at $\p_{k'_1,k'_2}$ of the similitude character $\widetilde{\lambda}$. Since the $\overline{\Z}_p$-span of $\{\vec{e}_3,  \vec{e}_4\}$ is $G_{K,S}$-stable with $G_{K,S}$-action given by $\tau_{\kappa'_1,\kappa'_2}$, we also have 
\[\langle x \vec{e}_3, x \vec{e}_4 \rangle = \det(\tau_{\kappa'_1,\kappa'_2}(x)) \langle  \vec{e}_3,  \vec{e}_4 \rangle.\]
Thus, we get that the $G_{K,S}$ character $\det(\tau_{\kappa'_1,\kappa'_2})$ equals $\lambda_{\p_{k'_1,k'_2}}$. The congruence condition on the weight $(k'_1,k'_2)$ --- and hence on $(\kappa'_1,\kappa'_2)$ --- now allows us to conclude that the Nebentypus of $g'$ is trivial. 

Consider the level $\Gamma_0(M')$ of $g'$. By Carayol's result \cite{MR870690}, the level $M'$ of $g'$ equals the tame Artin conductor of $\tau_{\kappa'_1,\kappa'_2}$. Therefore, the primes dividing $M'$ must lie in $S$. However, \cite[Proposition 3.7]{DPmin} tells us that primes dividing $\Delta_K$ do not divide the level $M'$ because it lets us deduce that $\tau_{\kappa'_1,\kappa'_2}$ is unramified at these primes. For the primes dividing $M$, the residual representation $\bar\tau$ is ramified. Furthermore, \cite[Proposition 3.4]{DPmin} tells us that the image under $\tau_{\kappa'_1,\kappa'_2}$ of inertia at these primes is unipotent. Therefore, the tame Artin conductor of $\tau_{\kappa'_1,\kappa'_2}$ must equal $M$ allowing us to conclude that the level of $g'$ is $\Gamma_0(M)$. Combining our observations, we conclude that $g'$ arises as a classical specialization of $\TTT$.

We can now conclude that the specialization at $\p_{k'_1,k'_2}$ of the $\GSp_4$ Hecke algebra $\TT$ corresponds to a $p$-ordinary stable Yoshida lift of a Hilbert modular eigenform over $K$. Theorem \ref{thm:yoshidafamily} however tells us that at the weights prescribed as in equation (\ref{eq:choiceofweights}), there exists a unique $\GSp_4$ Hida family passing through this specialization at $\p_{k'_1,k'_2}$ and that this $\GSp_4$ Hida family belongs to a minimal prime of $\TT^\Yos$. However, this contradicts our assumption that $\eta$ belongs to $\TT^{\perp}$. Proof of Claim \ref{claim:faithfulBperp} follows.
\end{proof}

By using properties of Fitting ideals \cite[Proposition 20.7(a)]{MR1322960}, Claim \ref{claim:faithfulBperp} lets us conclude that 
\begin{align}
\Fitt^0_{\TT^{\perp}}(\BB^{\perp}) = 0.
\end{align}
Using base-change properties of Fitting-ideals \cite[Corollary 20.5]{MR1322960}, we have
\begin{align}
\Fitt^0_{\TT^{\perp}/\JJ^{\perp}}(\BB^{\perp}/\JJ^\perp \BB^\perp) = 0.
\end{align}

\begin{claim}\label{claim:reflexive}
$\JJ \coloneqq \ker(\TT \rightarrow \TT^\Yos)$ is a reflexive $\Lambda$-module.
\end{claim}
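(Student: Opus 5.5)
We will show that $\JJ$ is reflexive by realizing it as a kernel of a map from a free (hence reflexive) $\Lambda$-module to a torsion-free $\Lambda$-module. The input is the freeness result of \cite[Theorem 1]{DPmin}: the minimal $R=\mathbb{T}$ theorem there gives that $\TT$ is a free $\Lambda$-module of finite rank. We also use Remark \ref{rem:yoshidafamilyisdecomposableatp}: $\TT^\Yos$ is the image of $\TT$ in $\prod_{i=1}^t(\TT)_{\eta_i}$, so it embeds in a product of $\Frac(\Lambda)$-vector spaces and is therefore a torsion-free $\Lambda$-module. Thus we have a short exact sequence of finitely generated $\Lambda$-modules
\[0 \rightarrow \JJ \rightarrow \TT \rightarrow \TT^\Yos \rightarrow 0,\]
with $\TT$ free and $\TT^\Yos$ torsion-free.

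Since $\Lambda=\Z_p\llbracket y_1,y_2\rrbracket$ is a regular (hence normal) Noetherian domain, a finitely generated torsion-free $\Lambda$-module $N$ is reflexive if and only if $N=\bigcap_{\mathrm{ht}(\q)=1}N_\q$ inside $N\otimes\Frac(\Lambda)$. Equivalently, $N$ is reflexive if and only if it satisfies Serre's condition $(S_2)$ (depth $\geq \min(2,\dim)$ at every prime).

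We argue via the intersection criterion. Let $x\in \JJ\otimes_\Lambda\Frac(\Lambda)$ lie in $\JJ_\q$ for every height one prime $\q$. Then $x\in \TT_\q$ for every such $\q$. Because $\TT$ is free, hence reflexive, we get $x\in\TT$. The image of $x$ in $\TT^\Yos$ vanishes after localizing at any height one prime. Since $\TT^\Yos$ is torsion-free, it injects into its localization at $(0)$, so that image is $0$. Hence $x\in\JJ$, and $\JJ$ is reflexive.

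Alternatively, one can apply the depth lemma to the exact sequence. At a prime $\q$ of height $\geq2$, $\mathrm{depth}\,\TT_\q\geq2$ and $\mathrm{depth}\,(\TT^\Yos)_\q\geq1$, since a torsion-free module has a nonzerodivisor in $\q$. Therefore $\mathrm{depth}\,\JJ_\q\geq 2$.

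The only substantive input, and the expected obstacle, is the $\Lambda$-freeness of $\TT$. This is not known in general in the $\GSp_4$ setting, and we simply import it from \cite[Theorem 1]{DPmin} under Set-up \ref{setting:all}. Everything else is commutative algebra.

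One point needs checking: that $\TT^\Yos$ is genuinely $\Lambda$-torsion-free. This holds because $\TT$ is reduced and equidimensional and finite over $\Lambda$, so each $\TT/\eta_i$ is a domain finite over $\Lambda$ and containing it. Afterward, as in Remark \ref{rem:refl}, we combine this with \cite[Lemma A1]{MR4084165}: the reflexive module $\JJ$ has rank one as a $\Lambda$-module, since it is isomorphic to $\JJ^\perp$, which is an ideal in $\TT^\perp$. That rank-one statement is what the subsequent argument needs in order to deduce freeness.
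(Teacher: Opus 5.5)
Your argument is correct and is essentially the paper's. The paper uses the same exact sequence $0 \to \JJ \to \TT \to \TT^\Yos \to 0$, with $\TT$ free by \cite[Theorem 1]{DPmin} and $\TT^\Yos$ torsion-free, and simply cites Greenberg \cite[Page 353, (2)]{MR2290593} for the fact that such a kernel is reflexive; you prove that fact directly via the height-one intersection criterion or the depth lemma. One caution about your closing aside, which the claim does not need: $\JJ$ does not have $\Lambda$-rank one in general, since $\JJ\otimes_\Lambda\Frac(\Lambda)\cong\prod_{j>t}(\TT)_{\eta_j}$ gives it rank $\mathrm{rank}_\Lambda\TT^\perp$, so the paper applies the rank-one freeness result \cite[Lemma A1]{MR4084165} not to $\JJ$ but to the reflexive ideal $\Lambda\cap\JJ^\perp$ of $\Lambda$.
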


\begin{proof}[Proof of Claim \ref{claim:reflexive}]
Observe that we have a short exact sequence of $\Lambda$-modules:
\begin{align}
0 \rightarrow \JJ \rightarrow \TT \rightarrow \TT^{\Yos} \rightarrow 0.
\end{align}
\cite[Theorem 1]{DPmin} tells us that $\TT$ is a free $\Lambda$-module with finite rank. Since $\TT$ is a torsion-free $\Lambda$-module, the construction of $\TT^{\Yos}$ as in equation (\ref{eq:reducedTdecomp}) tells us that $\TT^{\Yos}$ is a torsion-free $\Lambda$-module. The reflexivity of $\JJ$ now follows from a result in Greenberg's work  \cite[Page 353, (2)]{MR2290593}.
\end{proof}

Since $\TT$ is reduced, observe that $\TT^{\perp}$ is a finite torsion-free extension of $\Lambda$. We have the following commutative diagram:
\begin{align*}
\xymatrix{
\Lambda \cap \JJ^{\perp}  \ar@{^{(}->}[d] \ar@{^{(}->}[r]& \JJ^{\perp} \cong \JJ \ar@{^{(}->}[d]\\ 
\Lambda \ar@{^{(}->}[r]& \TT^{\perp}
}
\end{align*}

In what follows, we consider all intersections inside $\TT^{\perp} \otimes_{\Lambda} \Frac(\Lambda)$. Using \cite[\href{https://stacks.math.columbia.edu/tag/0AVB}{Lemma 0AVB}]{stacks-project} and the fact that $\Lambda$ is a Noetherian normal domain, the reflexivity of $\JJ^{\perp}$ --- since it is isomorphic to $\JJ$  --- allows us to conclude that 
\begin{align*}
\bigcap_{\p \in \mathrm{Spec}_{\mathrm{ht}=1} (\Lambda)} \Lambda_{\p} = \Lambda, \qquad \bigcap_{\p \in \mathrm{Spec}_{\mathrm{ht}=1} (\Lambda)} \JJ^{\perp}_{\p} = \JJ^{\perp}.
\end{align*}
As a result, the following sequence of inclusions
\begin{align*}
\Lambda \cap \JJ^{\perp} \quad & \subset \bigcap_{\p \in \mathrm{Spec}_{\mathrm{ht}=1} (\Lambda)} \left(\Lambda \cap \JJ^{\perp}\right)_{\p}, \\[5pt] & \subset \bigcap_{\p \in \mathrm{Spec}_{\mathrm{ht}=1} (\Lambda)} \Lambda_{\p} \cap \JJ^{\perp}_{\p}, \\[5pt] & \subset \bigcap_{\p \in \mathrm{Spec}_{\mathrm{ht}=1} (\Lambda)} \Lambda_\p \cap \bigcap_{\p \in \mathrm{Spec}_{\mathrm{ht}=1} (\Lambda)} \JJ^{\perp}_{\p} \\[5pt] &= \Lambda \cap \JJ^{\perp}, \qquad \text{(by \cite[\href{https://stacks.math.columbia.edu/tag/0AVB}{Lemma 0AVB}]{stacks-project})} 
\end{align*}
tells us that \begin{align}
    \Lambda \cap \JJ^{\perp} = \bigcap_{\p \in \mathrm{Spec}_{\mathrm{ht}=1} (\Lambda)} \left(\Lambda \cap \JJ^{\perp}\right)_{\p}.
\end{align}
Using \cite[\href{https://stacks.math.columbia.edu/tag/0AVB}{Lemma 0AVB}]{stacks-project} once again tells us that $\Lambda \cap \JJ^{\perp}$ is a reflexive ideal of $\Lambda$. Therefore, the reflexive $\Lambda$-ideal $\Lambda \cap \JJ^{\perp}$ is a free $\Lambda$-module. See  \cite[Lemma A1]{MR4084165}. Thus,$\Lambda \cap \JJ^{\perp} = (\beta)$, for some element $\beta$ in $\Lambda$. 

Using \cite[\href{https://stacks.math.columbia.edu/tag/07ZC}{Tag 07ZC}]{stacks-project} and the fact that $\Fitt^0_{\TT^{\perp}/\JJ^{\perp}}(\BB^{\perp}/\JJ^\perp \BB^\perp) =0$, we can conclude that 
\begin{align}\label{eq:nonzeroloc}
\left(\BB^{\perp}/\JJ^\perp \BB^\perp\right)_{\p} \neq 0, \qquad \text{for all prime ideals $\p$ of $\TT^{\perp}/\JJ^{\perp}$}.
\end{align}

Let $\tilde{\eta}_{\Lambda}$ be a prime of $\Lambda$ minimal above $(\beta)$. By Krull's principal ideal theorem \cite[Theorem 10.1]{MR1322960}, we~have
\begin{align}\label{eq:height}
    \mathrm{Height}(\widetilde{\eta}_\Lambda) \leq 1.
\end{align}
Let $\eta_{\Lambda}$ denote the corresponding minimal prime of $\Lambda/(\beta)$. Note that $\Lambda/(\beta)$ equals $\Lambda/(\Lambda \cap \JJ^{\perp})$. Since $\Lambda/(\Lambda \cap \JJ^{\perp}) \hookrightarrow \TT^{\perp}/\JJ^{\perp}$ is an integral extension, the going up theorem allows us to consider a minimal prime $\eta$ of $\TT^{\perp}/\JJ^{\perp}$ such that $\eta \cap \Lambda/(\Lambda \cap \JJ^{\perp}) = \eta_\Lambda$. Equation (\ref{eq:nonzeroloc}) now lets us conclude that 
\begin{align}
\left(\BB^{\perp}/\JJ^\perp \BB^\perp\right)_{\eta} \neq 0.
\end{align}

This implies that $\left(\BB^{\perp}/\JJ^\perp \BB^\perp\right)_{\eta_\Lambda} \neq 0$. This in turn allows us to conclude that 
\begin{align}\label{eq:localLambdanonzero}
\left(\BB^{\perp}/\JJ^\perp \BB^\perp\right)_{\widetilde{\eta}_\Lambda} \neq 0.
\end{align}

Combining equations (\ref{eq:height}) and (\ref{eq:localLambdanonzero}) contradicts the pseudo-nullity of $\BB^{\perp}/\JJ^\perp \BB^\perp$. This completes the proof of the theorem.

\section{Proof of Theorem \ref{thm:nomore}}
\theoremfour*

We will prove the theorem for the case involving $\TT^{\dec}$. The proof of the theorem for the case involving $\TT^{\cla}$ follows similarly by replacing $\Sel_{\mathrm{ur},p}(K,\DD)$ with $\Sel_{\mathrm{int}}(K,\DD)$. Before proceeding to the proof of the theorem, we prove some preliminary results. \\

Using Proposition \ref{prop:selmerclass}, we have the following natural surjection of $\Lambda$-modules: 
\begin{align} \label{eq:SelBsurjdec}
    \Sel_{\mathrm{ur},p}(K,\DD)^\vee \twoheadrightarrow \dfrac{\BB^{\dec}}{\JJ^\dec\BB^{\dec}}.
\end{align}

Here, recall that $\JJ^{\dec}$ is defined as $\ker(\TT^{\dec}\twoheadrightarrow \TT^{\mathrm{Yos}})$. The hypothesis \ref{lab:fincyc} states that $ \Sel_{\mathrm{ur},p}(K,\DD)^\vee$ is a cyclic $\TT^{\Yos}$-module and is hence also cyclic as a $\TT^\dec$-module. Hence by Nakayama's lemma,  the $\TT^{\dec}$ ideal $\BB^{\dec}$ is principal, say equal to $(\alpha)$. Therefore, we have the following isomorphism of $\TT^\dec$-modules:
\begin{align} \label{Bcyclic}
\dfrac{\BB^{\dec}}{\JJ^{\dec}\BB^{\dec}} \cong \dfrac{\TT^\dec}{(\JJ^\dec,\mathrm{Ann}_{\TT^\dec}(\BB^\dec))}.
\end{align}

Theorem \ref{thm:pseudorepBC}(\ref{hyp:BinsideYos0}) lets us conclude that
\[\BB^\dec \subset \JJ^\dec.\]

Therefore, we have a natural surjection 

\begin{align}\label{eq:radicalsurj}
\dfrac{\TT^\dec}{(\BB^\dec,\mathrm{Ann}_{\TT^\dec}(\BB^\dec))} \twoheadrightarrow  \dfrac{\TT^\dec}{(\JJ^\dec,\mathrm{Ann}_{\TT^\dec}(\BB^\dec))}.
\end{align}

\begin{claim}\label{claim:BradicalJ}
$\sqrt{\BB^{\dec}}=\JJ^\dec$.
\end{claim}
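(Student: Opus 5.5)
We need $\sqrt{\BB^\dec}=\JJ^\dec$ in the reduced ring $\TT^\dec$. Since $\BB^\dec\subset\JJ^\dec$ (Theorem \ref{thm:pseudorepBC}(\ref{hyp:BinsideYos0})) and $\JJ^\dec$ is radical, being the kernel of a surjection onto the reduced ring $\TT^\Yos$, one inclusion is immediate. For the reverse inclusion it suffices to show that every prime $\q$ of $\TT^\dec$ containing $\BB^\dec$ also contains $\JJ^\dec$. Because $\TT^\dec$ is finite over $\Lambda$ and reduced, and $\JJ^\dec=\bigcap$ of the minimal primes of $\TT^\dec$ pulling back to Yoshida components, I will argue through minimal primes. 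The plan is to show that every minimal prime $\eta'$ of $\TT^\dec/\BB^\dec$ is a minimal prime of $\TT^\dec$ corresponding to a Yoshida family. Then $\sqrt{\BB^\dec}$, the intersection of such $\eta'$, contains the intersection of all Yoshida minimal primes, which is $\JJ^\dec$.

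\textbf{Step 1.} Let $\eta'\supset\BB^\dec$ be a prime of $\TT^\dec$. By Theorem \ref{thm:pseudorepBC}(\ref{item:restriction}), the representation $\widetilde\rho_{\TT^\dec/\eta'}|_{G_{K,S}}$ becomes block diagonal. Hence $\widetilde\rho_{\TT^\dec/\eta'}\cong\Ind^{G_{\Q,S}}_{G_{K,S}}\tilde\tau$ for a $2$-dimensional $\tilde\tau$ over $\TT^\dec/\eta'$, exactly as in the proof of Claim \ref{claim:faithfulBperp}.

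\textbf{Step 2.} Show $\eta'$ has height $0$. Here I use cyclicity. We have $\BB^\dec=(\alpha)$, and by Krull's principal ideal theorem any minimal prime over $(\alpha)$ has height $\le1$ in $\TT^\dec$. Suppose $\eta'$ has height exactly $1$. Then $\alpha$ is not a zero divisor on the relevant components, and $\BB^\dec/\JJ^\dec\BB^\dec\cong\TT^\dec/(\JJ^\dec,\mathrm{Ann}(\alpha))$ by (\ref{Bcyclic}). Localizing at $\eta'$ should produce a support of codimension one for this quotient, contradicting the pseudo-nullity (\ref{lab:finPN}) via the surjection (\ref{eq:SelBsurjdec}). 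I expect this to be the main obstacle: one must check that $\eta'\supset\JJ^\dec$ or control $\mathrm{Ann}(\alpha)$ at $\eta'$. The cleanest route is to argue that $\mathrm{Ann}(\alpha)$ is the intersection of the minimal primes containing $\alpha$, and that $(\JJ^\dec,\mathrm{Ann}(\alpha))\subseteq\eta'$ once $\eta'$ contains both $\alpha$ and a Yoshida minimal prime. This places $\eta'$ in the support of a module whose $\Lambda$-support then has height $\le1$, contradicting pseudo-nullity.

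\textbf{Step 3.} Once $\eta'$ is a minimal prime of $\TT^\dec$, and hence (by equidimensionality) of $\TT$, containing $\BB$, run the argument of Claim \ref{claim:faithfulBperp}. Specialize at a cohomological prime of weight $k'_1>k'_2\gg0$ in the right congruence class. Apply Geraghty's theorem \cite[Theorem 5.13]{MR3953131} to get a Hilbert eigenform of level $\Gamma_0(M)$ with trivial Nebentypus, and invoke the uniqueness in Theorem \ref{thm:yoshidafamily}. This gives that $\eta'$ is a Yoshida minimal prime, so $\eta'\supseteq\JJ^\dec$. Combining Steps 1–3 yields $\sqrt{\BB^\dec}\supseteq\JJ^\dec$, and hence $\sqrt{\BB^\dec}=\JJ^\dec$.
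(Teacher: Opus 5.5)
\textbf{The gap is in Step 2.} Your reduction is fine: it suffices that every minimal prime $\eta'$ over $\BB^\dec$ is a Yoshida minimal prime. Step 3 is also fine, and is essentially Claim \ref{claim:faithfulBperp}. Step 2, however, is circular.

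To place $\eta'$ in the support of $\TT^\dec/(\JJ^\dec,\mathrm{Ann}_{\TT^\dec}(\BB^\dec))$ you need $\JJ^\dec\subseteq\eta'$, which is exactly the claim. If $\eta'$ already contains a Yoshida minimal prime $P$, then $\alpha\in\BB^\dec\subseteq P\subseteq\eta'$ and minimality give $\eta'=P$, so there is nothing to prove. The only case with content is $\eta'\not\supseteq\JJ^\dec$, i.e.\ $\eta'$ lies only on non-Yoshida components. There the module you localize is zero at $\eta'$, so (\ref{lab:finPN}) gives no contradiction. The Selmer group is a $\TT^\Yos$-module and its quotient $\BB^\dec/\JJ^\dec\BB^\dec$ is supported on $V(\JJ^\dec)$, so pseudo-nullity cannot see the non-Yoshida locus. (Also, in a reduced ring $\mathrm{Ann}(\alpha)$ is the intersection of the minimal primes \emph{not} containing $\alpha$.)

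There is a second problem. Even granting height $0$ in $\TT^\dec$, you cannot conclude that $\eta'$ is minimal in $\TT$. Nothing makes $\TT^\dec$ equidimensional; Theorem \ref{thm:nomore} is precisely about which of its minimal primes are minimal in $\TT$. And Step 3 cannot run on a lower-dimensional component of $V(\BB)$: there is no reason for cohomological primes with $k'_1>k'_2\gg0$ to be dense in it (components inside $V(p)$ have none).

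\textbf{The missing idea.} You need a lower bound on the dimension of every component of $V(\BB)$. This is deformation-theoretic and uses neither (\ref{lab:fincyc}) nor (\ref{lab:finPN}). The paper works with $\TT/\BB$ rather than $\TT^\dec/\BB^\dec$, and proves
\[
\varkappa:\mathrm{R}^{\mathrm{ord}(\p_1)}_{\bar\tau}\otimes_{\mathrm{O}[A_K]\llbracket T\rrbracket}\mathrm{O}\llbracket T\rrbracket\xrightarrow{\simeq}\TT/\BB .
\]
\begin{itemize}
\item One direction comes from the universal property of the $\Gl_2$ ring. The inputs are ordinarity at $\p_1$ from Theorem \ref{thm:pseudorepBC}(\ref{item:localpart}), minimality at $\q\mid M$ from Theorem \ref{thm:pseudorepBC}(\ref{hyp:localshaperamlevel}), and $\det\tilde\tau=\widetilde\lambda \bmod \BB$ via the symplectic pairing.
\item The inverse comes from the minimal $R=\mathbb{T}$ theorem \cite[Theorem 1]{DPmin}.
\end{itemize}
B\"ockle's presentation then gives $\TT/\BB\cong\mathrm{O}\llbracket Y_1,\dots,Y_{m_1},T\rrbracket/(t_1,\dots,t_{m_2},v)$ with $m_1-m_2\geq 2$. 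By Krull's Hauptidealsatz every minimal prime of $\TT/\BB$ has dimension $\geq 3$, while $\dim\TT=3$. Hence $\TT/\BB$ is a complete intersection, so Cohen--Macaulay and equidimensional of dimension $3$, and every minimal prime over $\BB$ is a minimal prime of $\TT$. Claim \ref{claim:faithfulBperp} then shows each is Yoshida, so $(\TT/\BB)^{\mathrm{red}}\cong\TT^\Yos$. Finally one passes to $\TT^\dec$ through $\TT/\BB\twoheadrightarrow\TT^\dec/\BB^\dec\twoheadrightarrow\TT^\Yos$.
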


\begin{proof}[Proof of Claim \ref{claim:BradicalJ}]
Recall that $S'$ denotes the set of primes of $K$ containing the archimedean primes, $\p_1$, $\p_2$ and all the primes containing $M$. We consider three universal deformation problems $\mathbf{D}^{\mathrm{n.o.}}_2$, $\mathbf{D}^{\ord(\p_1)}_2$ and $\mathbf{D}^{\mathrm{n.o.},\det}_2$. We first consider the functor $\mathbf{D}^{\mathrm{n.o.}}_2: \mathcal{C} \rightarrow \mathrm{Sets}$, that sends an object $R$ of $\mathcal{C}$ to the set of isomorphism classes of lifts $\tilde\tau : G_{K,S'} \to \Gl_2(R)$ of $\bar\tau$ satisfying the following properties:
    \begin{enumerate}[leftmargin=1.5em]
        \item  For each $i \in \{1,2\}$, there exists a matrix $P_{\tilde\tau,\p_i} \in \Gl_2(R)$ such that 
        \[\tilde\tau(g) = P_{\tilde\tau,\p_i} \begin{pmatrix} \chi_{i,1} & \star \\ 0 & \chi_{i,2}\end{pmatrix}P_{\tilde\tau,\p_i}^{-1}, \qquad \forall \ g \in G_{K_{\p_i}}.\]
          Here, for each $j \in \{1,2\}$, we have a character $\chi_{i,j}:G_{K_{\p_i}} \rightarrow R^\times$.  In this case, we say $\tilde\tau$ is \textit{nearly ordinary at $\p_i$}.
          
         \item  If $\mathfrak{q}$ contains $M$, there exists a $P_{\tilde\tau,\mathfrak{q}} \in \Gl_2(R)$ such that $\tilde{\tau}(I_{\mathfrak{q}})$ is the pro-cyclic group generated by \begin{align}\label{eq:residualminimalatq}
P_{\tilde\tau,\mathfrak{q}}\begin{pmatrix} 1 & 1 \\ 0 & 1\end{pmatrix}P_{\tilde\tau,\mathfrak{q}}^{-1}.
         \end{align}
         In this case, we say $\tilde\tau$ is \textit{minimal at $\mathfrak{q}$}. Note however that an application of Grothendieck's monodromy theorem and hypothesis \ref{hyp:congruence1} will tell us that any lift of $\bar\tau$ that is nearly ordinary at $\mathfrak{q}$ (that is, up to conjugation, $\tilde{\tau}\mid_{G_{K_{\mathfrak{q}}}}$ has the shape as specified in the previous point) is also minimal at $\mathfrak{q}$.
         
    \end{enumerate}

    We call such lifts $\tilde{\tau}$ a \textit{nearly ordinary} lift of $\bar\tau$. \\

  We next consider the functor $\mathbf{D}^{\mathrm{n.o.},\det}_2: \mathcal{C} \rightarrow \mathrm{Sets}$, that sends an object $R$ of $\mathcal{C}$ to the set of isomorphism classes of lifts $\tilde\tau : G_{K,S'} \to \Gl_2(R)$ of $\bar\tau$ that are \textit{nearly ordinary} along with the property that 
      \[\det(\tilde\tau) = \omega^{\kappa_1-1}.\] 
      Recall, $\omega:G_{K} \rightarrow \underbrace{\mathrm{Aut}(\mu_p)}_{(\Z/p\Z)^\times} \hookrightarrow \Z_p^\times$ denotes the canonical Hensel lift of the mod-$p$ cyclotomic character. \\

    We also consider the functor $\mathbf{D}^{\ord(\p_1)}_2: \mathcal{C} \rightarrow \mathrm{Sets}$, that sends an object $R$ of $\mathcal{C}$ to the set of isomorphism classes of lifts $\tilde\tau : G_{K,S'} \to \Gl_2(R)$ of $\bar\tau$ that are \textit{nearly ordinary} along with the property that $\chi_{1,2}$ is an unramified character of $G_{K_{\p_1}}$. We say that such a \textit{nearly ordinary} lift of $\bar\tau$ is \textit{ordinary at $\p_1$}. 

    We now establish that these functors are representable (by say $\mathrm{R}^{\mathrm{n.o.}}_{\bar\tau}$,  $\mathrm{R}^{\mathrm{n.o.},\det}_{\bar\tau}$ and $\mathrm{R}^{\mathrm{ord}(\p_1)}_{\bar\tau}$ respectively) and provide presentations for them using results of  B\"ockle \cite[Theorem 7.6]{MR2392352}.

\begin{enumerate}[label=(\roman*),leftmargin=1.5em]
\item To apply results of  B\"ockle, we need to establish a hypothesis labelled ``reg'' in \cite[Section 7]{MR2392352} for all primes containing $Mp$.  The fact that $\bar\tau$ is $p$-distinguished at $G_{K_{\p_1}}$ and $G_{K_{\p_2}}$ establishes the ``reg'' hypothesis at $\p_1$ and $\p_2$. To verify the ``reg'' hypothesis for all primes $\mathfrak{q}$ dividing $M$, we observe that using the shape of the residual representation $\bar\tau$ at the inertia subgroup of $\q$ as given in equation (\ref{eq:residualminimalatq}), one can deduce  that the residual representation $\bar\tau$ has a ``Steinberg shape'' at the decomposition group of $\mathfrak{q}$, that is, $\bar\tau \mid_{G_{\q}} \sim \begin{pmatrix} \omega \eta_\q & \star \\ 0 & \eta_\q \end{pmatrix}$ .
The  ``reg'' hypothesis can now be established for all primes $\mathfrak{q}$ dividing $M$ using the hypothesis \ref{hyp:congruence1}.
\item Mazur's result \cite{MR1012172} on the existence of the unrestricted universal deformation ring for the absolutely irreducible residual representation $\bar\tau$, along with the result \cite[Lemma 7.2]{MR2392352} establishing a versal hull for the nearly ordinary deformation functors implies that the functors   $\mathrm{R}^{\mathrm{n.o.}}_{\bar\tau}$ and $\mathrm{R}^{\mathrm{n.o.},\det}_{\bar\tau}$ are representable. 
\item Since the property of being nearly ordinary is invariant under twists, a standard argument (see for instance \cite[Proof of Lemma 19]{MR3320566}) shows us that we have the following natural isomorphism of rings:
\[\mathrm{R}^{\mathrm{n.o.}}_{\bar\tau} \cong \mathrm{R}^{\mathrm{n.o.},\det}_{\bar\tau} \hotimes_{\mathrm{O}} \mathrm{R}_{\omega^{\kappa_1-1}}.\]
Here, $\mathrm{R}_{\omega^{\kappa_1-1}}$ is the universal deformation ring of the character $\omega^{\kappa_1-1}:G_{K,S'} \rightarrow \mathbb{F}^\times$ in the category $\mathcal{C}$. Note that $\mathrm{R}_{\omega^{\kappa_1-1}}$ is isomorphic to the completed group ring over $\mathrm{O}$ of the maximal abelian pro-$p$ quotient of $G_{K,S'}$. See, for instance, \cite[Section 1.4]{MR1012172}. One can verify, using the fact that $K$ is a real quadratic field along with the hypothesis \ref{hyp:congruence1}, that  $\mathrm{R}_{\omega^{\kappa_1-1}}$ is isomorphic to the completed group ring $\mathrm{O}[A_K]\llbracket T \rrbracket$, where $A_K$ is the $p$-primary subgroup of the class group of $K$. In particular, if $A_K \cong \oplus_{i=1}^r \Z/p^{e_i}\Z$, then we have the following presentation:
\begin{align}\label{pre:nearlyord}
\mathrm{R}^{\mathrm{n.o.}}_{\bar\tau} \cong \dfrac{\mathrm{R}^{\mathrm{n.o.},\det}_{\bar\tau}\llbracket T, U_1,\cdots,U_r\rrbracket}{((1+U_1)^{p^{e_1}}-1, \cdots, (1+U_r)^{p^{e_r}}-1)}.
\end{align}
\item A presentation result of B\"ockle \cite[Theorem 7.6]{MR2392352} for the nearly ordinary deformation ring with constant determinant, which is analogous to the Mazur's presentation result \cite{MR1012172} for the unrestricted universal deformation ring, provides us the following ring isomorphism:
\begin{align} \label{pre:constdet}
    \mathrm{R}^{\mathrm{n.o.},\det}_{\bar\tau} \cong \dfrac{\mathrm{O}\llbracket Y_1,\cdots,Y_{m_1}  \rrbracket}{(t_1,\cdots,t_{m_2})}, \quad \text{with } m_1-m_2 \geq 2.
\end{align}
 To see that the hypotheses (a) and (b) of \cite[Theorem 7.6]{MR2392352} are satisfied, we use the fact that we are working with a deformation ring that is nearly-ordinary at all primes containing $Mp$. We have already established the ``reg'' hypothesis (c) of \cite[Theorem 7.6]{MR2392352}. To see that $m_1 - m_2 \geq 2$, we use the facts that (a) $\bar\tau$ is totally odd, (b) $\mathrm{ad}^{(0)}(\bar\tau)$ is an irreducible  representation of $G_{K,S'}$ since $\bar\tau$ satisfies the big image hypothesis, and (c) we are working with the nearly ordinary deformation problem at $\p_1$ and $\p_2$.

Combining equations (\ref{pre:nearlyord}) and (\ref{pre:constdet}), we have 
\begin{align} \label{presentation:nearlyordinarywithvar}
\mathrm{R}^{\mathrm{n.o.}}_{\bar\tau} \cong \dfrac{\mathrm{O}\llbracket Y_1,\cdots,Y_{m_1}, T, U_1,\cdots,U_r  \rrbracket}{(t_1,\cdots,t_{m_2},(1+U_1)^{p^{e_1}}-1, \cdots, (1+U_r)^{p^{e_r}}-1)}.
\end{align}
Here, the elements $t_1,\cdots,t_{m_2}$ belong to the power series ring $\mathrm{O}\llbracket Y_1,\cdots,Y_{m_1} \rrbracket$.
\item  We observe that $\bar\tau$ satisfies the $p$-distinguished hypothesis and is residually indecomposable at $\p_1$. As the $2$-dimensional $p$-adic Galois representation attached to $g_0$ is a characteristic zero lift of $\bar\tau$ that is ordinary at $\p_1$, note that we have $\bar\tau \mid_{G_{K_{\p_1}}} \sim \begin{pmatrix} \bar\chi_{1,1} & \star \\0 & \bar\chi_{1,2} \end{pmatrix}$,
with $\bar\chi_{1,2}$ being an unramified character of $G_{K_{\p_1}}$. Note that since $p$ splits in $K$, we have $\Q_p \stackrel{\cong}{\hookrightarrow} K_{\p_1}$. The image of the inertia subgroup $I_{\p_1}$ in $G_{K_{\p_1}}^{ab}$  is isomorphic to $\F_p^\times \times (1+p\Z_p)$. In particular, the restriction of $\bar\chi_{1,2}$ to the subgroup $\F_p^\times$ is trivial. Note that the nearly ordinary condition at $\p_1$ gives us a character $\chi^{\mathrm{univ}}_{1,2}:G_{\Q_p}^{ab} \rightarrow \left(\mathrm{R}^{\mathrm{n.o.}}_{\bar\tau}\right)^\times$ lifting $\bar\chi_{1,2}$. Let  $\xi$ be a topological generator of the subgroup isomorphic to $1+p\Z_p$, of the image of $I_{\p_1}$ in $G_{K_{\p_1}}^{ab}$.

These observations allow us to conclude that the functor $\mathbf{D}^{\ord(\p_1)}_2$ is representable and that the ordinary deformation ring $\mathrm{R}^{\mathrm{ord}(\p_1)}_{\bar\tau}$ is obtained as a quotient of $\mathrm{R}^{\mathrm{n.o.}}_{\bar\tau}$ as follows: 
\begin{align} \label{presentation:nearlyordinarywithvar2}
\mathrm{R}^{\mathrm{ord}(\p_1)}_{\bar\tau} \cong \dfrac{\mathrm{O}\llbracket Y_1,\cdots,Y_{m_1}, T, U_1,\cdots,U_r  \rrbracket}{(t_1,\cdots,t_{m_2},(1+U_1)^{p^{e_1}}-1, \cdots, (1+U_r)^{p^{e_r}}-1,v)}. 
\end{align}
Here, $v$ is a(ny) lift of $\chi_{1,2}^{univ}(\xi)-1$ to the power series ring $\mathrm{O}\llbracket Y_1,\cdots,Y_{m_1}, T, U_1,\cdots,U_r  \rrbracket$ under the isomorphism given in equation (\ref{presentation:nearlyordinarywithvar}).
Note that one obtains an induced presentation: 
\begin{align}\label{presentation:inducedtensorproduct}
\mathrm{R}^{\mathrm{ord}(\p_1)}_{\bar\tau} \otimes_{\mathrm{O}[A_K]\llbracket T \rrbracket} \mathrm{O}\llbracket T \rrbracket \cong \dfrac{\mathrm{O}\llbracket Y_1,\cdots,Y_{m_1}, T   \rrbracket}{(t_1,\cdots,t_{m_2},v)}. 
\end{align}
 \end{enumerate}

We now explain why we get a ring isomorphism:
\begin{align} \label{eq:Rgl2Tgsp4B}
\varkappa: \mathrm{R}^{\mathrm{ord}(\p_1)}_{\bar\tau} \otimes_{\mathrm{O}[A_K]\llbracket T \rrbracket} \mathrm{O}\llbracket T \rrbracket \xrightarrow{\simeq}  \dfrac{\TT}{\BB}.
\end{align}

Theorem \ref{thm:pseudorepBC}(\ref{hyp:localshapediscK}) tells us that the restriction of the Galois representation $\tilde{\rho}_{{\TT/\BB}}$ to $G_{K,S}$ factors through $G_{K,S'}$. As given by Theorem \ref{thm:pseudorepBC}(\ref{item:restriction}), the restriction of the Galois representation $\tilde{\rho}_{{\TT/\BB}}$ to $G_{K,S'}$ is isomorphic to a direct sum $\tilde{\tau} \oplus\tilde{\tau^c}$ of two Galois representations with $\tilde{\tau}$ and $\tilde{\tau^c}$ lifting $\bar\tau$ and $\bar\tau^c$ respectively. Note that $\bar\tau$ is ordinary at $\p_1$. Note also that the hypothesis \ref{hyp:resindec} tells us that there exists a unique $1$-dimensional unramified quotient for the action of $G_{K_{\p_1}}$ with the property that the evaluation of this character at $\Frob_{\p_1}$  equals the image of the $U_{\p_1}$-eigenvalue of $g_0$ in $\mathbb{F}$. Combining these observations along with the $p$-distinguishedness hypothesis \ref{lab:pdist} and the assignments of Hecke eigenvalues given in Lemma \ref{lemma:gspgl2cuspformassignment}, Theorem \ref{thm:pseudorepBC}(\ref{item:localpart}) allows us to deduce the following equality for all $x \in G_{K_{\p_1}}$:
\begin{align*}
  \tilde{\tau}^c\mid_{G_{K_{\p_1}}}(x) &= \left(\begin{array}{cc} \pi_\BB\circ  \dfrac{\widetilde{\lambda}}{ \widetilde{\psi}_1\cyc^{-2}\omega^{b_0}\widetilde{\kappa}_2}(x) & \star \\ 0 &\pi_\BB\circ  \widetilde{\psi}_1\cyc^{-2}\omega^{b_0}\widetilde{\kappa}_2(x) \end{array} \right), \\ 
  \tilde{\tau}\mid_{G_{K_{\p_1}}}(x) &= \left(\begin{array}{cc}  \pi_\BB\circ   \dfrac{\widetilde{\lambda}}{\widetilde{\psi}_0}(x) & \star \\  0 & \pi_\BB\circ   \widetilde{\psi}_0(x) \end{array} \right).
  \end{align*}
  Here, $\pi_\BB: \TT \rightarrow \TT/\BB$ is the natural quotient ring map. These observations let us conclude that $\tilde{\tau}$ is ordinary at $\p_1$ and nearly ordinary at $\p_2$.

Let $\mathfrak{q}$ denote a prime of $K$ containing $M$, but not containing $p$. The hypothesis  \ref{hyp:optimallevel} tells us that under a suitable basis, the evaluation of $\tilde\tau$ at a generator of tame inertia for $\mathfrak{q}$ is of the form $\begin{pmatrix}
1 + X & b' \\ c' & 1+Y
      \end{pmatrix}$, where $c', X,Y \in \TT/\BB$ and $b' \in (\TT/\BB)^\times$. Since the restriction $\tilde{\rho}_{\TT/\BB}$ to $G_{K,S'}$ is isomorphic to a direct sum $\tilde{\tau} \oplus\tilde{\tau^c}$, Theorem \ref{thm:pseudorepBC}(\ref{hyp:localshaperamlevel}) allows us to conclude that the $2 \times 2$ matrix $\begin{pmatrix}
X & b' \\ c' & Y
      \end{pmatrix}^2$ equals zero. As a result, $X=-Y$, $X^2=-b'c'$ and thus the determinant of $\begin{pmatrix}
1 + X & b' \\ c' & 1+Y
      \end{pmatrix}$ equals $1$. Combining these observations, a straightforward linear algebra computation would then tells us that $\begin{pmatrix}
1 + X & b' \\ c' & 1+Y
      \end{pmatrix}$ is conjugate over $\Gl_2(\TT/\BB)$ to $\begin{pmatrix}
1 & 1 \\ 0 & 1
      \end{pmatrix}$. Therefore, $\tilde\tau$ is minimal at $\mathfrak{q}$. The universality of $\mathrm{R}^{\mathrm{ord}(\p_1)}_{\bar\tau}$ hence produces the natural ring homomorphism $\varkappa_1: \mathrm{R}^{\mathrm{ord}(\p_1)}_{\bar\tau}\rightarrow \TT/\BB$. 

Using \cite[Theorem 3.1]{DPmin}, one can conclude that there exists a matrix $P \in \Gl_4(\TT/\BB)$ such that the conjugate representation $P\tilde{\rho}_{\TT/\BB}P^{-1}$ takes values in $\GSp_4(\TT/\BB)$. In other words, one has a non-degenerate symplectic pairing $\langle \cdot , \cdot\rangle_{\TT/\BB}$ on the underlying four dimensional free $\TT/\BB$-module preserved under the action of $G_{\Q,S}$ via $\tilde{\rho}_{\TT/\BB}$. The reduction of this symplectic pairing modulo the maximal ideal of $\TT/\BB$ gives us a non-degenerate symplectic pairing $\langle \cdot , \cdot\rangle_{\mathbb{F},1}$  on $\F^4$, which is preserved under the action of $G_{\Q,S}$ via $\bar\rho$. Note that one also has an explicit construction  (as given in Section \ref{sec:siegelgenus2}) of a non-degenerate symplectic pairing $\langle \cdot , \cdot\rangle_{\F,0}$ on $\mathbb{F}^4$, preserved under the action of $G_{\Q,S}$ via $\bar\rho$.  \cite[Claim 3.2]{DPmin} --- given in the proof of \cite[Theorem 3.1]{DPmin} --- ensures us that the pairings $\langle \cdot , \cdot\rangle_{\F,0}$ and $\langle \cdot , \cdot\rangle_{\F,1}$ are non-zero scalar multiples of each other. The explicit construction of the pairing shows us that there exists a basis for $\bar\tau$ such that the pairing matrix is given by $\begin{pmatrix} 0 & 1 \\ -1 & 0\end{pmatrix}$. Using these observations, one can lift this basis to a basis of $\tilde{\tau}$ such that the pairing matrix for $\langle \cdot , \cdot\rangle_{\TT/\BB}$ is given by $\begin{pmatrix} 0 & u \\ -u & 0\end{pmatrix}$, for some unit $u$ in $\TT/\BB$. An explicit calculation using the action of $G_{K,S}$ on this basis then shows the following equality of characters of $G_{K,S}$ valued in $\TT/\BB$:
\begin{align*}
\det(\tilde{\tau}) = \widetilde{\lambda} \mod \BB.
\end{align*}
Here, $\widetilde{\lambda}:G_{\Q,S} \rightarrow \TT^\times$ corresponds to the similitude character of $\tilde{\rho}_\TT$.  That is, $\det(\tilde{\tau})$ is obtained as the restriction of a character over $G_{\Q,S}$. Observe also that these characters of $G_{K,S}$ factor through $G_{K,S'}$. As a result, the map $\underbrace{\mathrm{O}[A_K]\llbracket T\rrbracket}_{\mathrm{R}_{\omega^{\kappa_1-1}}} \rightarrow (\TT/\BB)^\times$ afforded by $\det(\tilde{\tau})$   factors as $\varkappa_2:\mathrm{O}\llbracket T\rrbracket \rightarrow (\TT/\BB)^\times$. Combining $\varkappa_1$ and $\varkappa_2$ with the universal property of tensor products provides us the required map $\varkappa$ of equation~(\ref{eq:Rgl2Tgsp4B}).

To construct the inverse of $\varkappa$, first consider the universal two-dimensional Galois representation $\tau_{\ord}:G_{K,S'} \rightarrow \Gl_2(\mathrm{R}^{\mathrm{ord}(\p_1)}_{\bar\tau})$. Let $\tau_{\ord}^\circ:G_{K,S'} \rightarrow \Gl_2(\mathrm{R}^{\mathrm{ord}(\p_1)}_{\bar\tau} \otimes_{\mathrm{O}[A_K]\llbracket T\rrbracket}\mathrm{O}\llbracket T\rrbracket)$ denote the Galois representation induced by the natural surjection $\mathrm{R}^{\mathrm{ord}(\p_1)}_{\bar\tau} \rightarrow \mathrm{R}^{\mathrm{ord}(\p_1)}_{\bar\tau} \otimes_{\mathrm{O}[A_K]\llbracket T\rrbracket}\mathrm{O}\llbracket T\rrbracket$. We observe that $\det(\tau_{\ord}^\circ)$ extends to a character of $G_{\Q,S}$. Hence, by the construction provided in Section \ref{subsec:consyoshida}, the induction $\Ind^{G_{\Q,S}}_{G_{K,S}}(\tau_{\ord}^\circ)$  of the $2$-dimensional Galois representation is isomorphic to a lift of $\bar\rho$ valued in $\GSp_4(\mathrm{R}^{\mathrm{ord}(\p_1)}_{\bar\tau} \otimes_{\mathrm{O}[A_K]\llbracket T\rrbracket}\mathrm{O}\llbracket T\rrbracket)$. Since $\tau_{\ord}^\circ$ is ordinary at $\p_1$ and nearly ordinary at $\p_2$, one can check that its induction satisfies the $p$-ordinary condition listed in \cite[Definition 4.1]{DPmin}. Similarly, since $\tau_{\ord}^\circ$ is \textit{minimal} at all primes $\mathfrak{q}$ dividing $M$, we can use hypothesis \ref{hyp:squarefree} and \cite[Proposition 3.4]{DPmin} to deduce that its induction is \textit{minimal} at $q$ in the sense of \cite[Definition 4.1]{DPmin}, where $q=\q \cap \Q$. For all primes $q$ dividing $\mathrm{disc}(K)$, one can show that the $\Ind^{G_{\Q,S}}_{G_{K,S}}(\tau_{\ord}^\circ)$ is \textit{minimal} at $q$ --- in the sense of \cite[Definition 4.1]{DPmin} --- since $M$ is coprime to $\mathrm{disc}(K)$ by hypothesis \ref{hyp:squarefree} and \cite[Proposition 3.7]{DPmin}. Thus, one can indeed verify that $\Ind^{G_{\Q,S}}_{G_{K,S}}(\tau_{\ord}^\circ)$  satisfies the deformation conditions listed in \cite[Definition 4.1]{DPmin}. Universality of $\TT$, as proved in \cite[Theorem 1]{DPmin}, provides us a natural map $\TT \rightarrow \mathrm{R}^{\mathrm{ord}(\p_1)}_{\bar\tau} \otimes_{\mathrm{O}[A_K]\llbracket T\rrbracket}\mathrm{O}\llbracket T\rrbracket$. Theorem \ref{thm:pseudorepBC}(\ref{hyp:BinsideYos0}) then tells us that this map factors through $\TT/\BB$ which provides us a ring homomorphism \[\varkappa_{inv}: \TT/\BB \rightarrow \mathrm{R}^{\mathrm{ord}(\p_1)}_{\bar\tau} \otimes_{\mathrm{O}[A_K]\llbracket T\rrbracket}\mathrm{O}\llbracket T\rrbracket.\]

Let ${\rho}_{\TT/\BB}$ be the composition of the Galois representation $\rho_\TT:G_{\Q,S} \rightarrow \GSp_4(\TT)$ obtained in \cite[Theorem 3.1]{DPmin} with $\pi_\BB$. By the universal property of $\TT$, as proved in \cite[Theorem 1]{DPmin}, the four dimensional Galois representation $\varkappa \circ \varkappa_{inv} \circ {\rho}_{\TT/\BB}$ is induced by the unique ring map $\varkappa \circ \varkappa_{inv} \circ \pi_\BB: \TT \rightarrow \TT/\BB$. Here, $\pi_\BB :\TT \rightarrow \TT/\BB$ is the natural projection map. Let ${\rho}'_{\TT/\BB}:G_{\Q,S} \rightarrow \GSp_4(\TT/\BB)$ denote the Galois representation obtained from $\widetilde{\rho}_{\TT/\BB}$ by changing the basis from the standard $\left\{\vec{e}_1, \vec{e}_2, \vec{e}_3, \vec{e}_4 \right\}$ to $\left\{\vec{e}_1, \vec{e}_3, \vec{e}_4, \vec{e}_2 \right\}$. The fact that $\rho'_{\TT/\BB}$ is valued in $\GSp_4(\TT/\BB)$ comes from using the arguments in the previous paragraph --- in particular, that $\det(\widetilde{\tau})=\det(\widetilde{\tau}^c)$ --- along with Lemma \ref{lem:Gsp4basis}. Observe that $\TT$ is topologically generated by the image of the trace function $\mathrm{Trace}(\rho_\TT):G_{\Q,S} \rightarrow \TT$ as an $\mathrm{O}$-algebra. Observe also that the Galois representations $\rho'_{\TT/\BB}$ and $\rho_{\TT/\BB}$ are isomorphic over $\Gl_4$. The universal property of $\TT$ now allows us to deduce that the ring homomorphisms $\TT \rightarrow \TT/\BB$ --- induced by $\rho'_{\TT/\BB}$ and $\rho_{\TT/\BB}$ --- are both equal to the natural quotient map. Therefore, ${\rho}'_{\TT/\BB}$ is isomorphic (over $\GSp_4$) to ${\rho}_{\TT/\BB}$. By the explicit constructions of $\varkappa$ and $\varkappa_{inv}$, one can directly see that $\varkappa \circ \varkappa_{inv} \circ {\rho}_{\TT/\BB}$  equals ${\rho}'_{\TT/\BB}$. As a result, the ring homomorphisms  $\pi_\BB$ and $\varkappa \circ \varkappa_{inv} \circ \pi_\BB$ are equal as they induce the isomorphic four dimensional Galois representation valued in $\TT/\BB$. Consequently, $\varkappa \circ \varkappa_{inv}: \TT/\BB \rightarrow \TT/\BB$ is identity. \\

Similarly using (i) the universal property of $\mathrm{R}^{\mathrm{ord}(\p_1)}_{\bar\tau}$, (ii) the isomorphism (over $\GSp_4$) between ${\rho}_{\TT/\BB}$ and ${\rho}'_{\TT/\BB}$, (iii) the fact that $\bar\tau$ is not isomorphic to $\bar\tau^c$ along with (iv) the explicit constructions of $\varkappa$ and $\varkappa_{inv}$, one can show that the induced map $\varkappa_{inv} \circ \varkappa$ on the quotient $\mathrm{R}^{\mathrm{ord}(\p_1)}_{\bar\tau}\otimes_{\mathrm{O}[A_K]\llbracket T\rrbracket}\mathrm{O}\llbracket T\rrbracket$ of $\mathrm{R}^{\mathrm{ord}(\p_1)}_{\bar\tau}$ is the identity map. We can thus conclude that $\varkappa$ is a ring isomorphism, as given in equation (\ref{eq:Rgl2Tgsp4B}). \\

Recall that $\TT$, being a finite integral extension of $\Lambda$, has Krull dimension $3$. As a result, the Krull dimension of $\TT/\BB$ is at most $3$. Since $\varkappa$ is an isomorphism, equation (\ref{presentation:inducedtensorproduct}) gives us the following presentation:  
\begin{align}
\TT/\BB  \cong \dfrac{\mathrm{O}\llbracket Y_1,\cdots,Y_{m_1}, T   \rrbracket}{(t_1,\cdots,t_{m_2},v)}. 
\end{align}
Combining the inequality given in equation (\ref{pre:constdet}), Krull's Hauptidealsatz \cite[Theorem 10.2]{MR1322960}, and the fact that the regular local ring $\mathrm{O}\llbracket Y_1,\cdots,Y_{m_1}, T   \rrbracket$ is catenary \cite[Corollary 18.10]{MR1322960}, we can deduce that $\TT/\BB$ has Krull dimension at least $3$. Combining these observations now lets us conclude that $\TT/\BB$ is a local complete intersection ring with Krull dimension $3$ (and that the quantity $m_1-m_2$ equals $2$). Therefore, $\TT/\BB$ being a Cohen--Macaulay ring \cite[Proposition 18.13]{MR1322960} is also equidimensional (with Krull dimension $3$) \cite[Corollary 18.11]{MR1322960}. 

Therefore, the minimal primes of $\TT/\BB$ correspond to minimal primes of $\TT$. Let $\eta$ denote a minimal prime of $\TT$ corresponding to a minimal prime of $\TT/\BB$. Observe that $\eta$ contains $\BB$. By the GMA structure provided in Theorem \ref{thm:pseudorepBC}, the four dimensional Galois representation  $\tilde{\rho}_{\TT/\eta}$ is induced from a two-dimensional Galois representation of $G_{K,S}$. Using Claim \ref{claim:faithfulBperp} of Proof of \ref{thm:notisos}, we can conclude that $\eta$ does not correspond to a minimal prime of $\TT^{\perp}$. Hence, $\eta$ must correspond to a minimal prime of $\TT^{\Yos}$. Furthermore, every minimal prime of $\TT$, that corresponds to a minimal prime of $\TT^\Yos$, contains $\BB$ by Theorem \ref{thm:pseudorepBC}(\ref{hyp:BinsideYos0}). Therefore, the natural surjection $\TT/\BB \twoheadrightarrow \TT^{\Yos}$ provides us a natural ring isomorphism on the reduced quotients $\left(\TT/\BB\right)^{\mathrm{red}} \cong \TT^{\Yos}$. 
Since the surjective map $\TT/\BB \twoheadrightarrow \TT^{\Yos}$ factors through $\TT^\dec/\BB^\dec$, we have 
\[\left(\TT/\BB\right)^{\mathrm{red}} \cong \left(\TT^\dec/\BB^\dec\right)^{\mathrm{red}}  \cong \TT^{\Yos}.\]
This proves the claim that $\sqrt{\BB^\dec} = \JJ^\dec$. \end{proof}

\begin{claim}\label{claim:pnradical}
    $\dfrac{\TT^\dec}{(\BB^\dec,\mathrm{Ann}_{\TT^\dec}(\BB^\dec))}$ is a pseudo-null $\Lambda$-module.
\end{claim}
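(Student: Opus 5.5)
The plan is to show that the support of $\TT^\dec/(\BB^\dec,\mathrm{Ann}_{\TT^\dec}(\BB^\dec))$ contains no prime of height at most one, viewed over $\Lambda$ through the finite map $\Lambda\to\TT^\dec$. Since $\TT^\dec$ is finite over $\Lambda$, pseudo-nullity of a finitely generated $\TT^\dec$-module is equivalent to its support in $\mathrm{Spec}(\TT^\dec)$ having codimension at least two. This works because primes of $\TT^\dec$ of height $\le 1$ contract to primes of $\Lambda$ of height $\le 1$ by going-down/incomparability for the integral extension $\Lambda\hookrightarrow\TT^\dec$, the latter being torsion-free since $\TT^\dec$ is reduced with all components of dimension $3$. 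So it suffices to show that no prime $\mathfrak{Q}$ of $\TT^\dec$ of height $\le 1$ contains both $\BB^\dec$ and $\mathrm{Ann}_{\TT^\dec}(\BB^\dec)$.

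\emph{Step 1: localize at a candidate prime.} Suppose $\mathfrak{Q}\supset \BB^\dec+\mathrm{Ann}(\BB^\dec)$. By Claim \ref{claim:BradicalJ}, $\mathfrak{Q}\supset\sqrt{\BB^\dec}=\JJ^\dec$. Since $\TT^\dec/\JJ^\dec\cong\TT^\Yos$ is equidimensional of dimension $3$, $\mathfrak{Q}$ lies over a prime of $\TT^\Yos$. Because $\BB^\dec=(\alpha)$ is principal (by \ref{lab:fincyc} and Nakayama, as noted above), we have $\mathrm{Ann}(\BB^\dec)=\mathrm{Ann}(\alpha)$. In the reduced ring $\TT^\dec$, $\mathrm{Ann}(\alpha)$ is the intersection of the minimal primes not containing $\alpha$, and $(\alpha)+\mathrm{Ann}(\alpha)$ describes where the components with $\alpha=0$ (those of $\TT^\Yos$, which are exactly the minimal primes containing $\alpha$ by Claim \ref{claim:BradicalJ}) meet the components where $\alpha\neq0$. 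So the support is the intersection locus of $\mathrm{Spec}(\TT^\Yos)$ with the union of the remaining components of $\mathrm{Spec}(\TT^\dec)$.

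\emph{Step 2: rule out height $\le1$.} A minimal prime cannot lie in this locus: if $\mathfrak{Q}$ is minimal and contains $\alpha$, then $\mathfrak{Q}$ is a Yoshida component and $\mathrm{Ann}(\alpha)\not\subset\mathfrak{Q}$, since reducedness gives $(\TT^\dec)_{\mathfrak{Q}}$ a field in which $\alpha=0$. For height-one $\mathfrak{Q}$, I would use the fact that the intersection of the Yoshida components with the non-Yoshida components of $\TT^\dec$ is cut out in $\TT^\Yos$ by the image of $\mathrm{Ann}(\alpha)$. The aim is to exhibit this as a congruence locus of codimension $\ge 2$ in $\TT^\Yos$, by transporting the question through the inclusion $\BB^\dec\subset\JJ^\dec$ and the Fitting-ideal and reflexivity arguments of the proof of Theorem \ref{thm:notisos}.

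\emph{Main obstacle.} The key difficulty is Step 2 at height-one primes, i.e.\ excluding codimension-one congruences between Yoshida and non-Yoshida components inside $\TT^\dec$. My first attempt would use \ref{lab:finPN} together with the surjection (\ref{eq:SelBsurjdec}) and the isomorphism (\ref{Bcyclic}). These show that $\TT^\dec/(\JJ^\dec,\mathrm{Ann}(\BB^\dec))$ is pseudo-null. Localizing at a height-one $\mathfrak{Q}$, this forces $\JJ^\dec+\mathrm{Ann}(\alpha)\not\subset\mathfrak{Q}$, which contradicts $\mathfrak{Q}\supset \BB^\dec+\mathrm{Ann}(\alpha)$ once $\mathfrak{Q}\supset\JJ^\dec$, and that holds by Claim \ref{claim:BradicalJ}. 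Equivalently, the surjection (\ref{eq:radicalsurj}) has nilpotent kernel $\JJ^\dec/\BB^\dec$ modulo $\mathrm{Ann}$, so the two modules have the same support. Pseudo-nullity of the target therefore transfers to the source, because support is what governs pseudo-nullity. This support comparison, using $\sqrt{\BB^\dec}=\JJ^\dec$, is the step I expect to carry the argument.
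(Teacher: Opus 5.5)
Your argument is correct and is the paper's: (\ref{lab:finPN}), the surjection (\ref{eq:SelBsurjdec}) and the isomorphism (\ref{Bcyclic}) make $\TT^{\dec}/(\JJ^{\dec},\mathrm{Ann}_{\TT^{\dec}}(\BB^{\dec}))$ pseudo-null, and Claim~\ref{claim:BradicalJ} transfers this to $\TT^{\dec}/(\BB^{\dec},\mathrm{Ann}_{\TT^{\dec}}(\BB^{\dec}))$; the paper does the transfer elementwise (after localizing at a height-one prime of $\Lambda$ it writes $dx+ey=1$ with $x\in\JJ^{\dec}$ and raises to the $n$-th power so that $x^n\in\BB^{\dec}$), which is the same as your observation that $V(\BB^{\dec}+\mathrm{Ann}_{\TT^{\dec}}(\BB^{\dec}))=V(\JJ^{\dec}+\mathrm{Ann}_{\TT^{\dec}}(\BB^{\dec}))$ in $\mathrm{Spec}(\TT^{\dec})$, so both modules have the same $\Lambda$-support. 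One caveat: your opening claim that $\TT^{\dec}$ is $\Lambda$-torsion-free with all components of dimension $3$ is not justified, since non-Yoshida components of $\TT^{\dec}$ may be lower-dimensional (Theorem~\ref{thm:nomore} in fact shows any such component has dimension at most $2$) and then heights in $\TT^{\dec}$ need not match heights in $\Lambda$; nothing depends on this claim, because the support equality can be read directly on primes of $\Lambda$, so Steps~1--2, including the Fitting-ideal/reflexivity detour, can be dropped.
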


\begin{proof}[Proof of Claim \ref{claim:pnradical}]
The pseudo-nullity hypothesis \ref{lab:finPN}, the surjection in equation (\ref{eq:SelBsurjdec}) and the isomorphism in equation (\ref{Bcyclic}) let us conclude that $\dfrac{\TT^\dec}{(\JJ^\dec,\mathrm{Ann}_{\TT^\dec}(\BB^\dec))}$ is a pseudo-null $\Lambda$-module. We will now use Claim \ref{claim:BradicalJ} to complete the proof of this claim. Let $\p$ denote a height one prime ideal of $\Lambda$. Let $S$ denote the multiplicative  set $\Lambda \setminus \p$. By the pseudonullity hypothesis, we have 
\begin{align*}
\dfrac{S^{-1}\TT^\dec}{\left(\JJ^\dec,\mathrm{Ann}_{\TT^\dec}(\BB^\dec)\right)S^{-1}\TT^\dec}=0. 
\end{align*}
There exist elements $a$ and $b$ in $S^{-1}\TT^\dec$ along with elements $x$ in $\JJ^\dec$ and $y$ in $\mathrm{Ann}_{\TT^\dec}(\BB^\dec)$ such that 
\begin{align}\label{eq:unitidealsum}
    dx + ey  =1.
\end{align}
Since $\sqrt{\BB^{\dec}}=\JJ^\dec$, we have $x^n \in \BB^\dec$ for some integer $n \geq 1$. Raising equation (\ref{eq:unitidealsum}) to the $n$-th power, gives us elements $d'$ and $e'$ in $S^{-1}\TT^\dec$ along with elements $x'$ in $\BB^\dec$ and $y'$ in $\mathrm{Ann}_{\TT^\dec}(\BB^\dec)$ such that 
\begin{align}\label{eq:unitidealsum2}
    d'x' + e'y'  =1.
\end{align}
In other words, $\dfrac{S^{-1}\TT^\dec}{\left(\BB^\dec,\mathrm{Ann}_{\TT^\dec}(\BB^\dec)\right)S^{-1}\TT^\dec}=0$.
This proves Claim \ref{claim:pnradical}. 
\end{proof}

To prove the theorem, we want to show 
 \begin{align} \label{eq:minprime}
\mathrm{Spec}_{\mathrm{ht}=0}(\TT) \cap \pi_{\dec}^{-1}\left(\mathrm{Spec}_{\mathrm{ht}=0}(\TT^\dec)\right) = \pi_{\Yos}^{-1}\left(\mathrm{Spec}_{\mathrm{ht}=0}(\TT^\Yos)\right).
 \end{align}

Note that the following inclusion is straightforward from the construction of $\TT^\Yos$ and the surjection $\TT \twoheadrightarrow \TT^{\dec} \twoheadrightarrow \TT^{\Yos}$ of rings:
 \begin{align*}
\mathrm{Spec}_{\mathrm{ht}=0}(\TT) \cap \pi_{\dec}^{-1}\left(\mathrm{Spec}_{\mathrm{ht}=0}(\TT^\dec)\right) \supseteq \pi_{\Yos}^{-1}\left(\mathrm{Spec}_{\mathrm{ht}=0}(\TT^\Yos)\right).
 \end{align*}

Let $\eta$ be a minimal prime ideal of $\TT^\dec$ such that $\pi_{\dec}^{-1}(\eta) \notin \{\eta_1,\cdots, \eta_t\}$. To prove the reverse inclusion, it suffices to show that $\mathrm{Height}_{\TT}(\pi_{\dec}^{-1}(\eta)) \geq 1$. \\ 

Suppose that $\eta$ contains $\JJ^\dec$. Observe first that the construction of $\TT^\Yos$ allows us to conclude that $\displaystyle \JJ^\dec = \bigcap_{i=1}^t \pi_\dec(\eta_i)$. Therefore, if $\eta$ contains $\JJ^\dec$, then $\eta$ must contain $\pi_\dec(\eta_i)$ for some $ 1 \leq i \leq t$. Since $\eta$ is minimal, this forces $\eta$ to equal $\pi_\dec(\eta_i)$. Since $\pi_\dec$ is surjective by construction,  we have $\pi_\dec^{-1}(\eta)$ equals $\eta_i$, which contradicts our assumption that $\pi_\dec^{-1}(\eta)$ does not belong to  $\{\eta_1,\cdots, \eta_t\}$. \\

We may thus conclude that $\eta$ does not contain $\JJ^\dec$. Claim \ref{claim:BradicalJ} allows to conclude that $\eta$ does not contain the principal ideal $(\alpha)$. As a result, we have the inclusion of ideals in $\TT^\dec$:
\begin{align*}
\eta \supset \mathrm{Ann}_{\TT^\dec}((\alpha)).
\end{align*}
Since $\BB^\dec=(\alpha)$, we get a surjection 
$\dfrac{\TT^\dec}{(\BB^\dec,\mathrm{Ann}_{\TT^\dec}(\BB^\dec))} \twoheadrightarrow \dfrac{\TT^\dec}{(\alpha,\eta)}$.

Claim \ref{claim:pnradical} now lets us conclude that $\dfrac{\TT^\dec}{(\alpha,\eta)}$ is a pseudo-null $\Lambda$-module. The injectivity of the structure morphism $\Lambda \rightarrow \TT^\Yos$ lets us conclude that the structure morphism $\Lambda \rightarrow \TT^{\dec}$ is also an injection. Consequently, the structure morphism $\Lambda \rightarrow \TT^{\dec}$ provides us with a natural integral extension $\Lambda/I \hookrightarrow \TT^\dec/(\eta,\alpha)$, for some ideal $I$ of $\Lambda$. The pseudo-nullity hypothesis lets us conclude that the Krull dimension of $\Lambda/I$ --- denoted by $\mathrm{Dim}(\Lambda/I)$ --- is less than or equal to $\mathrm{Dim}(\Lambda)-2$. As a result, we have 
\[\mathrm{Dim}\left(\dfrac{\TT^\dec}{(\alpha,\eta)}\right) \leq  \mathrm{Dim}(\Lambda)-2.\] 
On the other hand, Krull's principal ideal theorem lets us conclude that 
\[ \mathrm{Dim}\left(\dfrac{\TT^\dec}{\eta}\right)- 1 \leq \mathrm{Dim}\left(\dfrac{\TT^\dec}{(\alpha,\eta)}\right).\] 
Combining these observations with the ring isomorphism $\dfrac{\TT}{\pi_{\dec}^{-1}(\eta)}\cong \dfrac{\TT^\dec}{\eta}$ allows us to conclude that $\mathrm{Dim}\left(\dfrac{\TT}{\pi_{\dec}^{-1}(\eta)}\right) \leq \mathrm{Dim}(\Lambda)-1$. Using the facts that $\Lambda \hookrightarrow \TT$ is an integral extension (and thus we have the equality $\mathrm{Dim}(\Lambda)  = \mathrm{Dim}(\TT)$) and that $\TT$  is equidimensional allow us to conclude that
\begin{align*}
\mathrm{Height}_{\TT}(\pi_{\dec}^{-1}(\eta)) \geq 1.
\end{align*}
This concludes the proof of the theorem.

\section{Limitations of Ribet's method and pseudonullity hypotheses without cyclicity}\label{sec:limitation}

We illustrate the limitations of Ribet's method and pseudonullity hypotheses towards characterizing Hida families of stable Yoshida lifts with an example. While we have chosen an  example in the $1$-variable setting for simplicity, one can easily modify this example to work for the $2$-variable setting simply by adding an auxiliary variable $X_2$ to the Iwasawa algebra. We have chosen to present the example in the $1$-variable setting to highlight the fact that limitations persist even in the setting of $\Gl_2$ Hida families as in \cite{MR4397038}. 

Consider the following rings $\Lambda \coloneqq \Z_p\llbracket X \rrbracket$ and $ T^{\mathrm{ord}} \coloneqq \dfrac{\Lambda [y,z]}{(y(y-p), z(z-X))}$ along with the natural augmentation ($\Lambda$-algebra) map:
\begin{align*}
\varpi: T^{\mathrm{ord}} &\twoheadrightarrow\Lambda, \qquad
y \mapsto 0, \qquad
z  \mapsto 0.
\end{align*}
Our example is chosen so that $T^{\mathrm{ord}}$ is not a cyclic $\Lambda$-module. The minimal primes of $T^{\mathrm{ord}}$ correspond, under the natural quotient map $\Lambda[ y,z]\rightarrow T^{\mathrm{ord}}$, to the image of the ideals 
\begin{align*}
   \eta_0 \leftrightarrow (y,z), \qquad \eta_1 \leftrightarrow (y,z-X), \qquad \eta_2 \leftrightarrow (y-p,z), \qquad \eta_3 \leftrightarrow (y-p,z-X).
\end{align*} The quotient of $T^{\mathrm{ord}}$ by each of these minimal primes is isomorphic to $\Lambda$. We make the following additional observations about the ring $T^{\mathrm{ord}}$, which serves as our analog to the ordinary Hecke algebra:
\begin{enumerate}[leftmargin=1.5em]
\item $T^{\mathrm{ord}}$ is a local ring since the quotient ring $\dfrac{T^{\mathrm{ord}}}{(p,X)T^{\mathrm{ord}}} \cong \dfrac{\F_p[y,z]}{(y^2,z^2)}$ is local. 
\item $T^{\mathrm{ord}}$ is a free (and hence a flat) $\Lambda$-module with rank $4$, generated by the set $\{1,y,z,yz\}$.

\item $T^{\mathrm{ord}}$ is a reduced ring. To see this, note that there exists an injective $\Lambda$-module map $\left(T^{\mathrm{ord}}\right)^{\mathrm{red}} \hookrightarrow \Lambda^4$. Here, we let $\left(T^{\mathrm{ord}}\right)^{\mathrm{red}}$ denote the maximal reduced quotient of $T^{\mathrm{ord}}$. If we consider the base change $\otimes_{\Lambda} \mathrm{Frac}(\Lambda)$, this injective map becomes an isomorphism of $\mathrm{Frac}(\Lambda)$-vector spaces.  Thus, the $\Lambda$-rank of $\left(T^{\mathrm{ord}}\right)^{\mathrm{red}}$ equals $4$. Therefore, the kernel of the natural surjective map $T^{\mathrm{ord}} \twoheadrightarrow \left(T^{\mathrm{ord}}\right)^{\mathrm{red}}$ must be $\Lambda$-torsion since the domain and codomain have the same $\Lambda$-rank. This is only possible if the natural map $T^{\mathrm{ord}} \twoheadrightarrow \left(T^{\mathrm{ord}}\right)^{\mathrm{red}}$ is an isomorphism since $T^{\mathrm{ord}}$ is free  $\Lambda$-module with rank $4$. 
\end{enumerate}

We let $T^{\perp}$ denote the image of $T^{\mathrm{ord}}$ under the natural map $\displaystyle T^{\mathrm{ord}} \rightarrow \prod_{i=1}^3 \dfrac{T^{\mathrm{ord}}}{\eta_i}$. Observe that 
\begin{align*}
    T^{\perp} =  \dfrac{\Lambda[y,z]}{(y(y-p), z(z-X), (y-p)(z-X))}.
\end{align*}
We make the following observations about the ring $T^{\perp}$:
\begin{enumerate}[label=(\alph*),leftmargin=1.5em]
\item Let $J$ denote $\mathrm{ker}\left(T^{\mathrm{ord}}\rightarrow \Lambda\right)$. In our case, $J$ equals the image of $(y,z)$ inside $T^{\mathrm{ord}}$.
\item Let $J^{\perp}$ denote the image of $J$ inside $T^{\perp}$. Note that $\dfrac{T^{\perp}}{J^{\perp}} \cong \dfrac{\Lambda}{(pX)}$.
Thus in this example, the analog of the congruence ideal of $\Lambda$ seems to be the principal ideal $(pX)$. As a $\Lambda$-module, the quotient $\dfrac{J}{J^2}$ (which we view as the analog of the dual of the usual Selmer group) is isomorphic to $\dfrac{\Lambda}{(p)} \oplus \dfrac{\Lambda}{(X)}$.
\item The minimal prime ideals of $T^{\perp}$ correspond to $\eta_1$, $\eta_2$ and $\eta_3$. Consider the following quotient rings:
\begin{align*}
\underbrace{\dfrac{T^{\perp}}{(J^{\perp},\eta_1)} \cong \Z_p}_{\text{The Krull dimension decreases by $1$}}, \qquad 
\underbrace{\dfrac{T^{\perp}}{(J^{\perp},\eta_2)} \cong \F_p\llbracket X \rrbracket}_{\text{The Krull dimension decreases by $1$}}, \qquad
\underbrace{\dfrac{T^{\perp}}{(J^{\perp},\eta_3)} \cong \F_p}_{\text{The Krull dimension decreases by $2$}}.
\end{align*}
along with the module of congruences corresponding to each of these minimal primes:
\begin{align*}
\underbrace{\dfrac{J^{\perp}+\eta_1}{(J^{\perp})^2+\eta_1} \cong \Z_p}_{\text{not a pseudo-null $\Lambda$-module}}, \quad  \underbrace{\dfrac{J^{\perp}+\eta_2}{(J^{\perp})^2+\eta_2}}_{\text{not a pseudo-null $\Lambda$-module}} \cong \F_p\llbracket X \rrbracket, \quad  \underbrace{\dfrac{J^{\perp}+\eta_3}{(J^{\perp})^2+\eta_3} \cong \dfrac{\m_\Lambda}{\m_\Lambda^2}}_{\text{pseudo-null but not cyclic $\Lambda$-module}}.
\end{align*}
\end{enumerate}

We let $T^{\dec}$ denote the image of $T^{\mathrm{ord}}$ under the natural map $\displaystyle T^{\mathrm{ord}} \rightarrow \dfrac{T^{\mathrm{ord}}}{\eta_0} \times \dfrac{T^{\mathrm{ord}}}{\eta_3}$. Observe that   \begin{align*}T^{\dec} \coloneqq \dfrac{\Lambda[y,z]}{(y(y-p),z(z-X), (y-p)z, y(z-X))}
\end{align*}

\begin{enumerate}[label=(\roman*),leftmargin=1.5em]
\item $T^{\dec}$ has two minimal prime ideals corresponding to $\eta_0$ and $\eta_3$.
\item We observe the following $\Lambda$-algebra isomorphism: 
\begin{align} \label{eq:fiberprodiso}
    T^{\dec} \cong \Lambda \times_{\F_p} \Lambda, \qquad
    y &\mapsto (p,0), \qquad  
    z \mapsto (X,0). 
\end{align}
One has the natural projection map $\varpi_\dec: T^{\dec} \rightarrow \Lambda$ induced by $\varpi:T^{\ord}\rightarrow \Lambda$ (that is, obtained by mapping $y$ and $z$ to $0$). Let $J^\dec$ denote $\ker(\varpi_\dec)$. In light of the isomorphism in equation (\ref{eq:fiberprodiso}), the map $\varpi_\dec$ may also alternatively be described  \[\varpi_\dec: \Lambda \times_{\F_p} \Lambda \rightarrow \Lambda\] by the projection map onto the second component. As a result, one observes that $J^{\dec}$ is isomorphic to the maximal ideal $\m_\Lambda$ as a $\Lambda$-module. 
\item Note that $J^\dec/(J^\dec)^2 \cong \m_\Lambda/\m_\Lambda^2$ has finite cardinality and hence pseudo-null as a $\Lambda$-module. This fact should be viewed as the analog of the pseudo-nullity hypotheses \ref{lab:finPN}. 
\item The generic rank of $T^{\dec}$ equals $2$. Compare this with \cite[Proposition 6.1.2]{MR4397038}. See also \cite[Section 9]{castella2024critical}.
\end{enumerate}

Our observation in this (counter)example, where we don't have the cyclicity hypothesis, is that Ribet's method and the pseudo-nullity hypothesis can characterize local \textit{indecomposability} for the Hida families corresponding to minimal primes $\eta_1$ and $\eta_2$, but not for the Hida family corresponding to $\eta_3$. This example also illustrates the conclusions of Theorem \ref{thm:notisos}.

\bibliography{biblio}

\begin{bibdiv}
\begin{biblist}

\bib{MR3966765}{inproceedings}{
      author={Andreatta, Fabrizio},
      author={Iovita, Adrian},
      author={Pilloni, Vincent},
       title={{$p$}-adic variation of automorphic sheaves},
        date={2018},
   booktitle={Proceedings of the {I}nternational {C}ongress of {M}athematicians---{R}io de {J}aneiro 2018. {V}ol. {II}. {I}nvited lectures},
   publisher={World Sci. Publ., Hackensack, NJ},
       pages={249\ndash 276},
}

\bib{MR3117174}{article}{
      author={Balasubramanyam, Baskar},
      author={Ghate, Eknath},
      author={Vatsal, Vinayak},
       title={On local {G}alois representations associated to ordinary {H}ilbert modular forms},
        date={2013},
        ISSN={0025-2611,1432-1785},
     journal={Manuscripta Math.},
      volume={142},
      number={3-4},
       pages={513\ndash 524},
         url={https://doi.org/10.1007/s00229-013-0614-1},
}

\bib{MR3989529}{article}{
      author={Bella\"iche, Jo\"el},
       title={Image of pseudo-representations and coefficients of modular forms modulo {$p$}},
        date={2019},
        ISSN={0001-8708,1090-2082},
     journal={Adv. Math.},
      volume={353},
       pages={647\ndash 721},
         url={https://doi.org/10.1016/j.aim.2019.07.001},
}

\bib{MR2656025}{article}{
      author={Bella\"iche, Jo\"el},
      author={Chenevier, Ga\"etan},
       title={Families of {G}alois representations and {S}elmer groups},
        date={2009},
        ISSN={0303-1179,2492-5926},
     journal={Ast\'erisque},
      number={324},
       pages={xii+314},
}

\bib{MR3320566}{article}{
      author={Bella\"iche, Jo\"el},
      author={Khare, Chandrashekhar},
       title={Level 1 {H}ecke algebras of modular forms modulo {$p$}},
        date={2015},
        ISSN={0010-437X,1570-5846},
     journal={Compos. Math.},
      volume={151},
      number={3},
       pages={397\ndash 415},
         url={https://doi.org/10.1112/S0010437X1400774X},
}

\bib{MR1235020}{article}{
      author={Blasius, Don},
      author={Rogawski, Jonathan~D.},
       title={Motives for {H}ilbert modular forms},
        date={1993},
        ISSN={0020-9910,1432-1297},
     journal={Invent. Math.},
      volume={114},
      number={1},
       pages={55\ndash 87},
         url={https://doi.org/10.1007/BF01232663},
}

\bib{MR4084165}{article}{
      author={Bleher, F.~M.},
      author={Chinburg, T.},
      author={Greenberg, R.},
      author={Kakde, M.},
      author={Pappas, G.},
      author={Sharifi, R.},
      author={Taylor, M.~J.},
       title={Higher {C}hern classes in {I}wasawa theory},
        date={2020},
        ISSN={0002-9327,1080-6377},
     journal={Amer. J. Math.},
      volume={142},
      number={2},
       pages={627\ndash 682},
         url={https://doi.org/10.1353/ajm.2020.0017},
}

\bib{MR1475167}{article}{
      author={B\"{o}cherer, S.},
      author={Schulze-Pillot, R.},
       title={Siegel modular forms and theta series attached to quaternion algebras. {II}},
        date={1997},
     journal={Nagoya Math. J.},
      volume={147},
       pages={71\ndash 106},
}

\bib{MR1096467}{article}{
      author={B\"{o}cherer, Siegfried},
      author={Schulze-Pillot, Rainer},
       title={Siegel modular forms and theta series attached to quaternion algebras},
        date={1991},
     journal={Nagoya Math. J.},
      volume={121},
       pages={35\ndash 96},
}

\bib{MR1292796}{article}{
      author={B\"{o}cherer, Siegfried},
      author={Schulze-Pillot, Rainer},
       title={Mellin transforms of vector valued theta series attached to quaternion algebras},
        date={1994},
     journal={Math. Nachr.},
      volume={169},
       pages={31\ndash 57},
}

\bib{MR2392352}{incollection}{
      author={B\"ockle, Gebhard},
       title={Presentations of universal deformation rings},
        date={2007},
   booktitle={{$L$}-functions and {G}alois representations},
      series={London Math. Soc. Lecture Note Ser.},
      volume={320},
   publisher={Cambridge Univ. Press, Cambridge},
       pages={24\ndash 58},
         url={https://doi.org/10.1017/CBO9780511721267.003},
}

\bib{MR2234120}{book}{
      author={Bushnell, Colin~J.},
      author={Henniart, Guy},
       title={The local {L}anglands conjecture for {$\rm GL(2)$}},
      series={Grundlehren der mathematischen Wissenschaften [Fundamental Principles of Mathematical Sciences]},
   publisher={Springer-Verlag, Berlin},
        date={2006},
      volume={335},
        ISBN={978-3-540-31486-8; 3-540-31486-5},
         url={https://doi.org/10.1007/3-540-31511-X},
}

\bib{MR1937198}{article}{
      author={Buzzard, Kevin},
       title={Analytic continuation of overconvergent eigenforms},
        date={2003},
        ISSN={0894-0347},
     journal={J. Amer. Math. Soc.},
      volume={16},
      number={1},
       pages={29\ndash 55},
         url={https://doi.org/10.1090/S0894-0347-02-00405-8},
}

\bib{MR1709306}{article}{
      author={Buzzard, Kevin},
      author={Taylor, Richard},
       title={Companion forms and weight one forms},
        date={1999},
        ISSN={0003-486X},
     journal={Ann. of Math. (2)},
      volume={149},
      number={3},
       pages={905\ndash 919},
         url={https://doi.org/10.2307/121076},
}

\bib{MR4079417}{article}{
      author={Calegari, Frank},
      author={Geraghty, David},
       title={Minimal modularity lifting for nonregular symplectic representations},
        date={2020},
        ISSN={0012-7094},
     journal={Duke Math. J.},
      volume={169},
      number={5},
       pages={801\ndash 896},
         url={https://doi.org/10.1215/00127094-2019-0044},
        note={With an appendix by Calegari, Geraghty and Michael Harris},
}

\bib{MR870690}{article}{
      author={Carayol, Henri},
       title={Sur les repr\'esentations {$l$}-adiques associ\'ees aux formes modulaires de {H}ilbert},
        date={1986},
        ISSN={0012-9593},
     journal={Ann. Sci. \'Ecole Norm. Sup. (4)},
      volume={19},
      number={3},
       pages={409\ndash 468},
         url={http://www.numdam.org/item?id=ASENS_1986_4_19_3_409_0},
      review={\MR{870690}},
}

\bib{MR4397038}{article}{
      author={Castella, Francesc},
      author={Wang-Erickson, Carl},
       title={Class groups and local indecomposability for non-{CM} forms},
        date={2022},
        ISSN={1435-9855,1435-9863},
     journal={J. Eur. Math. Soc. (JEMS)},
      volume={24},
      number={4},
       pages={1103\ndash 1160},
         url={https://doi.org/10.4171/JEMS/1107},
        note={With an appendix by Haruzo Hida},
}

\bib{castella2024critical}{article}{
      author={Castella, Francesc},
      author={Wang-Erickson, Carl},
       title={Critical {L}ambda-adic modular forms and bi-ordinary complexes},
        date={2024},
     journal={arXiv preprint arXiv:2406.08460},
}

\bib{MR2470687}{article}{
      author={Clozel, Laurent},
      author={Harris, Michael},
      author={Taylor, Richard},
       title={Automorphy for some {$l$}-adic lifts of automorphic mod {$l$} {G}alois representations},
        date={2008},
        ISSN={0073-8301,1618-1913},
     journal={Publ. Math. Inst. Hautes \'Etudes Sci.},
      number={108},
       pages={1\ndash 181},
         url={https://doi.org/10.1007/s10240-008-0016-1},
        note={With Appendix A, summarizing unpublished work of Russ Mann, and Appendix B by Marie-France Vign\'eras},
}

\bib{DPmin}{article}{
      author={Deo, Shaunak~V.},
      author={Palvannan, Bharathwaj},
       title={{A minimal modularity lifting theorem for Siegel modular forms}},
        date={2026},
     journal={arXiv preprint},
}

\bib{MR4120067}{article}{
      author={Deo, Shaunak~V.},
      author={Wiese, Gabor},
       title={Dihedral universal deformations},
        date={2020},
        ISSN={2522-0160,2363-9555},
     journal={Res. Number Theory},
      volume={6},
      number={3},
       pages={Paper No. 29, 37},
         url={https://doi.org/10.1007/s40993-020-00197-y},
}

\bib{MR2172950}{article}{
      author={Dimitrov, Mladen},
       title={Galois representations modulo {$p$} and cohomology of {H}ilbert modular varieties},
        date={2005},
        ISSN={0012-9593},
     journal={Ann. Sci. \'Ecole Norm. Sup. (4)},
      volume={38},
      number={4},
       pages={505\ndash 551},
         url={https://doi.org/10.1016/j.ansens.2005.03.005},
}

\bib{MR1322960}{book}{
      author={Eisenbud, David},
       title={Commutative algebra},
      series={Graduate Texts in Mathematics},
   publisher={Springer-Verlag, New York},
        date={1995},
      volume={150},
        ISBN={0-387-94268-8; 0-387-94269-6},
         url={https://doi.org/10.1007/978-1-4612-5350-1},
        note={With a view toward algebraic geometry},
}

\bib{MR2234862}{incollection}{
      author={Genestier, Alain},
      author={Tilouine, Jacques},
       title={Syst\`emes de {T}aylor-{W}iles pour {$\rm GSp_4$}},
        date={2005},
       pages={177\ndash 290},
        note={Formes automorphes. II. Le cas du groupe $\rm GSp(4)$},
      review={\MR{2234862}},
}

\bib{MR3953131}{article}{
      author={Geraghty, David},
       title={Modularity lifting theorems for ordinary {G}alois representations},
        date={2019},
        ISSN={0025-5831,1432-1807},
     journal={Math. Ann.},
      volume={373},
      number={3-4},
       pages={1341\ndash 1427},
         url={https://doi.org/10.1007/s00208-018-1742-4},
}

\bib{MR2139691}{article}{
      author={Ghate, Eknath},
      author={Vatsal, Vinayak},
       title={On the local behaviour of ordinary {$\Lambda$}-adic representations},
        date={2004},
        ISSN={0373-0956},
     journal={Ann. Inst. Fourier (Grenoble)},
      volume={54},
      number={7},
       pages={2143\ndash 2162 (2005)},
         url={http://aif.cedram.org/item?id=AIF_2004__54_7_2143_0},
}

\bib{MR1846466}{incollection}{
      author={Greenberg, Ralph},
       title={Iwasawa theory---past and present},
        date={2001},
   booktitle={Class field theory---its centenary and prospect ({T}okyo, 1998)},
      series={Adv. Stud. Pure Math.},
      volume={30},
   publisher={Math. Soc. Japan, Tokyo},
       pages={335\ndash 385},
}

\bib{MR2290593}{article}{
      author={Greenberg, Ralph},
       title={On the structure of certain {G}alois cohomology groups},
        date={2006},
        ISSN={1431-0635,1431-0643},
     journal={Doc. Math.},
       pages={335\ndash 391},
}

\bib{MR2740696}{article}{
      author={Greenberg, Ralph},
       title={Surjectivity of the global-to-local map defining a {S}elmer group},
        date={2010},
        ISSN={2156-2261,2154-3321},
     journal={Kyoto J. Math.},
      volume={50},
      number={4},
       pages={853\ndash 888},
         url={https://doi.org/10.1215/0023608X-2010-016},
}

\bib{MR3629652}{incollection}{
      author={Greenberg, Ralph},
       title={On the structure of {S}elmer groups},
        date={2016},
   booktitle={Elliptic curves, modular forms and {I}wasawa theory},
      series={Springer Proc. Math. Stat.},
      volume={188},
   publisher={Springer, Cham},
       pages={225\ndash 252},
         url={https://doi.org/10.1007/978-3-319-45032-2_6},
}

\bib{grossi2025asaiflachclassespadiclfunctions}{misc}{
      author={Grossi, Giada},
      author={Loeffler, David},
      author={Zerbes, Sarah~Livia},
       title={Asai-{F}lach classes, {$p$}-adic {$L$}-functions and the {B}loch-{K}ato conjecture for {$\mathrm{GO}(4)$}},
        date={2025},
         url={https://arxiv.org/abs/2407.17055},
}

\bib{MR1213108}{article}{
      author={Harris, Michael},
      author={Soudry, David},
      author={Taylor, Richard},
       title={{$l$}-adic representations associated to modular forms over imaginary quadratic fields. {I}. {L}ifting to {${\rm GSp}_4({\bf Q})$}},
        date={1993},
        ISSN={0020-9910},
     journal={Invent. Math.},
      volume={112},
      number={2},
       pages={377\ndash 411},
         url={https://doi.org/10.1007/BF01232440},
}

\bib{MR960949}{article}{
      author={Hida, Haruzo},
       title={On {$p$}-adic {H}ecke algebras for {${\rm GL}_2$} over totally real fields},
        date={1988},
        ISSN={0003-486X,1939-8980},
     journal={Ann. of Math. (2)},
      volume={128},
      number={2},
       pages={295\ndash 384},
         url={https://doi.org/10.2307/1971444},
}

\bib{MR1463699}{incollection}{
      author={Hida, Haruzo},
       title={Nearly ordinary {H}ecke algebras and {G}alois representations of several variables},
        date={1989},
   booktitle={Algebraic analysis, geometry, and number theory ({B}altimore, {MD}, 1988)},
   publisher={Johns Hopkins Univ. Press, Baltimore, MD},
       pages={115\ndash 134},
}

\bib{MR1097614}{incollection}{
      author={Hida, Haruzo},
       title={On nearly ordinary {H}ecke algebras for {${\rm GL}(2)$} over totally real fields},
        date={1989},
   booktitle={Algebraic number theory},
      series={Adv. Stud. Pure Math.},
      volume={17},
   publisher={Academic Press, Boston, MA},
       pages={139\ndash 169},
         url={https://doi.org/10.2969/aspm/01710139},
}

\bib{MR1954939}{article}{
      author={Hida, Haruzo},
       title={Control theorems of coherent sheaves on {S}himura varieties of {PEL} type},
        date={2002},
        ISSN={1474-7480,1475-3030},
     journal={J. Inst. Math. Jussieu},
      volume={1},
      number={1},
       pages={1\ndash 76},
         url={https://doi.org/10.1017/S1474748002000014},
}

\bib{MR2055355}{book}{
      author={Hida, Haruzo},
       title={{$p$}-adic automorphic forms on {S}himura varieties},
      series={Springer Monographs in Mathematics},
   publisher={Springer-Verlag, New York},
        date={2004},
        ISBN={0-387-20711-2},
         url={https://doi.org/10.1007/978-1-4684-9390-0},
}

\bib{MR2243770}{book}{
      author={Hida, Haruzo},
       title={Hilbert modular forms and {I}wasawa theory},
      series={Oxford Mathematical Monographs},
   publisher={The Clarendon Press, Oxford University Press, Oxford},
        date={2006},
        ISBN={978-0-19-857102-5; 0-19-857102-X},
         url={https://doi.org/10.1093/acprof:oso/9780198571025.001.0001},
}

\bib{MR3037789}{article}{
      author={Hida, Haruzo},
       title={Local indecomposability of {T}ate modules of non-{CM} abelian varieties with real multiplication},
        date={2013},
        ISSN={0894-0347,1088-6834},
     journal={J. Amer. Math. Soc.},
      volume={26},
      number={3},
       pages={853\ndash 877},
         url={https://doi.org/10.1090/S0894-0347-2013-00762-6},
}

\bib{MR3217764}{article}{
      author={Hida, Haruzo},
       title={Transcendence of {H}ecke operators in the big {H}ecke algebra},
        date={2014},
        ISSN={0012-7094},
     journal={Duke Math. J.},
      volume={163},
      number={9},
       pages={1655\ndash 1681},
         url={https://doi.org/10.1215/00127094-2690478},
}

\bib{hida2018anticyclotomic}{article}{
      author={Hida, Haruzo},
       title={Anticyclotomic cyclicity conjecture},
        date={2018},
     journal={Preprint. Available at http://www. math. ucla. edu/\~{} hida/AntCv9. pdf},
}

\bib{MR3858470}{article}{
      author={Hsieh, Ming-Lun},
      author={Namikawa, Kenichi},
       title={Inner product formula for {Y}oshida lifts},
        date={2018},
        ISSN={2195-4755,2195-4763},
     journal={Ann. Math. Qu\'e.},
      volume={42},
      number={2},
       pages={215\ndash 253},
         url={https://doi.org/10.1007/s40316-017-0088-8},
}

\bib{hsieh_yoshida}{article}{
      author={Hsieh, Ming-Lun},
      author={Palvannan, Bharathwaj},
       title={The congruence ideal associated to {$p$}-adic families of {Y}oshida lifts},
     journal={preprint},
}

\bib{MR2234859}{article}{
      author={Laumon, G\'erard},
       title={Fonctions z\^etas des vari\'et\'es de {S}iegel de dimension trois},
        date={2005},
        ISSN={0303-1179,2492-5926},
     journal={Ast\'erisque},
      number={302},
       pages={1\ndash 66},
}

\bib{Lei_2020}{article}{
      author={Lei, Antonio},
      author={Palvannan, Bharathwaj},
       title={Codimension two cycles in {I}wasawa theory and tensor product of {H}ida families},
        date={2022},
        ISSN={0025-5831,1432-1807},
     journal={Math. Ann.},
      volume={383},
      number={1-2},
       pages={39\ndash 75},
         url={https://doi.org/10.1007/s00208-020-02072-8},
}

\bib{MR600232}{article}{
      author={Levinger, Bernard~W.},
       title={The square root of a {$2\times 2$}\ matrix},
        date={1980},
        ISSN={0025-570X,1930-0980},
     journal={Math. Mag.},
      volume={53},
      number={4},
       pages={222\ndash 224},
         url={https://doi.org/10.2307/2689616},
}

\bib{MR1012172}{incollection}{
      author={Mazur, B.},
       title={Deforming {G}alois representations},
        date={1989},
   booktitle={Galois groups over {${\bf Q}$} ({B}erkeley, {CA}, 1987)},
      series={Math. Sci. Res. Inst. Publ.},
      volume={16},
   publisher={Springer, New York},
       pages={385\ndash 437},
         url={https://doi.org/10.1007/978-1-4613-9649-9_7},
}

\bib{MR3320491}{article}{
      author={Miyauchi, Michitaka},
      author={Yamauchi, Takuya},
       title={An explicit computation of {$p$}-stabilized vectors},
        date={2014},
        ISSN={1246-7405,2118-8572},
     journal={J. Th\'{e}or. Nombres Bordeaux},
      volume={26},
      number={2},
       pages={531\ndash 558},
         url={http://jtnb.cedram.org/item?id=JTNB_2014__26_2_531_0},
}

\bib{MR1263527}{incollection}{
      author={Nekov\'{a}\v{r}, Jan},
       title={On {$p$}-adic height pairings},
        date={1993},
   booktitle={S\'eminaire de {T}h\'eorie des {N}ombres, {P}aris, 1990--91},
      series={Progr. Math.},
      volume={108},
   publisher={Birkh\"auser Boston, Boston, MA},
       pages={127\ndash 202},
         url={https://doi.org/10.1007/s10107-005-0696-y},
}

\bib{MR2333680}{article}{
      author={Nekov\'{a}\v{r}, Jan},
       title={Selmer complexes},
        date={2006},
        ISSN={0303-1179,2492-5926},
     journal={Ast\'erisque},
      number={310},
       pages={viii+559},
}

\bib{MR1411348}{article}{
      author={Nyssen, Louise},
       title={Pseudo-repr\'esentations},
        date={1996},
        ISSN={0025-5831,1432-1807},
     journal={Math. Ann.},
      volume={306},
      number={2},
       pages={257\ndash 283},
         url={https://doi.org/10.1007/BF01445251},
}

\bib{MR4105535}{article}{
      author={Pilloni, V.},
       title={Higher coherent cohomology and {$p$}-adic modular forms of singular weights},
        date={2020},
     journal={Duke Math. J.},
      volume={169},
      number={9},
       pages={1647\ndash 1807},
}

\bib{MR2783930}{article}{
      author={Pilloni, Vincent},
       title={Prolongement analytique sur les vari\'et\'es de {S}iegel},
        date={2011},
        ISSN={0012-7094,1547-7398},
     journal={Duke Math. J.},
      volume={157},
      number={1},
       pages={167\ndash 222},
         url={https://doi.org/10.1215/00127094-2011-004},
      review={\MR{2783930}},
}

\bib{MR2920881}{article}{
      author={Pilloni, Vincent},
       title={Modularit\'e, formes de {S}iegel et surfaces ab\'eliennes},
        date={2012},
        ISSN={0075-4102,1435-5345},
     journal={J. Reine Angew. Math.},
      volume={666},
       pages={35\ndash 82},
         url={https://doi.org/10.1515/CRELLE.2011.123},
}

\bib{MR3059119}{article}{
      author={Pilloni, Vincent},
       title={Sur la th\'eorie de {H}ida pour le groupe {${\rm GSp}_{2g}$}},
        date={2012},
        ISSN={0037-9484,2102-622X},
     journal={Bull. Soc. Math. France},
      volume={140},
      number={3},
       pages={335\ndash 400},
         url={https://doi.org/10.24033/bsmf.2630},
}

\bib{MR2318628}{article}{
      author={Ramakrishnan, Dinakar},
      author={Shahidi, Freydoon},
       title={Siegel modular forms of genus 2 attached to elliptic curves},
        date={2007},
        ISSN={1073-2780},
     journal={Math. Res. Lett.},
      volume={14},
      number={2},
       pages={315\ndash 332},
         url={https://doi.org/10.4310/MRL.2007.v14.n2.a13},
}

\bib{MR419403}{article}{
      author={Ribet, Kenneth~A.},
       title={A modular construction of unramified {$p$}-extensions of {$Q(\mu \sb{p})$}},
        date={1976},
        ISSN={0020-9910,1432-1297},
     journal={Invent. Math.},
      volume={34},
      number={3},
       pages={151\ndash 162},
         url={https://doi.org/10.1007/BF01403065},
}

\bib{MR1378546}{article}{
      author={Rouquier, Rapha\"el},
       title={Caract\'erisation des caract\`eres et pseudo-caract\`eres},
        date={1996},
        ISSN={0021-8693,1090-266X},
     journal={J. Algebra},
      volume={180},
      number={2},
       pages={571\ndash 586},
         url={https://doi.org/10.1006/jabr.1996.0083},
}

\bib{MR3748235}{article}{
      author={Schmidt, Ralf},
       title={Archimedean aspects of {S}iegel modular forms of degree 2},
        date={2017},
     journal={Rocky Mountain J. Math.},
      volume={47},
      number={7},
       pages={2381\ndash 2422},
}

\bib{stacks-project}{misc}{
      author={{Stacks project authors}, The},
       title={The stacks project},
         how={\url{https://stacks.math.columbia.edu}},
        date={2025},
}

\bib{MR4596528}{article}{
      author={Stubley, Eric},
       title={Classical forms of weight one in ordinary families},
        date={2023},
        ISSN={1246-7405,2118-8572},
     journal={J. Th\'eor. Nombres Bordeaux},
      volume={35},
      number={1},
       pages={167\ndash 217},
         url={https://doi.org/10.5802/jtnb.1242},
}

\bib{MR1240640}{article}{
      author={Taylor, Richard},
       title={On the {$l$}-adic cohomology of {S}iegel threefolds},
        date={1993},
     journal={Invent. Math.},
      volume={114},
      number={2},
       pages={289\ndash 310},
}

\bib{MR4385094}{incollection}{
      author={Thomas, Oliver},
      author={Venjakob, Otmar},
       title={On spectral sequences for {I}wasawa adjoints \`a{} la {J}annsen for families},
        date={[2020] \copyright 2020},
   booktitle={Development of {I}wasawa theory---the centennial of {K}. {I}wasawa's birth},
      series={Adv. Stud. Pure Math.},
      volume={86},
   publisher={Math. Soc. Japan, Tokyo},
       pages={655\ndash 687},
}

\bib{MR2234861}{article}{
      author={Urban, Eric},
       title={Sur les repr\'esentations {$p$}-adiques associ\'ees aux repr\'esentations cuspidales de {${\rm GSp}_{4/{\Bbb Q}}$}},
        date={2005},
        ISSN={0303-1179,2492-5926},
     journal={Ast\'erisque},
      number={302},
       pages={151\ndash 176},
}

\bib{MR2234860}{article}{
      author={Weissauer, Rainer},
       title={Four dimensional {G}alois representations},
        date={2005},
        ISSN={0303-1179,2492-5926},
     journal={Ast\'erisque},
      number={302},
       pages={67\ndash 150},
}

\bib{MR586427}{article}{
      author={Yoshida, Hiroyuki},
       title={Siegel's modular forms and the arithmetic of quadratic forms},
        date={1980},
     journal={Invent. Math.},
      volume={60},
      number={3},
       pages={193\ndash 248},
}

\bib{MR758701}{article}{
      author={Yoshida, Hiroyuki},
       title={On {S}iegel modular forms obtained from theta series},
        date={1984},
     journal={J. Reine Angew. Math.},
      volume={352},
       pages={184\ndash 219},
}

\end{biblist}
\end{bibdiv}
\bibliographystyle{amsplain}
\end{document}